\DeclareRobustCommand{\greektext}{%
  \fontencoding{LGR}\selectfont\def\encodingdefault{LGR}}
\DeclareRobustCommand{\textgreek}[1]{\leavevmode{\greektext #1}}
\numberwithin{equation}{section}
\numberwithin{figure}{section}
\newcommand{\lyxaddress}[1]{
\par {\raggedright #1
\vspace{1.4em}
\noindent\par}
}
  \theoremstyle{remark}
  \newtheorem*{rem*}{\protect\remarkname}
 \theoremstyle{definition}
 \newtheorem*{defn*}{\protect\definitionname}
  \theoremstyle{plain}
  \newtheorem{prop}{\protect\propositionname}[section]
  \theoremstyle{plain}
  \newtheorem{lem}{\protect\lemmaname}[section]
  \theoremstyle{plain}
  \newtheorem{cor}{\protect\corollaryname}[section]
\newenvironment{customTheorem}[1]
  {\innercustomTheorem}
  {\endinnercustomTheorem}
  \providecommand{\definitionname}{Definition}
  \providecommand{\lemmaname}{Lemma}
  \providecommand{\propositionname}{Proposition}
  \providecommand{\remarkname}{Remark}
\providecommand{\corollaryname}{Corollary}
\begin{document}
\title{Superradiant instabilities for short-range non-negative\\ potentials on Kerr spacetimes and applications}

\author{Georgios Moschidis}

\maketitle

\lyxaddress{Princeton University, Department of Mathematics, Fine Hall, Washington
Road, Princeton, NJ 08544, United States, \tt gm6@math.princeton.edu}
\begin{abstract}
In \cite{Shlap}, Shlapentokh-Rothman established that the wave equation
$\square_{g_{M,a}}\text{\textgreek{y}}=0$ on subextremal Kerr spacetimes
$(\mathcal{M}_{M,a},g_{M,a})$, $0<|a|<M$, does not admit real mode
solutions. This is a highly non-trivial result, in view of the phenomenon
of superradiance, i.\,e.~the fact that the stationary Killing field
$T$ fails to be causal on the horizon $\mathcal{H}$. The analogue
of this result fails for long-range perturbations of the wave equation,
such as the Klein--Gordon equation $\square_{g_{M,a}}\text{\textgreek{y}}-\text{\textgreek{m}}^{2}\text{\textgreek{y}}=0$
for $\text{\textgreek{m}}>0$, as was shown by Shlapentokh-Rothman
in \cite{Shlapentokh-Rothman2013}. The question naturally arises
whether the absence of real modes persists under the addition of an
arbitrary short-range non-negative potential $V$ to the wave equation
or under changes of the metric $g_{M,a}$ in the far away region of
$\mathcal{M}_{M,a}$ (retaining the causality of $T$ there). 

In this paper, we answer the above question in the negative in both
cases. First, for any $0<|a|<M$, we establish the existence of real
mode solutions $\text{\textgreek{y}}$ to equation $\square_{g_{M,a}}\text{\textgreek{y}}-V\text{\textgreek{y}}=0$,
for a suitably chosen time-independent real potential $V$ with compact
support in space, satisfying the sign condition $V\ge0$. Exponentially
growing modes are also obtained after perturbing the potential $V$.
Then, as an application of the above results, we construct a family
of spacetimes $(\mathcal{M}_{M,a},g_{M,a}^{(def)})$ which are compact
in space perturbations of $(\mathcal{M}_{M,a},g_{M,a})$, have the
same symmetries as $(\mathcal{M}_{M,a},g_{M,a})$ and moreover admit
real and exponentially growing mode solutions to equation $\square_{g}\text{\textgreek{y}}=0$.
The nature of our construction forces, however, the spacetimes $(\mathcal{M}_{M,a},g_{M,a}^{(def)})$
to contain stably trapped null geodesics. We also construct a more
complicated family of spacetimes $(\mathcal{M}_{0},g_{M,a}^{(h)})$
admitting real and exponentially growing mode solutions to the wave
equation, on which the trapped set is normally hyperbolic, at the
expense of $g_{M,a}^{(h)}$ having conic asymptotics. 

The above results are in contrast with the case of stationary asymptotically
flat (or conic) spacetimes $(\mathcal{M},g)$ with a globally timelike
Killing field $T$, where the absence of real modes for equation $\square_{g}\text{\textgreek{y}}-V\text{\textgreek{y}}=0$
is immediate. On such spacetimes, this fact gives a useful continuity
criterion for showing stability for a smooth family of equations $\square_{g}\text{\textgreek{y}}-V_{\text{\textgreek{l}}}\text{\textgreek{y}}=0$,
with $\text{\textgreek{l}}\in[0,1]$ and $V_{0}=0$: It suffices to
bound the resolvent for frequencies in a neighborhood of $\text{\textgreek{w}}=0$
for all $\text{\textgreek{l}}\in[0,1]$. We show explicitly that this
criterion fails on Kerr spacetime, by constructing a potential $V$
so that for the smooth family of equations $\square_{g_{M,a}}\text{\textgreek{y}}-\text{\textgreek{l}}V\text{\textgreek{y}}=0$,
$\text{\textgreek{l}}\in[0,1]$, a real mode first appears at $\text{\textgreek{w}}=\text{\textgreek{w}}_{0}\in\mathbb{R}\backslash\{0\}$
for $\text{\textgreek{l}}=\text{\textgreek{l}}_{0}\in(0,1]$. 
\end{abstract}
\tableofcontents{}

\section{\label{sec:Introduction}Introduction}

The celebrated Kerr family of spacetimes $(\mathcal{M}_{M,a},g_{M,a})$,
first discovered in 1963 (see \cite{Kerr1963}), is a 2-parameter
family of solutions to the vacuum Einstein equations 
\begin{equation}
Ric_{\text{\textgreek{m}\textgreek{n}}}=0,
\end{equation}
parametrised by the \emph{mass} $M$ and the \emph{angular momentum
per unit mass} $a$. In the Boyer--Lindquist coordinate chart $(t,r,\text{\textgreek{j}},\text{\textgreek{f}}):\mathcal{M}_{M,a}\rightarrow\mathbb{R}\times(r_{+},+\infty)\times\mathbb{S}^{2}$,
the metric $g_{M,a}$ takes the form 
\begin{align}
g_{M,a}=-\big( & 1-\frac{2Mr}{\text{\textgreek{r}}^{2}}\big)dt^{2}-\frac{4Mar\sin^{2}\text{\textgreek{j}}}{\text{\textgreek{r}}^{2}}dtd\text{\textgreek{f}}+\frac{\text{\textgreek{r}}^{2}}{\text{\textgreek{D}}}dr^{2}+\label{eq:KerrMetric}\\
 & +\text{\textgreek{r}}^{2}d\text{\textgreek{j}}^{2}+\sin^{2}\text{\textgreek{j}}\frac{\text{\textgreek{P}}}{\text{\textgreek{r}}^{2}}d\text{\textgreek{f}}^{2},\nonumber 
\end{align}
where 
\begin{gather}
\text{\textgreek{r}}^{2}=r^{2}+a^{2}\cos^{2}\text{\textgreek{j}},\label{eq:Coeff1}\\
\text{\textgreek{D}}=(r-r_{+})\cdot(r-r_{-}),\label{eq:Coeff2}\\
r_{\pm}=M\pm\sqrt{M^{2}-a^{2}},\label{eq:Coeff3}\\
\text{\textgreek{P}}=(r^{2}+a^{2})^{2}-a^{2}\sin^{2}\text{\textgreek{j}}\text{\textgreek{D}}.\label{eq:Coeff4}
\end{gather}
 The Schwarzschild metric corresponds to (\ref{eq:KerrMetric}) for
$a=0$.

In the so called \emph{subextremal} parameter range $0\le|a|<M$,
the maximal extension $(\widetilde{\mathcal{M}}_{M,a},\tilde{g}_{M,a})$
of the Kerr spacetime $(\mathcal{M}_{M,a},g_{M,a})$, first constructed
by Carter in \cite{Carter1968a}, has two asymptotically flat ends,
and contains a black hole and a white hole region, which are bounded
by a future and a past event horizon $\mathcal{H}^{+}$ and $\mathcal{H}^{-}$
respectively. The union $\mathcal{H}=\mathcal{H}^{+}\cup\mathcal{H}^{-}$
is the event horizon of $(\widetilde{\mathcal{M}}_{M,a},\tilde{g}_{M,a})$,
while the intersection $\mathcal{H}^{+}\cap\mathcal{H}^{-}$ (which
is non empty) is the so called \emph{bifurcation sphere}. In this
extension, $(\mathcal{M}_{M,a},g_{M,a})$ is identified with the domain
of outer communications of one of the two asymptotically flat ends
of $(\widetilde{\mathcal{M}}_{M,a},\tilde{g}_{M,a})$. 

The wave equation 
\begin{equation}
\square_{g_{M,a}}\text{\textgreek{y}}=0\label{eq:WaveEquation}
\end{equation}
on $(\mathcal{M}_{M,a},g_{M,a})$, for $0\le|a|<M$, has been extensively
studied. A first result relevant to the stability (i.\,e.~boundedness
and decay) properties of equation (\ref{eq:WaveEquation}) in the
full subextremal range $0\le|a|<M$ is the proof by Whiting, in \cite{Whiting1989},
that equation (\ref{eq:WaveEquation}) does not admit exponentially
growing mode solutions, i.\,e.~solutions $\text{\textgreek{y}}$
of the form 
\begin{equation}
\text{\textgreek{y}}(t,r,\text{\textgreek{j}},\text{\textgreek{f}})=e^{-i\text{\textgreek{w}}t}\text{\textgreek{y}}_{\text{\textgreek{w}}}(r,\text{\textgreek{j}},\text{\textgreek{f}})\label{eq:ModeIntroduction}
\end{equation}
such that $Im(\text{\textgreek{w}})>0$, with $\text{\textgreek{y}}$
being smooth up to $\mathcal{H}^{+}\backslash\mathcal{H}^{-}$ and
having finite energy on the $\{t=const\}$ slices. This result was
extended by Shlapentokh-Rothman in \cite{Shlap}, where the non-existence
of outgoing real mode solutions of (\ref{eq:WaveEquation}) on $(\mathcal{M}_{M,a},g_{M,a})$,
$0\le|a|<M$, was established. Recall that a solution $\text{\textgreek{y}}$
to (\ref{eq:WaveEquation}) is called an outgoing real mode solution
if it is of the form (\ref{eq:ModeIntroduction}) with $\text{\textgreek{w}}\in\mathbb{R}\backslash\{0\}$,
such that $\text{\textgreek{y}}$ is smooth up to $\mathcal{H}^{+}\backslash\mathcal{H}^{-}$
and has finite energy flux through a hyperboloidal hypersurface $\mathcal{S}$
terminating at future null infinity and intersecting $\mathcal{H}^{+}$
transversally (satisfying also $\mathcal{S}\cap\mathcal{H}^{-}=\emptyset$),
but has infinite energy flux through the $\{t=const\}$ hypersurfaces;
see \cite{Shlap} (or Section \ref{sub:Modes}) for more details. 

\begin{figure}[h] 
\centering 
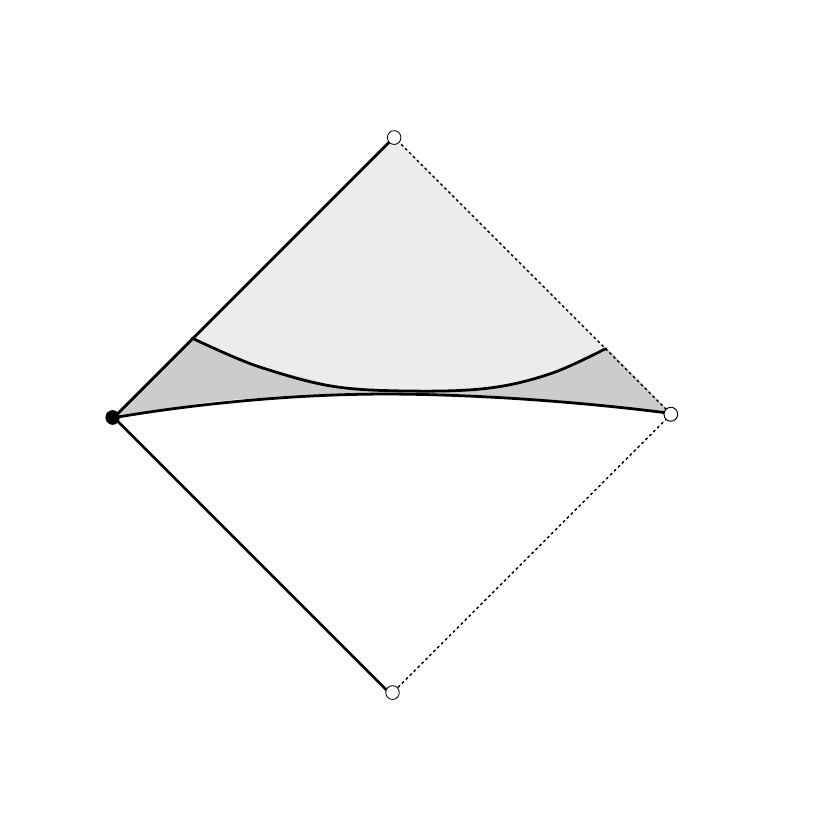 
\caption{A hyperboloidal hypersurface $\mathcal{S}\subset\mathcal{M}_{M,a}$ terminating at $\mathcal{I}^{+}$ and intersecting $\mathcal{H}^{+}$ transversally, such that $\mathcal{S}\cap\mathcal{H}^{-}=\emptyset$ (depicted above intersected with the 1+1 dimensional slice $\{\theta=\pi/2,\phi=0\}\subset\mathcal{M}_{M,a}$). An outgoing real mode solution $\text{\textgreek{y}}$ to equation \eqref{eq:WaveEquation}  has finite energy flux through  $\mathcal{S}$, but infinite energy flux through the hypersurface $\{t=0\}$.} 
\end{figure}

Notice that, in view of the fact that the stationary Killing field
$T=\partial_{t}$ fails to be causal on $\mathcal{H}^{+}$ when $a\neq0$,
the results of \cite{Whiting1989,Shlap} are highly non-trivial in
this case (unlike the Schwarzschild case $a=0$, where they would
simply follow from the energy identity for $T$; see Section \ref{sub:Superradiance}).
The absence of an everywhere causal Killing field on $(\mathcal{M}_{M,a},g_{M,a})$
when $a\neq0$ gives rise to the phenomenon of \emph{superradiance}
for equation (\ref{eq:WaveEquation}), which we will discuss in more
detail in Section \ref{sub:Superradiance}. 

The results of \cite{Shlap} were then used in \cite{DafRodSchlap},
where quantitative decay estimates for solutions $\text{\textgreek{y}}$
to (\ref{eq:WaveEquation}) on $(\mathcal{M}_{M,a},g_{M,a})$ for
$0\le|a|<M$ were obtained. For earlier stability results in the Schwarzschild
case (i.\,e.~when $a=0$) and the very slowly rotating Kerr case
(i.\,e.~when $|a|\ll M$), see \cite{KayWald,DafRod1,DafRod2,DafRod4,BlueSof1,BlueSterb}
and \cite{DafRod5,DafRod6,DafRod9,TatToh1,AndBlue1} respectively.
It should be noted, also, that the techniques employed in \cite{Shlap}
are robust enough to yield a mode stability statement for \emph{small
short-range} potential perturbations of equation (\ref{eq:WaveEquation}).

In contrast to the above results, superradiance-related instability
phenomena come into the picture in the case of \emph{long-range} perturbations
to equation (\ref{eq:WaveEquation}), showing that the mode stability
results of \cite{Whiting1989,Shlap} can not be extended to include
this case: In \cite{Shlapentokh-Rothman2013}, Shlapentokh-Rothman
constructed exponentially growing mode solutions (as well as real
mode solutions) to the Klein--Gordon equation 
\begin{equation}
\square_{g_{M,a}}\text{\textgreek{y}}-\text{\textgreek{m}}^{2}\text{\textgreek{y}}=0\label{eq:KleinGordon}
\end{equation}
on $(\mathcal{M}_{M,a},g_{M,a})$ with $a,\text{\textgreek{m}}\neq0$.
This result was anticipated by the heuristics of \cite{Damour1976,Zouros1979,Detweiler1980}
(see also the numerics of \cite{Dolan2007,Dolan2013} and references
therein).

The question naturally arises, therefore, whether the mode stability
results of \cite{Shlap} can be extended to include \underline{\emph{large short-range}}
deformations of equation (\ref{eq:WaveEquation}), in the form of
either potential perturbations or metric deformations retaining the
causal character of $T$ at each point. In this paper, we will provide
a negative answer to this question in both cases.

\subsection{Theorem 1: An instability result for $\square\text{\textgreek{y}}-V\text{\textgreek{y}}=0$
on Kerr spacetimes}

Our first result will show that the mode stability statement of \cite{Whiting1989,Shlap}
can \emph{not} be extended to include the case when an arbitrary term
of the form $-V\text{\textgreek{y}}$, with $V\ge0$ compactly supported
in the region $\{r\gg1\}$, is added to the wave equation (\ref{eq:WaveEquation})
on $(\mathcal{M}_{M,a},g_{M,a})$ for any $a$ in the subextremal
range $0<|a|<M$. In particular, we will establish the following:

\begin{customTheorem}{1}[short version] \label{Theorem 1} For any
$0<|a|<M$, any $\text{\textgreek{w}}_{R}\in\mathbb{R}\backslash\{0\}$,
any $r_{0}\gg1$ large in terms of $\text{\textgreek{w}}_{R}$ and
$a$ and any $0\le\text{\textgreek{w}}_{I}\ll1$ small in terms of
$\text{\textgreek{w}}_{R}$ and $a$, there exists a $V:\mathcal{M}_{M,a}\rightarrow[0,+\infty)$
compactly supported in the region $\{r\ge r_{0}\}$ and satisfying
$\partial_{t}V=\partial_{\text{\textgreek{f}}}V=0$, such that the
equation 
\begin{equation}
\square_{g_{M,a}}\text{\textgreek{y}}-V\text{\textgreek{y}}=0\label{eq:WavePotentialInrtro}
\end{equation}
admits an outgoing mode solution with frequency parameter $\text{\textgreek{w}}_{R}+i\text{\textgreek{w}}_{I}$.
In particular, (\ref{eq:WavePotentialInrtro}) admits a real mode
solution with $\text{\textgreek{w}}_{I}=0$, and an exponentially
growing mode solution with $\text{\textgreek{w}}_{I}>0$.

\end{customTheorem}

For a more detailed statement of Theorem \hyperref[Theorem 1]{1},
see Section \ref{sec:ProofOfMainThm}. For the definition of an outgoing
mode solution, see Section \ref{sub:Modes}. 

Note, in contrast, that, on a stationary and asymptotically flat spacetime
with an everywhere causal Killing field (e.\,g.~Schwarzschild exterior),
adding a potential $V$ with the properties described in Theorem \hyperref[Theorem 1]{1}
to the wave equation does not ``destroy'' the decay properties of
the corresponding solutions. It is due to the phenomenon of superradiance
(more precisely, the fact that $\mathcal{H}^{+}\cap\{g(T,T)>0\}\neq\emptyset$)
that a construction of a real mode solution to equation (\ref{eq:WavePotentialInrtro})
in the case $a\neq0$ is possible. For a discussion on the role of
superradiance as a mechanism of instability, see Section \ref{sub:Superradiance}\emph{.}

\subsection{Theorem 2: An instability result for $\square\text{\textgreek{y}}=0$
on short-range deformations of the Kerr metric}

As an application of Theorem \hyperref[Theorem 1]{1}, we will infer
the existence of real and exponentially growing modes for the wave
equation associated to stationary and compactly supported in space
deformations $g_{M,a}^{(def)}$ of the Kerr metric $g_{M,a}$ with
the same ergoregion as $g_{M,a}$:

\begin{customTheorem}{2}[short version] \label{Theorem 2} For any
$0<|a|<M$, any $\text{\textgreek{w}}_{R}\in\mathbb{R}\backslash\{0\}$,
any $0\le\text{\textgreek{w}}_{I}\ll1$ and any $r_{0}\gg1$, there
exists a stationary and axisymmetric Lorentzian metric $g_{M,a}^{(def)}$
on $\mathcal{M}_{M,a}$, coinciding with $g_{M,a}$ outside the region
$\{r_{0}\le r\le r_{0}+C\}$ (for some $C\gg1$) and satisfying $g_{M,a}^{(def)}(\partial_{t},\partial_{t})<0$
in $\{r_{0}\le r\le r_{0}+C\}$, such that the wave equation 
\begin{equation}
\square_{g_{M,a}^{(def)}}\text{\textgreek{y}}=0\label{eq:WavePerturbedMetricIntro}
\end{equation}
completely separates in the Boyer--Lindquist coordinate chart and
admits an outgoing mode solution with frequency parameter $\text{\textgreek{w}}_{R}+i\text{\textgreek{w}}_{I}$.
In particular, (\ref{eq:WavePerturbedMetricIntro}) admits a real
mode solution with $\text{\textgreek{w}}_{I}=0$, and an exponentially
growing mode solution with $\text{\textgreek{w}}_{I}>0$.

\end{customTheorem}

For a more detailed statement of Theorem \hyperref[Theorem 2]{2},
see Section \ref{sec:ProofOfCorolary}.

\subsection{\label{sub:IntroAsymptoticallyConic}An example with normally hyperbolic
trapping}

The spacetimes $(\mathcal{M}_{M,a},g_{M,a}^{(def)})$ of Theorem \hyperref[Theorem 2]{2}
possess the same ergoregion structure as $(\mathcal{M}_{M,a},g_{M,a})$,
but our specific construction forces the structure of the trapped
set to be different. In particular, the region $r_{0}\le r\le r_{0}+C$
of $(\mathcal{M}_{M,a},g_{M,a}^{(def)})$ contains stable trapped
null geodesics. As a consequence, the behaviour of high frequency
solutions to equations (\ref{eq:WaveEquation}) and (\ref{eq:WavePerturbedMetricIntro})
is substantially different. 

Despite this aspect of the spacetimes $(\mathcal{M}_{M,a},g_{M,a}^{(def)})$,
in general there is no reason for a connection to exist between the
structure of the trapped set outside the ergoregion (which manifests
itself in the high frequency behaviour of solutions to (\ref{eq:WavePerturbedMetricIntro}))
and superradiance-related mode-instabilities (which is a purely fixed
frequency phenomenon). In order to better clarify the irrelevance
of the structure of trapping to the existence of a real mode solution
to (\ref{eq:WavePerturbedMetricIntro}), it would be preferable to
have an example of a spacetime $(\mathcal{M},g)$ possessing a ``nice''
trapped set and at the same time admitting real or exponentially growing
modes. In Section \ref{sec:DeformedSpacetimes}, we construct a spacetime
$(\mathcal{M}_{0},g_{M,a}^{(h)})$ which has the symmetries of the
Kerr exterior $(\mathcal{M}_{M,a},g_{M,a})$, such that the trapped
set of $(\mathcal{M}_{0},g_{M,a}^{(h)})$ is normally hyperbolic and
the wave equation 
\begin{equation}
\square_{g_{M,a}^{(h)}}\text{\textgreek{y}}=0\label{eq:WaveNormallyHyperbolicIntro}
\end{equation}
admits an outgoing real mode solution (which in turn yields, after
a suitable perturbation, an exponentially growing mode solution).
However, our specific construction forces the spacetime $(\mathcal{M}_{0},g_{M,a}^{(h)})$
to be asymptotically conic, instead of asymptotically flat like the
Kerr exterior $(\mathcal{M}_{M,a},g_{M,a})$. For more details regarding
this technically involved (in comparison to the proof of Theorem \hyperref[Theorem 2]{2})
construction, see Section \ref{sec:DeformedSpacetimes} and Theorem
\hyperref[Theorem 3]{3}.

\subsection{\label{sub:Superradiance}Discussion of the role of superradiance
as a mechanism of instability }

In this section, we will try to put into some context the phenomenon
of superradiance and its relation to instability results concerning
the wave equation 
\begin{equation}
\square_{g}\text{\textgreek{y}}=0\label{eq:WaveEquationIntro}
\end{equation}
on a general class of spacetimes $(\mathcal{M},g)$. 

Let $(\mathcal{M},g)$ be a globally hyperbolic, stationary and asymptotically
flat spacetime with stationary Killing field $T$, possibly bounded
by an event horizon $\mathcal{H}$.%
\footnote{The discussion in this section also applies to asymptotically conic
spacetimes, as the ones discussed in Section \ref{sub:IntroAsymptoticallyConic}.%
} For the purposes of this section, and in analogy with the properties
of the subextremal Kerr exterior $(\mathcal{M}_{M,a},g_{M,a})$, the
spacetime $(\mathcal{M},g)$ will be called \emph{superradiant} if
$T$ fails to be causal everywhere on $\mathcal{M}$. In the case
when $T$ is everywhere timelike on $\mathcal{M}\backslash\mathcal{H}$,
the spacetime $(\mathcal{M},g)$ will be called \emph{non-superradiant}.%
\footnote{Note that, under the above definitions, there exist spacetimes which
are neither superradiant nor non-superradiant: For instance, stationary
spacetimes $(\mathcal{M},g)$ on which $T$ is everywhere causal and
identically null on an open set $\mathcal{U}\subset\mathcal{M}$,
or the spacetimes considered in \cite{Eperon2016}, do not fall in
either category.%
} It can be readily shown that a spacetime $(\mathcal{M},g)$ as above
is superradiant if and only if there exist solutions $\text{\textgreek{y}}$
to (\ref{eq:WaveEquationIntro}) such that their $T$-energy flux
through future null infinity $\mathcal{I}^{+}$ is greater than their
$T$-energy flux through a Cauchy hypersurface $\text{\textgreek{S}}$
of $\mathcal{M}$ (for the definition of the $T$-energy flux, see
below; for the definition of $\mathcal{I}^{+}$ on a general asymptotically
flat spacetime, see \cite{Moschidisc}). 

We will now proceed to examine some general mode stability statements
for equation (\ref{eq:WaveEquationIntro}) on non-superradiant spacetimes
and then highlight the failure of these statements on superradiant
spacetimes. To this end, it will be convenient to distinguish among
superradiant spacetimes the ones having a non-empty future event horizon
$\mathcal{H}^{+}$ satisfying $\mathcal{H}^{+}\cap\{g(T,T)>0\}\neq\emptyset$
(such as Kerr exterior spacetime $(\mathcal{M}_{M,a},g_{M,a})$ when
$a\neq0$).

\subsubsection{\label{sub:NonSuperradiant}Mode stablity results on non-superradiant
spacetimes}

Let $(\mathcal{M},g)$ be a spacetime as above which is non-superradiant,
and let $V:\mathcal{M}\rightarrow\mathbb{R}$ be a smooth function,
having compact support in space and satisfying $T(V)=0$. In this
case, equation 
\begin{equation}
\square_{g}\text{\textgreek{y}}-V\text{\textgreek{y}}=0,\label{eq:PotentialWaveGeneralIntro}
\end{equation}
satisfies the following mode stability statement for real and non-zero
frequencies:

\begin{enumerate}

\item[1. a)] No outgoing real mode solutions at a non-zero real frequency
parameter $\text{\textgreek{w}}$ exist for equation (\ref{eq:PotentialWaveGeneralIntro}). 

\item[$\hphantom{1. }$ b)] No $L^{2}$ ``eigenfunctions'' (i.\,e.~solutions
$\text{\textgreek{y}}$ of the form (\ref{eq:ModeIntroduction}) such
that $\int_{t=0}|\text{\textgreek{y}}|^{2}<+\infty$) at a non-zero
real frequency parameter $\text{\textgreek{w}}$ exist for equation
(\ref{eq:PotentialWaveGeneralIntro}). 

\end{enumerate}

The above statement can be inferred from the fact that the energy
flux associated to the vector field $T$, which is obtained by integrating
the divergence-free current
\begin{equation}
\mathscr{E}_{\text{\textgreek{m}}}[\text{\textgreek{y}}]=J_{\text{\textgreek{m}}}^{T}(\text{\textgreek{y}})-V|\text{\textgreek{y}}|^{2}g_{\text{\textgreek{m}\textgreek{n}}}T^{\text{\textgreek{n}}}\label{eq:TenergyCurrent}
\end{equation}
over a chosen causal hypersurface (see Section \ref{sec:ConstantsAndCurrents}
for our notations on vector field currents), is positive definite
both on $\mathcal{H}^{+}$ and on future null infinity $\mathcal{I}^{+}$.
Thus, integrating the identity 
\begin{equation}
\nabla^{\text{\textgreek{m}}}\mathscr{E}_{\text{\textgreek{m}}}[\text{\textgreek{y}}]=0
\end{equation}
 over suitable subregions of $\mathcal{M}$ (yielding an identity
for the $T$-energy flux of $\text{\textgreek{y}}$ on the associated
boundary hypersurfaces, the so called $T$-\emph{energy identity})
yields that a real mode solution of (\ref{eq:PotentialWaveGeneralIntro})
must have vanishing radiation field on $\mathcal{I}^{+}$and that
it must actually be an $L^{2}$ eigenfunction for (\ref{eq:PotentialWaveGeneralIntro}).
However, using a unique continuation argument (similar to the ones
appearing in \cite{Rellich1943,Odeh1965}, or using the Carleman estimates
of \cite{Rodnianski2011,Moschidisb}), we can infer that any solution
$\text{\textgreek{y}}$ to (\ref{eq:PotentialWaveGeneralIntro}) of
the form (\ref{eq:ModeIntroduction}) such that $\int_{t=0}|\text{\textgreek{y}}|^{2}<+\infty$
vanishes identically on $\mathcal{M}$; see also \cite{Shlap}. 

Assuming, moreover, that the potential function $V$ in (\ref{eq:PotentialWaveGeneralIntro})
satisfies the non-negativity condition $V\ge0$, one can readily obtain
the following results in addition to 1.a--1.b, yielding the full ``mode
stability'' statement for equation (\ref{eq:PotentialWaveGeneralIntro}):

\begin{enumerate}

\item[2.] Equation (\ref{eq:PotentialWaveGeneralIntro}) does not
have a zero eigenvalue or a zero resonance.%
\footnote{Equation (\ref{eq:PotentialWaveGeneralIntro}) is said to admit a
zero eigenvalue if there exists a solution $\text{\textgreek{y}}$
to (\ref{eq:PotentialWaveGeneralIntro}) which is smooth up to $\mathcal{H}$
(if non-empty) and satisfies $T\text{\textgreek{y}}=0$ and $||\text{\textgreek{y}}||_{L^{2}(\text{\textgreek{S}})}<+\infty$,
where $\text{\textgreek{S}}$ is a Cauchy hypersurface of $(\mathcal{M},g)$.
If $\text{\textgreek{y}}$ satisfies $T\text{\textgreek{y}}=0$, $||\partial\text{\textgreek{y}}||_{L^{2}(\text{\textgreek{S}})}<+\infty$
but $||\text{\textgreek{y}}||_{L^{2}(\text{\textgreek{S}})}=+\infty$,
then $\text{\textgreek{y}}$ is called a zero resonance. %
}

\item[3.] Equation (\ref{eq:PotentialWaveGeneralIntro}) does not
admit any exponentially growing mode solutions, since the energy norm
for solutions to (\ref{eq:PotentialWaveGeneralIntro}) associated
to the $T$-energy flux is positive definite and conserved. 

\end{enumerate}

In particular, these results are in contrast with the situation in
Theorem \hyperref[Theorem 1]{1}.

\subsubsection{\label{sub:SuperradiantFriedman}Superradiant spacetimes: the case
$\mathcal{H}^{+}\cap\{g(T,T)>0\}=\emptyset$}

Let us now examine the behaviour of solutions to (\ref{eq:PotentialWaveGeneralIntro})
in the case $(\mathcal{M},g)$ has a non-empty ergoregion (i.\,e.~$\{g(T,T)>0\}\neq\emptyset$)
with the property that either $\mathcal{H}^{+}=\emptyset$, or $\mathcal{H}^{+}\neq\emptyset$
and $\mathcal{H}^{+}\cap\{g(T,T)>0\}=\emptyset$ (note that the Kerr
exterior spacetime $(\mathcal{M}_{M,a},g_{M,a})$, $a\neq0$, does
\underline{not} satisfy this property). The condition $\mathcal{H}^{+}\cap\{g(T,T)>0\}=\emptyset$
implies the positivity of the flux of (\ref{eq:TenergyCurrent}) through
$\mathcal{H}^{+}$. 

On such a spacetime $(\mathcal{M},g)$, there exist smooth solutions
$\text{\textgreek{y}}$ to equation (\ref{eq:WaveEquationIntro})
with compactly supported initial data, such that the energy of $\text{\textgreek{y}}$
grows to infinity as time increases; see \cite{Friedman1978,Moschidis}.
While the proof of \cite{Moschidis} does not yield the existence
of exponentially growing mode solutions to equation (\ref{eq:WaveEquationIntro})
on $(\mathcal{M},g)$, it is reasonable to expect that, in general,
such a mode exists.%
\footnote{We should note that it is not at all clear if there exists a spacetime
$(\mathcal{M},g)$ with a non-empty ergoregion and no event horizon,
such that $(\mathcal{M},g)$ does not admit an exponentially growing
mode. A positive answer to this question would be particularly interesting,
as it would show that mode stability results for equation (\ref{eq:WaveEquation})
on superradiant spacetimes can coexist with instability results in
physical space. %
} Thus, it is in general expected that the mode stability statements
2 and 3 (concerning the non-existence of mode solutions with a frequency
parameter $\text{\textgreek{w}}$ satisfying $\text{\textgreek{w}}=0$
or $Im(\text{\textgreek{w}})>0$) fail in this case, even under the
sign condition $V\ge0$ for the potential term in equation (\ref{eq:WaveEquationIntro}).
However, such a spacetime $(\mathcal{M},g)$ always satisfies Statement
1.a, i.\,e.~$(\mathcal{M},g)$ does not admit an outgoing real mode
solution to (\ref{eq:PotentialWaveGeneralIntro}) in this case. This
fact can be inferred as follows: the $T$-energy identity and the
positivity of the flux of (\ref{eq:TenergyCurrent}) through $\mathcal{H}^{+}$
imply that any solution $\text{\textgreek{y}}$ to (\ref{eq:PotentialWaveGeneralIntro})
which is of the form (\ref{eq:ModeIntroduction}) for some $\text{\textgreek{w}}\in\mathbb{R}\backslash\{0\}$
and satisfies 
\begin{equation}
\lim_{r\rightarrow+\infty}\big(\partial_{r}\text{\textgreek{y}}-i\text{\textgreek{w}}\text{\textgreek{y}}\big)=0
\end{equation}
in each asymptotically flat end of $(\mathcal{M},g)$ has necessarily
vanishing $T$-energy flux through future null infinity $\mathcal{I}^{+}$.
Therefore, since $\text{\textgreek{w}}\neq0$, it can be readily verified
that, for any Cauchy hypersurface $\text{\textgreek{S}}$ of $(\mathcal{M},g)$:
\begin{equation}
\int_{\text{\textgreek{S}}}|\text{\textgreek{y}}|^{2}<+\infty.\label{eq:L^2eigenfunction}
\end{equation}
Thus, equation (\ref{eq:PotentialWaveGeneralIntro}) in this case
does not admit an outgoing real mode solution. 
\begin{rem*}
In general, we can not exclude the existence of $L^{2}$ ``eigenfunctions''
at a non-zero real frequency parameter $\text{\textgreek{w}}$ for
spacetimes $(\mathcal{M},g)$ as above, i.\,e.~Statement 1.b might
not hold. However, the conditions (\ref{eq:L^2eigenfunction}) and
$\text{\textgreek{w}}\in\mathbb{R}\backslash\{0\}$ imply, through
a suitable unique continuation argument that can be obtained by adapting
the Carleman-type estimates of \cite{Moschidisb} (or the estimates
of Section 6 of \cite{Moschidis}), that any $L^{2}$ ``eigenfunction''
$\text{\textgreek{y}}$ will be identically $0$ in the connected
component of $\mathcal{M}\backslash\{g(T,T)>0\}$ which contains the
asymptotically flat region of $\mathcal{M}$. Thus, under a stronger
unique continuation assumption for equation (\ref{eq:PotentialWaveGeneralIntro})
in a neighborhood of the ergoregion (satisfied, for instance, when
both $(\mathcal{M},g)$ and the potantial $V$ are analytic), the
statement 1.b can also be established. 
\end{rem*}

\subsubsection{\label{sub:SuperradiantNotFriedman}Superradiant spacetimes: the
case $\mathcal{H}^{+}\cap\{g(T,T)>0\}\neq\emptyset$}

Let us finally examine the case when $(\mathcal{M},g)$ has a non-empty
ergoregion and a non-empty future event horizon, with the property
$\mathcal{H}^{+}\cap\{g(T,T)>0\}\neq\emptyset$ (such as the Kerr
exterior spacetime $(\mathcal{M}_{M,a},g_{M,a})$ when $a\neq0$).
In this case, the energy identity of $T$ no longer yields a positive
energy flux through $\mathcal{H}^{+}$, and, thus, the vanishing of
the flux of a real mode solution $\text{\textgreek{y}}$ through $\mathcal{I}^{+}$
can not be inferred as before. Therefore, on such a spacetime, the
aforementioned argument leading to the non-existence of real mode
solutions $\text{\textgreek{y}}$ to equation (\ref{eq:PotentialWaveGeneralIntro})
no longer applies, and the real mode stability statement 1.a might,
in general, fail (in addition to the statements 1.b, 2 and 3). In
particular, Theorems \hyperref[Theorem 1]{1} and \hyperref[Theorem 2]{2}
provide examples of spacetimes $(\mathcal{M},g)$ and potentials $V$
such that the mode stability statement 1.a for equation (\ref{eq:PotentialWaveGeneralIntro})
fails.

In view of the aforementioned discussion, the proof of \cite{Shlap},
that equation (\ref{eq:WaveEquation}) on $(\mathcal{M}_{M,a},g_{M,a})$
(with $0<|a|<M$) does not admit real mode solutions, is highly non-trivial,
and relies on the specific algebraic structure of $(\mathcal{M}_{M,a},g_{M,a})$.
According to Theorem \hyperref[Theorem 1]{1}, this structure is ``destroyed''
by adding a compactly supported non-negative potential term $V\text{\textgreek{y}}$
to equation (\ref{eq:WaveEquation}), a modification that would be
completely harmless in the non-superradiant case (preserving the mode
stability properties of (\ref{eq:WaveEquationIntro}) in the case
where $T$ is everywhere causal on $\mathcal{M}$, as explained in
Section \ref{sub:NonSuperradiant}).

\subsection{\label{sub:SpectralConsequences}Local energy decay for the family
$\square_{g}\text{\textgreek{y}}-V_{\text{\textgreek{l}}}\text{\textgreek{y}}=0$
via continuity in $\text{\textgreek{l}}\in[0,1]$}

As a final example of the differences between superradiant and non-superradiant
spacetimes concerning the behaviour of solutions to equation (\ref{eq:PotentialWaveGeneralIntro}),
we will state below a simple continuity criterion for integrated local
energy decay for a family of wave equations with potential on non-superradiant
spacetimes, which utterly fails in the presence of superradiance.

As we showed in Section \ref{sub:NonSuperradiant}, on a globally
hyperbolic, stationary and asymptotically flat spacetime $(\mathcal{M},g)$,
possibly bounded by an event horizon $\mathcal{H}$ and possessing
a Killing field $T$ which is everywhere timelike on $\mathcal{M}\backslash\mathcal{H}$,
the mode stability statements 1.a and 1.b of Section \ref{sub:NonSuperradiant}
hold. This fact gives rise to the following zero-frequency continuity
criterion for integrated local energy decay for the family of equations
\begin{equation}
\square_{g}\text{\textgreek{y}}-V_{\text{\textgreek{l}}}\text{\textgreek{y}}=0,\label{eq:FamilyEquations}
\end{equation}
$\text{\textgreek{l}}\in[0,1]$, with $V_{\text{\textgreek{l}}}:\mathcal{M}\rightarrow\mathbb{R}$
having compact support in space, satisfying $T(V_{\text{\textgreek{l}}})=0$
and depending smoothly on $\text{\textgreek{l}}$: 

\medskip{}

\noindent \emph{Zero-frequency continuity criterion for integrated
local energy decay:} Provided the family (\ref{eq:FamilyEquations})
satisfies an integrated local energy decay estimate (possibly with
loss of derivatives) when $\text{\textgreek{l}}=0$, and the resolvent
of $\square-V_{\text{\textgreek{l}}}$ can be bounded for frequencies
$\text{\textgreek{w}}$ in a neighborhood of $0$ for all $\text{\textgreek{l}}\in[0,1]$,
then (\ref{eq:FamilyEquations}) satisfies a similar integrated local
energy decay estimate for all $\text{\textgreek{l}}\in[0,1]$. 

\medskip{}

\noindent See Section \ref{sec:Criterion} for a more detailed statement
of the above criterion, as well as for a definition of the resolvent
operator in this setting. 

Let us now examine whether the above criterion can be extended to
the case when $(\mathcal{M},g)$ is superradiant. As we did in Sections
\ref{sub:SuperradiantFriedman} and \ref{sub:SuperradiantNotFriedman},
we have to differentiate between the cases $\{g(T,T)>0\}\cap\mathcal{H}^{+}=\emptyset$
and $\{g(T,T)>0\}\cap\mathcal{H}^{+}\neq\emptyset$. In view of \cite{Friedman1978}
(see also our forthcoming \cite{Moschidis}), in the case when $\{g(T,T)>0\}\neq\emptyset$
and $\mathcal{H}^{+}=\emptyset$ (or at least $\{g(T,T)>0\}\cap\mathcal{H}^{+}=\emptyset$),
there exist solutions $\text{\textgreek{y}}$ to equation (\ref{eq:WaveEquationIntro})
which are not square integrable in time. Thus, in order to study conditions
yielding an integrated local energy decay estimate for families of
equations of the form (\ref{eq:FamilyEquations}) on superradiant
spacetimes, it is necessary to restrict to spacetimes $(\mathcal{M},g)$
with $\mathcal{H}^{+}\neq\emptyset$ satisfying $\{g(T,T)>0\}\cap\mathcal{H}^{+}\neq\emptyset$,
such as the subextremal Kerr spacetime $(\mathcal{M}_{M,a},g_{M,a})$
with $a\neq0$. 

In view of Theorem \hyperref[Theorem 1]{1}, the aforementioned zero-frequency
continuity criterion fails in the case of $(\mathcal{M}_{M,a},g_{M,a})$
with $a\neq0$. In particular, there exists a suitable potential $V$
on $\mathcal{M}_{M,a}$ so that, for the family 
\begin{equation}
\square_{g_{M,a}}\text{\textgreek{y}}-\text{\textgreek{l}}V\text{\textgreek{y}}=0\label{eq:FamilyKerr}
\end{equation}
with $\text{\textgreek{l}}\in[0,1]$, a real mode solution at some
non-zero frequency $\text{\textgreek{w}}_{0}$ first appears for $\text{\textgreek{l}}=\text{\textgreek{l}}_{0}\in(0,1]$,
while the resolvent operator associated to (\ref{eq:Family Kerr})
remains bounded for frequencies close to $0$ for all $\text{\textgreek{l}}\in[0,1]$.
Note that, when $\text{\textgreek{l}}$ is close to $0$, the robustness
of the method of \cite{Shlap} implies that (\ref{eq:FamilyKerr})
does not admit any real mode solutions. See Section \ref{sec:Criterion}
for more details.

\subsection{Outline of the paper}

This paper is organised as follows:

In Section \ref{sec:ConstantsAndCurrents}, we will introduce the
conventions on the notations of constants and vector fields that will
be used throughout the paper.

In Section \ref{sec:KerrProperties}, we will review the basic properties
of the subextremal Kerr spacetimes $(\mathcal{M}_{M,a},g_{M,a})$,
including the separability of the wave operator $\square_{g_{M,a}}$,
and we will introduce the notion of an outgoing mode solution of the
wave equation.

In Section~\ref{sec:ProofOfMainThm}, we will provide a more detailed
statement and the proof of Theorem \hyperref[Theorem 1]{1}. Similarly,
a more detailed statement and the proof of Theorem \hyperref[Theorem 2]{2}
will be presented in Section \ref{sec:ProofOfCorolary}.

In Section \ref{sec:DeformedSpacetimes}, we will construct a class
of asymptotically conic spacetimes with normally hyperbolic trapping,
admitting outgoing real or exponentially growing mode solutions to
the wave equation.

Finally, in Section \ref{sec:Criterion} we will present a zero-frequency
continuity criterion for decay for the family of equations (\ref{eq:FamilyEquations})
on non-superradiant spacetimes, and we will then examine its failure
in the presence of superradiance. To this end, we will also present
a definition of the resolvent operator for the wave equation on a
general class of spacetimes. We should note that Section \ref{sec:Criterion}
can be read independently of Sections \ref{sec:ProofOfCorolary} and
\ref{sec:DeformedSpacetimes}.

\subsection{\label{sub:Acknowledgements}Acknowledgements}

I would like to express my gratitude to my advisor Mihalis Dafermos
for providing me with comments, ideas and assistance while this paper
was being written. I would also like to thank Igor Rodnianski for
many insightful comments and suggestions. Finally, I would like to
thank Yakov Shlapentokh-Rothman and Stefanos Aretakis for many helpful
conversations.

\section{\label{sec:ConstantsAndCurrents}Conventions on constants and vector
field currents}

We will adopt the same conventions for denoting constants, volume
forms and vector field currents as was done in \cite{Moschidisb,Moschidisc}.

In particular, capital letters (e.\,g.~$C$) will be used to denote
``large'' constants, typically appearing on the right hand side
of inequalities, while lower case letters (e.\,g. $c$) will be used
to denote ``small'' constants. The same characters will be frequently
used to denote different constants. The dependence of these constants
on various unfixed parameters will be usually made explicit through
the use of appropriate subscripts. 

The notation $f_{1}\lesssim f_{2}$ for two real functions $f_{1},f_{2}$
will be used to imply that there exists some $C>0$, such that $f_{1}\le Cf_{2}$.
We will also write $f_{1}\sim f_{2}$ when we can bound $f_{1}\lesssim f_{2}$
and $f_{2}\lesssim f_{1}$, while $f_{1}\ll f_{2}$ will be equivalent
to the statement that $\frac{|f_{1}|}{|f_{2}|}$ can be bounded by
some sufficiently small (depending on the context) constant $c>0$.
In each of the aforementioned cases, the dependence of the implicit
constants on any unfixed parameters will be clear from the context.
For any function $f:\mathcal{A}\rightarrow[0,+\infty)$ on some set
$\mathcal{A}$, $\{f\gg1\}$ will denote the subset $\{f\ge C\}$
of $\mathcal{A}$ for some constant $C\gg1$.

The natural volume form $dg$ associated to the metric $g$ of a Lorentzian
manifold $(\mathcal{M}^{d+1},g)$ is defined, in any local coordinate
chart $(x^{0},x^{1},x^{2},\ldots x^{d})$, as: 
\[
dg=\sqrt{-det(g)}dx^{0}\cdots dx^{d}.
\]
We will often omit the notation for $dg$ in the expression of integrals
over measurable subsets of $\mathcal{M}$. The same rule will apply
when integrating over any spacelike hypersurface $\mathcal{S}$ of
$(\mathcal{M},g)$ using the natural volume form of its induced (Riemannian)
metric.

We will also frequently use the language of currents and vector field
multipliers for the wave equation: On any Lorentzian manifold $(\mathcal{M},g)$,
associated to the wave operator 
\begin{equation}
\square_{g}=\frac{1}{\sqrt{-det(g)}}\partial_{\text{\textgreek{a}}}\Big(\sqrt{-det(g)}\cdot g^{\text{\textgreek{a}\textgreek{b}}}\partial_{\text{\textgreek{b}}}\Big)
\end{equation}
 is a $(0,2)$-tensor called the \emph{energy momentum tensor} $Q$.
For any smooth function $\text{\textgreek{y}}:\mathcal{M}\rightarrow\mathbb{C}$,
the energy momentum tensor $Q$ evaluated at $\text{\textgreek{y}}$
is given by the expression

\begin{equation}
Q_{\text{\textgreek{a}\textgreek{b}}}(\text{\textgreek{y}})=\frac{1}{2}\Big(\partial_{\text{\textgreek{a}}}\text{\textgreek{y}}\cdot\partial_{\text{\textgreek{b}}}\bar{\text{\textgreek{y}}}+\partial_{\text{\textgreek{b}}}\bar{\text{\textgreek{y}}}\cdot\partial_{\text{\textgreek{a}}}\text{\textgreek{y}}\Big)-\frac{1}{2}\big(\partial^{\text{\textgreek{g}}}\text{\textgreek{y}}\cdot\partial_{\text{\textgreek{g}}}\bar{\text{\textgreek{y}}}\big)g_{\text{\textgreek{a}\textgreek{b}}}.
\end{equation}
For any continuous and piecewise $C^{1}$ vector field $X$ on $\mathcal{M}$,
the following associated currents can be defined almost everywhere
on $\mathcal{M}$:

\begin{equation}
J_{\text{\textgreek{m}}}^{X}(\text{\textgreek{y}})=Q_{\text{\textgreek{m}\textgreek{n}}}(\text{\textgreek{y}})X^{\text{\textgreek{n}}},
\end{equation}
\begin{equation}
K^{X}(\text{\textgreek{y}})=Q_{\text{\textgreek{m}\textgreek{n}}}(\text{\textgreek{y}})\nabla^{\text{\textgreek{m}}}X^{\text{\textgreek{n}}}.
\end{equation}
 The following divergence identity then holds almost everywhere on
$\mathcal{M}$:

\begin{equation}
\nabla^{\text{\textgreek{m}}}J_{\text{\textgreek{m}}}^{X}(\text{\textgreek{y}})=K^{X}(\text{\textgreek{y}})+Re\Big\{(\square_{g}\text{\textgreek{y}})\cdot X\bar{\text{\textgreek{y}}}\Big\}.
\end{equation}

\section{\label{sec:KerrProperties}Basic properties of the Kerr spacetimes}

In this Section, we will provide an overview of the basic properties
of the subextremal Kerr spacetimes that will be used throughout the
rest of the paper.

\subsection{\label{sub:CoordinateCharts}Special coordinate charts and Killing
fields}

In the present paper, we will work on subextremal Kerr exterior spacetimes
$(\mathcal{M}_{M,a},g_{M,a})$, where $g_{M,a}$ is given in the Boyer--Lindquist
$(t,r,\text{\textgreek{j}},\text{\textgreek{f}})$ chart by the expression
(\ref{eq:KerrMetric}). Keeping the notation introduced in Section
\ref{sec:Introduction}, we will denote with $(\widetilde{\mathcal{M}}_{M,a},\tilde{g}_{M,a})$
the maximally extended Kerr spacetime, while $\mathcal{H}^{+}$ and
$\mathcal{H}^{-}$ will denote the future and past event horizons
associated to one (fixed) asymptotically flat end of $(\widetilde{\mathcal{M}}_{M,a},\tilde{g}_{M,a})$.
The original Kerr spacetime $(\mathcal{M}_{M,a},g_{M,a})$ will be
identified with the domain of outer communications of this asymptotically
flat end of $(\widetilde{\mathcal{M}}_{M,a},\tilde{g}_{M,a})$. 

We will work entirely on $(\mathcal{M}_{M,a},g_{M,a})$, but we will
need to consider functions on $\mathcal{M}_{M,a}$ which are regular
up to $\mathcal{H}^{+}\backslash\mathcal{H}^{-}$. Since the Boyer--Lindquist
coordinate chart can not be extended smoothly beyond $\mathcal{H}^{+}\backslash\mathcal{H}^{-}$
(in view of the fact that the expression (\ref{eq:KerrMetric}) becomes
singular as $r\rightarrow r_{+}$), we will sometimes use the \emph{Kerr
star }coordinate chart $(t^{*},r,\text{\textgreek{j}},\text{\textgreek{f}}^{*})$
on $\mathcal{M}_{M,a}$, which is regular up to $\mathcal{H}^{+}\backslash\mathcal{H}^{-}$.
The new $t^{*},\text{\textgreek{f}}^{*}$ coordinate variables in
this chart are given by the expressions
\[
t^{*}=t+\int_{2r_{+}}^{r}\frac{x^{2}+a^{2}}{(x-r_{+})(x-r_{-})}\, dx
\]
and
\[
\text{\textgreek{f}}^{*}=\text{\textgreek{f}}+\int_{2r_{+}}^{r}\frac{a}{(x-r_{+})(x-r_{-})}\, dx.
\]
In Kerr star coordinates $(t^{*},r,\text{\textgreek{j}},\text{\textgreek{f}}^{*})$,
$\mathcal{H}^{+}\backslash\mathcal{H}^{-}$ corresponds to $\mathbb{R}\times\{r_{+}\}\times\mathbb{S}^{2}$,
where $\mathbb{S}^{2}$ is parametrised by $(\text{\textgreek{j}},\text{\textgreek{f}}^{*})$
in the standard way.

The vector fields $T=\partial_{t}$ and $\text{\textgreek{F}}=\partial_{\text{\textgreek{f}}}$
(in the Boyer--Lindquist coordinate chart) are Killing fields for
$g_{M,a}$, and extend to smooth Killing fields across $\mathcal{H}^{+}\backslash\mathcal{H}^{-}$
(and, in fact, on the whole of $(\widetilde{\mathcal{M}}_{M,a},\tilde{g}_{M,a})$).
The hypersurface $\mathcal{H}^{+}\backslash\mathcal{H}^{-}$ is a
non-degenerate Killing horizon of $(\widetilde{\mathcal{M}}_{M,a},\tilde{g}_{M,a})$,
with associated Killing field 
\begin{equation}
K=T-\frac{a}{2Mr_{+}}\text{\textgreek{F}}.\label{eq:HawkingField}
\end{equation}
We will call the vector field $T$ the \emph{stationary} Killing field
of $(\mathcal{M}_{M,a},g_{M,a})$, while $\text{\textgreek{F}}$ will
be called the \emph{axisymmetric }Killing field.

\subsection{\label{sub:Seperation}Separation of the wave equation on Kerr spacetimes}

A remarkable feature of the Kerr metric is that the wave equation
(\ref{eq:WaveEquation}) on $\mathcal{M}_{M,a}$ is separable. The
separability of the wave equation on $(\mathcal{M}_{M,a},g_{M,a})$
was dicovered by Carter in \cite{Carter1968} and is a consequence
of the fact that in addition to the Killing fields $T$, $\text{\textgreek{F}}$,
the Kerr metric also possesses an additional Killing tensor $K_{\text{\textgreek{a}}\text{\textgreek{b}}}$
(as explained by Walker and Penrose in \cite{Walker1970}). 

In particular, for any $\text{\textgreek{w}}\in\mathbb{C}$ and any
$m\in\mathbb{Z}$, equation (\ref{eq:WaveEquation}) admits formal
solutions of the form
\begin{equation}
\text{\textgreek{y}}(t,r,\text{\textgreek{j}},\text{\textgreek{f}})=e^{-i\text{\textgreek{w}}t}e^{im\text{\textgreek{f}}}S(\text{\textgreek{j}})R(r),\label{eq:ProductSolution}
\end{equation}
 where $S(\text{\textgreek{j}})$ satisfies (for some $\text{\textgreek{l}}\in\mathbb{C}$)
\begin{equation}
-\frac{1}{\sin\text{\textgreek{j}}}\frac{d}{d\text{\textgreek{j}}}\big(\sin\text{\textgreek{j}}\frac{dS}{d\text{\textgreek{j}}}\big)+\big(\frac{m^{2}}{\sin^{2}\text{\textgreek{j}}}-a^{2}\text{\textgreek{w}}^{2}\cos^{2}\text{\textgreek{j}}\big)S=\text{\textgreek{l}}S\label{eq:FormalAngularOde}
\end{equation}
and $R(r)$ satisfies 
\begin{equation}
\text{\textgreek{D}}\frac{d}{dr}\big(\text{\textgreek{D}}\frac{dR}{dr}\big)+\big(a^{2}m^{2}-4Mar\text{\textgreek{w}}m+(r^{2}+a^{2})^{2}\text{\textgreek{w}}^{2}-\text{\textgreek{D}}(\text{\textgreek{l}}+a^{2}\text{\textgreek{w}}^{2})\big)R=0.\label{eq:RadialODE}
\end{equation}

When $\text{\textgreek{w}}\in\mathbb{R}$, equation (\ref{eq:FormalAngularOde}),
combined with the boundary condition 
\begin{equation}
S(\text{\textgreek{j}})\mbox{ is bounded at }\text{\textgreek{j}}=0,\text{\textgreek{p}},
\end{equation}
 defines a well-posed Sturm--Liouville problem, with a set of eigenfunctions
$\{S_{\text{\textgreek{w}}ml}\}_{l\ge|m|}$ forming an orthonormal
basis of $L^{2}(\sin\text{\textgreek{j}}d\text{\textgreek{j}})$,
with corresponding real eigenvalues $\{\text{\textgreek{l}}_{\text{\textgreek{w}}ml}\}_{l\ge|m|}$.
In particular, when $\text{\textgreek{w}}=0$, $S_{0ml}$ are the
standard spherical harmonics, and $\text{\textgreek{l}}_{0ml}=l(l+1)$.
The construction of the pair $(S_{\text{\textgreek{w}}ml},\text{\textgreek{l}}_{\text{\textgreek{w}}ml})$
can also be performed, via perturbation theory, in the case when $\text{\textgreek{w}}$
is complex with $|Im(\text{\textgreek{w}})|\ll1$. See \cite{Shlap,DafRodSchlap}
for more details.

Let us define, for convenience, the auxiliary radial function $r_{*}=r_{*}(r):(r_{+},+\infty)\rightarrow(-\infty,+\infty)$
by solving 
\begin{equation}
\frac{dr_{*}}{dr}=\frac{r^{2}+a^{2}}{\text{\textgreek{D}}}.\label{eq:AuxiliaryR*}
\end{equation}
Having defined $(S_{\text{\textgreek{w}}ml},\text{\textgreek{l}}_{\text{\textgreek{w}}ml})$
as above for $(\text{\textgreek{w}},m,l)\in\mathbb{C}\times\mathbb{Z}\times\mathbb{Z}_{\ge|m|}$
with $|Im(\text{\textgreek{w}})|\ll1$, after setting 
\begin{equation}
u_{\text{\textgreek{w}}ml}(r_{*})=(r^{2}+a^{2})^{1/2}R(r)\label{eq:RescaledRadialFunction}
\end{equation}
and $\text{\textgreek{l}}=\text{\textgreek{l}}_{\text{\textgreek{w}}ml}$
in equation (\ref{eq:RadialODE}), we infer that 
\begin{equation}
u_{\text{\textgreek{w}}ml}^{\prime\prime}+\big(\text{\textgreek{w}}^{2}-V_{\text{\textgreek{w}}ml}\big)u_{\text{\textgreek{w}}ml}=0,\label{eq:SeperatedODE}
\end{equation}
where $^{\prime}$ denotes differentiation with respect to $r_{*}$
and 
\begin{equation}
V_{\text{\textgreek{w}}ml}=\frac{4Mram\text{\textgreek{w}}-a^{2}m^{2}+\text{\textgreek{D}}(\text{\textgreek{l}}_{\text{\textgreek{w}}ml}+a^{2}\text{\textgreek{w}}^{2})}{(r^{2}+a^{2})^{2}}+\frac{\text{\textgreek{D}}}{(r^{2}+a^{2})^{4}}\big(a^{2}\text{\textgreek{D}}+2Mr(r^{2}-a^{2})\big).\label{eq:KerrPotential}
\end{equation}

The aforementioned separability is preserved after adding to the wave
equation (\ref{eq:WaveEquation}) a term of the form $-\frac{(r^{2}+a^{2})^{2}}{(r-r_{+})(r-r_{-})\text{\textgreek{r}}^{2}}V(r)\cdot\text{\textgreek{y}}$
for some smooth function $V:(r_{+},+\infty)\rightarrow\mathbb{R}$.
In particular, the equation 
\begin{equation}
\square_{g_{M,a}}\text{\textgreek{y}}-\frac{(r^{2}+a^{2})^{2}}{(r-r_{+})(r-r_{-})\text{\textgreek{r}}^{2}}V(r)\text{\textgreek{y}}=0\label{eq:KerrWaveWithPotential}
\end{equation}
 admits for any $\text{\textgreek{w}}\in\mathbb{C}$ and any $m\in\mathbb{Z}$
solutions of the form
\begin{equation}
\text{\textgreek{y}}(t,r,\text{\textgreek{j}},\text{\textgreek{f}})=e^{-i\text{\textgreek{w}}t}e^{im\text{\textgreek{f}}}S(\text{\textgreek{j}})R(r),\label{eq:ProductSolution-1}
\end{equation}
 with $S$ as before and $R$ solving 
\begin{equation}
\text{\textgreek{D}}\frac{d}{dr}\big(\text{\textgreek{D}}\frac{dR}{dr}\big)+\big(a^{2}m^{2}+(r^{2}+a^{2})\text{\textgreek{w}}^{2}-\text{\textgreek{D}}(\text{\textgreek{l}}+a^{2}\text{\textgreek{w}}^{2})-(r^{2}+a^{2})^{2}V\big)R=0.\label{eq:RadialODE-1}
\end{equation}
In the case $|Im(\text{\textgreek{w}})|\ll1$ (and with $(S_{\text{\textgreek{w}}ml},\text{\textgreek{l}}_{\text{\textgreek{w}}ml})$,
$l\ge|m|$, as before), setting $u_{\text{\textgreek{w}}ml}(r_{*})\doteq(r^{2}+a^{2})^{1/2}R(r)$
in equation (\ref{eq:RadialODE-1}) for $\text{\textgreek{l}}=\text{\textgreek{l}}_{\text{\textgreek{w}}ml}$,
we obtain: 
\begin{equation}
u_{\text{\textgreek{w}}ml}^{\prime\prime}+\big(\text{\textgreek{w}}^{2}-V_{\text{\textgreek{w}}ml}-V\big)u_{\text{\textgreek{w}}ml}=0.\label{eq:SeperatedODEWithPotential}
\end{equation}

\subsection{\label{sub:Modes}Fourier decomposition and mode solutions}

Any smooth function $\text{\textgreek{Y}}:\mathcal{M}_{M,a}\rightarrow\mathbb{C}$
which is square integrable in the $t$ variable can be represented
as 
\[
\text{\textgreek{Y}}(t,r,\text{\textgreek{j}},\text{\textgreek{f}})\doteq\sum_{(m,l)\in\mathbb{Z}\times\mathbb{Z}_{\ge|m|}}\int_{-\infty}^{\infty}e^{-i\text{\textgreek{w}}t}e^{im\text{\textgreek{f}}}S_{\text{\textgreek{w}}ml}(\text{\textgreek{j}})\text{\textgreek{Y}}_{\text{\textgreek{w}}ml}(r)\, d\text{\textgreek{w}}
\]
for some $\text{\textgreek{Y}}_{\text{\textgreek{w}}ml}:(r_{+},+\infty)\rightarrow\mathbb{C}$
which is square integrable in $\text{\textgreek{w}}$ and square summable
in $(m,l)$. 

For any function $F$ with compact support in $\mathcal{M}_{M,a}$,
any solution $\text{\textgreek{y}}$ to equation
\begin{equation}
\square_{g_{M,a}}\text{\textgreek{y}}=F
\end{equation}
which is smooth up to $\mathcal{H}^{+}\backslash\mathcal{H}^{-}$
and is supported on the set $\{t\ge t_{F}\}$, where 
\begin{equation}
t_{F}=\inf\big\{ t_{0}\in\mathbb{R}\,|\,\{t=t_{0}\}\cap supp(F)\neq\emptyset\},
\end{equation}
is square integrable in time (according to the polynomial decay estimates
established in \cite{DafRodSchlap}) and can thus be decomposed as
\begin{equation}
\text{\textgreek{y}}(t,r,\text{\textgreek{j}},\text{\textgreek{f}})=\sum_{(m,l)\in\mathbb{Z}\times\mathbb{Z}_{\ge|m|}}\int_{-\infty}^{\infty}e^{-i\text{\textgreek{w}}t}e^{im\text{\textgreek{f}}}S_{\text{\textgreek{w}}ml}(\text{\textgreek{j}})R_{\text{\textgreek{w}}ml}(r)\, d\text{\textgreek{w}},
\end{equation}
 with $u_{\text{\textgreek{w}}ml}$ (defined in terms of $R_{\text{\textgreek{w}}ml}$
as in (\ref{eq:RescaledRadialFunction})) satisfying for almost all
$\text{\textgreek{w}}\in\mathbb{R}$: 
\begin{equation}
\begin{cases}
u_{\text{\textgreek{w}}ml}^{\prime\prime}+\big(\text{\textgreek{w}}^{2}-V_{\text{\textgreek{w}}ml}\big)u_{\text{\textgreek{w}}ml}=(r^{2}+a^{2})^{-3/2}\text{\textgreek{D}}(\text{\textgreek{r}}^{2}F)_{\text{\textgreek{w}}ml}\\
u_{\text{\textgreek{w}}ml}^{\prime}-i\text{\textgreek{w}}u_{\text{\textgreek{w}}ml}\rightarrow0\mbox{ as }r_{*}\rightarrow+\infty\\
u_{\text{\textgreek{w}}ml}^{\prime}+i\big(\text{\textgreek{w}}-\frac{am}{2Mr_{+}}\big)u_{\text{\textgreek{w}}ml}\rightarrow0\mbox{ as }r_{*}\rightarrow-\infty.
\end{cases}\label{eq:ODEForTheFourierTransform}
\end{equation}
 See \cite{DafRodSchlap} for more details.

We will now define the notion of an outgoing mode solution to equation
(\ref{eq:KerrWaveWithPotential}):
\begin{defn*}
Let $V:(r_{+},+\infty)\rightarrow\mathbb{\mathbb{C}}$ be any smooth
and compactly supported function. A solution $\text{\textgreek{y}}$
to equation 
\begin{equation}
\square_{g_{M,a}}\text{\textgreek{y}}-\frac{(r^{2}+a^{2})^{2}}{(r-r_{+})(r-r_{-})\text{\textgreek{r}}^{2}}V(r)\text{\textgreek{y}}=0\label{eq:WaveEquationPotential}
\end{equation}
 on $(\mathcal{M}_{M,a},g_{M,a})$ will be called an \emph{outgoing
mode solution} (or simply a mode solution) with parameters $(\text{\textgreek{w}},m,l)\in\big(\mathbb{C}\backslash\{0\}\big)\times\mathbb{Z}\times\mathbb{Z}_{\ge|m|}$,
$0\le Im(\text{\textgreek{w}})\ll1$, if $\text{\textgreek{y}}$ is
of the form 
\begin{equation}
\text{\textgreek{y}}(t,r,\text{\textgreek{j}},\text{\textgreek{f}})=e^{-i\text{\textgreek{w}}t}e^{im\text{\textgreek{f}}}S_{\text{\textgreek{w}}ml}(\text{\textgreek{j}})\cdot R_{\text{\textgreek{w}}ml}(r),\label{eq:ModeSolution}
\end{equation}
 with $u_{\text{\textgreek{w}}ml}$ (defined in terms of $R_{\text{\textgreek{w}}ml}$
as in (\ref{eq:RescaledRadialFunction})) satisfying 
\begin{equation}
\begin{cases}
u_{\text{\textgreek{w}}ml}^{\prime\prime}+\big(\text{\textgreek{w}}^{2}-V_{\text{\textgreek{w}}ml}-V\big)u_{\text{\textgreek{w}}ml}=0\\
e^{-i\text{\textgreek{w}}r_{*}}\big(u_{\text{\textgreek{w}}ml}^{\prime}-i\text{\textgreek{w}}u_{\text{\textgreek{w}}ml}\big)\rightarrow0\mbox{ as }r_{*}\rightarrow+\infty\\
e^{i\big(\text{\textgreek{w}}-\frac{am}{2Mr_{+}}\big)r_{*}}\big(u_{\text{\textgreek{w}}ml}^{\prime}+i\big(\text{\textgreek{w}}-\frac{am}{2Mr_{+}}\big)u_{\text{\textgreek{w}}ml}\big)\rightarrow0\mbox{ as }r_{*}\rightarrow-\infty.
\end{cases}\label{eq:ODEForMode}
\end{equation}
\end{defn*}
\begin{rem*}
The above definition of a mode solution also extends to time frequencies
$\text{\textgreek{w}}$ with $Im(\text{\textgreek{w}})<0$, provided
the boundary conditions at $r_{*}=\pm\infty$ in (\ref{eq:ODEForMode})
are replaced by the equivalent (in the $Im(\text{\textgreek{w}})\ge0$
case) conditions $u_{\text{\textgreek{w}}ml}\sim e^{i\text{\textgreek{w}}r_{*}}$
and $u_{\text{\textgreek{w}}ml}\sim e^{-i\big(\text{\textgreek{w}}-\frac{am}{2Mr_{+}}\big)r_{*}}$
as $r_{*}\rightarrow+\infty,-\infty$ respectively. Such solutions
to the wave equation (\ref{eq:WaveEquation}) are known in the physics
literature as \emph{quasinormal modes}, and have been studied extensively
(see e.\,g.~\cite{Chandrasekhar1991}). For a definition of quasi-normal
modes on more general spacetimes, see \cite{Dyatlov2011,Dyatlov2015,Warnick2015}.
Notice, also, that it is straightforward to extend the above definition
of an outgoing mode solution to any metric $g$ on $\mathcal{M}_{M,a}$,
for which $\square_{g}$ completely separates in the Boyer--Lindquist
coordinate chart (such as the metrics constructed in Sections \ref{sec:ProofOfCorolary}--\ref{sec:DeformedSpacetimes}).
\end{rem*}
The notion of an outgoing mode solution can be introduced for more
general stationary and asymptotically flat spacetimes: Let$(\mathcal{M},g)$
be a smooth and globally hyperbolic spacetime, which is stationary
(with stationary Killing field $T$), asymptotically flat (with the
asymptotics described by Assumption 1 in Section 2.1.1 of \cite{Moschidisb})
and possibly bounded by a future event horizon $\mathcal{H}^{+}$
and a past event horizon $\mathcal{H}^{-}$ (for the relevant definitions,
see Section 2.1.1 of \cite{Moschidisb}). Let $t:\mathcal{M}\rightarrow\mathbb{R}$
satisfy $T(t)=1$, with $\{t=0\}$ being a Cauchy hypersurface of
$(\mathcal{M},g)$, and let $N$ be a globally timelike vector field
on $(\mathcal{M},g)$ satisfying $[T,N]=0$ and $N=T$ in the asymptotically
flat region of $(\mathcal{M},g)$. Finally, let $\mathcal{S}\subset\mathcal{M}$
be a smooth, inextendible spacelike hyperboloidal hypersurface satisfying
$\mathcal{S}\cap\mathcal{H}^{-}=\emptyset$, intersecting $\mathcal{H}^{+}$
transversally (in the case $\mathcal{H}^{+}\neq\emptyset$) and terminating
at future null infinity $\mathcal{I}^{+}$ (see Section 3.1 of \cite{Moschidisc}).
We can then introduce the following defintion:
\begin{defn*}
A smooth solution $\text{\textgreek{y}}$ to equation 
\begin{equation}
\square_{g}\text{\textgreek{y}}-V\text{\textgreek{y}}=0,
\end{equation}
for some smooth $V:\mathcal{M}\rightarrow\mathbb{C}$ satisfying $T(V)=0$,
is called an outgoing mode solution with frequency parameter $\text{\textgreek{w}}\in\mathbb{C}\backslash\{0\}$,
$Im(\text{\textgreek{w}})\ge0$, if $\text{\textgreek{y}}$ has the
following properties:

\begin{enumerate}

\item The function $\text{\textgreek{y}}$ is of the form 
\begin{equation}
\text{\textgreek{y}}=e^{-i\text{\textgreek{w}}t}\text{\textgreek{y}}_{\text{\textgreek{w}}}
\end{equation}
with $T(\text{\textgreek{y}}_{\text{\textgreek{w}}})=0$. 

\item We have 
\begin{equation}
\int_{\mathcal{S}}J_{\text{\textgreek{m}}}^{N}(\text{\textgreek{y}})n_{\mathcal{S}}^{\text{\textgreek{m}}}<+\infty,\label{eq:FiniteEnergyHyperboloids}
\end{equation}
where $n_{\mathcal{S}}$ is the future directed unit normal to $\mathcal{S}$.

\item In the case $\text{\textgreek{w}}\in\mathbb{R}\backslash\{0\}$,
we have 
\begin{equation}
\int_{\{t=0\}}J_{\text{\textgreek{m}}}^{N}(\text{\textgreek{y}})n^{\text{\textgreek{m}}}=+\infty,
\end{equation}
where $n$ is the future directed unit normal to $\{t=0\}$.

\end{enumerate}
\end{defn*}
We should remark that (\ref{eq:FiniteEnergyHyperboloids}) is a condition
on both the regularity of $\text{\textgreek{y}}$ near $\mathcal{H}^{+}\backslash\mathcal{H}^{-}$
and the decay properties of the first derivatives of $\text{\textgreek{y}}$
near $\mathcal{I}^{+}$.

Notice that, specialising the latter definition to the case of Kerr
exterior spacetime $(\mathcal{M}_{M,a},g_{M,a})$, a mode solution
$\text{\textgreek{y}}$ with parameters $(\text{\textgreek{w}},m,l)$
on $(\mathcal{M}_{M,a},g_{M,a})$, according to our former definition,
is automatically an outgoing mode solution with frequency parameter
$\text{\textgreek{w}}$, according to the latter definition. See also
\cite{Shlap} for the relation between the outgoing radiation condition
at $r=+\infty$ and condition (\ref{eq:FiniteEnergyHyperboloids}).

\subsection{\label{sub:SuperradiantRegime}The superradiant frequency regime}

The \emph{superradiant frequency regime }of $(\mathcal{M}_{M,a},g_{M,a})$,
consists of those frequency triads $(\text{\textgreek{w}},m,l)\in\mathbb{R}\times\mathbb{Z}\times\mathbb{Z}_{\ge|m|}$
for which the limits 
\begin{equation}
\mathcal{F}_{\pm}[u_{\text{\textgreek{w}}ml}]=\lim_{r_{*}\rightarrow\pm\infty}\pm Im\big(\text{\textgreek{w}}u_{\text{\textgreek{w}}ml}^{-1}\cdot u_{\text{\textgreek{w}}ml}^{\prime}\big)\label{eq:FluxesForSuperradiance}
\end{equation}
 for any non-zero function $u_{\text{\textgreek{w}}ml}$ satisfying
(\ref{eq:ODEForTheFourierTransform}) have opposite sign.%
\footnote{Note that the limits (\ref{eq:FluxesForSuperradiance}) differ from
the frequency seperated $T$-energy fluxes at $r_{*}=\pm\infty$ by
a factor of $|u_{\text{\textgreek{w}}ml}|^{-2}$.%
} In view of the boundary conditions of (\ref{eq:ODEForTheFourierTransform})
at $r_{*}=\pm\infty$, it readily follows that the superradiant frequency
regime has the following form: 
\begin{equation}
\mathfrak{A}_{sup}^{(a,M)}=\Big\{(\text{\textgreek{w}},m,l)\in\mathbb{R}\times\mathbb{Z}\times\mathbb{Z}_{\ge|m|}\,\big|\,\text{\textgreek{w}}\big(\text{\textgreek{w}}-\frac{am}{2Mr_{+}}\big)<0\Big\}.\label{eq:SuperradiantRegimeKerr}
\end{equation}
For a more detailed discussion about the structure of the superradiant
frequency regime (\ref{eq:SuperradiantRegimeKerr}), as well as its
relation to energy estimates for solutions to (\ref{eq:WaveEquation}),
see \cite{Shlap} and references therein.

\section{\label{sec:ProofOfMainThm}Proof of Theorem \hyperref[Theorem 1]{1}}

In this Section, we will provide a more detailed statement and the
proof of Theorem \hyperref[Theorem 1]{1}.

\subsection{Detailed statement and sketch of the proof of Theorem \hyperref[Theorem 1]{1}}

A more detailed statement of Theorem \hyperref[Theorem 1]{1} is the
following:

\begin{customTheorem}{1}[detailed version] \label{Theorem 1 detailed}
For any $0<|a|<M$ and any frequency triad $(\text{\textgreek{w}}_{R},m,l)\in(\mathbb{R}\backslash\{0\})\times\mathbb{Z}\times\mathbb{Z}_{\ge|m|}$
in the superradiant regime (\ref{eq:SuperradiantRegimeKerr}), there
exist constants $C_{\text{\textgreek{w}}_{R}ml}>r_{+}$ and $C_{\text{\textgreek{w}}_{R}ml}^{(0)}>0$
depending only on $\text{\textgreek{w}}_{R},m,l$ (as well as the
Kerr parameters $M,a$) such that for any $r_{0}>C_{\text{\textgreek{w}}_{R}ml}$
and any $\text{\textgreek{w}}_{I}\ge0$ sufficiently small in terms
of $(\text{\textgreek{w}}_{R},m,l,r_{0})$, there exists a smooth
function $V:(r_{+},\infty)\rightarrow[0,2\text{\textgreek{w}}_{R}^{2}]$
supported on $\{r_{0}\le r\le r_{0}+C_{\text{\textgreek{w}}_{R}ml}^{(0)}\}$
such that the equation 
\begin{equation}
\square_{g_{M,a}}\text{\textgreek{y}}-\frac{(r^{2}+a^{2})^{2}}{(r-r_{+})(r-r_{-})\text{\textgreek{r}}^{2}}V(r)\cdot\text{\textgreek{y}}=0
\end{equation}
 on Kerr spacetime $(\mathcal{M}_{M,a},g_{M,a})$ admits an outgoing
mode solution with parameters $(\text{\textgreek{w}}_{R}+i\text{\textgreek{w}}_{I},m,l)$. 

\end{customTheorem}

The proof of Theorem \hyperref[Theorem 1 detailed]{1} will be presented
in the following sections. In particular, we will prove the following
stronger proposition, which immediately yields Theorem \hyperref[Theorem 1 detailed]{1}
as a special case:
\begin{prop}
\label{prop:StrongerProposition}For any $0<|a|<M$, any frequency
triad $(\text{\textgreek{w}}_{R},m,l)\in(\mathbb{R}\backslash\{0\})\times\mathbb{Z}\times\mathbb{Z}_{\ge|m|}$
in the superradiant regime (\ref{eq:SuperradiantRegimeKerr}), any
smooth function $\mathcal{G}:\mathbb{R}^{4}\rightarrow\mathbb{C}$
such that $\mathcal{G}[\text{\textgreek{w}}_{R},0,\cdot,\cdot]\equiv0$
and $\mathcal{G}[\text{\textgreek{w}}_{R},\cdot,0,\cdot]\equiv0$,
there exist constants $C_{\text{\textgreek{w}}_{R}ml}>r_{+}$ and
$C_{\text{\textgreek{w}}_{R}ml}^{(0)}>0$ , depending on $\text{\textgreek{w}}_{R},m,l,M,a$,
such that, for any $r_{0}>C_{\text{\textgreek{w}}_{R}ml}$, any $0<\text{\textgreek{e}}_{1}\le1$
and any $0\le\text{\textgreek{w}}_{I}\ll1$ sufficiently small in
terms of $\text{\textgreek{w}}_{R},m,l,M,a,r_{0},\text{\textgreek{e}}_{1}$
and $\mathcal{G}$, there exists a smooth real valued function $V:(r_{+},+\infty)\rightarrow[0,+\infty)$
supported on $\{r_{0}\le r\le r_{0}+C_{\text{\textgreek{w}}_{R}ml}^{(0)}\}$
and satisfying 
\begin{equation}
V(r)+\frac{4Mram\text{\textgreek{w}}_{R}-a^{2}m^{2}+\text{\textgreek{D}}(\text{\textgreek{l}}_{\text{\textgreek{w}}_{R}ml}+a^{2}\text{\textgreek{w}}_{R}^{2})}{(r^{2}+a^{2})^{2}}\le(1+\text{\textgreek{e}}_{1})\text{\textgreek{w}}_{R}^{2},\label{eq:UpperBoundPrecise}
\end{equation}
such that equation 
\begin{equation}
u^{\prime\prime}+\Big((\text{\textgreek{w}}_{R}+i\text{\textgreek{w}}_{I})^{2}-V_{(\text{\textgreek{w}}_{R}+i\text{\textgreek{w}}_{I})ml}(r_{*})+\mathcal{G}\big(\text{\textgreek{w}}_{R},\text{\textgreek{w}}_{I},V(r(r_{*})),r_{*}\big)-V(r(r_{*}))\Big)u=0,\label{eq:SeperatedODEWithPotential-1}
\end{equation}
where $V_{\text{\textgreek{w}}ml}(r_{*})=V_{\text{\textgreek{w}}ml}(r(r_{*}))$
is given by (\ref{eq:KerrPotential}), admits a solution $u$ satisfying
the following boundary conditions at $r_{*}=\pm\infty$:
\begin{gather}
\lim_{r_{*}\rightarrow+\infty}\Big(e^{-i(\text{\textgreek{w}}_{R}+i\text{\textgreek{w}}_{I})r_{*}}\big(u^{\prime}-i(\text{\textgreek{w}}_{R}+i\text{\textgreek{w}}_{I})u\big)\Big)=0,\label{eq:BoundaryCondition+Infty}\\
\lim_{r_{*}\rightarrow-\infty}\Big(e^{i\big(\text{\textgreek{w}}_{R}+i\text{\textgreek{w}}_{I}-\frac{am}{2Mr_{+}}\big)r_{*}}\big(u^{\prime}+i\big(\text{\textgreek{w}}_{R}+i\text{\textgreek{w}}_{I}-\frac{am}{2Mr_{+}}\big)u\big)\Big)=0.\label{eq:BoundaryCondition-Infty}
\end{gather}

\end{prop}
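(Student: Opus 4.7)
My plan is to reverse-engineer the potential $V$ by first prescribing the mode wavefunction $u$ on the three regions $\{r<r_0\}$, $[r_0, r_0+C_{\omega_R ml}^{(0)}]$, and $\{r > r_0 + C_{\omega_R ml}^{(0)}\}$, and only then defining $V$ from the radial ODE \eqref{eq:SeperatedODEWithPotential-1}. Outside the transition interval I set $V\equiv 0$, so $u$ satisfies the homogeneous Kerr-radial equation there. Since $(\omega_R,m,l)$ lies in the superradiant regime \eqref{eq:SuperradiantRegimeKerr}, both asymptotic squared frequencies $\omega_R^2$ at $r_*=+\infty$ and $(\omega_R-\tfrac{am}{2Mr_+})^2$ at $r_*=-\infty$ are strictly positive, so the outgoing boundary conditions \eqref{eq:BoundaryCondition+Infty}--\eqref{eq:BoundaryCondition-Infty} uniquely determine (up to complex scalars) two solutions $u_{\mathrm{hor}}, u_\infty$ of the homogeneous equation, which are linearly independent when $\omega_I=0$ by the mode-stability theorem of \cite{Shlap}. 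The problem then reduces to producing $c_1,c_2\in\mathbb{C}\setminus\{0\}$ and a real $V$ with the required pointwise bounds and compact support in $[r_0,r_0+C_{\omega_R ml}^{(0)}]$ such that the solution of \eqref{eq:SeperatedODEWithPotential-1} matching $c_1 u_{\mathrm{hor}}$ in value and first derivative at $r_0$ also matches $c_2 u_\infty$ at $r_0 + C_{\omega_R ml}^{(0)}$.

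The central move is to prescribe $u$ explicitly on the transition interval and then recover $V$ from the rearrangement
\[
V = (\omega_R+i\omega_I)^2 - V_{(\omega_R+i\omega_I)ml} - \frac{u''}{u} + \mathcal{G}\bigl(\omega_R,\omega_I,V,r_*\bigr),
\]
viewed as an implicit equation for $V$. The vanishing hypothesis $\mathcal{G}[\omega_R,0,\cdot,\cdot]\equiv 0$ forces $\partial_V\mathcal{G}= O(\omega_I)$ near $\omega_I=0$, so for $\omega_I$ small this map is a contraction on $C^0$ and uniquely determines $V$. To guarantee $V$ is real-valued I would prescribe $u = f+ig$ with $f,g$ real-valued functions both solving a common real second-order linear ODE $y''+\rho(r_*)y=0$ on the transition interval; this forces $u''/u=-\rho\in\mathbb{R}$, and in the $\omega_I=0$ case gives $V=\omega_R^2-V_{\omega_R ml}+\rho\in\mathbb{R}$. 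Taking $\rho$ close to zero makes $f,g$ close to affine functions of $r_*$, and hence $V$ close to $\omega_R^2-V_{\omega_R ml}$, which for $r_0$ large lies comfortably inside the admissible window dictated by \eqref{eq:UpperBoundPrecise} since the Kerr potential $V_{\omega_R ml}$ decays like $r^{-2}$. The four real parameters packaged in $c_1,c_2\in\mathbb{C}$ are then used to match the values and first derivatives of $f,g$ at the two endpoints against the boundary data extracted from $u_{\mathrm{hor}}$ and $u_\infty$.

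The main obstacle is carrying out this matching at both endpoints simultaneously, while ensuring that $u=f+ig$ never vanishes on $[r_0,r_0+C_{\omega_R ml}^{(0)}]$ (otherwise $u''/u$ blows up and the definition of $V$ becomes singular) and that $\rho$ stays small enough to preserve both positivity $V\ge 0$ and the upper bound \eqref{eq:UpperBoundPrecise}. Near each endpoint the boundary data from $u_{\mathrm{hor}}, u_\infty$ force $f,g$ to oscillate on the natural wavelength $\omega_R^{-1}$, producing non-trivial $\rho$ in boundary layers; the constant $C_{\omega_R ml}^{(0)}$ must be chosen large enough in terms of $(\omega_R,m,l)$ to absorb these boundary layers while leaving a central bulk on which $\rho$ is negligible, and the arguments of $c_1,c_2$ must be chosen so that the complex-valued path $r_*\mapsto u(r_*)$ avoids the origin. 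Once the $\omega_I=0$, $\mathcal{G}\equiv 0$ version of the construction is in place, the extension to $0<\omega_I\ll 1$ and general admissible $\mathcal{G}$ is a perturbative argument: all objects depend continuously on $\omega_I$, the contraction argument absorbs the $\mathcal{G}$-dependence, and the margins built into the $\omega_I=0$ construction allow the bounds \eqref{eq:UpperBoundPrecise} to survive sufficiently small perturbations.
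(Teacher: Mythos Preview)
Your overall scheme---prescribe $u$ on the transition interval with $u''/u$ real and then read off $V=\omega_R^2-V_{\omega_R ml}-u''/u$---is the same as the paper's, and your observation that ``$V$ real'' is equivalent to $u=f+ig$ with $f,g$ solving a common real equation $y''+\rho y=0$ is exactly the conservation law $\mathrm{Im}(u'\bar u)=\text{const}$ that underlies the paper's construction. But there is a genuine gap in how you invoke the superradiance hypothesis, and without filling it the matching step cannot be carried out.

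You say that superradiance ensures ``both asymptotic squared frequencies $\omega_R^2$ and $(\omega_R-\tfrac{am}{2Mr_+})^2$ are strictly positive''. That is always true: they are squares of nonzero real numbers (the second vanishes only at the superradiant threshold, which is excluded). What the superradiant inequality $\omega_R(\omega_R-\tfrac{am}{2Mr_+})<0$ actually buys is that the two conserved fluxes have the \emph{same sign}: from the boundary conditions,
\[
\mathrm{Im}(u'_\infty\bar u_\infty)=\omega_R\,|u_\infty(+\infty)|^2,\qquad
\mathrm{Im}(u'_{\mathrm{hor}}\bar u_{\mathrm{hor}})=\bigl(\tfrac{am}{2Mr_+}-\omega_R\bigr)|u_{\mathrm{hor}}(-\infty)|^2,
\]
and these agree in sign precisely in the superradiant regime. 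Your ansatz conserves $\mathrm{Im}(u'\bar u)$, so matching $c_1 u_{\mathrm{hor}}$ at the left endpoint to $c_2 u_\infty$ at the right forces $|c_1|^2\,\mathrm{Im}(u'_{\mathrm{hor}}\bar u_{\mathrm{hor}})=|c_2|^2\,\mathrm{Im}(u'_\infty\bar u_\infty)$, which is impossible for $c_1,c_2\neq 0$ unless the signs agree. This is the obstruction your construction must clear, and your proposal does not identify it; outside the superradiant regime the same argument shows no such real $V$ can exist, so this is not a technicality.

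Even granting the correct sign condition, the matching is under-specified. With $\rho\equiv 0$ one checks that matching forces $u_{\mathrm{hor}}(r_0)/u'_{\mathrm{hor}}(r_0)+C_{\omega_R ml}^{(0)}=u_\infty(r_0+C_{\omega_R ml}^{(0)})/u'_\infty(r_0+C_{\omega_R ml}^{(0)})\approx (i\omega_R)^{-1}$, which fails for large $C_{\omega_R ml}^{(0)}$; so ``$\rho$ close to zero throughout'' cannot work, and one needs a mechanism to select $\rho$. The paper supplies one: after rescaling so that the two conserved fluxes are equal, $u_{\mathrm{hor}}$ and $u_\infty$ trace ellipses in $\mathbb{C}$ of the same orientation, and one joins them along a common exterior tangent segment (on which $u''=0$, hence $V\approx\omega_R^2$), using a phase rotation $e^{i\theta_2}$ of $u_\infty$ to synchronise the tangent-point parametrisations. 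Non-vanishing of $u$ is automatic since an exterior tangent to an origin-centred ellipse avoids the origin. The resulting piecewise-constant $V$ is then smoothed and made robust to $\omega_I>0$ and $\mathcal{G}$ by a two-parameter Wronskian argument with a topological degree step. Your last paragraph gestures at the oscillate/affine/oscillate picture but does not provide either of the two missing ingredients: the flux normalisation coming from superradiance, and the common-tangent construction that actually achieves the $C^1$ matching.
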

Notice that, in view of the separation of the wave equation on $(\mathcal{M}_{M,a},g_{M,a})$
and the definition of a mode solution (see Sections \ref{sub:Seperation},
\ref{sub:Modes} respectively), Theorem \hyperref[Theorem 1 detailed]{1}
is obtained after setting $\mathcal{G}\equiv0$ and $\text{\textgreek{e}}_{1}=1$
in the statement of Proposition \ref{prop:StrongerProposition}. Choosing
the function $\mathcal{G}$ and the parameter $\text{\textgreek{e}}_{1}$
in a different way will be useful in the proof of Theorem \hyperref[Theorem 2]{2}.

The proof of Proposition \ref{prop:StrongerProposition} will occupy
Sections \ref{sub:UsefulLemma} and \ref{sub:Proof}. We will now
proceed to sketch the main ideas of the proof.

\paragraph*{Sketch of the proof of Proposition \ref{prop:StrongerProposition}.}

The proof of Proposition \ref{prop:StrongerProposition} will be mainly
based on the following straightforward observation, which we state
here in the form of a lemma:
\begin{lem}
\label{lem:ConservationOfEnergyFlux}For any function $w:\mathbb{R}\rightarrow\mathbb{C}$
satisfying an equation of the form 
\begin{equation}
\frac{d^{2}w}{dx^{2}}+\text{\textgreek{W}}\cdot w=0
\end{equation}
 for some real function $\text{\textgreek{W}}:\mathbb{R}\rightarrow\mathbb{R}$,
the quantity $Im\big(\frac{dw}{dx}\cdot\bar{w}\big)$ is constant
in $x$.
\end{lem}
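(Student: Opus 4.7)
The plan is a one-line computation, so I will focus on the cleanest bookkeeping. I would introduce the ``flux'' quantity
\begin{equation*}
J(x) \doteq Im\big(w'(x)\,\bar{w}(x)\big) = \frac{1}{2i}\big(w'\bar{w} - \bar{w}'w\big),
\end{equation*}
and differentiate it in $x$. Upon differentiating, the cross terms $w'\bar{w}'$ and $\bar{w}'w'$ cancel in pairs, leaving
\begin{equation*}
\frac{dJ}{dx} = \frac{1}{2i}\big(w''\bar{w} - \bar{w}''w\big).
\end{equation*}

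Next I would substitute the ODE $w'' = -\Omega w$. The only hypothesis used in an essential way is that $\Omega$ takes real values: taking the complex conjugate of the ODE then yields $\bar{w}'' = \overline{-\Omega w} = -\Omega\bar{w}$, so the two terms in the parenthesis become $-\Omega|w|^{2}$ and $\Omega|w|^{2}$, which cancel exactly. This gives $dJ/dx \equiv 0$ on $\mathbb{R}$, which is the desired claim.

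I do not anticipate any obstacle here, as this is just the classical Wronskian identity for a Schr\"odinger-type equation with real potential, written in terms of the ``energy flux'' $Im(w'\bar{w})$. It is however worth emphasising in the write-up that the argument is sharp: had $\Omega$ been allowed to have a nonzero imaginary part, one would instead obtain $dJ/dx = Im(\Omega)\,|w|^{2}$, which is precisely the mechanism by which a dissipative or amplifying potential would destroy flux conservation. This sharpness is what makes the lemma effective in what follows: it will be applied to equation (\ref{eq:SeperatedODEWithPotential-1}) in the real-mode regime, where $\omega_{I}=0$ and the effective potential is real by virtue of the assumption $\mathcal{G}[\omega_{R},0,\cdot,\cdot]\equiv 0$, in order to relate the asymptotic fluxes of a candidate solution $u$ at $r_{*}=\pm\infty$ through the boundary conditions (\ref{eq:BoundaryCondition+Infty})--(\ref{eq:BoundaryCondition-Infty}).
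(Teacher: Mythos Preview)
Your proof is correct and is precisely the standard one-line Wronskian computation; the paper in fact does not supply a proof at all, stating the lemma only as a ``straightforward observation'' and moving on.
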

Notice that in the case of the separated equation (\ref{eq:SeperatedODEWithPotential}),
the conserved quantity $Im\big(\frac{du_{\text{\textgreek{w}}ml}}{dr_{*}}\cdot\bar{u}_{\text{\textgreek{w}}ml}\big)$
is proportional to the frequency seperated $T$-energy current (see,
e.\,g.~\cite{DafRodSchlap}).

As a first step towards establishing Proposition \ref{prop:StrongerProposition},
we will show that for any $\text{\textgreek{w}}\in\mathbb{R}\backslash\{0\}$
and any two solutions $u_{1},u_{2}:[a,b]\rightarrow\mathbb{C}$ of 

\begin{equation}
\frac{d^{2}u}{dx^{2}}+\text{\textgreek{w}}^{2}u=0\label{eq:BasicODE-1}
\end{equation}
 satisfying 
\begin{equation}
Im(u_{1}^{\prime}\bar{u}_{1})=Im(u_{2}^{\prime}\bar{u}_{2}),\label{eq:SameConstantOfMotion}
\end{equation}
assuming also that $b-a$ is large enough, there exists a piecewise
constant potential $V:[a,b]\rightarrow[0,\text{\textgreek{w}}^{2}]$
which is identically $0$ in a neighborhood of $x=a,b$, such that
the modified equation 
\begin{equation}
\frac{d^{2}u}{dx^{2}}+\big(\text{\textgreek{w}}^{2}-V\big)u=0\label{eq:BasicODE-1-1}
\end{equation}
admits a solution $u$ such that $u=u_{1}$ in a neighborhood of $x=a$
and $u=e^{i\text{\textgreek{j}}_{2}}u_{2}$ in a neighborhood of $x=b$,
for some suitable $\text{\textgreek{j}}_{2}\in[0,2\text{\textgreek{p}})$.
See Lemma \ref{eq:RoughProblem} for more details. Note that, in view
of Lemma \ref{lem:ConservationOfEnergyFlux}, the condition (\ref{eq:SameConstantOfMotion})
is necessary for the existence of such a function $V$. 

The proof of Lemma \ref{eq:RoughProblem} will be based on the observation
that the functions $u_{1},u_{2}:[a,b]\rightarrow\mathbb{C}$ trace
out two ellipses $\mathcal{C}_{1},\mathcal{C}_{2}$ in the complex
plane, having the same orientation in view of (\ref{eq:SameConstantOfMotion}).
This fact will be used to show that, for a suitable value of $\text{\textgreek{j}}_{2}$
and a well chosen closed interval $[x_{1},x_{2}]\subset(a,b)$, the
curve $u:[a,b]\rightarrow\mathbb{C}$ defined so that
\begin{enumerate}
\item $u=u_{1}$ on $[a,x_{1}]$,
\item $u=e^{i\text{\textgreek{j}}_{2}}u_{2}$ on $[x_{2},b]$ and
\item $u=\text{\textgreek{e}}_{\text{\textgreek{j}}_{2}}$ on $[x_{1},x_{2}]$,
where $\text{\textgreek{e}}_{\text{\textgreek{j}}_{2}}:[x_{1},x_{2}]\rightarrow\mathbb{C}$
is a parametrization of one of the common tangent lines of the ellipse
$\mathcal{C}_{1}$ and the rotated ellipse $e^{i\text{\textgreek{j}}_{2}}\mathcal{C}_{2}$
\end{enumerate}
is a $C^{1}$ and piecewise $C^{2}$ function from $[a,b]$ to $\mathbb{C}$.
In particular, $u$ will satisfy an ode of the form (\ref{eq:BasicODE-1-1}).

The second step in the proof of Proposition \ref{prop:StrongerProposition}
will consist of showing the following: Even if equation (\ref{eq:BasicODE-1})
is perturbed by introducing a small imaginary part for $\text{\textgreek{w}}$
(as well as other small complex valued terms), and the function $V$
is in addition required to be smooth, one can still obtain a suitable
potential $V$ and a smooth solution $u$ of the (perturbed version
of) equation (\ref{eq:BasicODE-1-1}), such that $u$ induces any
chosen initial data on $a$ and $b$ which are close enough to the
initial data induced by $u_{1}$ and $\text{\textgreek{l}}u_{2}$,
respectively, for some suitable $\text{\textgreek{l}}\in\mathbb{C}\backslash\{0\}$.
See Lemma \ref{lem:PerturbationOfTheSimplerODE} for more details. 

Finally, the proof of Proposition \ref{prop:StrongerProposition}
will be completed in Section \ref{sub:Proof}, roughly along the following
lines: We will first choose the functions $u_{1}$ and $u_{2}$ (appearing,
for instance, in the statement of Lemma \ref{lem:PerturbationOfTheSimplerODE})
so that they satisfy the boundary conditions (\ref{eq:BoundaryCondition+Infty})
and (\ref{eq:BoundaryCondition-Infty}), respectively, for $\text{\textgreek{w}}_{I}=0$,
normalised so that (\ref{eq:SameConstantOfMotion}) also holds (this
is possible only in the case when the frequency triad $(\text{\textgreek{w}}_{R},m,l)$
lies in the superradiant regime (\ref{eq:SuperradiantRegimeKerr})).
Then, the proof will follow by applying Lemma \ref{lem:PerturbationOfTheSimplerODE}
for equation (\ref{eq:SeperatedODEWithPotential-1}) on the interval
$\{r_{0}\le r\le r_{0}+C_{\text{\textgreek{w}}_{R}ml}^{(0)}\}$, for
a well chosen perturbation of the initial data induced by $u_{1},u_{2}$
at $r=r_{0},r=r_{0}+C_{\text{\textgreek{w}}_{R}ml}^{(0)}$ respectively.

\subsection{\label{sub:UsefulLemma}The main lemmas}

In this section, we will state and prove some lemmas concerning the
behaviour of solutions to the ordinary differential equation (\ref{eq:BasicODE-1})
with equal constants of motion. These lemmas lie at the heart of the
proof of Proposition \ref{prop:StrongerProposition}, and some of
the technical details involved in their proof will be needed in the
constructions of Section \ref{sec:DeformedSpacetimes}.
\begin{lem}
\label{lem:RoughPotential}Let $\text{\textgreek{w}},L_{0}\in\mathbb{R}\backslash\{0\}$
and $C_{0}>0$, and let $a<b$ be two real numbers such that $b-a>\frac{2\text{\textgreek{p}}}{\text{\textgreek{w}}}+\frac{\text{\textgreek{p}}C_{0}^{2}}{|L_{0}|}+2$.
Let $u_{1},u_{2}:[a,b]\rightarrow\mathbb{C}$ be two solutions of
the ordinary differential equation 
\begin{equation}
\frac{d^{2}u}{dx^{2}}+\text{\textgreek{w}}^{2}u=0\label{eq:BasicODE}
\end{equation}
with equal and non-zero constant of motion (see Lemma \ref{lem:ConservationOfEnergyFlux})
\begin{equation}
Im\big(\frac{du_{1}}{dx}\cdot\bar{u}_{1}\big)=Im\big(\frac{du_{2}}{dx}\cdot\bar{u}_{2}\big)=L_{0}\neq0,\label{eq:EqualAngularMomentum}
\end{equation}
satisfying also 
\begin{equation}
\sup_{[a,b]}\big(|u_{1}|+|u_{2}|\big)\le C_{0},\label{eq:SupBound}
\end{equation}
such that $\frac{u_{1}}{u_{2}}$ is not a constant function. Then
for any $x_{0}\in[a,b]$ such that $[x_{0},x_{0}+\frac{2\text{\textgreek{p}}}{\text{\textgreek{w}}}+\frac{\text{\textgreek{p}}C_{0}^{2}}{|L_{0}|}]\subset(a,b)$,
there exists a $C^{1}$ and piecewise $C^{2}$ function $\tilde{u}:[a,b]\rightarrow\mathbb{C}$
satisfying for some suitable $\text{\textgreek{j}}_{2}\in[0,2\text{\textgreek{p}})$
and $x_{1},x_{2}\in[x_{0},x_{0}+\frac{2\text{\textgreek{p}}}{\text{\textgreek{w}}}+\frac{\text{\textgreek{p}}C_{0}^{2}}{|L_{0}|}]$
with $x_{1}<x_{2}$: 
\begin{equation}
\begin{cases}
\frac{d^{2}\tilde{u}}{dx^{2}}+\big(\text{\textgreek{w}}^{2}-\tilde{V}(x)\big)\tilde{u}=0\\
\tilde{u}=u_{1}\mbox{ on }[a,x_{1}]\\
\tilde{u}=e^{i\text{\textgreek{j}}_{2}}u_{2}\mbox{ on }[x_{2},b],
\end{cases}\label{eq:RoughProblem}
\end{equation}
where the piecewise constant function $\tilde{V}:\mathbb{R}\rightarrow[0,\text{\textgreek{w}}^{2}]$
is defined in terms of $x_{1},x_{2}$ as: 
\begin{equation}
\tilde{V}(x)=\begin{cases}
0, & x\in\mathbb{R}\backslash[x_{1},x_{2}]\\
\text{\textgreek{w}}^{2}, & x\in[x_{1},x_{2}].
\end{cases}\label{eq:RoughPotential-1}
\end{equation}
\end{lem}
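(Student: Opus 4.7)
My plan is to translate the existence of $\tilde u$ and $\tilde V$ into a small system of transcendental matching equations that admit a clean geometric interpretation as a common-tangent problem for the ellipses traced by $u_1$ and $u_2$ in $\mathbb{C}$. On $[x_1, x_2]$ the prescription $\tilde V \equiv \omega^{2}$ reduces the ODE in (\ref{eq:RoughProblem}) to $\tilde u'' = 0$, forcing $\tilde u$ to be affine; combined with $C^{1}$ matching with $u_1$ at $x_1$, this yields $\tilde u(x) = u_1(x_1) + u_1'(x_1)(x - x_1)$ on $[x_1, x_2]$, and then $C^{1}$ matching with $e^{i\phi_2} u_2$ at $x_2$ imposes the two complex conditions
$$
(\text{A})\;\; u_1(x_1) + u_1'(x_1)(x_2 - x_1) = e^{i\phi_2}\, u_2(x_2), \qquad (\text{B})\;\; u_1'(x_1) = e^{i\phi_2}\, u_2'(x_2),
$$
which geometrically say that the segment $\tilde u([x_1, x_2])$ must be a common tangent line to $\mathcal{C}_1 \doteq u_1([a,b])$ at $u_1(x_1)$ and to the rotated ellipse $e^{i\phi_2}\mathcal{C}_2$ at $e^{i\phi_2}u_2(x_2)$, traversed with matching complex velocities.

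At first glance (A)-(B) amount to four real equations in the three real unknowns $(x_1, x_2, \phi_2)$. However, computing $\mathrm{Im}\bigl((\text{B})\cdot\overline{(\text{A})}\bigr)$ and using $\mathrm{Im}(|u_1'(x_1)|^{2}(x_2 - x_1)) = 0$ reduces, via hypothesis (\ref{eq:EqualAngularMomentum}) and Lemma \ref{lem:ConservationOfEnergyFlux}, to the tautology $L_0 = L_0$, so one real condition is automatic and the system is effectively square. I would then use the phase of (B) to define $\phi_2 = \arg u_1'(x_1) - \arg u_2'(x_2)$, reducing the residual content to the speed-matching constraint $|u_1'(x_1)| = |u_2'(x_2)|$ together with a single real equation extracted from (A).

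To solve this square system, I would run a continuity/winding-number argument as $x_1$ traverses a full period $[x_0, x_0 + 2\pi/\omega]$ of the orbit on $\mathcal{C}_1$. The pointwise identity $|u_i'|^{2} + \omega^{2}|u_i|^{2} = \text{const}_i$, which follows immediately from the explicit form $u_i = A_i e^{i\omega x} + B_i e^{-i\omega x}$, converts the speed-matching constraint into a smooth one-dimensional curve in $(x_1, x_2)$-space along which the residual equation extracted from (A) can be shown to change sign, the essential geometric input being the equality of the signed areas $\pi L_0/\omega$ of $\mathcal{C}_1$ and $\mathcal{C}_2$ and the freedom in choosing $\phi_2$. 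The quantitative bound $x_2 - x_1 \le 2\pi/\omega + \pi C_0^{2}/|L_0|$ then follows from the identity $x_2 - x_1 = |e^{i\phi_2}u_2(x_2) - u_1(x_1)|/|u_1'(x_1)| \le 2C_0/|u_1'(x_1)|$ together with a lower bound on $|u_1'(x_1)|$ of the form $\gtrsim |L_0|/C_0$, secured by selecting the branch of the solution with $u_1(x_1)$ away from the extremal points of $\mathcal{C}_1$.

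The step I expect to be the main obstacle is the sign-change portion of the continuity argument: tracking how $\phi_2(x_1)$ and $x_2(x_1)$ vary along the speed-matching curve as $x_1$ sweeps one full period of $\mathcal{C}_1$, ruling out the possibility that the residual real equation vanishes only in the forbidden regime $x_2 \le x_1$, and handling the near-degenerate cases in which $\mathcal{C}_1$ or $\mathcal{C}_2$ is close to a circle (the truly degenerate case being excluded only by the hypothesis that $u_1/u_2$ is non-constant, which forces $\mathcal{C}_1 \neq \mathcal{C}_2$ as oriented curves up to rotation).
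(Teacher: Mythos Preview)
Your reduction to the common-tangent conditions (A)--(B) and the observation that the equal-$L_0$ hypothesis makes one of the four real equations automatic are correct and match the paper's geometric core. The difference is in how the continuity argument is organized.

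You propose to vary $x_1$ over a period of $\mathcal{C}_1$, solve the speed-matching $|u_1'(x_1)|=|u_2'(x_2)|$ for $x_2$, read off $\phi_2$ from the phase of (B), and seek a sign change in the residual part of (A). Beyond the difficulty you flag, there is a parametrization issue you did not: writing $u_j=A_je^{i\omega x}+B_je^{-i\omega x}$, the range of $|u_j'|/\omega$ is the interval $[\,\big||A_j|-|B_j|\big|,\,|A_j|+|B_j|\,]$, and the equal-$L_0$ hypothesis (which gives equal products of the endpoints) together with $u_1/u_2$ non-constant forces one of these intervals to strictly contain the other. If $\mathcal{C}_1$ is the larger ellipse, some values of $x_1$ admit no matching $x_2$, so your parametrization by $x_1$ over a full period breaks down. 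This is repairable (parametrize by the time variable on the smaller ellipse instead), but the residual sign-change argument still needs its own work, as you acknowledge.

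The paper sidesteps both issues by varying the rotation angle $\phi$ instead. For each $\phi\in[0,2\pi]$, the non-congruence of $\mathcal{C}_1$ and $\mathcal{C}_2$ guarantees four common external tangent lines to $\mathcal{C}_1$ and $e^{i\phi}\mathcal{C}_2$, with tangent points depending smoothly on $\phi$; one family has $x_1<x_2$ in the line parametrization matched to $u_1$'s velocity. One then observes that the tangent point $P_{2,\phi}$ on $e^{i\phi}\mathcal{C}_2$ stays in a fixed half-plane (off the major-axis line of $\mathcal{C}_1$) as $\phi$ varies, while the reference curve $\phi\mapsto e^{i\phi}u_2(\text{fixed point})$ has winding number $1$ about the origin; hence for some $\phi_2$ they lie on the same ray through $0$, and since both lie on $e^{i\phi_2}\mathcal{C}_2$ they coincide. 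The derivative matching at $x_2$ then falls out for free from the equal-$L_0$ condition, exactly as in your reduction. The advantage of this organization is that the common tangent exists for \emph{every} $\phi$ and varies smoothly, so the winding argument runs on all of $[0,2\pi]$ with no speed-matching obstruction and no separate sign-change analysis.
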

\begin{proof}
For $j=1,2$, we can decompose 
\begin{equation}
u_{j}(x)=A_{j}e^{i\text{\textgreek{w}}x}+B_{j}e^{-i\text{\textgreek{w}}x}\label{eq:Ellipses}
\end{equation}
 for some $A_{j},B_{j}\in\mathbb{C}$. Since $b-a>\frac{2\text{\textgreek{p}}}{\text{\textgreek{w}}}$
and the constant of motion $Im\big(\frac{du_{j}}{dx}\cdot\bar{u}_{j}\big)$
for $u_{j}$ is non zero, the image of $u_{j}$ is an ellipse $\mathcal{C}_{j}$
in the plane of the complex numbers, with its center at the origin.
In view of the equality (\ref{eq:EqualAngularMomentum}), we have
\begin{equation}
|A_{1}|^{2}-|B_{1}|^{2}=|A_{2}|^{2}-|B_{2}|^{2}=\text{\textgreek{w}}^{-1}L_{0}.\label{eq:EqualAngularMomentumForEllipseCoefficients}
\end{equation}
Thus, in view of (\ref{eq:EqualAngularMomentumForEllipseCoefficients})
and the fact that $u_{1}/u_{2}$ was assumed to not be identically
constant, we infer that the semi-major and semi-minor axes of $\mathcal{C}_{1},\mathcal{C}_{2}$
have different length, i.\,e. 
\begin{equation}
|A_{1}|+|B_{1}|\neq|A_{2}|+|B_{2}|\mbox{ and }|A_{1}|-|B_{1}|\neq|A_{2}|-|B_{2}|.\label{eq:UnequalAxes}
\end{equation}
Therefore, for any $\text{\textgreek{j}}\in[0,2\text{\textgreek{p}})$,
the ellipses $\mathcal{C}_{1}$ and $e^{i\text{\textgreek{j}}}\mathcal{C}_{2}$
(where $e^{i\text{\textgreek{j}}}\mathcal{C}_{2}$ is the rotation
of $\mathcal{C}_{2}$ by a $\text{\textgreek{j}}$-angle around the
origin) intersect at four distinct points which vary smoothly with
$\text{\textgreek{j}}$. Moreover, the orientation induced on $\mathcal{C}_{1},\mathcal{C}_{2}$
by their parametrization through $u_{j}:[a,b]\rightarrow\mathcal{C}$
is the same, and, in particular, it is clockwise if $L_{0}<0$ and
counter-clockwise if $L_{0}>0$. 

\begin{figure}[h] 
\centering 
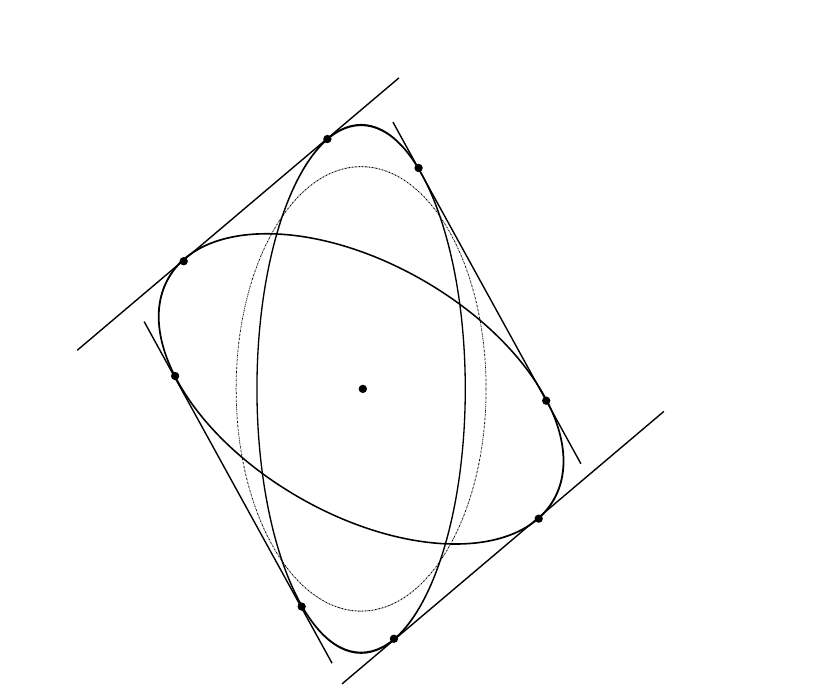 
\caption{The two ellipses $\mathcal{C}_{1}$ and $e^{i \text{\textgreek{j}}} \mathcal{C}_{2}$, with their four common tangents $\text{\textgreek{e}}^{(j)}_{\text{\textgreek{j}}}$ and the associated tangent points $P^{(j)}_{k,\text{\textgreek{j}}}$.} 
\end{figure}

In view of (\ref{eq:EqualAngularMomentumForEllipseCoefficients})
and (\ref{eq:UnequalAxes}), for any $\text{\textgreek{j}}\in[0,2\text{\textgreek{p}}]$
there exist four straight lines $\text{\textgreek{e}}_{\text{\textgreek{j}}}^{(j)}$,
$j=1,\ldots,4$, which lie in the exterior of $\mathcal{C}_{1}$ and
$e^{i\text{\textgreek{j}}}\mathcal{C}_{2}$ and are tangent to both,
and these lines depend smoothly on $\text{\textgreek{j}}$, satisfying
also $\text{\textgreek{e}}_{0}^{(j)}=\text{\textgreek{e}}_{2\text{\textgreek{p}}}^{(j)}$
(notice that the smooth dependence of $\text{\textgreek{e}}_{\text{\textgreek{j}}}^{(j)}$
on $\text{\textgreek{j}}$ is a consequence of (\ref{eq:UnequalAxes})).
We will parametrize these lines as $\text{\textgreek{e}}_{\text{\textgreek{j}}}^{(j)}:\mathbb{R}\rightarrow\mathbb{C}$,
with 
\begin{equation}
\text{\textgreek{e}}_{\text{\textgreek{j}}}^{(j)}(x)=a_{\text{\textgreek{j}}}^{(j)}x+b_{\text{\textgreek{j}}}^{(j)}\label{eq:Parametrization Line}
\end{equation}
for some $a_{\text{\textgreek{j}}}^{(j)},b_{\text{\textgreek{j}}}^{(j)}\in\mathbb{C}$
(that will be fixed more precisely shortly), depending smoothly on
$\text{\textgreek{j}}$. 

Let $P_{1,\text{\textgreek{j}}}^{(j)}$ and $P_{2,\text{\textgreek{j}}}^{(j)}$
be the points where $\text{\textgreek{e}}_{\text{\textgreek{j}}}^{(j)}$
meets $\mathcal{C}_{1}$ and $e^{i\text{\textgreek{j}}}\mathcal{C}_{2}$
respectively. Without loss of generality, we will assume that $|A_{1}|+|B_{1}|>|A_{2}|+|B_{2}|$
in (\ref{eq:UnequalAxes}).%
\footnote{In case $|A_{1}|+|B_{1}|>|A_{2}|+|B_{2}|$, the same analysis goes
through with the roles of $u_{1}$ and $u_{2}$ interchanged.%
} Then, it follows that, if $\text{\textgreek{e}}_{max}^{(0)}$ is
the straight line in $\mathbb{C}$ defined by the major semi-axis
of $\mathcal{C}_{1}$, no one of the points $P_{k,\text{\textgreek{j}}}^{(j)}$
($k=1,2$, $j=1,\ldots,4$) belongs to $\text{\textgreek{e}}^{(0)}$
for any $\text{\textgreek{j}}\in[0,2\text{\textgreek{p}}]$. Notice
also that $P_{k,\text{\textgreek{j}}}^{(j)}$ vary smoothly with $\text{\textgreek{j}}$
in view of (\ref{eq:UnequalAxes}) and (\ref{eq:EqualAngularMomentumForEllipseCoefficients})
(and $P_{k,0}^{(j)}=P_{k,2\text{\textgreek{p}}}^{(j)}$). Thus, for
any $k=1,2$ and $j=1,\ldots,4$, the point $P_{k,\text{\textgreek{j}}}^{(j)}$
remains on the same side of $\text{\textgreek{e}}^{(0)}$ for all
$\text{\textgreek{j}}\in[0,2\text{\textgreek{p}}]$. 

Let $x_{1,\text{\textgreek{j}}}^{(j)},x_{2,\text{\textgreek{j}}}^{(j)}\in\mathbb{R}$
be the unique points for which $\text{\textgreek{e}}_{\text{\textgreek{j}}}^{(j)}(x_{1,\text{\textgreek{j}}}^{(j)})=P_{1,\text{\textgreek{j}}}^{(j)}$
and $\text{\textgreek{e}}_{\text{\textgreek{j}}}^{(j)}(x_{2,\text{\textgreek{j}}}^{(j)})=P_{2,\text{\textgreek{j}}}^{(j)}$,
$j=1,\ldots,4$. Let us also define $y_{1,\text{\textgreek{j}}}^{(j)}$
to be the unique point in $[x_{0},x_{0}+\frac{2\text{\textgreek{p}}}{\text{\textgreek{w}}})$
such that $u_{1}(y_{1,\text{\textgreek{j}}}^{(j)})=P_{1,\text{\textgreek{j}}}^{(j)}.$
Since $\text{\textgreek{e}}_{\text{\textgreek{j}}}^{(j)}$ is tangent
to $\mathcal{C}_{1}$ at $P_{1,\text{\textgreek{j}}}^{(j)}$, we have
\begin{equation}
\text{\textgreek{e}}_{\text{\textgreek{j}}}^{(j)}(x_{1,\text{\textgreek{j}}}^{(j)})=u_{1}(y_{1,\text{\textgreek{j}}}^{(j)})\label{eq:IntersectionPoint}
\end{equation}
 and 
\begin{equation}
\frac{d\text{\textgreek{e}}_{\text{\textgreek{j}}}^{(j)}}{dx}(x_{1,\text{\textgreek{j}}}^{(j)})=\text{\textgreek{l}}_{1,\text{\textgreek{j}}}^{(j)}\frac{du_{1}}{dx}(y_{1,\text{\textgreek{j}}}^{(j)})\label{eq:TangencyPoint}
\end{equation}
for some $\text{\textgreek{l}}_{1,\text{\textgreek{j}}}^{(j)}\in\mathbb{R}\backslash\{0\}$.
We will uniquely fix the the parametrization (\ref{eq:Parametrization Line})
of $\text{\textgreek{e}}_{\text{\textgreek{j}}}^{(j)}$ by requiring
that $x_{1,\text{\textgreek{j}}}^{(j)}=y_{1,\text{\textgreek{j}}}^{(j)}$
and $\text{\textgreek{l}}_{1,\text{\textgreek{j}}}^{(j)}=1$. 

Notice that for two values of $j$ (say $j=1,3$), we have $x_{1,\text{\textgreek{j}}}^{(j)}<x_{2,\text{\textgreek{j}}}^{(j)}$
for all $\text{\textgreek{j}}\in[0,2\text{\textgreek{p}}]$, i.\,e.~in
this parametrization of $\text{\textgreek{e}}_{\text{\textgreek{j}}}^{(j)}$,
$P_{1,\text{\textgreek{j}}}^{(j)}$ lies before $P_{2,\text{\textgreek{j}}}^{(j)}$.
From now on, we will work only with the family of lines $\text{\textgreek{e}}_{\text{\textgreek{j}}}^{(1)}$
and the family of points $P_{1,\text{\textgreek{j}}}^{(1)}$ and $P_{2,\text{\textgreek{j}}}^{(1)}$.
In view of (\ref{eq:EqualAngularMomentum}), (\ref{eq:SupBound})
and the fact that $\text{\textgreek{l}}_{1,\text{\textgreek{j}}}^{(1)}=1$
in (\ref{eq:TangencyPoint}), as well as the fact that the angle formed
by the polar rays through the points $P_{1,\text{\textgreek{j}}}^{(1)}$
and $P_{2,\text{\textgreek{j}}}^{(1)}$ is at most $\frac{\text{\textgreek{p}}}{2}$,
we can bound for any $\text{\textgreek{j}}\in[0,2\text{\textgreek{p}}]$:
\begin{equation}
x_{2,\text{\textgreek{j}}}^{(1)}-x_{1,\text{\textgreek{j}}}^{(1)}\le\frac{\text{\textgreek{p}}C_{0}^{2}}{|L_{0}|}.\label{eq:TimeDifferenceOfPoints}
\end{equation}
Thus, $x_{2,\text{\textgreek{j}}}^{(1)}\in[x_{0},x_{0}+\frac{2\text{\textgreek{p}}}{\text{\textgreek{w}}}+\frac{\text{\textgreek{p}}C_{0}^{2}}{|L_{0}|})\subset(a,b)$. 

We have shown that the map $h_{2}:[0,2\text{\textgreek{p}}]\rightarrow\mathbb{C}\backslash\{0\}$,
\begin{equation}
h_{2}(\text{\textgreek{j}})=P_{2,\text{\textgreek{j}}}^{(1)}\label{eq:TangentPoint}
\end{equation}
is a smooth map satisfying $h_{2}(0)=h_{2}(2\text{\textgreek{p}})$,
and moreover $h_{2}([0,2\text{\textgreek{p}}])$ does not intersect
$\text{\textgreek{e}}_{0}$. Defining, now, the map $\tilde{h}_{2}:[0,2\text{\textgreek{p}}]\rightarrow\mathbb{C}\backslash\{0\}$,
\begin{equation}
\tilde{h}_{2}(\text{\textgreek{j}})=e^{i\text{\textgreek{j}}}u_{2}(x_{2,0}^{(1)}),\label{eq:EllipsePoint}
\end{equation}
we readily verify that $\tilde{h}_{2}$ is also smooth, satisfying
$\tilde{h}_{2}(0)=\tilde{h}_{2}(2\text{\textgreek{p}})$, and furthermore
the curve $\text{\textgreek{j}}\longmapsto\tilde{h}_{2}(\text{\textgreek{j}})$
has winding number (around the origin) equal to 1. Therefore, there
exists some $\text{\textgreek{j}}_{2}\in[0,2\text{\textgreek{p}})$
and some $\text{\textgreek{l}}>0$ such that 
\begin{equation}
h_{2}(\text{\textgreek{j}}_{2})=\text{\textgreek{l}}\tilde{h}_{2}(\text{\textgreek{j}}_{2}).\label{eq:ParallelVectors}
\end{equation}
Since $h_{2}(\text{\textgreek{j}}_{2}),\tilde{h}_{2}(\text{\textgreek{j}}_{2})\in e^{i\text{\textgreek{j}}_{2}}\mathcal{C}_{2}$
and each ray from the origin intersects the ellipse $e^{i\text{\textgreek{j}}_{2}}\mathcal{C}_{2}$
only once, we thus infer that $\text{\textgreek{l}}=1$, i.\,e. 
\begin{equation}
h_{2}(\text{\textgreek{j}}_{2})=\tilde{h}_{2}(\text{\textgreek{j}}_{2}).\label{eq:ParallelVectors-1}
\end{equation}

In view of (\ref{eq:TangentPoint}), (\ref{eq:EllipsePoint}) and
(\ref{eq:ParallelVectors-1}), we have:
\begin{equation}
\text{\textgreek{e}}_{\text{\textgreek{j}}_{2}}^{(1)}(x_{2,\text{\textgreek{j}}_{2}}^{(1)})=e^{i\text{\textgreek{j}}_{2}}u_{2}(x_{2,\text{\textgreek{j}}_{2}}^{(1)}).\label{eq:IntersectionPoint2}
\end{equation}
Since $\text{\textgreek{e}}_{\text{\textgreek{j}}_{2}}^{(1)}$ is
tangent to $e^{i\text{\textgreek{j}}_{2}}\mathcal{C}_{2}$ at $P_{2,\text{\textgreek{j}}_{2}}^{(1)}$,
we also have for some $\text{\textgreek{l}}_{2}\in\mathbb{R}\backslash\{0\}$:
\begin{equation}
\frac{d\text{\textgreek{e}}_{\text{\textgreek{j}}_{2}}^{(1)}}{dx}(x_{2,\text{\textgreek{j}}_{2}}^{(1)})=\text{\textgreek{l}}_{2}e^{i\text{\textgreek{j}}_{2}}\frac{du_{2}}{dx}(x_{2,\text{\textgreek{j}}_{2}}^{(1)}).\label{eq:TangencyPoint-2}
\end{equation}
Thus, the equalities (\ref{eq:EqualAngularMomentum}), (\ref{eq:TangencyPoint-2})
and (\ref{eq:TangencyPoint}) (in view of the fact that $\text{\textgreek{e}}$
is linear) readily implies that $\text{\textgreek{l}}_{2}=\text{\textgreek{l}}_{1}=1$. 

All in all, after setting (for notational simplicity) $x_{k}=x_{k,\text{\textgreek{j}}_{2}}^{(1)}$
for $k=1,2$ and $\text{\textgreek{e}}=\text{\textgreek{e}}_{\text{\textgreek{j}}_{2}}^{(1)}$,
the equalities (\ref{eq:IntersectionPoint}), (\ref{eq:TangencyPoint}),
(\ref{eq:IntersectionPoint2}) and (\ref{eq:TangencyPoint-2}) yield
\begin{equation}
\text{\textgreek{e}}(x_{1})=u_{1}(x_{1}),\mbox{ }\text{\textgreek{e}}(x_{2})=e^{i\text{\textgreek{j}}_{2}}u_{2}(x_{2})\label{eq:IntersectionPoint-1}
\end{equation}
 and 
\begin{equation}
\frac{d\text{\textgreek{e}}}{dx}(x_{1})=\frac{du_{1}}{dx}(x_{1}),\mbox{ }\frac{d\text{\textgreek{e}}}{dx}(x_{2})=e^{i\text{\textgreek{j}}_{2}}\frac{du_{2}}{dx}(x_{2})\label{eq:TangencyPoint-1}
\end{equation}
with $\text{\textgreek{j}}_{2}\in[0,2\text{\textgreek{p}})$ and 
\begin{equation}
x_{0}\le x_{1}<x_{2}<x_{0}+\frac{2\text{\textgreek{p}}}{\text{\textgreek{w}}}+\frac{\text{\textgreek{p}}C_{0}^{2}}{|L_{0}|}.\label{eq:IntervalLength}
\end{equation}

If we define the function $\tilde{u}:[a,b]\rightarrow\mathbb{C}$
as the unique $C^{1}$ and piecewise $C^{2}$ solution of the initial
value problem 
\begin{equation}
\begin{cases}
\frac{d^{2}\tilde{u}}{dx^{2}}+\big(\text{\textgreek{w}}^{2}-\tilde{V}(x)\big)\tilde{u}=0\\
\tilde{u}(a)=u_{1}(a)\\
\frac{d\tilde{u}}{dx}(a)=\frac{du_{1}}{dx}(a),
\end{cases}\label{eq:RoughProblem-1}
\end{equation}
where $\tilde{V}$ is defined by (\ref{eq:RoughPotential-1}) for
the chosen values of $x_{1},x_{2}$, then $\tilde{u}$ satisfies the
following properties (readily inferred in view of (\ref{eq:IntersectionPoint-1}),
(\ref{eq:TangencyPoint-1}) and the fact that the function $\text{\textgreek{e}}(x)$
satisfies $\frac{d^{2}\text{\textgreek{e}}}{dx^{2}}=0$): 

\begin{itemize}

\item $\tilde{u}(x)=u_{1}(x)$ for $x\in[a,x_{1}]$,

\item $\tilde{u}(x)=\text{\textgreek{e}}(x)$ for $x\in[x_{1},x_{2}]$,

\item $\tilde{u}(x)=e^{i\text{\textgreek{j}}_{2}}u_{2}(x)$ for $x\in[x_{2},b]$.

\end{itemize}

Thus, the proof of the lemma is complete.
\end{proof}
The following lemma will allow us to mollify the piecewise constant
potential $\tilde{V}$ of Lemma \ref{lem:RoughPotential}, as well
as extend Lemma \ref{lem:RoughPotential} in order to include smooth
perturbations of equation \ref{eq:BasicODE}.
\begin{lem}
\label{lem:PerturbationOfTheSimplerODE}Let $\text{\textgreek{w}},L_{0}\in\mathbb{R}\backslash0$,$C_{0}>0$,
$a<b$, $x_{0}\in(a,b)$ and $u_{1},u_{2}:[a,b]\rightarrow\mathbb{C}$
be as in the statement of Lemma \ref{lem:RoughPotential}, and let
$Z:\mathbb{R}^{3}\times[a,b]\rightarrow\mathbb{C}$ be a smooth function
such that $Z(\text{\textgreek{w}},0,v,x)=0$ for all $(v,x)\in,\mathbb{R}\times[a,b]$.
We will use the following notation for the absolute value of the Wronskian
of $u_{1},u_{2}$ (which is constant in $x\in[a,b]$): 
\begin{equation}
\mathcal{W}[u_{1},u_{2}]=\Big|\frac{du_{1}}{dx}u_{2}-u_{1}\frac{du_{2}}{dx}\Big|.
\end{equation}
Then, for any $0<\text{\textgreek{e}}_{1}\le1$, there exists a $\text{\textgreek{d}}_{0}$
sufficiently small in terms of $\text{\textgreek{w}}$, $C_{0}$,
$L_{0}$, $\text{\textgreek{e}}_{1}$, $\mathcal{Z}$ and $\mathcal{\mathcal{W}}[u_{1},u_{2}]$,
where 
\begin{equation}
\mathcal{Z}\doteq\sup_{|\text{\textgreek{w}}_{I}|\le1,|v|\le2\text{\textgreek{w}}^{2}}\sum_{j=0}^{2}\int_{a}^{b}\big|\partial_{\text{\textgreek{w}}_{I}}^{j}Z(\text{\textgreek{w}},\text{\textgreek{w}}_{I},v,x)\big|\, dx,\label{eq:BoundControlTerm}
\end{equation}
such that for any $\text{\textgreek{w}}_{I}\in(-\text{\textgreek{d}}_{0},\text{\textgreek{d}}_{0})$
and any initial data sets $(u_{a}^{(0)},u_{a}^{(1)}),(u_{b}^{(0)},u_{b}^{(1)})\in\mathbb{C}^{2}$
satisfying 
\begin{equation}
\big|u_{a}^{(0)}-u_{1}(a)\big|+\big|u_{a}^{(1)}-\frac{du_{1}}{dx}(a)\big|+\big|u_{b}^{(0)}-u_{2}(b)\big|+\big|u_{b}^{(1)}-\frac{du_{2}}{dx}(b)\big|<\text{\textgreek{d}}_{0},\label{eq:ClosenessData}
\end{equation}
there exists a smooth function $V:\mathbb{R}\rightarrow[0,(1+\text{\textgreek{e}}_{1})\text{\textgreek{w}}^{2}]$
supported in $[x_{0}-\text{\textgreek{d}}_{0},x_{0}+\text{\textgreek{d}}_{0}+\frac{2\text{\textgreek{p}}}{\text{\textgreek{w}}_{R}}+\frac{\text{\textgreek{p}}C_{0}^{2}}{|L_{0}|}]$
and a smooth solution $u$ to equation 
\begin{equation}
\frac{d^{2}u}{dx^{2}}+\big(\text{\textgreek{w}}^{2}+Z(\text{\textgreek{w}},\text{\textgreek{w}}_{I},V(x),x)-V(x)\big)u=0\label{eq:DistortedBasicODE}
\end{equation}
 such that $\big(u(a),\frac{du}{dx}(a)\big)=(u_{a}^{(0)},u_{a}^{(1)})$
and $\big(u(b),\frac{du}{dx}(b)\big)=(\text{\textgreek{l}}u_{b}^{(0)},\text{\textgreek{l}}u_{b}^{(1)})$
for some $\text{\textgreek{l}}\in\mathbb{C}\backslash\{0\}$.
\end{lem}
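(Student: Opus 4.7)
I would derive this lemma from Lemma~\ref{lem:RoughPotential} by (i) mollifying the piecewise constant potential $\tilde V$ produced there into a smooth $V_{0}\in[0,\omega^{2}]$, and then (ii) running a two-parameter shooting argument, closed by the implicit function theorem, to absorb the three simultaneous perturbations present in the hypotheses: the smoothing error, the $O(\delta_{0})$ boundary-data mismatch of (\ref{eq:ClosenessData}), and the $\omega_{I}$-dependent term $Z$, which vanishes when $\omega_{I}=0$. The $\varepsilon_{1}$-slack on the amplitude bound and the $\delta_{0}$-margin on the support provide the room for this perturbative adjustment. Concretely, I would apply Lemma~\ref{lem:RoughPotential} at the given $x_{0}$ to obtain $x_{1}<x_{2}$ in $[x_{0},x_{0}+\tfrac{2\pi}{\omega}+\tfrac{\pi C_{0}^{2}}{|L_{0}|}]$, a phase $\varphi_{2}$, and the piecewise constant $\tilde V\in[0,\omega^{2}]$; then define $V_{0}$ by monotone smooth interpolation through the two jumps of $\tilde V$ over narrow bands $[x_{j}-\rho,x_{j}+\rho]$, $j=1,2$, with a mollification scale $\rho$ to be fixed in the final step. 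Standard ODE continuity shows that the forward solution of $u''+(\omega^{2}-V_{0})u=0$ starting from $(u_{1}(a),u_{1}'(a))$ differs from the Lemma~\ref{lem:RoughPotential} patchwork $\tilde u$ by $O(\rho)$ on $[a,b]$.

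\textbf{Two-parameter shooting.} Next, I would pick disjoint smooth non-negative bumps $\chi_{1},\chi_{2}$ supported in $(x_{1}+\rho,x_{2}-\rho)$, where $V_{0}\equiv\omega^{2}$, and introduce the family $V_{\boldsymbol\mu}:=V_{0}+\mu_{1}\chi_{1}+\mu_{2}\chi_{2}$, $\boldsymbol\mu=(\mu_{1},\mu_{2})\in\mathbb{R}^{2}$, which remains in $[0,(1+\varepsilon_{1})\omega^{2}]$ provided $|\mu_{j}|\lesssim\varepsilon_{1}\omega^{2}$. For each parameter tuple, let $u^{+}$ be the forward solution of (\ref{eq:DistortedBasicODE}) with potential $V_{\boldsymbol\mu}$ and initial data $(u_{a}^{(0)},u_{a}^{(1)})$ at $a$, and $u^{-}$ the backward solution with terminal data $(u_{b}^{(0)},u_{b}^{(1)})$ at $b$. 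Define the shooting defect
\begin{equation*}
\Phi(\boldsymbol\mu;\omega_{I},u_{a}^{(0)},u_{a}^{(1)},u_{b}^{(0)},u_{b}^{(1)}) \;:=\; u^{+}(x_{\star})(u^{-})'(x_{\star}) - (u^{+})'(x_{\star})\, u^{-}(x_{\star}) \;\in\; \mathbb{C},
\end{equation*}
computed at any $x_{\star}\in(x_{1},x_{2})$. Since both $u^{+}$ and $u^{-}$ solve the same equation (\ref{eq:DistortedBasicODE}), which has no first-derivative term, this Wronskian is $x_{\star}$-independent, even when $Z$ is complex. When $\Phi=0$, $u^{+}$ and $u^{-}$ are $\mathbb{C}$-proportional throughout $[a,b]$, and $u:=u^{+}$ is then a global solution matching $(u_{a}^{(0)},u_{a}^{(1)})$ at $a$ and $\lambda(u_{b}^{(0)},u_{b}^{(1)})$ at $b$ for a unique $\lambda\in\mathbb{C}\setminus\{0\}$ (nonzero because $(u_{a}^{(0)},u_{a}^{(1)})\neq(0,0)$ for $\delta_{0}$ small, hence $u^{+}\not\equiv 0$). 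The dimensions balance --- $\boldsymbol\mu\in\mathbb{R}^{2}$ solves $\Phi\in\mathbb{C}\simeq\mathbb{R}^{2}$ --- so the problem reduces to a quantitative implicit function theorem: at the baseline $(\boldsymbol\mu,\omega_{I},u_{a}^{(0)},u_{a}^{(1)},u_{b}^{(0)},u_{b}^{(1)})=(\mathbf{0},0,u_{1}(a),u_{1}'(a),u_{2}(b),u_{2}'(b))$ one has $\Phi=O(\rho)$ by step (i), and it remains to verify that the Jacobian $\partial_{\boldsymbol\mu}\Phi|_{\mathrm{base}}\colon\mathbb{R}^{2}\to\mathbb{C}$ is non-degenerate.

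\textbf{Main obstacle.} The hard part is this Jacobian non-degeneracy. By variation of constants, $\partial_{\mu_{j}}\Phi|_{0}$ reduces, modulo $O(\rho)$, to the integral of $\chi_{j}$ against $u^{+}u^{-}$ over $\mathrm{supp}(\chi_{j})$; on the middle region $(x_{1}+\rho,x_{2}-\rho)$ the product $u^{+}u^{-}$ is $O(\rho)$-close to $e^{i\varphi_{2}}\epsilon(x)^{2}$, where $\epsilon(\cdot)$ is the $\mathbb{C}$-valued affine parametrization of the common tangent line from the proof of Lemma~\ref{lem:RoughPotential}. Because that tangent line does not pass through the origin, $\epsilon$ is nowhere zero on $[x_{1},x_{2}]$ and $\epsilon^{2}$ has non-constant complex argument along $[x_{1},x_{2}]$. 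Placing $\chi_{1}$ and $\chi_{2}$ on two subintervals of $(x_{1}+\rho,x_{2}-\rho)$ where $\epsilon^{2}$ points in $\mathbb{R}$-linearly independent directions yields a quantitative lower bound $|\det_{\mathbb{R}}\partial_{\boldsymbol\mu}\Phi|_{0}|\gtrsim 1$ depending only on $\omega,C_{0},L_{0},\mathcal{W}[u_{1},u_{2}]$. Fixing $\rho$ small enough that the $O(\rho)$ errors do not overwhelm this bound, the quantitative inverse function theorem delivers a unique $\boldsymbol\mu=O(\delta_{0})$ with $\Phi=0$, for all $\omega_{I}$ and data perturbations below a threshold $\delta_{0}$ depending on $\omega,C_{0},L_{0},\varepsilon_{1},\mathcal{W}[u_{1},u_{2}]$ and $\mathcal{Z}$. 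The most delicate technical point is making this non-degeneracy quantitatively uniform in the Lemma~\ref{lem:RoughPotential} geometry, in particular controlling the near-degenerate regime where $\epsilon^{2}$ has nearly constant argument along $[x_{1},x_{2}]$; I expect this to require a careful analysis of how the common tangent line varies with the ellipses $\mathcal{C}_{1},\mathcal{C}_{2}$ constructed there.
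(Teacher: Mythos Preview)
Your approach is correct and parallels the paper's in its essential architecture: a two--parameter family of potentials perturbing the rough $\tilde V$ of Lemma~\ref{lem:RoughPotential}, a Wronskian shooting map $\Phi$, and Jacobian non-degeneracy coming from the fact that $\tilde u^{2}$ (equivalently $\epsilon^{2}$) points in $\mathbb{R}$-independent directions at two distinct points of the tangent segment. Your variation-of-constants identity $\partial_{\mu_{j}}\Phi|_{0}=-\int\chi_{j}\,u^{+}u^{-}$ is exactly right, and on the middle segment $u^{+}u^{-}\approx e^{-i\varphi_{2}}\epsilon^{2}$ as you say.

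There are two genuine differences from the paper. First, the paper perturbs \emph{before} mollifying: its parameters are a shift $\bar x_{1}$ of the left endpoint and a signed width $\eta$ of a $\pm\varepsilon_{1}\omega^{2}$ step at the midpoint $\tfrac{x_{1}+x_{2}}{2}$; it then convolves the resulting piecewise-constant $\tilde V_{\bar x_{1}\eta}$ with a kernel of scale $\delta$. This yields explicit formulas $\partial_{\bar x_{1}}w=-\omega^{2}\tilde u(x_{1})^{2}$, $\partial_{\eta}w=-\varepsilon_{1}\omega^{2}\tilde u(\tfrac{x_{1}+x_{2}}{2})^{2}$, but at the cost that the dependence on $\delta$ is only Lipschitz, so the paper cannot use the implicit function theorem and instead invokes a topological degree argument (its Lemma~\ref{lem:TopologicalLemma}). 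Your order --- mollify first, then add smooth bumps --- keeps everything $C^{1}$ in all parameters simultaneously and lets you close with the ordinary IFT, which is cleaner. Second, concerning your ``main obstacle'': the quantitative non-degeneracy does not in fact require any analysis of how the tangent line varies. The paper simply observes that $\tilde u(x_{1})=u_{1}(x_{1})$ and $\tilde u(\tfrac{x_{1}+x_{2}}{2})=\tfrac{1}{2}(u_{1}(x_{1})+e^{i\varphi_{2}}u_{2}(x_{2}))$ differ in argument by less than $\tfrac{\pi}{2}$, and that $|\mathrm{Im}(u_{1}(x_{1})\overline{e^{i\varphi_{2}}u_{2}(x_{2})})|\gtrsim\omega^{-1}\mathcal{W}[u_{1},u_{2}]$ directly from (\ref{eq:EqualAngularMomentumForEllipseCoefficients}) and the tangency conditions. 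The same one-line bound works for your $\int\chi_{1}\epsilon^{2}$, $\int\chi_{2}\epsilon^{2}$ once you place $\chi_{1}$ near $x_{1}$ and $\chi_{2}$ near the midpoint; no delicate regime analysis is needed.
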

\emph{Remark on the proof of Lemma \ref{lem:PerturbationOfTheSimplerODE}.}
Setting $Z=0$, $(u_{a}^{(0)},u_{a}^{(1)})=(u_{1}(a),\frac{du_{1}}{dx}(a))$,
$(u_{b}^{(0)},u_{b}^{(1)})=(u_{1}(b),\frac{du_{1}}{dx}(b))$ and $\text{\textgreek{e}}_{1}=1$
in the statement of Lemma \ref{lem:PerturbationOfTheSimplerODE} does
not obscure any of the main difficulties associated to the proof of
the lemma, but the notation is substantially simplified. Thus, it
might be advisable for the reader to adopt this simplifying assumption
on $Z,(u_{a}^{(0)},u_{a}^{(1)}),(u_{b}^{(0)},u_{b}^{(1)})$ and $\text{\textgreek{e}}_{1}$
at first reading.

Under this simplification, the proof of Lemma \ref{lem:PerturbationOfTheSimplerODE}
proceeds by showing that, after suitably perturbing and then smoothing
out the potential $\tilde{V}:\mathbb{R}\rightarrow[0,\text{\textgreek{w}}^{2}]$
of Lemma \ref{lem:RoughPotential}, obtaining a new potential $V:\mathbb{R}\rightarrow[0,2\text{\textgreek{w}}^{2}]$,
the two solutions of equation (\ref{eq:DistortedBasicODE}) coinciding
with $\tilde{u}$ (of Lemma \ref{lem:RoughPotential}) around $x=a$
and $x=b$, respectively, have vanishing Wronskian on $[a,b]$ (and
hence differ only by a constant multiple). 

The aforementioned perturbation of $\tilde{V}$ is achieved through
small variations $\bar{x}_{1}$ of the point $x_{1}$ in the definition
of (\ref{eq:RoughPotential-1}), as well as an addition of a term
of the form $\text{\textgreek{w}}^{2}\text{\textgreek{q}}_{[\frac{x_{1}+x_{2}}{2},\frac{x_{1}+x_{2}}{2}+\text{\textgreek{h}}]}$
for some small $\text{\textgreek{h}}\in\mathbb{R}$ (where $\text{\textgreek{q}}_{[c,d]}$
equals the characteristic function of $[c,d]$ if $c\le d$, and minus
the characteristic function of $[d,c]$ if $c>d$). Denoting with
$\tilde{V}_{\bar{x}_{1}\text{\textgreek{h}}}$ the perturbed potential,
and with $w(\bar{x}_{1},\text{\textgreek{h}})$ the Wronskian of the
two solutions of 
\begin{equation}
\frac{d^{2}u}{dx^{2}}+(\text{\textgreek{w}}^{2}-\tilde{V}_{\bar{x}_{1}\text{\textgreek{h}}})u=0\label{eq:RoughEquationRemark}
\end{equation}
coinciding with $\tilde{u}$ near $x=a$ and $x=b$, respectively,
it is shown that the image of $w$ (as a function of the parameters
$\bar{x}_{1},\text{\textgreek{h}}$) contains an open neighborhood
of $0\in\mathbb{C}$.%
\footnote{That would not be always true had we chosen to perturb $\tilde{V}$
only by varying the two points $x_{1},x_{2}$ in (\ref{eq:RoughPotential-1}).%
} This fact is then shown to imply that $0$ belongs to the image of
the associated Wronskian for equation (\ref{eq:RoughEquationRemark})
after a suitable mollification of the rough potential $\tilde{V}_{\bar{x}_{1}\text{\textgreek{h}}}$,
yielding the proof of Lemma \ref{lem:PerturbationOfTheSimplerODE}
after a suitable choice of the parameters $\bar{x}_{1},\text{\textgreek{h}}$.
The rough dependence of the mollified potential on the mollifying
parameter poses the main difficulty in the last step. 

\medskip{}

\begin{proof}
Let $\tilde{V}:\mathbb{R}\rightarrow[0,\text{\textgreek{w}}^{2}]$,
$\text{\textgreek{j}}_{2}$ and $\tilde{u}:[a,b]\rightarrow\mathbb{C}$
be as in the statement of Lemma \ref{lem:RoughPotential}. Let $\text{\textgreek{h}}_{0}>0$
be sufficiently small in terms of $\text{\textgreek{w}},x_{1},a,b,C_{0},L_{0}$
and $\mathcal{W}[u_{1},u_{2}]$, and let us use the notation 
\begin{equation}
\mathcal{D}_{\text{\textgreek{h}}_{0}}\doteq(x_{1}-\text{\textgreek{h}}_{0},x_{1}+\text{\textgreek{h}}_{0})\times(-\text{\textgreek{h}}_{0},\text{\textgreek{h}}_{0}).
\end{equation}
For any $(\bar{x}_{1},\text{\textgreek{h}})\in\mathcal{D}_{\text{\textgreek{h}}_{0}}$,
we define the function $\tilde{V}_{\bar{x}_{1}\text{\textgreek{h}}}:\mathbb{R}\rightarrow[0,2\text{\textgreek{w}}^{2}]$
as follows:
\begin{equation}
\tilde{V}_{\bar{x}_{1}\text{\textgreek{h}}}(x)=\begin{cases}
0, & x\in(-\infty,\bar{x}_{1})\cup(x_{2},+\infty),\\
\text{\textgreek{w}}^{2}, & x\in[\bar{x}_{1},\frac{x_{1}+x_{2}}{2}+\text{\textgreek{h}}]\cup[\frac{x_{1}+x_{2}}{2},\bar{x}_{2}],\\
(1-\text{\textgreek{e}}_{1})\text{\textgreek{w}}^{2}, & x\in(\frac{x_{1}+x_{2}}{2}+\text{\textgreek{h}},\frac{x_{1}+x_{2}}{2}),
\end{cases}\label{eq:PerturbedFunction}
\end{equation}
when $\text{\textgreek{h}}\ge0$, and 
\begin{equation}
\tilde{V}_{\bar{x}_{1}\text{\textgreek{h}}}(x)=\begin{cases}
0, & x\in(-\infty,\bar{x}_{1})\cup(x_{2},+\infty),\\
\text{\textgreek{w}}^{2}, & x\in[\bar{x}_{1},\frac{x_{1}+x_{2}}{2}+\text{\textgreek{h}}]\cup[\frac{x_{1}+x_{2}}{2},\bar{x}_{2}],\\
(1+\text{\textgreek{e}}_{1})\text{\textgreek{w}}^{2}, & x\in(\frac{x_{1}+x_{2}}{2}+\text{\textgreek{h}},\frac{x_{1}+x_{2}}{2}),
\end{cases}\label{eq:PerturbedFunctionNegativeCase}
\end{equation}
when $\text{\textgreek{h}}\le0$. Notice that $\tilde{V}_{x_{1}0}\equiv\tilde{V}$. 

We will also define the functions $\tilde{u}_{\bar{x}_{1}\text{\textgreek{h}}}^{(a)},\tilde{u}_{\bar{x}_{1}\text{\textgreek{h}}}^{(b)}:[a,b]\rightarrow\mathbb{C}$
as solutions to the following initial value problems: 
\begin{equation}
\begin{cases}
\frac{d^{2}\tilde{u}_{\bar{x}_{1}\text{\textgreek{h}}}^{(a)}}{dx^{2}}+\big(\text{\textgreek{w}}^{2}-\tilde{V}_{\bar{x}_{1}\text{\textgreek{h}}}\big)\tilde{u}_{\bar{x}_{1}\text{\textgreek{h}}}^{(a)}=0\\
\tilde{u}_{\bar{x}_{1}\text{\textgreek{h}}}^{(a)}(a)=\tilde{u}(a)\\
\frac{d\tilde{u}_{\bar{x}_{1}\text{\textgreek{h}}}^{(a)}}{dx}(a)=\frac{d\tilde{u}}{dx}(a)
\end{cases}\label{eq:RoughProblemA}
\end{equation}
and 
\begin{equation}
\begin{cases}
\frac{d^{2}\tilde{u}_{\bar{x}_{1}\text{\textgreek{h}}}^{(b)}}{dx^{2}}+\big(\text{\textgreek{w}}^{2}-\tilde{V}_{\bar{x}_{1}\text{\textgreek{h}}}\big)\tilde{u}_{\bar{x}_{1}\text{\textgreek{h}}}^{(b)}=0\\
\tilde{u}_{\bar{x}_{1}\text{\textgreek{h}}}^{(b)}(b)=\tilde{u}(b)\\
\frac{d\tilde{u}_{\bar{x}_{1}\text{\textgreek{h}}}^{(b)}}{dx}(b)=\frac{d\tilde{u}}{dx}(b).
\end{cases}\label{eq:RoughProblemB}
\end{equation}
Notice that 
\begin{equation}
\tilde{u}_{x_{1}0}^{(a)}=\tilde{u}_{x_{1}0}^{(b)}=\tilde{u}.\label{eq:EqualityBaseCase}
\end{equation}
Furthermore, the map $(\bar{x}_{1},\text{\textgreek{h}})\rightarrow\big(\tilde{u}_{\bar{x}_{1}\text{\textgreek{h}}}^{(a)},\tilde{u}_{\bar{x}_{1}\text{\textgreek{h}}}^{(b)}\big)$
on $\mathcal{D}_{\text{\textgreek{h}}_{0}}$ is $C^{0}$ in the $\big(C^{1}([a,b])\cap C^{2}([a,b]\backslash I_{\text{\textgreek{h}}_{0}})\big)^{2}$
topology and $C^{1}$ in the $\big(C^{1}([a,b]\backslash I_{\text{\textgreek{h}}_{0}})\big)^{2}$
topology, where 
\begin{equation}
I_{\text{\textgreek{h}}_{0}}\doteq(x_{1}-\text{\textgreek{h}}_{0},x_{1}+\text{\textgreek{h}}_{0})\cup(\frac{x_{1}+x_{2}}{2}-\text{\textgreek{h}}_{0},\frac{x_{1}+x_{2}}{2}+\text{\textgreek{h}}_{0})\cup(x_{2}-\text{\textgreek{h}}_{0},x_{2}+\text{\textgreek{h}}_{0})\subset[a,b].
\end{equation}

Let us define the function $w:\mathcal{D}_{\text{\textgreek{h}}_{0}}\rightarrow\mathbb{C}$
as the Wronskian of the pair $\tilde{u}_{\bar{x}_{1}\text{\textgreek{h}}}^{(a)},\tilde{u}_{\bar{x}_{1}\text{\textgreek{h}}}^{(b)}$
for any $x_{3}\in[a,b]$: 
\begin{equation}
w(\bar{x}_{1},\text{\textgreek{h}})=\Bigg(\frac{d\tilde{u}_{\bar{x}_{1}\text{\textgreek{h}}}^{(a)}}{dx}\tilde{u}_{\bar{x}_{1}\text{\textgreek{h}}}^{(b)}-\tilde{u}_{\bar{x}_{1}\text{\textgreek{h}}}^{(a)}\frac{d\tilde{u}_{\bar{x}_{1}\text{\textgreek{h}}}^{(b)}}{dx}\Bigg)\Bigg|_{x=x_{3}}.\label{eq:Wronskian}
\end{equation}
Notice that the value of the right hand side of (\ref{eq:Wronskian})
is independent of the choice of $x_{3}\in[a,b]$. In view of the fact
that the map $(\bar{x}_{1},\text{\textgreek{h}})\rightarrow\big(\tilde{u}_{\bar{x}_{1}\text{\textgreek{h}}}^{(a)},\tilde{u}_{\bar{x}_{1}\text{\textgreek{h}}}^{(b)}\big)$
is $C^{1}$ in the $\big(C^{1}([a,b]\backslash I_{\text{\textgreek{h}}_{0}})\big)^{2}$
topology, we deduce that $w\in C^{1}(\mathcal{D}_{\text{\textgreek{h}}_{0}})$.
Furthermore, 
\begin{equation}
w(x_{1},0)=0\label{eq:ZeroAtOrigin}
\end{equation}
 in view of (\ref{eq:EqualityBaseCase}). 

For any $(\bar{x}_{1},\text{\textgreek{h}})\in\mathcal{D}_{\text{\textgreek{h}}_{0}}$,
with $\text{\textgreek{h}}>0$, the functions $\tilde{u}_{\bar{x}_{1}\text{\textgreek{h}}}^{(a)},\tilde{u}_{\bar{x}_{1}\text{\textgreek{h}}}^{(b)}$
are of the form (with $\text{\textgreek{g}}\in\{a,b\}$):
\begin{equation}
\tilde{u}_{\bar{x}_{1}\text{\textgreek{h}}}^{(\text{\textgreek{g}})}(x)=\begin{cases}
A_{\bar{x}_{1}\text{\textgreek{h}}}^{(1,\text{\textgreek{g}})}e^{i\text{\textgreek{w}}x}+B_{\bar{x}_{1}\text{\textgreek{h}}}^{(1,\text{\textgreek{g}})}e^{-i\text{\textgreek{w}}x}, & x\in[a,\bar{x}_{1}]\\
A_{\bar{x}_{1}\text{\textgreek{h}}}^{(2,\text{\textgreek{g}})}x+B_{\bar{x}_{1}\text{\textgreek{h}}}^{(2,\text{\textgreek{g}})}, & x\in[\bar{x}_{1},\frac{x_{1}+x_{2}}{2}]\\
A_{\bar{x}_{1}\text{\textgreek{h}}}^{(3,\text{\textgreek{g}})}e^{i\text{\textgreek{e}}_{1}^{1/2}\text{\textgreek{w}}x}+B_{\bar{x}_{1}\text{\textgreek{h}}}^{(3,\text{\textgreek{g}})}e^{-i\text{\textgreek{e}}_{1}^{1/2}\text{\textgreek{w}}x}, & x\in[\frac{x_{1}+x_{2}}{2},\frac{x_{1}+x_{2}}{2}+\text{\textgreek{h}}]\\
A_{\bar{x}_{1}\text{\textgreek{h}}}^{(4,\text{\textgreek{g}})}x+B_{\bar{x}_{1}\text{\textgreek{h}}}^{(4,\text{\textgreek{g}})}, & x\in[\frac{x_{1}+x_{2}}{2}+\text{\textgreek{h}},x_{2}]\\
A_{\bar{x}_{1}\text{\textgreek{h}}}^{(5,\text{\textgreek{g}})}e^{i\text{\textgreek{w}}x}+B_{\bar{x}_{1}\text{\textgreek{h}}}^{(5,\text{\textgreek{g}})}e^{-i\text{\textgreek{w}}x}, & x\in[x_{2},b],
\end{cases}\label{eq:GeneralSolution}
\end{equation}
where the constants $A_{\bar{x}_{1}\text{\textgreek{h}}}^{(j,\text{\textgreek{g}})}\in\mathbb{C}$,
$j\in\{1,\ldots,5\}$, $\text{\textgreek{g}}\in\{a,b\}$ are uniquely
determined by the initial conditions of (\ref{eq:RoughProblemA}),
(\ref{eq:RoughProblemB}) and the requirement that the functions $\tilde{u}_{\bar{x}_{1}\text{\textgreek{h}}}^{(\text{\textgreek{g}})}$
are $C^{1}$. Thus, using the identity (\ref{eq:EqualityBaseCase}),
we can readily calculate that 
\begin{gather}
\partial_{\bar{x}_{1}}w|_{(\bar{x}_{1},\text{\textgreek{h}})=(x_{1},0)}=-\text{\textgreek{w}}^{2}\big(\tilde{u}(x_{1})\big)^{2},\label{eq:WronskianFirstPoint}\\
\partial_{\text{\textgreek{h}}}w|_{(\bar{x}_{1},\text{\textgreek{h}})=(x_{1},0)}=-\text{\textgreek{e}}_{1}\text{\textgreek{w}}^{2}\big(\tilde{u}(\frac{x_{1}+x_{2}}{2})\big)^{2}.\label{eq:WronskianMiddlePoint}
\end{gather}
Since $\tilde{u}(x_{1})=u_{1}(x_{1})$ and $\tilde{u}(x_{2})=e^{i\text{\textgreek{j}}_{2}}u_{2}(x_{2})$
differ by a polar angle of magnitude less than $\text{\textgreek{p}}$,
and $\tilde{u}(\frac{x_{1}+x_{2}}{2})=\frac{u_{1}(x_{1})+e^{i\text{\textgreek{j}}_{2}}u_{2}(x_{2})}{2}$
is the middle-point of the line segment defined by $\tilde{u}(x_{1})$
and $\tilde{u}(x_{2})$ (because $\tilde{u}$ is linear on $[x_{1},x_{2}]$),
we deduce that $\tilde{u}(x_{1})$ and $\tilde{u}(\frac{x_{1}+x_{2}}{2})$
differ by a polar angle smaller than $\frac{\text{\textgreek{p}}}{2}$.
Hence, the linear span of $\big(\tilde{u}(x_{1})\big)^{2}$ and $\big(\tilde{u}(\frac{x_{1}+x_{2}}{2})\big)^{2}$
(viewed as vectors in $\mathbb{C}\simeq\mathbb{R}^{2}$) is the whole
plane, and thus, in view of (\ref{eq:WronskianFirstPoint})--(\ref{eq:WronskianMiddlePoint}),
we deduce that the derivative map $Dw:T\mathcal{D}_{\text{\textgreek{h}}_{0}}\rightarrow T\mathbb{C}$
is invertible at $(\bar{x}_{1},\text{\textgreek{h}})=(x_{1},0)$,
satisfying in particular the lower bound: 
\begin{equation}
\big|Dw|_{(\bar{x}_{1},\text{\textgreek{h}})=(x_{1},0)}\big|\ge c\text{\textgreek{e}}_{1}\text{\textgreek{w}}^{2}\big|Im\big(u_{1}(x_{1})\cdot e^{-i\text{\textgreek{j}}_{2}}\bar{u}_{2}(x_{2})\big)\big|>0.\label{eq:LowerBoundDerivativeOrigin}
\end{equation}
Notice also that, in view of the fact that $u_{1},u_{2}:[a,b]\rightarrow\mathbb{C}$
are expressed as (\ref{eq:Ellipses}) satisfying (\ref{eq:EqualAngularMomentumForEllipseCoefficients}),
and $x_{1},x_{2}$ define a common tangent of $u_{1}$ and $e^{i\text{\textgreek{j}}_{2}}u_{2}$,
from (\ref{eq:LowerBoundDerivativeOrigin}) we can estimate (for an
absolute constant $c>0$):
\begin{equation}
\big|Dw|_{(\bar{x}_{1},\text{\textgreek{h}})=(x_{1},0)}\big|\ge c\text{\textgreek{e}}_{1}\text{\textgreek{w}}\mathcal{W}[u_{1},u_{2}].\label{eq:LowerBoundDerivativeOriginWronskian}
\end{equation}

Let $K:\mathbb{R}\rightarrow[0,+\infty)$ be a smooth function supported
in $[-1,1]$ such that $\int_{\mathbb{R}}K(x)\, dx=1$. For any $(\bar{x}_{1},\text{\textgreek{h}})\in\mathcal{D}_{\text{\textgreek{h}}_{0}}$
and $\text{\textgreek{d}}>0$, let us define the function $V_{\bar{x}_{1}\text{\textgreek{h}}}^{(\text{\textgreek{d}})}:\mathbb{R}\rightarrow[0,(1+\text{\textgreek{e}}_{1})\text{\textgreek{w}}^{2}]$
as the convolution: 
\begin{equation}
V_{\bar{x}_{1}\text{\textgreek{h}}}^{(\text{\textgreek{d}})}(x)=\int_{-\infty}^{+\infty}\tilde{V}_{\bar{x}_{1}\text{\textgreek{h}}}(x-y)\cdot\text{\textgreek{d}}^{-1}K(\text{\textgreek{d}}^{-1}y)\, dy.\label{eq:DefinitionSmoothingWithDelta}
\end{equation}
 Notice that for any $\text{\textgreek{d}}>0$, $V_{\bar{x}_{1}\text{\textgreek{h}}}^{(\text{\textgreek{d}})}$
is a smooth function supported in $[x_{1}-\text{\textgreek{d}},x_{2}+\text{\textgreek{d}}]$
and, as $\text{\textgreek{d}}\rightarrow0$, the function $V_{\bar{x}_{1}\text{\textgreek{h}}}^{(\text{\textgreek{d}})}$
converges to $\tilde{V}_{\bar{x}_{1}\text{\textgreek{h}}}$ pointwise
everywhere except at the points $\mathscr{A}=\big\{\bar{x}_{1},x_{2},\frac{x_{1}+x_{2}}{2},\frac{x_{1}+x_{2}}{2}+\text{\textgreek{h}}\big\}$.
We will set $V_{\bar{x}_{1}\text{\textgreek{h}}}^{(0)}\doteq\tilde{V}_{\bar{x}_{1}\text{\textgreek{h}}}$.
Notice that, since $\tilde{V}_{\bar{x}_{1}\text{\textgreek{h}}}$
is piecewise constant, has bounded support and is uniformly bounded
by $2\text{\textgreek{w}}^{2}$, $|V_{\bar{x}_{1}\text{\textgreek{h}}}^{(\text{\textgreek{d}})}-V_{\bar{x}_{1}\text{\textgreek{h}}}^{(0)}|$
is bounded by $4\text{\textgreek{w}}^{2}$ and supported in the set
$\big\{ x:\, dist(x,\mathscr{A})\le\text{\textgreek{d}}\big\}$ for
any $0\le\text{\textgreek{d}}\le1$. In particular, for any $0\le\text{\textgreek{d}}_{2},\text{\textgreek{d}}_{3}\le1$,
we can bound (provided $\text{\textgreek{h}}_{0}$ is smaller than
some absolute constant $c_{0}$):
\begin{equation}
\sup_{(\bar{x}_{1},\text{\textgreek{h}})\in\mathcal{D}_{\text{\textgreek{h}}_{0}}}\int_{-\infty}^{\infty}\big|V_{\bar{x}_{1}\text{\textgreek{h}}}^{(\text{\textgreek{d}}_{2})}(x)-V_{\bar{x}_{1}\text{\textgreek{h}}}^{(\text{\textgreek{d}}_{3})}(x)\big|\, dx\le C\text{\textgreek{w}}^{2}|\text{\textgreek{d}}_{3}-\text{\textgreek{d}}_{2}|.\label{eq:L1DerivativeOfPotential}
\end{equation}

Let $\text{\textgreek{d}}_{0},\text{\textgreek{d}}_{1}>0$ be small
constants (their magnitude will be specified in more detail later).
Let us define the set $\mathcal{B}_{\text{\textgreek{d}}_{0}}^{(u_{1},u_{2})}\subset\mathbb{C}^{4}$
as the set of all $(u_{a}^{(0)},u_{a}^{(1)},u_{b}^{(0)},u_{b}^{(1)})\in\mathbb{C}^{4}$
satisfying (\ref{eq:ClosenessData}). For any $(\text{\textgreek{d}},\text{\textgreek{w}}_{I})\in[0,\text{\textgreek{d}}_{1})\times(-\text{\textgreek{d}}_{0},\text{\textgreek{d}}_{0})$,
$(u_{a}^{(0)},u_{a}^{(1)},u_{b}^{(0)},u_{b}^{(1)})\in\mathcal{B}_{\text{\textgreek{d}}_{0}}^{(u_{1},u_{2})}$
and $(\bar{x}_{1},\text{\textgreek{h}})\in\mathcal{D}_{\text{\textgreek{h}}_{0}}$,
we define the functions $u^{(a)},u^{(b)}:[a,b]\rightarrow\mathbb{C}$
as solutions to the following initial value problems: 
\begin{equation}
\begin{cases}
\frac{d^{2}u^{(a)}}{dx^{2}}+\big(\text{\textgreek{w}}^{2}+Z(\text{\textgreek{w}},\text{\textgreek{w}}_{I},V_{\bar{x}_{1}\text{\textgreek{h}}}^{(\text{\textgreek{d}})}(x),x)-V_{\bar{x}_{1}\text{\textgreek{h}}}^{(\text{\textgreek{d}})}(x)\big)u^{(a)}=0\\
u^{(a)}(a)=u_{a}^{(0)}\\
\frac{du^{(a)}}{dx}(a)=u_{a}^{(1)}
\end{cases}\label{eq:SmoothProblemA}
\end{equation}
and 
\begin{equation}
\begin{cases}
\frac{d^{2}u^{(b)}}{dx^{2}}+\big(\text{\textgreek{w}}^{2}+Z(\text{\textgreek{w}},\text{\textgreek{w}}_{I},V_{\bar{x}_{1}\text{\textgreek{h}}}^{(\text{\textgreek{d}})}(x),x)-V_{\bar{x}_{1}\text{\textgreek{h}}}^{(\text{\textgreek{d}})}(x)\big)u^{(b)}=0\\
u^{(b)}(b)=u_{b}^{(0)}\\
\frac{du^{(b)}}{dx}(b)=u_{b}^{(1)}
\end{cases}\label{eq:SmoothProblemB}
\end{equation}
From now on, we will use the shorthand notation $\mathscr{U}=(u_{a}^{(0)},u_{a}^{(1)},u_{b}^{(0)},u_{b}^{(1)})$
and $\mathscr{U}_{0}=(u_{1}(a),\frac{du_{1}}{dx}(a),u_{2}(b),\frac{du_{2}}{dx}(b))$.
We will also set 
\begin{equation}
\mathcal{U}(\text{\textgreek{d}},\text{\textgreek{w}}_{I},\mathscr{U},\bar{x}_{1},\text{\textgreek{h}})=(u^{(a)},u^{(b)}).\label{eq:PairMap}
\end{equation}

Notice that, in view of the fact that $Z(\text{\textgreek{w}},0,\cdot)=0$,
we have $\mathcal{U}(0,0,\mathscr{U}_{0},\bar{x}_{1},\text{\textgreek{h}})=(\tilde{u}_{\bar{x}_{1}\text{\textgreek{h}}}^{(a)},\tilde{u}_{\bar{x}_{1}\text{\textgreek{h}}}^{(b)})$.
Furthermore, by differentiating (\ref{eq:SmoothProblemA}) and (\ref{eq:SmoothProblemB})
with respect to $\text{\textgreek{w}}_{I},\mathscr{U},\bar{x}_{1},\text{\textgreek{h}}$,
we readily infer that the map $\mathcal{U}:[0,\text{\textgreek{d}}_{1})\times(-\text{\textgreek{d}}_{0},\text{\textgreek{d}}_{0})\times\mathcal{B}_{\text{\textgreek{d}}_{0}}^{(u_{1},u_{2})}\times\mathcal{D}_{\text{\textgreek{h}}_{0}}\rightarrow\big(C^{1}([a,b])\cap C^{2}([a,b]\backslash I_{\text{\textgreek{h}}_{0}})\big)^{2}$
(defined by (\ref{eq:PairMap})) has the following regularity properties:

\begin{enumerate}

\item $\mathcal{U}\in C^{0}\Big\{[0,\text{\textgreek{d}}_{1})\times(-\text{\textgreek{d}}_{0},\text{\textgreek{d}}_{0})\times\mathcal{B}_{\text{\textgreek{d}}_{0}}^{(u_{1},u_{2})}\times\mathcal{D}_{\text{\textgreek{h}}_{0}}\rightarrow\big(C^{1}([a,b])\cap C^{2}([a,b]\backslash I_{\text{\textgreek{h}}_{0}})\big)^{2}\Big\}.$

\item $\mathcal{U}\in C^{0}\Big\{[0,\text{\textgreek{d}}_{1})\rightarrow C^{2}\Big((-\text{\textgreek{d}}_{0},\text{\textgreek{d}}_{0})\times\mathcal{B}_{\text{\textgreek{d}}_{0}}^{(u_{1},u_{2})}\rightarrow C^{1}\big(\mathcal{D}_{\text{\textgreek{h}}_{0}}\rightarrow\big(C^{2}([a,b])\backslash I_{\text{\textgreek{h}}_{0}}\big)^{2}\big)\Big)\Big\}.$

\item \label{enu:Lipschitz} $\mathcal{U}\in C^{0,1}\Big\{[0,\text{\textgreek{d}}_{1})\rightarrow C^{2}\Big((-\text{\textgreek{d}}_{0},\text{\textgreek{d}}_{0})\times\mathcal{B}_{\text{\textgreek{d}}_{0}}^{(u_{1},u_{2})}\rightarrow C^{0}\big(\mathcal{D}_{\text{\textgreek{h}}_{0}}\rightarrow\big(C^{2}([a,b])\backslash I_{\text{\textgreek{h}}_{0}}\big)^{2}\big)\Big)\Big\}.$

\end{enumerate}

In the above, $C^{k}(\mathcal{A}\rightarrow\mathscr{V})$ (or $C^{k,a}(A\rightarrow\mathscr{V})$)
denotes the space of $C^{k}$ (or $C^{k,a}$, respectively) functions
defined on the manifold $A$ and taking values in the Banach space
$\mathscr{V}$. Note that \ref{enu:Lipschitz} is a consequence of
(\ref{eq:L1DerivativeOfPotential}).

Thus, extending (\ref{eq:Wronskian}) on the whole of $\mathscr{D}_{\text{\textgreek{d}}_{1},\text{\textgreek{d}}_{0},\text{\textgreek{h}}_{0}}=[0,\text{\textgreek{d}}_{1})\times(-\text{\textgreek{d}}_{0},\text{\textgreek{d}}_{0})\times\mathcal{B}_{\text{\textgreek{d}}_{0}}^{(u_{1},u_{2})}\times\mathcal{D}_{\text{\textgreek{h}}_{0}}$
as the Wronskian of the associated pair $u^{(a)},u^{(b)}$: 
\begin{equation}
w(\text{\textgreek{d}},\text{\textgreek{w}}_{I},\mathscr{U},\bar{x}_{1},\text{\textgreek{h}})=\Big(\frac{du^{(a)}}{dx}u^{(b)}-u^{(a)}\frac{du^{(b)}}{dx}\Big)\Big|_{x=x_{3}},\label{eq:ExtendedWronskian}
\end{equation}
we infer that 
\begin{align}
w\in C^{0}\Big\{[0,\text{\textgreek{d}}_{1})\rightarrow C^{2}\big( & (-\text{\textgreek{d}}_{0},\text{\textgreek{d}}_{0})\times\mathcal{B}_{\text{\textgreek{d}}_{0}}^{(u_{1},u_{2})}\rightarrow C^{1}(\mathcal{D}_{\text{\textgreek{h}}_{0}})\big)\Big\}\label{eq:RegularityW}\\
 & \bigcap C^{0,1}\Big\{[0,\text{\textgreek{d}}_{1})\rightarrow C^{2}\big((-\text{\textgreek{d}}_{0},\text{\textgreek{d}}_{0})\times\mathcal{B}_{\text{\textgreek{d}}_{0}}^{(u_{1},u_{2})}\rightarrow C^{0}(\mathcal{D}_{\text{\textgreek{h}}_{0}})\big)\Big\},\nonumber 
\end{align}
satisfying the following bounds in view of (\ref{eq:SmoothProblemA}),
(\ref{eq:SmoothProblemB}) and (\ref{eq:L1DerivativeOfPotential})
(provided $\text{\textgreek{d}}_{0},\text{\textgreek{d}}_{1},\text{\textgreek{h}}_{0}$
are smaller than some absolute constant $c_{0}>0$):
\begin{equation}
\sum_{j_{1}+j_{2}=0}^{2}\sum_{j_{3}+j_{4}=0}^{1}|\partial_{\text{\textgreek{w}}_{I}}^{j_{1}}D_{\mathscr{U}}^{j_{2}}\partial_{\bar{x}_{1}}^{j_{3}}\partial_{\text{\textgreek{h}}}^{j_{4}}w|\le C(\text{\textgreek{w}},\mathcal{Z},C_{0},L_{0})\label{eq:UpperBoundDerivatives}
\end{equation}
and 
\begin{equation}
\sup_{\text{\textgreek{d}}_{2},\text{\textgreek{d}}_{3}\in[0,\text{\textgreek{d}}_{1})}\sum_{j_{1}+j_{2}=0}^{2}\frac{|\partial_{\text{\textgreek{w}}_{I}}^{j_{1}}D_{\mathscr{U}}^{j_{2}}w(\text{\textgreek{d}}_{2},\cdot)-\partial_{\text{\textgreek{w}}_{I}}^{j_{1}}D_{\mathscr{U}}^{j_{2}}w(\text{\textgreek{d}}_{3},\cdot)|}{|\text{\textgreek{d}}_{2}-\text{\textgreek{d}}_{3}|}\le C(\text{\textgreek{w}},\mathcal{Z},C_{0},L_{0}),\label{eq:UpperBoundHolder}
\end{equation}
where $C(\text{\textgreek{w}},\mathcal{Z},C_{0},L_{0})>0$ depends
only on $\text{\textgreek{w}},\mathcal{Z},C_{0},L_{0}$. 

In view of (\ref{eq:LowerBoundDerivativeOrigin}), (\ref{eq:UpperBoundDerivatives})
and the fact that $w(0,0,\mathscr{U}_{0},x_{1},0)=0$, we infer that,
provided $\text{\textgreek{h}}_{0},\text{\textgreek{d}}_{0}$ are
sufficiently small in terms of $\text{\textgreek{w}},\mathcal{Z},C_{0},L_{0},\text{\textgreek{e}}_{1}$
and $\mathcal{W}[u_{1},u_{2}]$, then for any $(\text{\textgreek{w}}_{I},\mathscr{U})\in(-\text{\textgreek{d}}_{0},\text{\textgreek{d}}_{0})\times\mathcal{B}_{\text{\textgreek{d}}_{0}}^{(u_{1},u_{2})}$
the map 
\begin{equation}
w(0,\text{\textgreek{w}}_{I},\mathscr{U},\cdot):\mathcal{D}_{\text{\textgreek{h}}_{0}}\rightarrow\mathbb{C}\label{eq:InitialDiffeomorphism}
\end{equation}
is a diffeomorphism onto an open neighborhood of $0\in\mathbb{C}$,
with $w(\{0\}\times\{\text{\textgreek{w}}_{I}\}\times\{\mathscr{U}\}\times\mathcal{D}_{\text{\textgreek{h}}_{0}/2})$
containing a disk of radius at least $c=c(\text{\textgreek{w}},\mathcal{Z},C_{0},L_{0},\mathcal{W}[u_{1},u_{2}])$.
Therefore, in view of (\ref{eq:UpperBoundHolder}), provided $\text{\textgreek{d}}_{1}$
is sufficiently small in terms of $\text{\textgreek{w}},\mathcal{Z},C_{0},L_{0},\mathcal{W}[u_{1},u_{2}],\text{\textgreek{e}}_{1},\text{\textgreek{h}}_{0}$
and $\text{\textgreek{d}}_{0}$, we have for all $(\text{\textgreek{d}},\text{\textgreek{w}}_{I},\mathscr{U})\in[0,\text{\textgreek{d}}_{1})\times(-\text{\textgreek{d}}_{0},\text{\textgreek{d}}_{0})\times\mathcal{B}_{\text{\textgreek{d}}_{0}}^{(u_{1},u_{2})}$:
\begin{equation}
0\notin w(\{\text{\textgreek{d}}\}\times\{\text{\textgreek{w}}_{I}\}\times\{\mathscr{U}\}\times\mathcal{D}_{\text{\textgreek{h}}_{0}}\backslash\mathcal{D}_{\text{\textgreek{h}}_{0}/2}).\label{eq:NotInAnnulus}
\end{equation}
 Thus, (\ref{eq:UpperBoundHolder}), (\ref{eq:NotInAnnulus}) and
Lemma \ref{lem:TopologicalLemma} of the Appendix imply that:
\begin{equation}
0\in w(\{\text{\textgreek{d}}\}\times\{\text{\textgreek{w}}_{I}\}\times\{\mathscr{U}\}\times\mathcal{D}_{\text{\textgreek{h}}_{0}})\mbox{ for all }(\text{\textgreek{d}},\text{\textgreek{w}}_{I},\mathscr{U})\in[0,\text{\textgreek{d}}_{1})\times(-\text{\textgreek{d}}_{0},\text{\textgreek{d}}_{0})\times\mathcal{B}_{\text{\textgreek{d}}_{0}}^{(u_{1},u_{2})}.\label{eq:OpenImageInPerturbation}
\end{equation}

The relation (\ref{eq:OpenImageInPerturbation}) implies (in view
of the definition (\ref{eq:ExtendedWronskian}) of the Wronskian)
that for any $(\text{\textgreek{d}},\text{\textgreek{w}}_{I},\mathscr{U})\in[0,\text{\textgreek{d}}_{1})\times(-\text{\textgreek{d}}_{0},\text{\textgreek{d}}_{0})\times\mathcal{B}_{\text{\textgreek{d}}_{0}}^{(u_{1},u_{2})}$,
there exist some $(\bar{x}_{1},\text{\textgreek{h}})\in\mathcal{D}_{\text{\textgreek{h}}_{0}}$
and a $\text{\textgreek{l}}\in\mathbb{C}\backslash\{0\}$ so that
the pair $(u^{(a)},u^{(b)})$ associated to $(\text{\textgreek{d}},\text{\textgreek{w}}_{I},\mathscr{U},\bar{x}_{1},\text{\textgreek{h}})$
satisfies on $[a,b]$: 
\begin{equation}
u^{(a)}\equiv\text{\textgreek{l}}u^{(b)}.\label{eq:RescaledSolutions}
\end{equation}

We finally construct the required function $u$ solving equation (\ref{eq:DistortedBasicODE})
as follows: Fix a $0<\text{\textgreek{d}}<\text{\textgreek{d}}_{1}$,
and define $(\bar{x}_{1},\text{\textgreek{h}})\in\mathcal{D}_{\text{\textgreek{h}}_{0}}$
and $\text{\textgreek{l}}\in\mathbb{C}\backslash\{0\}$ in terms of
$(\text{\textgreek{d}},\text{\textgreek{w}}_{I},\mathscr{U})$ as
above, so that (\ref{eq:RescaledSolutions}) holds. Then, setting
$V=V_{\bar{x}_{1}\text{\textgreek{h}}}^{(\text{\textgreek{d}})}$
and $u=u^{(a)}$, we deduce that $u$ satisfies equation (\ref{eq:DistortedBasicODE}),
and moreover $\big(u(a),\frac{du}{dx}(a)\big)=(u_{a}^{(0)},u_{a}^{(1)})$
and $\big(u(b),\frac{du}{dx}(b)\big)=(\text{\textgreek{l}}u_{b}^{(0)},\text{\textgreek{l}}u_{b}^{(1)})$.
Thus, the proof of the lemma is complete. 
\end{proof}
In Section \ref{sec:DeformedSpacetimes}, we will also need the following
refinement of Lemma \ref{lem:PerturbationOfTheSimplerODE}, providing
an estimate on the change of the potential $V$ of Lemma \ref{lem:PerturbationOfTheSimplerODE}
under smooth variations of equation (\ref{eq:DistortedBasicODE})
and the associated initial data: 
\begin{lem}
\label{lem:DifferenceV} Let $\text{\textgreek{w}},L_{0}\in\mathbb{R}\backslash\{0\}$,
$C_{0}>0$, $a<b$, $x_{0}$, and $u_{1},u_{2}:[a,b]\rightarrow\mathbb{C}$
be as in the statement of Lemma \ref{lem:PerturbationOfTheSimplerODE}.
Let $Z^{(s)}:\mathbb{R}^{3}\times[a,b]\rightarrow\mathbb{C}$ be a
family of functions depending smoothly on $s\in[0,1]$, satisfying
for all $s\in[0,1]$ $Z^{(s)}(\text{\textgreek{w}},0,\cdot)=0$. Let
also $\text{\textgreek{d}}_{0}>0$ be sufficiently small in terms
of $\text{\textgreek{w}}$, $C_{0}$, $L_{0}$, $\max_{s\in[0,1]}\mathcal{Z}^{(s)}$
and $\mathcal{\mathcal{W}}[u_{1},u_{2}]$, where 
\begin{equation}
\mathcal{Z}^{(s)}\doteq\sup_{|v|\le2\text{\textgreek{w}}^{2}}\sup_{|\text{\textgreek{w}}_{I}|\le1}\sum_{j=0}^{2}\int_{a}^{b}\big|\partial_{\text{\textgreek{w}}_{I}}^{j}Z^{(s)}(\text{\textgreek{w}},\text{\textgreek{w}}_{I},v,x)\big|\, dx.\label{eq:BoundControlTermFamily}
\end{equation}
Then, for any $s\in[0,1]$, any $\text{\textgreek{w}}_{I}\in(-\text{\textgreek{d}}_{0},\text{\textgreek{d}}_{0})$
and any family of initial data sets $\mathscr{U}^{(s)}=(u_{a}^{(0,s)},u_{a}^{(1,s)},u_{b}^{(0,s)},u_{b}^{(1,s)})\in\mathbb{C}^{4}$
depending smoothly on $s\in[0,1]$ and satisfying 
\begin{equation}
\big|u_{a}^{(0,s)}-u_{1}(a)\big|+\big|u_{a}^{(1,s)}-\frac{du_{1}}{dx}(a)\big|+\big|u_{b}^{(0,s)}-u_{2}(b)\big|+\big|u_{b}^{(1,s)}-\frac{du_{2}}{dx}(b)\big|<\text{\textgreek{d}}_{0},\label{eq:ClosenessDataFamily}
\end{equation}
there exists a family of smooth functions $V^{(s)}:\mathbb{R}\rightarrow[0,2\text{\textgreek{w}}^{2}]$,
$s\in[0,1]$, supported in $[x_{0}-\text{\textgreek{d}}_{0},x_{0}+\text{\textgreek{d}}_{0}+\frac{2\text{\textgreek{p}}}{\text{\textgreek{w}}_{R}}+\frac{\text{\textgreek{p}}C_{0}^{2}}{|L_{0}|}]$,
satisfying for all $0\le s_{1}\le s_{2}\le1$ 
\begin{equation}
\int_{a}^{b}\big|V^{(s_{1})}(x)-V^{(s_{2})}(x)\big|\, dx\le C(\text{\textgreek{w}},C_{0},L_{0},\max_{s\in[0,1]}\mathcal{Z}^{(s)},\mathcal{\mathcal{W}}[u_{1},u_{2}])\cdot\int_{s_{1}}^{s_{2}}\Big(|\frac{d}{ds}\mathscr{U}^{(s)}|+\sup_{|\text{\textgreek{w}}_{I}|\le1}\sum_{j=0}^{2}\int_{a}^{b}\big|\partial_{s}\partial_{\text{\textgreek{w}}_{I}}^{j}Z^{(s)}(\text{\textgreek{w}},\text{\textgreek{w}}_{I},x)\big|\, dx\Big)\, ds,\label{eq:L1EstimatePotentials}
\end{equation}
and a smooth family of solutions $u^{(s)}$ to equation 
\begin{equation}
\frac{d^{2}u^{(s)}}{dx^{2}}+\big(\text{\textgreek{w}}^{2}+Z^{(s)}(\text{\textgreek{w}},\text{\textgreek{w}}_{I},V^{(s)}(x),x)-V^{(s)}(x)\big)u^{(s)}=0\label{eq:DistortedBasicODE-1}
\end{equation}
 such that $\big(u^{(s)}(a),\frac{du^{(s)}}{dx}(a)\big)=(u_{a}^{(0,s)},u_{a}^{(1,s)})$
and $\big(u^{(s)}(b),\frac{du^{(s)}}{dx}(b)\big)=(\text{\textgreek{l}}^{(s)}u_{b}^{(0,s)},\text{\textgreek{l}}^{(s)}u_{b}^{(1,s)})$
for some $\text{\textgreek{l}}^{(s)}\in\mathbb{C}\backslash\{0\}$.\end{lem}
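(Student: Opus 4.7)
The plan is to revisit the proof of Lemma~\ref{lem:PerturbationOfTheSimplerODE} and observe that, once the smoothing parameter $\text{\textgreek{d}}$ is held fixed, the parameters $(\bar{x}_{1},\text{\textgreek{h}})\in\mathcal{D}_{\text{\textgreek{h}}_{0}}$ which make the Wronskian (\ref{eq:ExtendedWronskian}) vanish depend smoothly on the remaining inputs $(\text{\textgreek{w}}_{I},\mathscr{U},Z)$ via the implicit function theorem. Substituting the $s$-parametrised data $(\mathscr{U}^{(s)},Z^{(s)})$ therefore yields a smooth curve $s\mapsto(\bar{x}_{1}^{(s)},\text{\textgreek{h}}^{(s)})$ and a corresponding family $V^{(s)}\doteq V_{\bar{x}_{1}^{(s)}\text{\textgreek{h}}^{(s)}}^{(\text{\textgreek{d}})}$ of mollified potentials. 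The $L^{1}$ estimate (\ref{eq:L1EstimatePotentials}) will then follow from the fact that the map $(\bar{x}_{1},\text{\textgreek{h}})\mapsto V_{\bar{x}_{1}\text{\textgreek{h}}}^{(\text{\textgreek{d}})}$ is Lipschitz in the $L^{1}$ norm, combined with a quantitative control of $\partial_{s}(\bar{x}_{1}^{(s)},\text{\textgreek{h}}^{(s)})$ in terms of the $s$-derivatives of $(\mathscr{U}^{(s)},Z^{(s)})$.

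\textbf{Construction and quantitative control.} Fix $\text{\textgreek{d}}\in(0,\text{\textgreek{d}}_{1})$ as in the proof of Lemma~\ref{lem:PerturbationOfTheSimplerODE}, and for each $s\in[0,1]$ define the Wronskian $w^{(s)}(\text{\textgreek{w}}_{I},\bar{x}_{1},\text{\textgreek{h}})$ by repeating the construction (\ref{eq:SmoothProblemA})--(\ref{eq:ExtendedWronskian}) with $Z$ and $\mathscr{U}$ replaced by $Z^{(s)}$ and $\mathscr{U}^{(s)}$, respectively. Provided $\text{\textgreek{d}}_{0},\text{\textgreek{d}}_{1},\text{\textgreek{h}}_{0}$ are small enough in terms of $\text{\textgreek{w}},C_{0},L_{0},\max_{s\in[0,1]}\mathcal{Z}^{(s)}$ and $\mathcal{W}[u_{1},u_{2}]$, the bounds (\ref{eq:UpperBoundDerivatives})--(\ref{eq:UpperBoundHolder}) and the lower bound (\ref{eq:LowerBoundDerivativeOriginWronskian}) on $D_{(\bar{x}_{1},\text{\textgreek{h}})}w^{(s)}$ hold uniformly in $s$ (by continuity from $s=0$, where $w^{(0)}$ agrees with the Wronskian of Lemma~\ref{lem:PerturbationOfTheSimplerODE}), and the implicit function theorem yields a smooth curve $s\mapsto(\bar{x}_{1}^{(s)},\text{\textgreek{h}}^{(s)})\in\mathcal{D}_{\text{\textgreek{h}}_{0}}$ solving $w^{(s)}(\text{\textgreek{w}}_{I},\bar{x}_{1}^{(s)},\text{\textgreek{h}}^{(s)})=0$, with
\begin{equation}
\partial_{s}(\bar{x}_{1}^{(s)},\text{\textgreek{h}}^{(s)})=-\big(D_{(\bar{x}_{1},\text{\textgreek{h}})}w^{(s)}\big)^{-1}\partial_{s}w^{(s)}.
\end{equation}
Differentiating (\ref{eq:SmoothProblemA})--(\ref{eq:SmoothProblemB}) in $s$ and repeating the Duhamel bookkeeping underlying (\ref{eq:UpperBoundDerivatives}) yields
\begin{equation}
|\partial_{s}w^{(s)}|\le C\Big(\big|\tfrac{d}{ds}\mathscr{U}^{(s)}\big|+\sup_{|\text{\textgreek{w}}_{I}|\le1}\sum_{j=0}^{2}\int_{a}^{b}\big|\partial_{s}\partial_{\text{\textgreek{w}}_{I}}^{j}Z^{(s)}(\text{\textgreek{w}},\text{\textgreek{w}}_{I},\cdot,x)\big|\,dx\Big),
\end{equation}
with $C$ depending only on $\text{\textgreek{w}},C_{0},L_{0},\max_{s}\mathcal{Z}^{(s)},\mathcal{W}[u_{1},u_{2}]$; the two $\text{\textgreek{w}}_{I}$ derivatives enter because we reuse the $C^{2}$ regularity in $(\text{\textgreek{w}}_{I},\mathscr{U})$ already established for $w^{(s)}$ in the proof of Lemma~\ref{lem:PerturbationOfTheSimplerODE}. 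The family of solutions $u^{(s)}$ is then the associated $u^{(a)}$ of (\ref{eq:SmoothProblemA}) with $s$-dependent data.

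\textbf{$L^{1}$ estimate and main obstacle.} With $V^{(s)}=V_{\bar{x}_{1}^{(s)}\text{\textgreek{h}}^{(s)}}^{(\text{\textgreek{d}})}$, the unsmoothed piecewise-constant potentials $\tilde{V}_{\bar{x}_{1}\text{\textgreek{h}}}$ defined in (\ref{eq:PerturbedFunction})--(\ref{eq:PerturbedFunctionNegativeCase}) are uniformly bounded by $2\text{\textgreek{w}}^{2}$ and differ between two parameter choices only on intervals of total length $\lesssim|\bar{x}_{1}^{(s_{1})}-\bar{x}_{1}^{(s_{2})}|+|\text{\textgreek{h}}^{(s_{1})}-\text{\textgreek{h}}^{(s_{2})}|$; since convolution with the unit-mass kernel $\text{\textgreek{d}}^{-1}K(\text{\textgreek{d}}^{-1}\cdot)$ is an $L^{1}$ contraction, the same bound transfers to $V^{(s_{1})}-V^{(s_{2})}$, giving $\int_{a}^{b}|V^{(s_{1})}-V^{(s_{2})}|\,dx\lesssim\text{\textgreek{w}}^{2}\big(|\bar{x}_{1}^{(s_{1})}-\bar{x}_{1}^{(s_{2})}|+|\text{\textgreek{h}}^{(s_{1})}-\text{\textgreek{h}}^{(s_{2})}|\big)$. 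Integrating the quantitative bound on $\partial_{s}(\bar{x}_{1}^{(s)},\text{\textgreek{h}}^{(s)})$ from the preceding paragraph over $[s_{1},s_{2}]$ then produces (\ref{eq:L1EstimatePotentials}). The main technical subtlety to monitor is that the mollification (\ref{eq:DefinitionSmoothingWithDelta}) is only Lipschitz (not $C^{1}$) in $\text{\textgreek{d}}$, which is precisely why the implicit function theorem could not absorb $\text{\textgreek{d}}$ as an argument in the original proof of Lemma~\ref{lem:PerturbationOfTheSimplerODE}; here, however, $\text{\textgreek{d}}$ is held fixed while only $s$ varies, so $w^{(s)}$ is smooth in all remaining arguments, the implicit function theorem applies cleanly, and no H\"older-type workaround of the kind used in (\ref{eq:UpperBoundHolder})--(\ref{eq:OpenImageInPerturbation}) is needed.
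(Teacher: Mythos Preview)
Your proposal is correct and follows essentially the same approach as the paper: fix the mollification parameter $\text{\textgreek{d}}$, apply the implicit function theorem (using the uniform lower bound (\ref{eq:LowerBoundDerivativeOriginWronskian}) and upper bounds (\ref{eq:UpperBoundDerivatives})--(\ref{eq:UpperBoundHolder})) to produce a smooth curve $s\mapsto(\bar{x}_{1}(s),\text{\textgreek{h}}(s))$ with $w=0$, bound its $s$-derivative in terms of $\partial_{s}\mathscr{U}^{(s)}$ and $\partial_{s}Z^{(s)}$, and then read off (\ref{eq:L1EstimatePotentials}) from the $L^{1}$-Lipschitz dependence of $V_{\bar{x}_{1}\text{\textgreek{h}}}^{(\text{\textgreek{d}})}$ on $(\bar{x}_{1},\text{\textgreek{h}})$. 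Your explicit remark that $\text{\textgreek{d}}$ is held fixed so the implicit function theorem applies cleanly (avoiding the topological workaround of (\ref{eq:OpenImageInPerturbation})) is exactly the point.
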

\begin{proof}
Let $\text{\textgreek{d}}_{1},\text{\textgreek{h}}_{0}$ be sufficiently
small in terms of $\text{\textgreek{w}}$, $C_{0}$, $L_{0}$, $\max_{s\in[0,1]}\mathcal{Z}^{(s)}$
and $\mathcal{\mathcal{W}}[u_{1},u_{2}]$. We will use the same notations
and conventions as in the proof of Lemma \ref{lem:PerturbationOfTheSimplerODE}.

Let $V^{(0)}:\mathbb{R}\rightarrow[0,2\text{\textgreek{w}}^{2}]$
be the potential function associated to the pair of functions $(u_{1},u_{2})$
and the initial data tetrad $(u_{a}^{(0,0)},u_{a}^{(1,0)},u_{b}^{(0,0)},u_{b}^{(1,0)})$
as in Lemma \ref{lem:PerturbationOfTheSimplerODE}, yielding a smooth
solution $u^{(0)}$ to (\ref{eq:DistortedBasicODE-1}) for $s=0$.
In particular, following the proof of Lemma \ref{lem:PerturbationOfTheSimplerODE},
$V^{(0)}$ is of the form $V_{\bar{x}_{1}(0)\text{\textgreek{h}}(0)}^{(\text{\textgreek{d}})}$
(defined according to (\ref{eq:DefinitionSmoothingWithDelta})) for
some $\text{\textgreek{d}}\in(0,\text{\textgreek{d}}_{1})$ and some
$(\bar{x}_{1}(0),\text{\textgreek{h}}(0))\in\mathcal{D}_{\text{\textgreek{h}}_{0}}$
such that 
\begin{equation}
w(\text{\textgreek{d}},\text{\textgreek{w}}_{I},\mathscr{U}^{(0)},\bar{x}_{1}(0),\text{\textgreek{h}}(0))=0,
\end{equation}
where $w$ is defined as (\ref{eq:ExtendedWronskian}), with $(u^{(a)},u^{(b)})$
solving (\ref{eq:SmoothProblemA}) and (\ref{eq:SmoothProblemB})
with $\mathscr{U}^{(0)}$ replacing $\mathscr{U}$. 

In view of the estimates (\ref{eq:LowerBoundDerivativeOrigin}), (\ref{eq:UpperBoundDerivatives})
and (\ref{eq:UpperBoundHolder}) (as well as (\ref{eq:ClosenessDataFamily})),
provided $\text{\textgreek{d}}_{1},\text{\textgreek{d}}_{0},\text{\textgreek{h}}_{0}$
are sufficiently small in terms of $\text{\textgreek{w}}$, $C_{0}$,
$L_{0}$, $\max_{s\in[0,1]}\mathcal{Z}^{(s)}$ and $\mathcal{\mathcal{W}}[u_{1},u_{2}]$,
there exists a pair $(\bar{x}_{1}(s),\text{\textgreek{h}}(s))\in\mathcal{D}_{\text{\textgreek{h}}_{0}}$
for any $s\in[0,1]$ such that 
\begin{equation}
w(\text{\textgreek{d}},\text{\textgreek{w}}_{I},\mathscr{U}^{(s)},\bar{x}_{1}(s),\text{\textgreek{h}}(s))=0.\label{eq:WronskianZeroOnTheFamily}
\end{equation}
In particular, an application of the implicit function theorem (in
view again of (\ref{eq:LowerBoundDerivativeOrigin}), (\ref{eq:UpperBoundDerivatives})
and (\ref{eq:UpperBoundHolder})) implies that 
\begin{equation}
\big|\frac{d}{ds}\bar{x}_{1}(s)\big|+\big|\frac{d}{ds}\text{\textgreek{h}}(s)\big|\le C(\text{\textgreek{w}},C_{0},L_{0},\max_{s\in[0,1]}\mathcal{Z}^{(s)},\mathcal{\mathcal{W}}[u_{1},u_{2}])\cdot\Big(|\frac{d}{ds}\mathscr{U}^{(s)}|+\sup_{|v|\le2\text{\textgreek{w}}^{2}}\sup_{|\text{\textgreek{w}}_{I}|\le1}\sum_{j=0}^{2}\int_{a}^{b}\big|\partial_{s}\partial_{\text{\textgreek{w}}_{I}}^{j}Z^{(s)}(\text{\textgreek{w}},\text{\textgreek{w}}_{I},v,x)\big|\, dx\Big).\label{eq:DifferenceEstimateParameters}
\end{equation}

Setting $V^{(s)}=V_{\bar{x}_{1}(s)\text{\textgreek{h}}(s)}^{(\text{\textgreek{d}})}$
for $s\in[0,1]$, the existence of a smooth solution $u^{(s)}$ to
(\ref{eq:DistortedBasicODE-1}) satisfying the assumptions of the
lemma follows, in view of (\ref{eq:WronskianZeroOnTheFamily}), as
in the end of the proof of Lemma \ref{lem:PerturbationOfTheSimplerODE}.
Furthermore, in view of (\ref{eq:PerturbedFunction}), (\ref{eq:PerturbedFunctionNegativeCase}),
(\ref{eq:DefinitionSmoothingWithDelta}) and (\ref{eq:DifferenceEstimateParameters}),
inequality (\ref{eq:L1EstimatePotentials}) follows readily.
\end{proof}

\subsection{\label{sub:Proof}Proof of Proposition \ref{prop:StrongerProposition}}

Let $u_{inf}$ be the unique (up to multiplication by a complex constant)
solution of the ordinary differential equation 
\begin{equation}
u^{\prime\prime}+\big(\text{\textgreek{w}}_{R}^{2}-V_{\text{\textgreek{w}}_{R}ml}\big)u=0,\label{eq:SeperatedODE-1}
\end{equation}
 satisfying the outgoing condition 
\begin{equation}
u_{inf}^{\prime}-i\text{\textgreek{w}}_{R}u_{inf}\rightarrow0\label{eq:RadiationConditionInfinity}
\end{equation}
 as $r_{*}\rightarrow+\infty$, and let $u_{hor}$ be the solution
of (\ref{eq:SeperatedODE-1}) satisfying the smoothness condition
on the horizon 
\begin{equation}
u_{hor}^{\prime}+i\big(\text{\textgreek{w}}_{R}-\frac{am}{2Mr_{+}}\big)u_{hor}\rightarrow0\label{eq:SmoothnessConditionHorizon}
\end{equation}
 as $r_{*}\rightarrow-\infty$. Notice that in view of the form of
equation (\ref{eq:SeperatedODE-1}) and the conditions (\ref{eq:RadiationConditionInfinity})
and (\ref{eq:SmoothnessConditionHorizon}), the following limits are
well defined in $\mathbb{C}$: 
\begin{gather}
\lim_{r_{*}\rightarrow+\infty}\big(e^{-i\text{\textgreek{w}}_{R}r_{*}}u_{inf}(r_{*})\big)\doteq u_{inf}(+\infty)\label{eq:InfinityLimit}\\
\lim_{r_{*}\rightarrow-\infty}\big(e^{i(\text{\textgreek{w}}_{R}-\frac{am}{2Mr_{+}})r_{*}}u_{hor}(r_{*})\big)\doteq u_{hor}(-\infty)\label{eq:HorizonLimit}
\end{gather}
for some $u_{inf}(+\infty),u_{hor}(-\infty)\in\mathbb{C}\backslash\{0\}$.

The quantity $Im\big(u^{\prime}\cdot\bar{u}\big)$ is constant as
a function of $r_{*}$ for both $u_{inf}$ and $u_{hor}$, since they
both satisfy (\ref{eq:SeperatedODE-1}) and $\text{\textgreek{w}}_{R},V_{\text{\textgreek{w}}_{R}ml}\in\mathbb{R}$
(see the remark below Proposition \ref{prop:StrongerProposition}).
Thus, from (\ref{eq:RadiationConditionInfinity}), (\ref{eq:SmoothnessConditionHorizon}),
(\ref{eq:InfinityLimit}) and (\ref{eq:HorizonLimit}) we deduce that
\begin{equation}
Im\big(u_{inf}^{\prime}\cdot\bar{u}_{inf}\big)=|u_{inf}(+\infty)|^{2}\text{\textgreek{w}}_{R}\label{eq:AngularMomentumInfinity}
\end{equation}
and 
\begin{equation}
Im\big(u_{hor}^{\prime}\cdot\bar{u}_{hor}\big)=|u_{hor}(-\infty)|^{2}\big(\frac{am}{2Mr_{+}}-\text{\textgreek{w}}_{R}\big).\label{eq:AngularMomentumHorizon}
\end{equation}
Since $(\text{\textgreek{w}}_{R},m,l)$ lies in the superradiant regime
(\ref{eq:SuperradiantRegimeKerr}), the quantities (\ref{eq:AngularMomentumInfinity})
and (\ref{eq:AngularMomentumHorizon}) are of the same sign. Thus,
there exists an $a_{3}\in\mathbb{R}\backslash\{0\}$, such that by
rescaling $u_{inf}\rightarrow a_{3}u_{inf}$,%
\footnote{which is allowed since $u_{inf}$ was only defined up to multiplication
by a non zero complex constant%
} we have 
\begin{equation}
Im\big(u_{inf}^{\prime}\cdot\bar{u}_{inf}\big)=Im\big(u_{hor}^{\prime}\cdot\bar{u}_{hor}\big).\label{eq:EqualityAngularMomenta}
\end{equation}
From now on, we will assume that $u_{inf}$ has been rescaled like
this. 

For any $\text{\textgreek{w}}_{I}\ge0$, we will also define the functions
$u_{inf}^{(\text{\textgreek{w}}_{I})},u_{hor}^{(\text{\textgreek{w}}_{I})}:\mathbb{R}\rightarrow\mathbb{C}$
as the unique solutions of equation 
\begin{equation}
u^{\prime\prime}+\big((\text{\textgreek{w}}_{R}+i\text{\textgreek{w}}_{I})^{2}-V_{(\text{\textgreek{w}}_{R}+i\text{\textgreek{w}}_{I})ml}\big)u=0\label{eq:SeperatedODE-1-1}
\end{equation}
satisfying 
\begin{equation}
\lim_{r_{*}\rightarrow+\infty}\big(e^{-i(\text{\textgreek{w}}_{R}+i\text{\textgreek{w}}_{I})r_{*}}u_{inf}^{(\text{\textgreek{w}}_{I})}(r_{*})\big)=u_{inf}(+\infty)
\end{equation}
and 
\[
\lim_{r_{*}\rightarrow-\infty}\big(e^{i(\text{\textgreek{w}}_{R}+i\text{\textgreek{w}}_{I}-\frac{am}{2Mr_{+}})r_{*}}u_{hor}^{(\text{\textgreek{w}}_{I})}(r_{*})\big)=u_{hor}(-\infty)
\]
respectively.

We will assume that $C_{\text{\textgreek{w}}_{R}ml}$ has been chosen
large in terms of $\text{\textgreek{w}}_{R},m,l,M,a$, so that in
the region $r\ge C_{\text{\textgreek{w}}_{R}ml}$ we can bound 
\begin{equation}
0\le V_{\text{\textgreek{w}}_{R}ml}\le\frac{\text{\textgreek{w}}_{R}^{2}}{2}.\label{eq:SmallnessOfThePotential}
\end{equation}
This is possible since $V_{\text{\textgreek{w}}_{R}ml}\rightarrow0$
as $r_{*}\rightarrow+\infty$ and $V_{\text{\textgreek{w}}_{R}ml}\ge0$
for $r$ sufficiently large in terms of $\text{\textgreek{w}}_{R},m,l,M,a$.
For a $C_{\text{\textgreek{w}}_{R}ml}^{(0)}>0$ sufficiently large
in terms of $\text{\textgreek{w}}_{R},m,l,M,a$, we will fix $\text{\textgreek{q}}:\mathbb{R}\rightarrow[0,1]$
to be a smooth cut-off function such that $\text{\textgreek{q}}\equiv0$
on $(-\infty,r_{*}(r_{0})]\cup[r_{*}(r_{0})+C_{\text{\textgreek{w}}_{R}ml}^{(0)},+\infty)$
and $\text{\textgreek{q}}\equiv1$ on $[r_{*}(r_{0})+1,r_{*}(r_{0})+C_{\text{\textgreek{w}}_{R}ml}^{(0)}-1]$. 

The proof of Proposition \ref{prop:StrongerProposition} will follow
by constructing, for any $0\le\text{\textgreek{w}}_{I}\ll1$ (sufficiently
small in terms of $\text{\textgreek{w}}_{R},m,l,M,a,\text{\textgreek{e}}_{1}$
and $\mathcal{G}$), a smooth function $V_{f}:\mathbb{R}\rightarrow[0,(\frac{1}{4}+\text{\textgreek{e}}_{1})\text{\textgreek{w}}_{R}^{2}]$
supported on $[r_{*}(r_{0})+2,r_{*}(r_{0})+C_{\text{\textgreek{w}}_{R}ml}^{(0)}-2]$
such that, using the ansatz 
\begin{equation}
V=-\text{\textgreek{q}}\big(V_{\text{\textgreek{w}}_{R}ml}(r_{*})-\frac{3\text{\textgreek{w}}_{R}^{2}}{4}\big)+V_{f}(r_{*}),
\end{equation}
 the equation 
\begin{equation}
u^{\prime\prime}+\big((\text{\textgreek{w}}_{R}+i\text{\textgreek{w}}_{I})^{2}-V_{(\text{\textgreek{w}}_{R}+i\text{\textgreek{w}}_{I})ml}(r_{*})+\mathcal{G}(\text{\textgreek{w}}_{R},\text{\textgreek{w}}_{I},V(r_{*}),r_{*})-V(r_{*})\big)u=0\label{eq:FirtsStepPerturbedPotential}
\end{equation}
 admits a solution $u$ which satisfies $u\equiv u_{hor}^{(\text{\textgreek{w}}_{I})}$
for $r_{*}\le r_{*}(r_{0})$ and $u\equiv\text{\textgreek{l}}u_{inf}^{(\text{\textgreek{w}}_{I})}$
for $r_{*}\ge r_{*}(r_{0})+C_{\text{\textgreek{w}}ml}^{(0)}$ and
some $\text{\textgreek{l}}\in\mathbb{C}\backslash\{0\}$. To this
end, we will make use of Lemma \ref{lem:PerturbationOfTheSimplerODE}.

The ordinary differential equation 
\begin{equation}
u^{\prime\prime}+\big(\text{\textgreek{w}}_{R}^{2}-V_{\text{\textgreek{w}}_{R}ml}+\text{\textgreek{q}}(V_{\text{\textgreek{w}}_{R}ml}-\frac{3\text{\textgreek{w}}_{R}^{2}}{4})\big)u=0\label{eq:FirtsStepPerturbedPotential-1}
\end{equation}
 admits two unique solutions $u_{1},u_{2}:\mathbb{R}\rightarrow\mathbb{C}$
satisfying $u_{1}\equiv u_{hor}$ for $r_{*}\le r_{*}(r_{0})$ and
$u_{2}\equiv u_{inf}$ for $r_{*}\ge r_{*}(r_{0})+C_{\text{\textgreek{w}}_{R}ml}^{(0)}$.
For $r_{*}\in[r_{*}(r_{0})+1,r_{*}(r_{0})+C_{\text{\textgreek{w}}_{R}ml}^{(0)}-1]$,
$u_{1}$ and $u_{2}$ satisfy 
\begin{equation}
u^{\prime\prime}+\big(\frac{\text{\textgreek{w}}_{R}}{2}\big)^{2}u=0.\label{eq:FirtsStepPerturbedPotential-1-1}
\end{equation}
By perturbing $\text{\textgreek{q}}$ on the interval $\{r_{*}(r_{0})+C_{\text{\textgreek{w}}_{R}ml}^{(0)}-1\le r_{*}\le r_{*}(r_{0})+C_{\text{\textgreek{w}}_{R}ml}^{(0)}\}$,
if necessary, we will assume without loss of generality that $\frac{u_{1}}{u_{2}}$
is not constant on $\mathbb{R}$ (and thus also on any open interval
of $\mathbb{R}$), so that $\mathcal{W}[u_{1},u_{2}]$ satisfies the
lower bound:
\begin{equation}
\mathcal{W}[u_{1},u_{2}]=\big|u_{1}^{\prime}u_{2}-u_{1}u_{2}^{\prime}\big|\ge c_{\text{\textgreek{w}}_{R}mlr_{0}}>0.\label{eq:NonZeroWronskian}
\end{equation}

Notice that, by comparing (\ref{eq:FirtsStepPerturbedPotential-1})
to equation 
\begin{equation}
u^{\prime\prime}+\big(\text{\textgreek{w}}_{R}^{2}-\text{\textgreek{q}}_{R_{1}}(r_{*})V_{\text{\textgreek{w}}_{R}ml}(r_{+})\big)u=0,
\end{equation}
for some fixed $R_{*}\in\mathbb{R}$, where $\text{\textgreek{q}}_{R_{*}}:\mathbb{R}\rightarrow+\infty$
is a step function satisfying $\text{\textgreek{q}}_{R_{*}}\equiv0$
for $r_{*}\le R_{*}$ and $\text{\textgreek{q}}_{R_{*}}\equiv1$ for
$r_{*}>R_{*}$, we can bound, in view of (\ref{eq:FirtsStepPerturbedPotential})
and the boundary conditions (\ref{eq:InfinityLimit}) and (\ref{eq:HorizonLimit})
(see also \cite{Coppel1965}):
\begin{align}
\sup_{r_{*}\in\mathbb{R}}\big|u_{1}(r_{*})\big|\le C & (\text{\textgreek{w}}_{R},m)|u_{hor}(-\infty)|+C\int_{R_{*}}^{\infty}\big|V_{\text{\textgreek{w}}_{R}ml}+\text{\textgreek{q}}(\frac{\text{\textgreek{w}}_{R}^{2}}{4}-V_{\text{\textgreek{w}}_{R}ml})\big|\, dr_{*}+\label{eq:ExplicitUniformBoundHorizon}\\
 & +C\int_{-\infty}^{R_{*}}\big|V_{\text{\textgreek{w}}_{R}ml}-V_{\text{\textgreek{w}}_{R}ml}(r_{+})+\text{\textgreek{q}}(\frac{\text{\textgreek{w}}_{R}^{2}}{4}-V_{\text{\textgreek{w}}_{R}ml})\big|\, dr_{*}\nonumber 
\end{align}
 and 
\begin{align}
\sup_{r_{*}\in\mathbb{R}}\big|u_{2}(r_{*})\big|\le C & (\text{\textgreek{w}}_{R},m)|u_{inf}(+\infty)|+C\int_{R_{*}}^{\infty}\big|V_{\text{\textgreek{w}}_{R}ml}+\text{\textgreek{q}}(\frac{\text{\textgreek{w}}_{R}^{2}}{4}-V_{\text{\textgreek{w}}ml})\big|\, dr_{*}+\label{eq:ExplicitUniformBoundInfinity}\\
 & +C\int_{-\infty}^{R_{*}}\big|V_{\text{\textgreek{w}}_{R}ml}-V_{\text{\textgreek{w}}_{R}ml}(r_{+})+\text{\textgreek{q}}(\frac{\text{\textgreek{w}}_{R}^{2}}{4}-V_{\text{\textgreek{w}}_{R}ml})\big|\, dr_{*}.\nonumber 
\end{align}
Thus, choosing the constant $C_{\text{\textgreek{w}}_{R}ml}^{(0)}$
sufficiently large in terms of $\text{\textgreek{w}}_{R},m,l,M,a$
we can bound (recall (\ref{eq:EqualityAngularMomenta})):
\begin{equation}
\frac{\sup_{\mathbb{R}}\big(|u_{1}|+|u_{2}|\big)^{2}}{Im\big(u_{1}^{\prime}\cdot\bar{u}_{1}\big)}\ll C_{\text{\textgreek{w}}ml}^{(0)}\label{eq:LargeInterval}
\end{equation}
(notice that the left hand side of (\ref{eq:LargeInterval}) is invariant
under multiplication of $u_{1},u_{2}$ with the same non-zero constant). 

We will also define the functions $u_{1}^{(\text{\textgreek{w}}_{I})},u_{2}^{(\text{\textgreek{w}}_{I})}:\mathbb{R}\rightarrow\mathbb{C}$
as the unique solutions of equation 
\begin{equation}
u^{\prime\prime}+\big((\text{\textgreek{w}}_{R}+i\text{\textgreek{w}}_{I})^{2}+\mathcal{G}(\text{\textgreek{w}}_{R},\text{\textgreek{w}}_{I},-\text{\textgreek{q}}(V_{\text{\textgreek{w}}_{R}ml}(r_{*})-\frac{3\text{\textgreek{w}}_{R}^{2}}{4}),r_{*})-V_{(\text{\textgreek{w}}_{R}+i\text{\textgreek{w}}_{I})ml}(r_{*})+\text{\textgreek{q}}(V_{\text{\textgreek{w}}_{R}ml}(r_{*})-\frac{3\text{\textgreek{w}}_{R}^{2}}{4})\big)u=0\label{eq:FirtsStepPerturbedPotential-1-2}
\end{equation}
 satisfying $u_{1}^{(\text{\textgreek{w}}_{I})}\equiv u_{hor}^{(\text{\textgreek{w}}_{I})}$
for $r_{*}\le r_{*}(r_{0})$ and $u_{2}^{(\text{\textgreek{w}}_{I})}\equiv u_{inf}^{(\text{\textgreek{w}}_{I})}$
for $r_{*}\ge r_{*}(r_{0})+C_{\text{\textgreek{w}}_{R}ml}^{(0)}$.
Notice that $u_{1}^{(0)}=u_{1}$ and $u_{2}^{(0)}=u_{2}$.

Setting 
\begin{equation}
\mathcal{Z}(\text{\textgreek{w}}_{R},\text{\textgreek{w}}_{I},v,r_{*})\doteq2i\text{\textgreek{w}}_{R}\text{\textgreek{w}}_{I}-\text{\textgreek{w}}_{I}^{2}-V_{(\text{\textgreek{w}}_{R}+i\text{\textgreek{w}}_{I})ml}(r_{*})+V_{\text{\textgreek{w}}_{R}ml}(r_{*})+\mathcal{G}(\text{\textgreek{w}}_{R},\text{\textgreek{w}}_{I},v-\text{\textgreek{q}}(V_{\text{\textgreek{w}}_{R}ml}(r_{*})-\frac{3\text{\textgreek{w}}_{R}^{2}}{4}),r_{*}),
\end{equation}
equation (\ref{eq:FirtsStepPerturbedPotential}) restricted on $[r_{*}(r_{0})+1,r_{*}(r_{0})+C_{\text{\textgreek{w}}_{R}ml}^{(0)}-1]$
becomes: 
\begin{equation}
u^{\prime\prime}+\big(\big(\frac{\text{\textgreek{w}}_{R}}{2}\big)^{2}+\mathcal{Z}(\text{\textgreek{w}}_{R},\text{\textgreek{w}}_{I},V_{f}(r_{*}),r_{*})-V_{f}(r_{*})\big)u=0.\label{eq:ModelOfPerturbation}
\end{equation}
Thus, the existence of a function $V_{f}:\mathbb{R}\rightarrow[0,(\frac{1}{4}+\text{\textgreek{e}}_{1})\text{\textgreek{w}}_{R}^{2}\text{\textgreek{w}}_{R}^{2}]$
supported on $[r_{*}(r_{0})+2,r_{*}(r_{0})+C_{\text{\textgreek{w}}_{R}ml}^{(0)}-2]$
such that the equation (\ref{eq:ModelOfPerturbation}) admits a solution
$u$ coinciding with $u_{1}^{(\text{\textgreek{w}}_{I})}$ on $[r_{*}(r_{0})+1,r_{*}(r_{0})+2]$
and with $\text{\textgreek{l}}u_{2}^{(\text{\textgreek{w}}_{I})}$
on $[r_{*}(r_{0})+C_{\text{\textgreek{w}}_{R}ml}^{(0)}-2,r_{*}(r_{0})+C_{\text{\textgreek{w}}_{R}ml}^{(0)}-1]$
(for some $\text{\textgreek{l}}\in\mathbb{C}\backslash\{0\}$) is
guaranteed by Lemma \ref{lem:PerturbationOfTheSimplerODE} (setting
$\frac{\text{\textgreek{w}}_{R}}{2}$ in place of $\text{\textgreek{w}}$
there, as well as $\text{\textgreek{e}}_{1}=1$), in view of (\ref{eq:ExplicitUniformBoundHorizon}),
(\ref{eq:ExplicitUniformBoundInfinity}), (\ref{eq:LargeInterval})
and (\ref{eq:NonZeroWronskian}). Thus, the proof can be concluded
by extending $u$ on the whole of $\mathbb{R}$ under the requirment
that it coincides with $u_{1}^{(\text{\textgreek{w}}_{I})}$ for $r_{*}\le r_{*}(r_{0})+1$
and with $\text{\textgreek{l}}u_{2}^{(\text{\textgreek{w}}_{I})}$
for $r_{*}\ge r_{*}(r_{0})+C_{\text{\textgreek{w}}_{R}ml}^{(0)}-1$.
\qed

\section{\label{sec:ProofOfCorolary}Proof of Theorem \hyperref[Theorem 2]{2}}

In this section, we will provide a more detailed statement and the
proof of Theorem \hyperref[Theorem 2]{2}.

In particular, a more detailed statement of Theorem \hyperref[Theorem 2]{2}
is the following:

\begin{customTheorem}{2}[detailed version] \label{Theorem 2 detailed}
For any $0<|a|<M$ and any frequency triad $(\text{\textgreek{w}}_{0},m_{0},l_{0})\in(\mathbb{R}\backslash\{0\})\times\mathbb{Z}\times\mathbb{Z}_{\ge|m|}$
in the superradiant regime (\ref{eq:SuperradiantRegimeKerr}), there
exist constants $C_{\text{\textgreek{w}}_{0}m_{0}l_{0}}>r_{+}$ and
$C_{\text{\textgreek{w}}_{0}m_{0}l_{0}}^{(0)}>0$ depending only on
$\text{\textgreek{w}}_{0}m_{0}l_{0},a,M$, such that for any $r_{0}>C_{\text{\textgreek{w}}_{0}m_{0}l_{0}}$
and any $\text{\textgreek{w}}_{I}\ge0$ sufficiently small in terms
of $\text{\textgreek{w}}_{0}m_{0}l_{0},r_{0}$, there exists a Lorentzian
metric $g_{M,a}^{(def)}$ on $\mathcal{M}_{M,a}$ such that:

\begin{enumerate}

\item The vector fields $T,\text{\textgreek{F}}$ are Killing vector
fields for $g_{M,a}^{(def)}$. 

\item $g_{M,a}^{(def)}\equiv g_{M,a}$ for $\{r\le r_{0}\}$ and
$\{r\ge r_{0}+C_{\text{\textgreek{w}}_{R}ml}^{(0)}\}$.

\item $g_{M,a}^{(def)}(T,T)<0$ for $r\ge r_{0}$.

\item The wave equation 
\begin{equation}
\square_{g_{M,a}^{(def)}}\text{\textgreek{y}}=0\label{eq:WaveEquationOnRoughPerturbation}
\end{equation}
completely separates in the Boyer-Lindquist coordinate chart.

\item Equation (\ref{eq:WaveEquationOnRoughPerturbation}) admits
an outgoing mode solution with parameters $(\text{\textgreek{w}}_{0}+i\text{\textgreek{w}}_{I},m_{0},l_{0})$.

\end{enumerate}

\end{customTheorem} 

\medskip{}

\noindent \emph{Proof.} Let $C_{\text{\textgreek{w}}_{0}m_{0}l_{0}}$
and $C_{\text{\textgreek{w}}_{0}m_{0}l_{0}}^{(0)}$ be as in the proof
of Theorem \hyperref[Theorem 1 detailed]{1}. 

For a function $h:(2M,+\infty)\rightarrow(0,+\infty)$ satisfying
$h=1$ for $r\notin[r_{0},r_{0}+C_{\text{\textgreek{w}}_{0}m_{0}l_{0}}^{(0)}]$
(to be defined later), let us introduce the following metric on $\mathcal{M}_{M,a}$,
expressed in the Boyer--Lindquist coordinate chart: 
\begin{align}
g_{M,a}^{(def)}=-h(r)\big( & 1-\frac{2Mr}{\text{\textgreek{r}}^{2}}\big)dt^{2}-h(r)\frac{4Mar\sin^{2}\text{\textgreek{j}}}{\text{\textgreek{r}}^{2}}dtd\text{\textgreek{f}}+h^{3}(r)\frac{\text{\textgreek{r}}^{2}}{\text{\textgreek{D}}}dr^{2}+\label{eq:RoughMetric}\\
 & +h(r)\text{\textgreek{r}}^{2}d\text{\textgreek{j}}^{2}+h(r)\sin^{2}\text{\textgreek{j}}\frac{\text{\textgreek{P}}}{\text{\textgreek{r}}^{2}}d\text{\textgreek{f}}^{2}.\nonumber 
\end{align}
Notice that (\ref{eq:RoughMetric}) satisfies the following properties:
\begin{enumerate}
\item The vector fields $\partial_{t},\partial_{\text{\textgreek{f}}}$
are Killing vector fields for $g_{M,a}^{(def)}$.
\item We have $g_{M,a}^{(def)}=g_{M,a}$ for $r\notin[r_{0},r_{0}+C_{\text{\textgreek{w}}_{0}m_{0}l_{0}}^{(0)}]$,
since $h=1$ there.
\item We have $g_{M,a}^{(def)}(\partial_{t},\partial_{t})<0$ in the region
$\{r\ge r_{0}\}$ (provided $C_{\text{\textgreek{w}}_{0}m_{0}l_{0}}$
is sufficiently large). 
\item The wave equation 
\begin{equation}
\square_{g_{M,a}^{(def)}}\text{\textgreek{y}}=0\label{eq:WaveEquationRoughSpacetime}
\end{equation}
 on $(\mathcal{M}_{M,a},g_{M,a}^{(def)})$ completely separates in
the $(t,r,\text{\textgreek{j}},\text{\textgreek{f}})$ coordinate
chart.
\end{enumerate}
The latter property is deduced as follows: The wave operator on $(\mathcal{M}_{M,a},g_{M,a}^{(def)})$
has the following form: 
\begin{equation}
h(r)\text{\textgreek{r}}^{2}\square_{g_{M,a}^{(def)}}\text{\textgreek{y}}=h^{-2}(r)\cdot\partial_{r}\big(\text{\textgreek{D}}\partial_{r}\text{\textgreek{y}}\big)-\big(\frac{(r^{2}+a^{2})^{2}}{\text{\textgreek{D}}}+a^{2}\sin^{2}\text{\textgreek{j}}\big)\partial_{t}^{2}\text{\textgreek{y}}+\big(\frac{1}{\sin^{2}\text{\textgreek{j}}}-\frac{a^{2}}{\text{\textgreek{D}}}\big)\partial_{\text{\textgreek{f}}}^{2}\text{\textgreek{y}}-\frac{4Mar}{\text{\textgreek{D}}}\partial_{t}\partial_{\text{\textgreek{f}}}\text{\textgreek{y}}-\frac{1}{\sin\text{\textgreek{j}}}\partial_{\text{\textgreek{j}}}\big(\sin\text{\textgreek{j}}\partial_{\text{\textgreek{j}}}\text{\textgreek{y}}\big).\label{eq:RadialODE-2}
\end{equation}
Therefore, for any $(\text{\textgreek{w}},m)\in\mathbb{C}\times\mathbb{Z}$,
equation (\ref{eq:WaveEquationRoughSpacetime}) admits solutions of
the form (\ref{eq:ProductSolution}), with $S(\text{\textgreek{j}})$
satisfying (\ref{eq:FormalAngularOde}) and $R(r)$ satisfying 
\begin{equation}
h^{-2}(r)\text{\textgreek{D}}\frac{d}{dr}\big(\text{\textgreek{D}}\frac{dR}{dr}\big)+\big(a^{2}m^{2}-4Mar\text{\textgreek{w}}m+(r^{2}+a^{2})^{2}\text{\textgreek{w}}^{2}-\text{\textgreek{D}}(\text{\textgreek{l}}+a^{2}\text{\textgreek{w}}^{2})\big)R=0.
\end{equation}
 In particular, for any $(\text{\textgreek{w}},m,l)\in\mathbb{C}\times\mathbb{Z}\times\mathbb{Z}_{\ge|m|}$
with $|Im(\text{\textgreek{w}})|$ sufficiently small, equation (\ref{eq:WaveEquationRoughSpacetime})
admits solutions of the form 
\begin{equation}
\text{\textgreek{y}}(t,r,\text{\textgreek{j}},\text{\textgreek{f}})=e^{-i\text{\textgreek{w}}t}e^{im\text{\textgreek{f}}}S_{\text{\textgreek{w}}ml}(\text{\textgreek{j}})R_{\text{\textgreek{w}}ml}(r),
\end{equation}
 with $S_{\text{\textgreek{w}}ml}$ defined as in Section \ref{sub:Seperation}
and $u_{\text{\textgreek{w}}ml}$ (defined in terms of $R$ by (\ref{eq:RescaledRadialFunction}))
satisfying
\begin{equation}
u_{\text{\textgreek{w}}ml}^{\prime\prime}+\big(h^{2}\text{\textgreek{w}}^{2}-V_{\text{\textgreek{w}}ml;h}\big)u_{\text{\textgreek{w}}ml}=0,\label{eq:RoughSeperatedODE}
\end{equation}
where $^{\prime}$ denotes differentiation with respect to the variable
$r_{*}$ (defined by (\ref{eq:AuxiliaryR*})), and $V_{\text{\textgreek{w}}ml;h}$
is defined as: 
\begin{equation}
V_{\text{\textgreek{w}}ml;h}(r)=h^{2}(r)\frac{4Mram\text{\textgreek{w}}-a^{2}m^{2}+\text{\textgreek{D}}(\text{\textgreek{l}}_{\text{\textgreek{w}}ml}+a^{2}\text{\textgreek{w}}^{2})}{(r^{2}+a^{2})^{2}}+\frac{\text{\textgreek{D}}}{(r^{2}+a^{2})^{4}}\big(a^{2}\text{\textgreek{D}}+2Mr(r^{2}-a^{2})\big).\label{eq:RoughPotential}
\end{equation}

We will now show that (\ref{eq:WaveEquationRoughSpacetime}) admits
an outgoing mode solution with parameters $(\text{\textgreek{w}}_{0}+i\text{\textgreek{w}}_{I},m_{0},l_{0})$,
provided $0\le\text{\textgreek{w}}_{I}\ll1$ and the function $h$
is chosen appropriately. Let us set 
\begin{equation}
\text{\textgreek{e}}_{1}=\frac{1}{2}\inf_{r\in[r_{0},r_{0}+C_{\text{\textgreek{w}}_{R}ml}^{(0)}]}\Big(\frac{\text{\textgreek{D}}}{(r^{2}+a^{2})^{4}}\big(a^{2}\text{\textgreek{D}}+2Mr(r^{2}-a^{2})\Big)>0\label{eq:Epsilon1}
\end{equation}
and 
\begin{equation}
\mathcal{G}[\text{\textgreek{w}}_{0},\text{\textgreek{w}}_{I},v,r]\doteq iv\cdot\frac{Im\Big((\text{\textgreek{w}}_{0}+i\text{\textgreek{w}}_{I})^{2}-\frac{4Mram(\text{\textgreek{w}}_{0}+i\text{\textgreek{w}}_{I})-a^{2}m^{2}+\text{\textgreek{D}}(\text{\textgreek{l}}_{\text{\textgreek{w}}ml}+a^{2}(\text{\textgreek{w}}_{0}+i\text{\textgreek{w}}_{I})^{2})}{(r^{2}+a^{2})^{2}}\Big)}{Re\Big((\text{\textgreek{w}}_{0}+i\text{\textgreek{w}}_{I})^{2}-\frac{4Mram(\text{\textgreek{w}}_{0}+i\text{\textgreek{w}}_{I})-a^{2}m^{2}+\text{\textgreek{D}}(\text{\textgreek{l}}_{\text{\textgreek{w}}ml}+a^{2}(\text{\textgreek{w}}_{0}+i\text{\textgreek{w}}_{I})^{2})}{(r^{2}+a^{2})^{2}}\Big)}.\label{eq:PerturbingFunction}
\end{equation}
Let $V(r(\cdot)):\mathbb{R}\rightarrow[0,+\infty)$ be the potential
function provided by Proposition \ref{prop:StrongerProposition} for
the the function $\mathcal{G}$ above and the parameters $(\text{\textgreek{w}}_{0},m_{0},l_{0})$,
$r_{0}$, $\text{\textgreek{e}}_{1}$ and $\text{\textgreek{w}}_{I}$.
Finally, let us then define
\begin{equation}
h^{2}(r)=1-\Big(Re\big((\text{\textgreek{w}}_{0}+i\text{\textgreek{w}}_{I})^{2}-\frac{4Mram(\text{\textgreek{w}}_{0}+i\text{\textgreek{w}}_{I})-a^{2}m^{2}+\text{\textgreek{D}}(\text{\textgreek{l}}_{\text{\textgreek{w}}ml}+a^{2}(\text{\textgreek{w}}_{0}+i\text{\textgreek{w}}_{I})^{2})}{(r^{2}+a^{2})^{2}}\big)\Big)^{-1}V(r).\label{eq:DefinitionOfH}
\end{equation}

Notice that, in view of (\ref{eq:KerrPotential}), (\ref{eq:UpperBoundPrecise})
and \ref{eq:Epsilon1}, provided $\text{\textgreek{w}}_{I}$ is sufficiently
small  in terms of $\text{\textgreek{w}}_{0},m_{0},l_{0},M,a,r_{0},\text{\textgreek{e}}_{1}$,
the relation \ref{eq:DefinitionOfH} indeed guarantees that 
\begin{equation}
h^{2}>0,\label{eq:PositivityH}
\end{equation}
and hence $h$ is a smooth positive function. Since $V(r)$ is supported
in $[r_{0},r_{0}+C_{\text{\textgreek{w}}_{0}m_{0}l_{0}}^{(0)}]$,
from (\ref{eq:DefinitionOfH}) we have 
\begin{equation}
h\equiv1\mbox{ outside }[r_{0},r_{0}+C_{\text{\textgreek{w}}_{0}m_{0}l_{0}}^{(0)}].\label{eq:His1}
\end{equation}
Notice also that (\ref{eq:PerturbingFunction}) and (\ref{eq:DefinitionOfH})
yield: 
\begin{equation}
h^{2}(\text{\textgreek{w}}_{0}+i\text{\textgreek{w}}_{I})^{2}-V_{\text{\textgreek{w}}_{0}+i\text{\textgreek{w}}_{I},m_{0}l_{0};h}(r)=(\text{\textgreek{w}}_{0}+i\text{\textgreek{w}}_{I})^{2}-V_{\text{\textgreek{w}}_{0}+i\text{\textgreek{w}}_{I},m_{0}l_{0}}(r)+\mathcal{G}[\text{\textgreek{w}}_{0},\text{\textgreek{w}}_{I},V(r),r]-V(r)\label{eq:EqualityPotentials}
\end{equation}
(where $V_{\text{\textgreek{w}}ml}$ is defined by (\ref{eq:KerrPotential})). 

In view of (\ref{eq:EqualityPotentials}), equation (\ref{eq:RoughSeperatedODE})
for the frequency triad $(\text{\textgreek{w}}_{0}+i\text{\textgreek{w}}_{I},m_{0},l_{0})$
becomes 
\begin{equation}
u^{\prime\prime}+\big((\text{\textgreek{w}}_{0}+i\text{\textgreek{w}}_{I})^{2}-V_{(\text{\textgreek{w}}_{0}+i\text{\textgreek{w}}_{I})m_{0}l_{0}}(r_{*})+\mathcal{G}(\text{\textgreek{w}}_{0},\text{\textgreek{w}}_{I},V(r_{*}),r_{*}\big)u=0.\label{eq:TheOldODEPotential}
\end{equation}
In view of Proposition \ref{prop:StrongerProposition} (and our choice
of $V$), provided $\text{\textgreek{w}}_{I}$ is sufficiently small
 in terms of $\text{\textgreek{w}}_{0},m_{0},l_{0},M,a,r_{0},\text{\textgreek{e}}_{1}$,
equation (\ref{eq:TheOldODEPotential}) admits a solution $u$ satisfying
at $r_{*}=\pm\infty$ the boundary conditions (\ref{eq:ODEForMode}).
Thus, we finally deduce that:
\begin{enumerate}[resume]
\item With $h$ defined as above, equation (\ref{eq:WaveEquationRoughSpacetime})
admits an outgoing real mode solution with parameters $(\text{\textgreek{w}}_{0},m_{0},l_{0})$.
\qed
\end{enumerate}

\section{\label{sec:DeformedSpacetimes}Superradiant instabilities for the
free wave equation on spacetimes with normally hyperbolic trapping}

According to our discussion in Section \ref{sub:IntroAsymptoticallyConic},
in order to better clarify the absence of any connection between superradiance
related mode instabilities and the structure of trapping outside the
ergoregion, it would be preferable to replace the spacetimes $(\mathcal{M}_{M,a},g_{M,a}^{(def)})$
of Theorem \hyperref[Theorem 2 detailed]{2} with a family of spacetimes
admitting real or exponentially growing mode solutions to (\ref{eq:WaveEquationIntro})
and at the same time having a ``nice'' trapped set. In this section,
we will construct such a family of spacetimes $(\mathcal{M}_{0},g_{M,a}^{(h)})$.
We note already that the nature of our construction forces the spacetimes
$(\mathcal{M}_{0},g_{M,a}^{(h)})$ to be asymptotically conic instead
of asymptotically flat (see the remarks below Theorem \hyperref[Theorem 3]{3}).

\subsection{\label{sub:MainResultAsymptoticallyConic}Theorem 3: Statement and
remarks on the proof}

The main result of this section will be the following:

\begin{customTheorem}{3} \label{Theorem 3} For any $\text{\textgreek{w}}_{R}\in\mathbb{R}\backslash\{0\}$
and $0\le\text{\textgreek{w}}_{I}\ll1$ (small in terms of $\text{\textgreek{w}}_{R}$),
there exists a smooth family of globally hyperbolic Lorentzian metrics
$g_{M,a}^{(h)}$, $M>0$, $a\ge0$, on the ambient ``Schwarzschild
exterior'' manifold $\mathcal{M}_{0}=\mathbb{R}\times(2M,+\infty)\times\mathbb{S}^{2}$,
such that for any $M>0$, $R_{b}>2M$, $k\in\mathbb{N}$ and $\text{\textgreek{e}}>0$,
there exists an $a_{0}\in(0,M)$ so that for $0<a\le a_{0}$ the following
properties are satisfied:

\begin{enumerate}

\item In the $(t,r,\text{\textgreek{j}},\text{\textgreek{f}})$ coordinate
chart on $\mathcal{M}_{0}$, the vector fields $T=\partial_{t}$ and
$\text{\textgreek{F}}=\partial_{\text{\textgreek{f}}}$ are Killing
vector fields for $g_{M,a}^{(h)}$.

\item The spacetime $(\mathcal{M}_{0},g_{M,a}^{(h)})$ is asymptotically
conic.

\item The metric $g_{M,a}^{(h)}$ can be smoothly extended up to
$\mathcal{H}=\{r=2M\}$ in the ``Schwarzschild star'' coordinate
chart on $\mathcal{M}_{0}$. In this extension, $\mathcal{H}$ is
the event horizon associated to the asymptotically conic end of $(\mathcal{M}_{0},g_{M,a}^{(h)})$.
Furthermore, $\mathcal{H}$ is a Killing horizon with positive surface
gravity.

\item On the closed subset $\{2M\le r\le R_{b}\}$ of $\mathcal{M}_{0}\cup\mathcal{H}^{+}$,
the metrics $g_{M,a}^{(h)}$ and $g_{M,a}$ are $\text{\textgreek{e}}$-close
in the $C^{k}$ norm defined with respect to the ``Schwarzschild
star'' coordinate chart on $\{2M\le r\le R_{b}\}$. Furthermore,
 the ergoregion of $(\mathcal{M}_{0},g_{M,a}^{(h)})$ is non-empty,
contained in the set $\{r\le2M+\text{\textgreek{e}}\}$, and 
\begin{equation}
\sup_{\mathcal{M}_{M,0}}g_{M,a}^{(h)}(T,T)<\text{\textgreek{e}}.
\end{equation}

\item The wave equation 
\begin{equation}
\square_{g_{M,a}^{(h)}}\text{\textgreek{y}}=0\label{eq:WaveEquationAsymptoticallyConic}
\end{equation}
completely separates in the $(t,r,\text{\textgreek{j}},\text{\textgreek{f}})$
coordinate chart.

\item The wave equation (\ref{eq:WaveEquationAsymptoticallyConic})
admits an outgoing mode solution with frequency parameter $\text{\textgreek{w}}_{R}+i\text{\textgreek{w}}_{I}$.

\item The trapped set of $(\mathcal{M}_{0},g_{M,a}^{(h)})$ is normally
hyperbolic.

\end{enumerate}

\end{customTheorem}

The proof of Theorem \hyperref[Theorem 3]{3} will be presented in
Section  \ref{sub:FinalProofDeformed}.

The metric $g_{M,a}^{(h)}$ in Theorem \hyperref[Theorem 3]{3} is
constructed as an asymptotically conic perturbation of an asymptotically
flat metric $g_{M,a}^{(1)}$ on $\mathcal{M}_{0}$ with normally hyperbolic
trapped set - see Sections \ref{sub:AuxiliaryMetric}--\ref{sub:DeformedMetric}.
The metric $g_{M,a}^{(1)}$ bears many algebraic and geometric similarities
with the Kerr metric $g_{M,a}$, but it does not solve the vacuum
Einstein equations. 

The main difference between the spacetimes $(\mathcal{M}_{M,a},g_{M,a}^{(def)})$
of Theorem \hyperref[Theorem 2 detailed]{2} and $(\mathcal{M}_{0},g_{M,a}^{(h)})$
of Theorem \hyperref[Theorem 3]{3} lies exactly in the structure
of the trapped set: The spacetime $(\mathcal{M}_{M,a},g_{M,a}^{(def)})$
contains stable trapped null geodesics, while the trapped set of $(\mathcal{M}_{0},g_{M,a}^{(h)})$
is normally hyperbolic. The normal hyperbolicity of the trapped set
of $(\mathcal{M}_{0},g_{M,a}^{(h)})$ can actually be deduced from
the high frequency integrated local energy decay statement of Proposition
\ref{prop:HighFrequencyILED}.

At this point, we should remark the following:

\begin{enumerate}

\item The construction of the auxiliary metric $g_{M,a}^{(1)}$,
as an intermediate step for establishing Theorem \hyperref[Theorem 3]{3},
was motivated as follows: Attempting to modify the Kerr metric $g_{M,a}$
in the region $\{r\gg1\}$, so that the wave equation $\square_{\tilde{g}_{M,a}}\text{\textgreek{y}}=0$
for the new metric $\tilde{g}_{M,a}$ admits an outgoing mode solution
while at the same time remaining completely separable, one encounters
the following obstacle to controlling the structure of the trapped
set: The separability of $\square_{\tilde{g}_{M,a}}$, combined with
the requirement that $\tilde{g}_{M,a}=g_{M,a}$ in the region $\{r\lesssim1\}$
and the fact that the angular separation variable $\text{\textgreek{L}}$
for $\square_{g_{M,a}}$ depends on the time separation variable $\text{\textgreek{w}}$
(see Section \ref{sub:Seperation} for the separation of the wave
equation on Kerr spacetimes), imply a rigid relation between $\tilde{g}_{M,a}(\partial_{t},\partial_{t})$
and $\tilde{g}_{M,a}(\partial_{\text{\textgreek{f}}},\partial_{\text{\textgreek{f}}})$,
which leaves almost no room for deforming $g_{M,a}$ without introducing
stably trapped null geodesics. In view of these difficulties, we introduced
a novel metric $g_{M,a}^{(1)}$ on $\mathcal{M}_{0}$, which has many
algebraic and analytic similarities with the slowly rotating Kerr
metric (such as the separability of the wave operator and the polynomial
decay of solutions to the wave equation), but for which the angular
separation variable $\text{\textgreek{L}}$ does not depend on $\text{\textgreek{w}}$. 

\item The conic asymptotics of the metric $g_{M,a}^{(h)}$ in Theorem
\hyperref[Theorem 3]{3} are a technical necessity imposed by the
methods used in the proof, which, in view of the conditions imposed
on the structure of the trapped set, enforces a monotonicity condition
on the angular components of $g_{M,a}^{(h)}$.%
\footnote{This is simply the condition $h^{\prime}\ge0$ appearing in the proof
of Lemma \ref{lem:IledDeforemdMetric}.%
} It would be of particular interest to examine whether $g_{M,a}^{(h)}$
can be replaced by an asymptotically flat metric with similar properties.
However, since the $r^{p}$-weighted estimates of \cite{DafRod7,Moschidisc}
also apply in the asymptotically conic case, the conic asymptotics
of $g_{M,a}^{(h)}$ pose no additional difficulties (compared to the
asymptotically flat case) in the study of the decay properties of
solutions to (\ref{eq:WaveEquationAsymptoticallyConic}).

\end{enumerate}

As we remarked before, the normal hyperbolicity of the trapped set
of the spacetimes $(\mathcal{M}_{0},g_{M,a}^{(h)})$ of Theorem \hyperref[Theorem 3]{3}
can be viewed as a consequence of the statement that $(\mathcal{M}_{0},g_{M,a}^{(h)})$
satisfies a high frequency integrated local energy decay estimate:
\begin{prop}
\label{prop:HighFrequencyILED}Let $(\mathcal{M}_{0},g_{M,a}^{(h)})$
be the spacetimes of Theorem \hyperref[Theorem 3]{3}. Let $\bar{t}:\mathcal{M}_{0}\cup\mathcal{H}^{+}\rightarrow\mathbb{R}$
be a smooth function with spacelike level sets intersecting $\mathcal{H}^{+}$,
such that $T(\bar{t})=1$ and $\bar{t}\equiv t$ for $\{r\ge3M\}$.
Then, for any $\text{\textgreek{e}}>0$, there exists a (small) parameter
$a_{0}>0$ and (large) parameters $2M\ll R_{-}\ll R_{+}$, with $R_{-}$
independent of $\text{\textgreek{e}}$, so that the following statement
holds for any $0\le a\le a_{0}$: For any solution $\text{\textgreek{y}}$
to the inhomogeneous wave equation 
\begin{equation}
\square_{g_{M,a}^{(h)}}\text{\textgreek{y}}=F\label{eq:InhomogeneousWaveEquationAsymptoticallyConic}
\end{equation}
 on $(\mathcal{M}_{0},g_{M,a}^{(h)})$ which is smooth up to $\mathcal{H}^{+}$,
we can bound for any $\text{\textgreek{t}}_{1}\le\text{\textgreek{t}}_{2}$:
\begin{equation}
\begin{split}\int_{\{\text{\textgreek{t}}_{1}\le\bar{t}\le\text{\textgreek{t}}_{2}\}}\Big((1- & \frac{2M}{r})r^{-2}|\partial_{r}\text{\textgreek{y}}|^{2}+\text{\textgreek{q}}_{r\neq3M}(r)\cdot r^{-2}J_{\text{\textgreek{m}}}^{N}(\text{\textgreek{y}})N^{\text{\textgreek{m}}}+r^{-4}|\text{\textgreek{y}}|^{2}\Big)\, dg_{M,a}^{(h)}+\int_{\{\bar{t}=\text{\textgreek{t}}_{2}\}}J_{\text{\textgreek{m}}}^{N}(\text{\textgreek{y}})\bar{n}^{\text{\textgreek{m}}}\le\\
\le & C_{R_{-},R_{+}}\Big(\int_{\{\bar{t}=\text{\textgreek{t}}_{1}\}}J_{\text{\textgreek{m}}}^{N}(\text{\textgreek{y}})\bar{n}^{\text{\textgreek{m}}}+\mathcal{Z}_{\text{\textgreek{t}}_{1},\text{\textgreek{t}}_{2}}[F,\text{\textgreek{y}};R_{-}]+\int_{\{R_{-}\le r_{*}\le R_{+}\}\cap\{\text{\textgreek{t}}_{1}\le\bar{t}\le\text{\textgreek{t}}_{2}\}}\min\big\{|T\text{\textgreek{y}}|^{2},|\text{\textgreek{y}}|^{2}\big\}\, dg_{M,a}^{(h)}\Big),
\end{split}
\label{eq:DegenerateIled-1-2}
\end{equation}
\begin{equation}
\begin{split}\int_{\{\text{\textgreek{t}}_{1}\le\bar{t}\le\text{\textgreek{t}}_{2}\}}\Big(r^{-2} & J_{\text{\textgreek{m}}}^{N}(\text{\textgreek{y}})N^{\text{\textgreek{m}}}+r^{-4}|\text{\textgreek{y}}|^{2}\Big)\, dg_{M,a}^{(h)}+\sum_{j=0}^{1}\int_{\{\bar{t}=\text{\textgreek{t}}_{2}\}}J_{\text{\textgreek{m}}}^{N}(T^{j}\text{\textgreek{y}})\bar{n}^{\text{\textgreek{m}}}\le\\
\le & C_{R_{-},R_{+}}\Big(\sum_{j=0}^{1}\int_{\{\bar{t}=\text{\textgreek{t}}_{1}\}}J_{\text{\textgreek{m}}}^{N}(T^{j}\text{\textgreek{y}})\bar{n}^{\text{\textgreek{m}}}+\sum_{j=0}^{1}\mathcal{Z}_{\text{\textgreek{t}}_{1},\text{\textgreek{t}}_{2}}[T^{j}F,T^{j}\text{\textgreek{y}};R_{-}]+\\
 & \hphantom{C_{R_{-},R_{+}}\Big(}+\int_{\{R_{-}\le r_{*}\le R_{+}\}\cap\{\text{\textgreek{t}}_{1}\le\bar{t}\le\text{\textgreek{t}}_{2}\}}\min\big\{|T\text{\textgreek{y}}|^{2},|\text{\textgreek{y}}|^{2}\big\}\, dg_{M,a}^{(h)}\Big).
\end{split}
\label{eq:NonDegenerateIled-1-2}
\end{equation}
 In the above, the vector field $N$ is everywhere timelike on $\mathcal{M}_{0}\cup\mathcal{H}^{+}$
and satisfies $[T,N]=0$ and $N\equiv T$ for $r\ge R_{2}\gg1$, $\mathcal{Z}_{\text{\textgreek{t}}_{1},\text{\textgreek{t}}_{2}}[F,\text{\textgreek{y}};R_{-}]$
is defined as 
\[
\mathcal{Z}_{\text{\textgreek{t}}_{1},\text{\textgreek{t}}_{2}}[F,\text{\textgreek{y}};R_{-}]\doteq\int_{\{r\le R_{-}\}\cap\{\text{\textgreek{t}}_{1}\le\bar{t}\le\text{\textgreek{t}}_{2}\}}|F|^{2}+\int_{\{r\ge R_{-}\}\cap\{\text{\textgreek{t}}_{1}\le\bar{t}\le\text{\textgreek{t}}_{2}\}}|F|\cdot\big(|\partial_{r}\text{\textgreek{y}}|+r^{-1}|\text{\textgreek{y}}|\big)+\int_{\{\text{\textgreek{t}}_{1}\le\bar{t}\le\text{\textgreek{t}}_{2}\}}|F|\cdot|T\text{\textgreek{y}}|,
\]
$\bar{n}^{\text{\textgreek{m}}}$ is the future directed unit normal
to the hypersurfaces $\{\bar{t}=\text{\textgreek{t}}\}$, the cut-off
function $\text{\textgreek{q}}_{r\neq3M}:(2M,+\infty)\rightarrow[0,1]$
vanishes on $[3M-\text{\textgreek{e}},3M+\text{\textgreek{e}}]$ and
is identically $1$ outside $(3M-2\text{\textgreek{e}},3M+2\text{\textgreek{e}})$,
and the constant $C_{R_{-},R_{+}}$ depends only on the precise choice
of $R_{-}$ and $R_{+}$.
\end{prop}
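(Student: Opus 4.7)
The plan is to prove Proposition \ref{prop:HighFrequencyILED} by following the general microlocal/vector field framework for integrated local energy decay on Kerr-like backgrounds developed in \cite{DafRodSchlap}, adapted to the asymptotically conic setting. Since $\square_{g_{M,a}^{(h)}}$ separates completely in Boyer--Lindquist-type coordinates (property 5 of Theorem \hyperref[Theorem 3]{3}), I would perform a Fourier/spherical harmonic decomposition of $\text{\textgreek{y}}$ (after appropriate time cut-off, as in \cite{DafRodSchlap}) reducing the estimate to a family of radial ODEs in $r_{*}$ of the form $u^{\prime\prime}+(\text{\textgreek{w}}^{2}-\tilde{V}_{\text{\textgreek{w}}ml})u=H$, and then construct frequency-localised multipliers $f_{\text{\textgreek{w}}ml}u^{\prime}+h_{\text{\textgreek{w}}ml}u$ whose microlocal energies, when summed over frequencies, control the bulk terms on the left-hand side of \eqref{eq:DegenerateIled-1-2}--\eqref{eq:NonDegenerateIled-1-2}.

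The main obstacle is the trapping at $r=3M$. The idea is that, since $g_{M,a}^{(h)}$ is $C^{k}$-close to $g_{M,a}$ in $\{2M\le r\le R_{b}\}$ and since for Schwarzschild ($a=0$) the trapped set is normally hyperbolic and coincides with the photon sphere $\{r=3M\}$, for $a$ small enough the same will be true of $g_{M,a}^{(h)}$. In the separated picture this manifests as the existence, for each frequency triad in the non-superradiant regime, of a unique simple maximum $r_{\text{\textgreek{w}}ml}^{\max}\to3M$ of $\tilde{V}_{\text{\textgreek{w}}ml}$ with $(r_{\text{\textgreek{w}}ml}^{\max}-3M)\to0$ as $a\to0$ and $|\text{\textgreek{w}}|\to\infty$. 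A Morawetz multiplier of the form $f=f(r_{*};\text{\textgreek{w}},m,l)$ vanishing precisely at $r_{\text{\textgreek{w}}ml}^{\max}$ then produces a positive definite bulk term that degenerates in a neighborhood of $r=3M$; the degeneration is absorbed by the cut-off $\text{\textgreek{q}}_{r\neq3M}$, and the remaining low-frequency contributions (where $f$ is not definite) are dominated by the zeroth-order $\min\{|T\text{\textgreek{y}}|^{2},|\text{\textgreek{y}}|^{2}\}$ term on the right-hand side, localised in $\{R_{-}\le r_{*}\le R_{+}\}$ as in \cite{DafRodSchlap}.

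For the superradiant frequency regime $\text{\textgreek{w}}(\text{\textgreek{w}}-\frac{am}{2Mr_{+}})<0$ I would exploit property 4 of Theorem \hyperref[Theorem 3]{3}: since the ergoregion shrinks to $\{r=2M\}$ and $\sup g_{M,a}^{(h)}(T,T)<\text{\textgreek{e}}$ as $a\to a_{0}\to0$, the superradiant frequency range is so narrow that the corresponding output can be handled by a standard $T+\text{\textgreek{w}}_{+}\text{\textgreek{F}}$-type current at fixed frequency, paying a small loss in $a$ which can be absorbed into the zeroth-order term on the right (since superradiant frequencies are bounded in $|\text{\textgreek{w}}|$ for fixed $m$, avoiding the trapping regime for $a$ sufficiently small). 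Near the horizon $\mathcal{H}^{+}$, the fact that $\mathcal{H}$ is a Killing horizon of positive surface gravity (property 3) gives access to the Dafermos--Rodnianski redshift vector field $N$, providing the non-degenerate $J_{\text{\textgreek{m}}}^{N}(\text{\textgreek{y}})N^{\text{\textgreek{m}}}$ control outside $\{|r-3M|<2\text{\textgreek{e}}\}$ in \eqref{eq:DegenerateIled-1-2}, at the cost of introducing the $\min\{|T\text{\textgreek{y}}|^{2},|\text{\textgreek{y}}|^{2}\}$ term (as in \cite{DafRod5}).

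In the asymptotically conic far region $\{r\ge R_{+}\}$, I would employ the $r^{p}$-weighted energy method of \cite{DafRod7,Moschidisc}, which, as noted in the discussion after Theorem \hyperref[Theorem 3]{3}, works without modification in the asymptotically conic case; this produces the weighted spacetime norms $r^{-2}(1-\frac{2M}{r})|\partial_{r}\text{\textgreek{y}}|^{2}$, $r^{-2}J_{\text{\textgreek{m}}}^{N}(\text{\textgreek{y}})N^{\text{\textgreek{m}}}$ and the $r^{-4}|\text{\textgreek{y}}|^{2}$ zeroth order term, and allows a standard $\text{\textgreek{w}}\to0$ Hardy argument handling the inhomogeneity $F$ via the composite norm $\mathcal{Z}_{\text{\textgreek{t}}_{1},\text{\textgreek{t}}_{2}}[F,\text{\textgreek{y}};R_{-}]$. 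Finally, the non-degenerate estimate \eqref{eq:NonDegenerateIled-1-2} is obtained from the degenerate one \eqref{eq:DegenerateIled-1-2} by commutation with the Killing field $T$ (costing one derivative, which accounts for the $j=0,1$ summation), using that $T$ is everywhere timelike on $\{r\ge3M+\text{\textgreek{e}}\}$ for $\text{\textgreek{e}}$ and $a$ sufficiently small, and then adding a small multiple of the degenerate estimate to close.
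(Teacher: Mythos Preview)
Your overall framework (separation, frequency-localised Morawetz multipliers, redshift, $r^{p}$ in the far region, $T$-commutation for the non-degenerate estimate) is the same as the paper's, and your treatment of superradiance via smallness of $a$ is in the right spirit. However, you misidentify the source of the zeroth-order error term $\int_{\{R_{-}\le r_{*}\le R_{+}\}}\min\{|T\text{\textgreek{y}}|^{2},|\text{\textgreek{y}}|^{2}\}$, and this conceals a genuine gap.

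The metric $g_{M,a}^{(h)}$ is close to Schwarzschild only on the \emph{bounded} region $\{r\le R_{b}\}$; in the region $\{R_{+}\le r_{*}\le R_{+}^{(1)}\}$ the deforming function $h$ is non-constant, and a priori this could create additional (even stable) trapping, destroying the Morawetz argument entirely. The paper's proof hinges on the monotonicity condition $h^{\prime}\ge0$, which was built into the construction of $h$ in Proposition \ref{prop:ModeDeformedMetric}: this yields the one-sided bound $(V_{\text{\textgreek{w}}ml;h}^{(h)})^{\prime}\ge0$ throughout $\{r_{*}\ge(R_{1}^{*})^{2}\}\setminus[R_{+},R_{+}^{(1)}]$ and $\ge -C_{R_{+},R_{+}^{(1)}}$ on $[R_{+},R_{+}^{(1)}]$, \emph{uniformly in} $(\text{\textgreek{w}},m,l)$. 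It is precisely this frequency-independent loss on the deformation interval that produces the $|\text{\textgreek{y}}|^{2}$ error on the right-hand side, not ``low-frequency contributions'' as you suggest. Without invoking $h^{\prime}\ge0$, your Morawetz multiplier does not have a sign in the intermediate region and the argument does not close. A secondary point: the paper separates the argument into an estimate for $L^{2}$-in-$t^{*}$ solutions (Lemma \ref{lem:IledDeforemdMetric}) and an upgrade to finite slabs via a time cutoff combined with the almost-Killing field $\tilde{K}=T+\text{\textgreek{q}}_{hor}\frac{a}{8M^{2}}\text{\textgreek{F}}$ (Proposition \ref{prop:FromL^2ILEDtoActualILED}); your outline blurs this step, and the $\tilde{K}$-energy boundedness (with $O(a)$ deformation tensor absorbed by the Morawetz bulk) is what actually delivers the $\int_{\{\bar{t}=\text{\textgreek{t}}_{2}\}}J_{\text{\textgreek{m}}}^{N}(\text{\textgreek{y}})\bar{n}^{\text{\textgreek{m}}}$ term on the left.
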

The proof of Proposition \ref{prop:HighFrequencyILED} will be presented
in Section \ref{sub:FinalProofDeformed}, as a part of the proof of
Theorem \hyperref[Theorem 3]{3}.

The estimates (\ref{eq:DegenerateIled-1-2}) and (\ref{eq:NonDegenerateIled-1-2})
can be viewed as integrated local energy decay estimates (with loss
of derivatives) for solutions $\text{\textgreek{y}}$ to (\ref{eq:InhomogeneousWaveEquationAsymptoticallyConic})
in the case when the time frequency of $\text{\textgreek{y}}$ is
very high or very low (so that the last term in the right hand side
of (\ref{eq:DegenerateIled-1-2}) and (\ref{eq:NonDegenerateIled-1-2})
can be absorbed into the left hand side). These high frequency properties
of (\ref{eq:DegenerateIled-1-2}) and (\ref{eq:NonDegenerateIled-1-2})
are closely associated to the normal hyperbolicity of the trapped
set of $(\mathcal{M}_{0},g_{M,a}^{(h)})$ when $a\ll M$. Notice,
however, that the last term in the right hand side of (\ref{eq:DegenerateIled-1-2})
and (\ref{eq:NonDegenerateIled-1-2}) can not be dropped completely,
since this would imply that the wave equation (\ref{eq:WaveEquationAsymptoticallyConic})
does not admit an outgoing mode solution, contradicting the statement
of Theorem \hyperref[Theorem 3]{3}.

The rest of this section is organised as follows: In Sections \ref{sub:AuxiliaryMetric}--\ref{sub:ILEDAuxiliaryMetric}
we will construct an auxiliary family of asymptotically flat spacetimes
$(\mathcal{M}_{0},g_{M,a}^{(1)})$, and we will study its main properties.
These spacetimes will then be deformed into the asymptotically conic
spacetimes $(\mathcal{M}_{0},g_{M,a}^{(h)})$ in Section \ref{sub:DeformedMetric}.
In Section \ref{sub:GeneralisationMainTheorem}, we will extend Theorem
\hyperref[Theorem 1 detailed]{1} to the spacetimes $(\mathcal{M}_{0},g_{M,a}^{(h)})$.
Finally, in Section \ref{sub:FinalProofDeformed}, we will complete
the proof of Theorem \hyperref[Theorem 3]{3} and Proposition \ref{prop:HighFrequencyILED}.

\subsection{\label{sub:AuxiliaryMetric}Construction of the auxiliary spacetimes
$(\mathcal{M}_{0},g_{M,a}^{(1)})$}

For any $M>0$ and $a\ge0$, we define the following metric on $\mathcal{M}_{0}=\mathbb{R}\times(2M,+\infty)\times\mathbb{S}^{2}$
in the $(t,r,\text{\textgreek{j}},\text{\textgreek{f}})$ coordinate
system (where $t,r$ are the projections onto the first two factors
of $\mathcal{M}_{0}$ and $(\text{\textgreek{j}},\text{\textgreek{f}})$
are the usual polar coordinates on $\mathbb{S}^{2}$): 
\begin{align}
g_{M,a}^{(1)}=-\big( & 1-\frac{2M}{r}-\frac{a^{2}M^{2}\sin^{2}\text{\textgreek{j}}}{r^{4}}\big)dt^{2}-\frac{2Ma\sin^{2}\text{\textgreek{j}}}{r}dtd\text{\textgreek{f}}+\big(1-\frac{2M}{r}\big)^{-1}dr^{2}+\label{eq:NewMetricBlackHole}\\
 & +r^{2}\big(d\text{\textgreek{j}}^{2}+\sin^{2}\text{\textgreek{j}}d\text{\textgreek{f}}^{2}\big).\nonumber 
\end{align}
Notice that $(\mathcal{M}_{0},g_{M,a}^{(1)})$ is a smooth, globally
hyperbolic and asymptotically flat spacetime. Furthermore, $(\mathcal{M}_{0},g_{M,a}^{(1)})$
is stationary and axisymmetric, with stationary Killing field $T=\partial_{t}$
and axisymmetric Killing field $\text{\textgreek{F}}=\partial_{\text{\textgreek{f}}}$
(both coordinate vector fields defined in the fixed $(t,r,\text{\textgreek{j}},\text{\textgreek{f}})$
coordinate chart). When $a=0$, (\ref{eq:NewMetricBlackHole}) is
simply the Schwarzschild exterior metric.

When $a>0$, the Killing field $T$ becomes spacelike in the region
$\{r<r_{1}(\text{\textgreek{j}})\}$, where $r_{1}(\text{\textgreek{j}})$
is the positive solution of
\begin{equation}
1-\frac{2M}{r_{1}}-\frac{a^{2}M^{2}\sin^{2}\text{\textgreek{j}}}{r_{1}^{4}}=0
\end{equation}
(notice that $r_{1}(\text{\textgreek{j}})>2M$ for $a>0$ and $\text{\textgreek{j}}\neq0,\text{\textgreek{p}}$).
However, the span of the Killing fields $T,\text{\textgreek{F}}$
contains everywhere on $\mathcal{M}_{0}$ a timelike direction. This
follows from the fact that the determinant 
\begin{equation}
\mathfrak{D}=\det\left(\begin{array}{cc}
g_{M,a}^{(1)}(T,T) & g_{M,a}^{(1)}(T,\text{\textgreek{F}})\\
g_{M,a}^{(1)}(T,\text{\textgreek{F}}) & g_{M,a}^{(1)}(\text{\textgreek{F}},\text{\textgreek{F}})
\end{array}\right)=-\big(1-\frac{2M}{r}\big)r^{2}\sin^{2}\text{\textgreek{j}}
\end{equation}
is everywhere negative on $\mathcal{M}_{0}$, except on the axis $\text{\textgreek{j}}=0,\text{\textgreek{p}}$,
where $\text{\textgreek{F}}$ vanishes and $T$ is timelike.

The spacetime $(\mathcal{M}_{0},g_{M,a}^{(1)})$ does not contain
any black hole or white hole region, i.\,e.~the domain of outer
communications of the asymptotically flat region $\{r\gg1\}$ of $\mathcal{M}_{0}$
is the whole $\mathcal{M}_{0}$. This can be inferred as follows:
Let us fix a vector field $V$ in the span of $\{T,\text{\textgreek{F}}\}$,
such that $V$ is everywhere on $\mathcal{M}_{0}$ future pointing
and timelike, $V\equiv T$ in the asymptotic region $\{r\gg1\}$ and
$[T,V]=[\text{\textgreek{F}},V]=0$. Then for some $h:(2M,+\infty)\rightarrow(1,+\infty)$
(taking suitably large values), the vector fields 
\begin{equation}
V_{1}=\partial_{r}+h(r)V
\end{equation}
 and 
\begin{equation}
V_{2}=\partial_{r}-h(r)V
\end{equation}
are future directed and past directed timelike vector fields respectively.
Furthermore, $V_{1}r=V_{2}r=1$, and thus, starting from any point
$p\in\mathcal{M}_{0}$, the flow of $V_{1},V_{2}$ reaches the asymptotically
flat region $\{r\gg1\}$ in finite time. Thus, any point $p\in\mathcal{M}_{0}$
communicates with the asymptotically flat region through both future
and past directed causal curves.

In the limit $r\rightarrow2M$, the expression (\ref{eq:NewMetricBlackHole})
for the metric breaks down, however, the spacetime is future incomplete,
with incomplete null geodesics approaching $r=2M$. It turns out that
the spacetime can be smoothly extended to the future ``beyond''
$r=2M$, and this follows imediately after the following change of
coordinates: By introducing the new coordinate functions 
\begin{gather}
t^{*}\doteq t+\bar{t}(r)\\
\text{\textgreek{f}}^{*}\doteq\text{\textgreek{f}}+\bar{\text{\textgreek{f}}}(r),
\end{gather}
where 
\begin{gather}
\frac{d\bar{t}}{dr}=\big(1-\frac{2M}{r}\big)^{-1},\\
\frac{d\bar{\text{\textgreek{f}}}}{dr}=\big(1-\frac{2M}{r}\big)^{-1}aMr^{-3},
\end{gather}
the expression for $g_{M,a}^{(1)}$ in the $(t^{*},r,\text{\textgreek{j}},\text{\textgreek{f}}^{*})$
coordinate chart on $\mathcal{M}_{0}$ becomes: 
\begin{align}
g_{M,a}^{(1)}=-\big( & 1-\frac{2M}{r}-\frac{a^{2}M^{2}\sin^{2}\text{\textgreek{j}}}{r^{4}}\big)(dt^{*})^{2}-\frac{2Ma\sin^{2}\text{\textgreek{j}}}{r}dt^{*}d\text{\textgreek{f}}^{*}+2dt^{*}dr+\label{eq:NewMetricBlackHole-1}\\
 & +r^{2}\big(d\text{\textgreek{j}}^{2}+\sin^{2}\text{\textgreek{j}}(d\text{\textgreek{f}}^{*})^{2}\big).\nonumber 
\end{align}
Thus, $g_{M,a}^{(1)}$ can be smoothly extended beyond $r=2M$. In
such a smooth extension $(\widetilde{\mathcal{M}}_{0},\tilde{g}_{M,a}^{(1)})$
of $(\mathcal{M}_{0},g_{M,a}^{(1)})$, where $\widetilde{\mathcal{M}}_{0}=\mathbb{R}\times(2M-\text{\textgreek{d}},+\infty)\times\mathbb{S}^{2}$
for some $0<\text{\textgreek{d}}<2M$, $\mathcal{M}_{0}$ has a non-empty
boundary $\mathcal{H}^{+}$, on which $r$ extends continuously (with
$r|_{\mathcal{H}^{+}}\equiv2M$). It can be readily verified (in the
$(t^{*},r,\text{\textgreek{j}},\text{\textgreek{f}}^{*})$ coordinate
chart) that $\mathcal{H}^{+}$ is actually a smooth, null hypersurface,
and thus $g_{M,a}^{(1)}$ extends uniquely (independently of the particular
choice of the extension $(\widetilde{\mathcal{M}}_{0},\tilde{g}_{M,a}^{(1)})$)
as a smooth Lorentzian metric on the manifold with boundary 
\begin{equation}
\overline{\mathcal{M}}_{0}=\mathcal{M}_{0}\cup\mathcal{H}^{+}\label{eq:ExtendedManifold}
\end{equation}
(on which the $(t^{*},r,\text{\textgreek{j}},\text{\textgreek{f}}^{*})$
coordinate chart is smooth). Furthermore, the Killing fields $T,\text{\textgreek{F}}$
extend smoothly on $\mathcal{H}^{+}$. Notice that $T$ is spacelike
on $\mathcal{H}^{+}$ (except on the points $\text{\textgreek{j}}=0,\text{\textgreek{p}}$
of $\mathcal{H}^{+}$, where $T$ is null). 

It can be readily verified that in any such extension $(\widetilde{\mathcal{M}}_{0},\tilde{g}_{M,a}^{(1)})$
of $(\mathcal{M}_{0},g_{M,a}^{(1)})$, the domain of outer communications
of $(\widetilde{\mathcal{M}}_{0},\tilde{g}_{M,a}^{(1)})$ is precisely
$(\mathcal{M}_{0},g_{M,a}^{(1)})$, and $\mathcal{H}^{+}$ is the
future event horizon. Furthermore, $\mathcal{H}^{+}$ is also a Killing
horizon: Introducing the vector field 
\begin{equation}
K\doteq T+\frac{a}{8M^{2}}\text{\textgreek{F}},\label{eq:HawkingKillingFieldNew}
\end{equation}
we notice that $K$ is a Killing vector field for $g_{M,a}^{(1)}$
(as a linear combination of $T,\text{\textgreek{F}}$ with constant
coefficients), and furthermore $g_{M,a}^{(1)}(K,K)=0$ on $r=2M$. 
\begin{rem*}
Let us note that the existence of a Killing field parallel to the
null generator of $\mathcal{H}^{+}$ does not follow in this case
by Hawking's theorem (see \cite{Hawking1972}), since $(\mathcal{M}_{0},g_{M,a}^{(1)})$
does not satisfy the null energy condition (in general, one would
expect the null generator of $\mathcal{H}^{+}$ to be a $\text{\textgreek{j}}$-dependent
linear combination of $T,\text{\textgreek{F}}$).
\end{rem*}
Since the vector field $K$ satisfies 
\begin{equation}
g_{M,a}^{(1)}(K,K)=-\big(1-\frac{2M}{r}\big)\Big\{1-a^{2}\sin^{2}\text{\textgreek{j}}\frac{\big(1-\frac{2M}{r}\big)^{2}(r^{2}+2Mr+4M^{2})^{2}}{64M^{4}r^{2}}\Big\},
\end{equation}
and thus $\partial_{r}g_{M,a}^{(1)}(K,K)|_{\mathcal{H}^{+}}>0$ in
the $(t^{*},r,\text{\textgreek{j}},\text{\textgreek{f}}^{*})$ coordinate
chart, the Killing horizon $\mathcal{H}^{+}$ is a non-degenerate
horizon with positive surface gravity.

\subsection{Separation of the wave equation and frequency decomposition on $(\mathcal{M}_{0},g_{M,a}^{(1)})$}

The wave operator $\square_{g_{M,a}^{(1)}}$ on $(\mathcal{M}_{0},g_{M,a}^{(1)})$
takes the form: 
\begin{align}
\square_{g_{M,a}^{(1)}}\text{\textgreek{y}}= & r^{-2}\partial_{r}\big(r^{2}(1-\frac{2M}{r})\partial_{r}\text{\textgreek{y}}\big)+r^{-2}(\sin\text{\textgreek{j}})^{-1}\partial_{\text{\textgreek{j}}}\big(\sin\text{\textgreek{j}}\partial_{\text{\textgreek{j}}}\text{\textgreek{y}}\big)+\label{eq:WaveOperatorUnpurtubredBeforeSeperation}\\
 & +\frac{-\partial_{t}^{2}\text{\textgreek{y}}-2Mar^{-3}\partial_{t}\partial_{\text{\textgreek{f}}}\text{\textgreek{y}}+\big(\big(1-\frac{2M}{r}\big)(\sin\text{\textgreek{j}})^{-2}-a^{2}M^{2}r^{-4}\big)r^{-2}\partial_{\text{\textgreek{f}}}^{2}\text{\textgreek{y}}}{\big(1-\frac{2M}{r}\big)}.\nonumber 
\end{align}
Therefore, the wave equation $\square_{g_{M,a}^{(1)}}\text{\textgreek{y}}=0$
separates on $(\mathcal{M}_{0},g_{M,a}^{(1)})$, i.\,e.~it admits
solutions of the form 
\begin{equation}
\text{\textgreek{y}}(t,r,\text{\textgreek{j}},\text{\textgreek{f}})=e^{-i\text{\textgreek{w}}t}e^{im\text{\textgreek{f}}}S_{ml}(\text{\textgreek{j}})\cdot R_{\text{\textgreek{w}}ml}(r)\label{eq:ProductSolution-2}
\end{equation}
for $(\text{\textgreek{w}},m,l)\in\mathbb{C}\times\mathbb{Z}\times\mathbb{Z}_{\ge|m|}$,
where $e^{im\text{\textgreek{f}}}S_{ml}(\text{\textgreek{j}})$ are
the usual spherical harmonics on $\mathbb{S}^{2}$, with $\{S_{ml}\}_{(m,l)\in\mathbb{Z}\times\mathbb{Z}_{\ge|m|}}$
being the usual set of eigenfunctions of the Sturm--Liouville problem
\begin{equation}
\begin{cases}
-\frac{1}{\sin\text{\textgreek{j}}}\frac{d}{d\text{\textgreek{j}}}\big(\sin\text{\textgreek{j}}\frac{dS_{ml}}{d\text{\textgreek{j}}}\big)+\frac{m^{2}}{\sin^{2}\text{\textgreek{j}}}S_{ml}=l(l+1)S_{ml}\\
S_{ml}(\text{\textgreek{j}})\mbox{ is bounded at }\text{\textgreek{j}}=0,\text{\textgreek{p}},
\end{cases}\label{eq:AngularODE-1}
\end{equation}
and $R_{\text{\textgreek{w}}ml}(r)$ satisfies the ordinary differential
equation 
\begin{equation}
r^{-2}\big(1-\frac{2M}{r}\big)\frac{d}{dr}\Big(r^{2}\big(1-\frac{2M}{r}\big)\frac{dR_{\text{\textgreek{w}}ml}}{dr}\Big)+\Big\{\text{\textgreek{w}}^{2}-2Mar^{-3}\text{\textgreek{w}}m+a^{2}M^{2}r^{-6}-\big(1-\frac{2M}{r}\big)r^{-2}l(l+1)\Big\} R_{\text{\textgreek{w}}ml}=0.\label{eq:RadialODENewSpacetime}
\end{equation}

Using on $\mathcal{M}_{0}$ the auxiliary radial function $r_{*}:(2M,+\infty)\rightarrow(-\infty,+\infty)$
defined by 
\begin{equation}
\frac{dr_{*}}{dr}\doteq\big(1-\frac{2M}{r}\big)^{-1}\label{eq:NewAuxiliaryFunction}
\end{equation}
and setting 
\begin{equation}
u_{\text{\textgreek{w}}ml}\doteq r\cdot R_{\text{\textgreek{w}}ml},
\end{equation}
equation (\ref{eq:RadialODENewSpacetime}) becomes: 
\begin{equation}
u_{\text{\textgreek{w}}ml}^{\prime\prime}+\big(\text{\textgreek{w}}^{2}-V_{\text{\textgreek{w}}ml}\big)u_{\text{\textgreek{w}}ml}=0,\label{eq:SimplifiedRadialODE}
\end{equation}
where $^{\prime}$ denotes differentiation with respect to $r_{*}$,
and 
\begin{equation}
V_{\text{\textgreek{w}}ml}(r)=\frac{\big(1-\frac{2M}{r}\big)l(l+1)+2aMr^{-1}\text{\textgreek{w}}m-a^{2}M^{2}r^{-4}m^{2}}{r^{2}}+\big(1-\frac{2M}{r}\big)\frac{2M}{r^{3}}.\label{eq:PotentialODENewSpacetime}
\end{equation}

Any smooth function $\text{\textgreek{Y}}:\mathcal{M}_{0}\rightarrow\mathbb{C}$
which is square integrable in the $t$ variable can be uniquely decomposed
as 
\[
\text{\textgreek{Y}}(t,r,\text{\textgreek{j}},\text{\textgreek{f}})\doteq\sum_{(m,l)\in\mathbb{Z}\times\mathbb{Z}_{\ge|m|}}\int_{-\infty}^{\infty}e^{-i\text{\textgreek{w}}t}e^{im\text{\textgreek{f}}}S_{ml}(\text{\textgreek{j}})\text{\textgreek{Y}}_{\text{\textgreek{w}}ml}(r)\, d\text{\textgreek{w}}
\]
for some $\text{\textgreek{Y}}_{\text{\textgreek{w}}ml}:(r_{+},+\infty)\rightarrow\mathbb{C}$.
With $\overline{\mathcal{M}}_{0}$ defined as in (\ref{eq:ExtendedManifold}),
if $\text{\textgreek{y}}:\overline{\mathcal{M}}_{0}\rightarrow\mathbb{C}$
is a smooth function which is supported on a set of the form $\{t\ge t_{0}\}$
for some $t_{0}\in\mathbb{R}$ and is square integrable in the $t^{*}$
variable, satisfying
\begin{equation}
\square_{g_{M,a}^{(1)}}\text{\textgreek{y}}=F\label{eq:InhomogeneousWaveEquationNewSpacetime}
\end{equation}
for some smooth function $F:\overline{\mathcal{M}}_{0}\rightarrow\mathbb{C}$
which is square integrable in $t^{*}$, then $u_{\text{\textgreek{w}}ml}(r)\doteq r\text{\textgreek{y}}_{\text{\textgreek{w}}ml}(r)$
satisfies for all $(m,l)\in\mathbb{Z}\times\mathbb{Z}_{\ge|m|}$ and
almost all $\text{\textgreek{w}}\in\mathbb{R}$: 
\begin{equation}
\begin{cases}
u_{\text{\textgreek{w}}ml}^{\prime\prime}+\big(\text{\textgreek{w}}^{2}-V_{\text{\textgreek{w}}ml}\big)u_{\text{\textgreek{w}}ml}=(r-2M)\cdot F_{\text{\textgreek{w}}ml}\\
u_{\text{\textgreek{w}}ml}^{\prime}-i\text{\textgreek{w}}u_{\text{\textgreek{w}}ml}\rightarrow0\mbox{ as }r_{*}\rightarrow+\infty\\
u_{\text{\textgreek{w}}ml}^{\prime}+i\big(\text{\textgreek{w}}-\frac{am}{8M^{2}}\big)u_{\text{\textgreek{w}}ml}\rightarrow0\mbox{ as }r_{*}\rightarrow-\infty.
\end{cases}\label{eq:ODEForTheFourierTransformNewSpacetime}
\end{equation}
 The derivation of this ordinary differential equation for the Fourier
tranform of $\text{\textgreek{y}}$ follows in exactly the same way
as for subextremal Kerr spacetimes (see \cite{DafRodSchlap} for more
details). The last boundary condition of (\ref{eq:ODEForTheFourierTransformNewSpacetime})
is derived from the fact that $\text{\textgreek{y}}$ is smooth on
$\mathcal{H}^{+}$, and $\partial_{r_{*}}$extends smoothly on $\mathcal{H}^{+}$
as $\partial_{r_{*}}|_{\mathcal{H}^{+}}=K|_{\mathcal{H}^{+}}$. 

Let us also remark that for any smooth $V:[2M,+\infty)\rightarrow\mathbb{C}$
of compact support, equation 
\begin{equation}
\square_{g_{M,a}^{(1)}}\text{\textgreek{y}}-V(r)\text{\textgreek{y}}=F\label{eq:InhomogeneousWaveEquationNewSpacetime-1}
\end{equation}
(with $\text{\textgreek{y}},F$ having the same asymptotic behaviour
as before) also separates, leading to the following variant of (\ref{eq:ODEForTheFourierTransformNewSpacetime})
for $u_{\text{\textgreek{w}}ml}(r)\doteq r\text{\textgreek{y}}_{\text{\textgreek{w}}ml}(r)$
for all $(m,l)\in\mathbb{Z}\times\mathbb{Z}_{\ge|m|}$ and almost
all $\text{\textgreek{w}}\in\mathbb{R}$: 
\begin{equation}
\begin{cases}
u_{\text{\textgreek{w}}ml}^{\prime\prime}+\big(\text{\textgreek{w}}^{2}-V_{\text{\textgreek{w}}ml}-(1-\frac{2M}{r})\cdot V\big)u_{\text{\textgreek{w}}ml}=(r-2M)\cdot F_{\text{\textgreek{w}}ml}\\
u_{\text{\textgreek{w}}ml}^{\prime}-i\text{\textgreek{w}}u_{\text{\textgreek{w}}ml}\rightarrow0\mbox{ as }r_{*}\rightarrow+\infty\\
u_{\text{\textgreek{w}}ml}^{\prime}+i\big(\text{\textgreek{w}}-\frac{am}{8M^{2}}\big)u_{\text{\textgreek{w}}ml}\rightarrow0\mbox{ as }r_{*}\rightarrow-\infty.
\end{cases}\label{eq:ODEForTheFourierTransformNewSpacetime-1}
\end{equation}

The \emph{superradiant frequency regime }of $(\mathcal{M}_{0},g_{M,a}^{(1)})$\emph{,
}defined as for the Kerr exterior spacetime $(\mathcal{M}_{M,a},g_{M,a})$
(see Section \ref{sub:SuperradiantRegime}), consists of those frequency
triads $(\text{\textgreek{w}},m,l)\in\mathbb{R}\times\mathbb{Z}\times\mathbb{Z}_{\ge|m|}$
for which the limits 
\begin{equation}
\mathcal{F}_{\pm}[u_{\text{\textgreek{w}}ml}]=\lim_{r_{*}\rightarrow\pm\infty}\pm Im\big(\text{\textgreek{w}}u_{\text{\textgreek{w}}ml}^{-1}\cdot u_{\text{\textgreek{w}}ml}^{\prime}\big)
\end{equation}
 for any non-zero function $u_{\text{\textgreek{w}}ml}$ satisfying
(\ref{eq:ODEForTheFourierTransformNewSpacetime}) have opposite sign.
In view of the boundary conditions of (\ref{eq:ODEForTheFourierTransformNewSpacetime}),
it readily follows that the superradiant frequency regime has the
following form 
\begin{equation}
\mathfrak{A}_{sup}^{(a,M)}=\Big\{(\text{\textgreek{w}},m,l)\in\mathbb{R}\times\mathbb{Z}\times\mathbb{Z}_{\ge|m|}\,\big|\,\text{\textgreek{w}}\big(\text{\textgreek{w}}-\frac{am}{8M^{2}}\big)<0\Big\}\label{eq:SuperradiantRegimeNewSpacetime}
\end{equation}

The metric $g_{M,a}^{(1)}$ on $\overline{\mathcal{M}}_{0}$ approaches
the Schwarzschild exterior metric $g_{M,0}$ smoothly as $a\rightarrow0$.
Based on this fact, the following lemma can be readily inferred (the
proof of which will be omitted):
\begin{lem}
\label{lem:BehaviourOfPotentialNewSpacetime}There exists some (large)
$C_{0}>1$ such that the following statement holds: For any $\text{\textgreek{d}}_{0}>0$,
there exists some $0<a_{0}\ll M$, such that for any $0<a<a_{0}$
the potential $V_{\text{\textgreek{w}}ml}$ has the following properties:

\begin{enumerate}

\item For any $(\text{\textgreek{w}},m,l)\in\mathbb{R}\times\mathbb{Z}\times\mathbb{Z}_{\ge|m|}$,
we have 
\begin{equation}
\big|V_{\text{\textgreek{w}}ml}(r)-V_{ml}^{(Sch)}(r)\big|\le\text{\textgreek{d}}_{0}r^{-2}\big(\text{\textgreek{w}}^{2}+r^{-2}m^{2}\big),\label{eq:ClosednessSchwarzschildPotential}
\end{equation}
where $V_{ml}^{(Sch)}$ is the corresponding potential for the Schwarzschild
metric $g_{M,0}$: 
\begin{equation}
V_{ml}^{(Sch)}(r)=\frac{\big(1-\frac{2M}{r}\big)l(l+1)}{r^{2}}+\big(1-\frac{2M}{r}\big)\frac{2M}{r^{3}}.
\end{equation}

\item For any $(\text{\textgreek{w}},m,l)\in\mathbb{R}\times\mathbb{Z}\times(\mathbb{Z}_{\ge|m|}\backslash\{0\})$
such that $l\ge\max\{C_{0}^{-1}|\text{\textgreek{w}}|,C_{0}\}$, there
exists some $r_{\text{\textgreek{w}}ml}\in(3M-\text{\textgreek{d}}_{0},3M+\text{\textgreek{d}}_{0})$
such that $\frac{dV_{\text{\textgreek{w}}ml}}{dr}$ has only a simple
root at $r=r_{\text{\textgreek{w}}ml}$, and moreover 
\begin{equation}
\big(1-\frac{r}{r_{\text{\textgreek{w}}ml}}\big)\cdot\frac{dV_{\text{\textgreek{w}}ml}}{dr}\ge C_{0}^{-1}(l^{2}+\text{\textgreek{w}}^{2})r^{-3}\label{eq:TrappingEstimatePotential}
\end{equation}

\end{enumerate}
\end{lem}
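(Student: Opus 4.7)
}

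For part 1 I would subtract the two potentials explicitly:
\[
V_{\text{\textgreek{w}}ml}(r) - V^{(Sch)}_{ml}(r) = 2aMr^{-3}\text{\textgreek{w}}m - a^{2}M^{2}r^{-6}m^{2}.
\]
Young's inequality $2|\text{\textgreek{w}}m| \leq r\text{\textgreek{w}}^{2} + r^{-1}m^{2}$ bounds the first summand by $aMr^{-2}(\text{\textgreek{w}}^{2}+r^{-2}m^{2})$, while the second satisfies $a^{2}M^{2}r^{-6}m^{2} \leq (a^{2}/4)\,r^{-4}m^{2} \leq (a^{2}/4)\,r^{-2}(\text{\textgreek{w}}^{2}+r^{-2}m^{2})$ once $r\geq 2M$ is used. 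Choosing $a_{0}$ small in terms of $\text{\textgreek{d}}_{0}$ and $M$ to absorb both prefactors into $\text{\textgreek{d}}_{0}$ completes the proof of (\ref{eq:ClosednessSchwarzschildPotential}).

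For part 2 the starting point is the elementary identity
\[
\frac{d}{dr}\Big[(1-\tfrac{2M}{r})\tfrac{l(l+1)}{r^{2}}\Big] = \frac{2l(l+1)(3M-r)}{r^{4}},
\]
which both singles out the dominant contribution to $\frac{dV_{\text{\textgreek{w}}ml}}{dr}$ for large $l$ and identifies the Schwarzschild photon sphere $r=3M$ as a non-degenerate critical point of this leading piece, with linearised slope of order $l^{2}/M^{4}$ there. I would then decompose $\frac{dV_{\text{\textgreek{w}}ml}}{dr}$ as this leading term plus three correction terms: (i) the subleading Schwarzschild piece obtained by differentiating $(1-2M/r)\cdot 2M/r^{3}$ (of size $M/r^{4}$, independent of both $\text{\textgreek{w}}$ and $a$), (ii) the derivative of the cross-term $2aMr^{-3}\text{\textgreek{w}}m$, and (iii) the derivative of $-a^{2}M^{2}r^{-6}m^{2}$. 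Under the assumption $l \geq \max\{C_{0}^{-1}|\text{\textgreek{w}}|,C_{0}\}$ one has $|\text{\textgreek{w}}m|\leq C_{0}l^{2}$ and $m^{2}\leq l^{2}$, so corrections (ii) and (iii) are uniformly bounded by $C\cdot a\cdot l^{2}/r^{4}$ with $C$ depending only on $M$ and $C_{0}$. A continuity argument (or the implicit function theorem applied to the $(r,a)$-dependence of the derivative) then produces the unique simple zero $r_{\text{\textgreek{w}}ml}\in(3M-\text{\textgreek{d}}_{0},3M+\text{\textgreek{d}}_{0})$ of $\frac{dV_{\text{\textgreek{w}}ml}}{dr}$, provided $C_{0}$ is chosen large enough so that the $O(C_{0}^{-2})$ shift from (i) fits inside $(-\text{\textgreek{d}}_{0},\text{\textgreek{d}}_{0})$ and $a_{0}$ is then chosen small enough (in terms of $\text{\textgreek{d}}_{0},M,C_{0}$) so that the shifts from (ii) and (iii) do the same.

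The quantitative trapping estimate (\ref{eq:TrappingEstimatePotential}) is then extracted from the same decomposition: Taylor-expanding $\frac{dV_{\text{\textgreek{w}}ml}}{dr}$ to first order around $r_{\text{\textgreek{w}}ml}$, one finds the leading coefficient equals $-\frac{2l(l+1)}{(3M)^{4}}$ plus lower-order corrections, hence is of magnitude bounded below by $cl^{2}/M^{4}$ for $l\geq C_{0}$. Multiplying through by the sign factor $(1-r/r_{\text{\textgreek{w}}ml})$ converts this non-degeneracy statement into the desired one-sided positive bound, with the full factor $(l^{2}+\text{\textgreek{w}}^{2})$ on the right-hand side of (\ref{eq:TrappingEstimatePotential}) recovered using $\text{\textgreek{w}}^{2}\leq C_{0}^{2}l^{2}$ (a consequence of the frequency assumption).

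The main obstacle will be the bookkeeping of the three constants $a_{0},C_{0},\text{\textgreek{d}}_{0}$ in part 2: the $\text{\textgreek{w}}$-independent subleading correction (i) forces $C_{0}$ to be taken large so that the resulting $O(l^{-2})$ shift of the critical point lies well within $(-\text{\textgreek{d}}_{0},\text{\textgreek{d}}_{0})$, while the corrections (ii) and (iii) then force $a_{0}$ to be small in terms of $M$, the already-chosen $C_{0}$, and $\text{\textgreek{d}}_{0}$. Once this hierarchy of parameters is arranged, the remaining analytic content reduces to elementary one-dimensional calculus on $V_{\text{\textgreek{w}}ml}$.
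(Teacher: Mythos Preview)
The paper omits the proof, remarking only that the lemma ``can be readily inferred'' from the smooth convergence $g_{M,a}^{(1)} \to g_{M,0}$ as $a \to 0$; your explicit computation is precisely the natural way to unpack this, and part~1 is correct as written.

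For part~2, the perturbative analysis around $r = 3M$ is the right idea, but two subtleties in the \emph{statement} itself deserve comment. First, the subleading Schwarzschild correction~(i) already shifts the critical point of $V^{(Sch)}_{ml}$ away from $3M$ by an amount of order $M/l^{2}$, so at the smallest admissible value $l = C_{0}$ this shift is a fixed quantity of order $M/C_{0}^{2}$; placing $r_{\text{\textgreek{w}}ml}$ inside $(3M - \text{\textgreek{d}}_{0}, 3M + \text{\textgreek{d}}_{0})$ for \emph{arbitrary} $\text{\textgreek{d}}_{0}$ therefore forces $C_{0}$ to depend on $\text{\textgreek{d}}_{0}$, contrary to the literal quantifier order $\exists C_{0}\,\forall \text{\textgreek{d}}_{0}$. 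You handle this correctly; just flag explicitly that you are establishing the evidently intended version $\forall \text{\textgreek{d}}_{0}\,\exists C_{0}, a_{0}$. Second, the left side of (\ref{eq:TrappingEstimatePotential}) vanishes at $r = r_{\text{\textgreek{w}}ml}$ while the right side does not, so the inequality as literally written cannot hold there; your Taylor-expansion argument naturally produces the version with an additional factor $(1 - r/r_{\text{\textgreek{w}}ml})$ on the right, which is the form actually used in the downstream integrated-local-energy-decay construction.
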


\subsection{\label{sub:ILEDAuxiliaryMetric}An integrated local energy decay
estimate for $(\mathcal{M}_{0},g_{M,a}^{(1)})$ in the case $a\ll M$}

Using Lemma \ref{lem:BehaviourOfPotentialNewSpacetime}, we will establish
the following integrated local energy decay estimate in the case $a\ll M$,
along the lines of the corresponding proof in the case of a very slowly
rotating Kerr spacetimes in \cite{DafRod6}: 
\begin{prop}
\label{Prop:ILEDNewSpacetime}With the notation as in Section \ref{sub:AuxiliaryMetric},
there exists a (small) $a_{0},\text{\textgreek{d}}_{0}>0$ and (large)
$C>0$,$R_{0}>2M$, such that for any $0<a\ll a_{0}$ and any smooth
solution $\text{\textgreek{y}}$ of (\ref{eq:InhomogeneousWaveEquationNewSpacetime})
on $\overline{\mathcal{M}}_{0}$ supported on a set of the form $\{t\ge t_{0}\}$
for some $t_{0}\in\mathbb{R}$ with the property that $\text{\textgreek{y}}$,
$F$ are square integrable in the $t^{*}$ variable, the following
integrated local energy decay estimates hold: 
\begin{gather}
\int_{\mathcal{M}_{0}}\big((1-\frac{2M}{r})r^{-2}|\partial_{r}\text{\textgreek{y}}|^{2}+\text{\textgreek{q}}_{r\neq3M}(r)\cdot r^{-2}J_{\text{\textgreek{m}}}^{N}(\text{\textgreek{y}})N^{\text{\textgreek{m}}}+r^{-4}|\text{\textgreek{y}}|^{2}\big)\le C\cdot\mathcal{Z}[F,\text{\textgreek{y}};R_{0}],\label{eq:DegenerateIled}\\
\int_{\mathcal{M}_{0}}\big(r^{-2}J_{\text{\textgreek{m}}}^{N}(\text{\textgreek{y}})N^{\text{\textgreek{m}}}+r^{-4}|\text{\textgreek{y}}|^{2}\big)\le C\cdot\big(\mathcal{Z}[F,\text{\textgreek{y}};R_{0}]+\mathcal{Z}[TF,T\text{\textgreek{y}};R_{0}]\big),\label{eq:NonDegenerateIled}
\end{gather}
where 
\begin{align}
\mathcal{Z}[F,\text{\textgreek{y}};R_{0}]\doteq & \int_{\{r\le R_{0}\}}|F|^{2}+\Bigg|\int_{\{r\ge R_{0}\}}F\cdot\big(r^{-1}+O(r^{-2})\big)\partial_{r}(r\bar{\text{\textgreek{y}}})\,\Bigg|+\Bigg|\int_{\{r\ge R_{0}\}}F\cdot O(r^{-2})\bar{\text{\textgreek{y}}}\,\Bigg|+\Bigg|\int_{\mathcal{M}_{0}}F\cdot T\bar{\text{\textgreek{y}}}\,\Bigg|,\label{eq:InhomogeneousErrorTerms}
\end{align}
 $N$ is a $T$-invariant timelike vectorfield on $\overline{\mathcal{M}}_{0}$
such that $N\equiv T$ in the region $\{r\gg1\}$, and the cut-off
function $\text{\textgreek{q}}_{r\neq3M}:(2M,+\infty)\rightarrow[0,1]$
vanishes in $[3M-\text{\textgreek{d}}_{0},3M+\text{\textgreek{d}}_{0}]$
and is identically $1$ on $(2M,3M-2\text{\textgreek{d}}_{0}]\cup[3M+2\text{\textgreek{d}}_{0},+\infty)$.\end{prop}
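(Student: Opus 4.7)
The plan is to follow the strategy of \cite{DafRod6} for very slowly rotating Kerr spacetimes, adapted to the radial ODE (\ref{eq:SimplifiedRadialODE})--(\ref{eq:PotentialODENewSpacetime}) arising from the complete separation of $\square_{g_{M,a}^{(1)}}$. Given that $g_{M,a}^{(1)}$ converges smoothly to the Schwarzschild metric as $a \to 0$, and given the structure of $V_{\text{\textgreek{w}}ml}$ provided by Lemma \ref{lem:BehaviourOfPotentialNewSpacetime}, one should first establish a frequency-localised Morawetz estimate for the ODE (\ref{eq:ODEForTheFourierTransformNewSpacetime}) and then recover the physical space estimates (\ref{eq:DegenerateIled})--(\ref{eq:NonDegenerateIled}) by Plancherel's theorem. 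The contribution of the inhomogeneity $F$ will be handled via the standard pairing identities, producing exactly the error terms appearing in $\mathcal{Z}[F,\text{\textgreek{y}};R_0]$.

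First, I would fix a Fourier representation $u_{\text{\textgreek{w}}ml}(r_*)$ of $r\text{\textgreek{y}}$ and apply a current of the form $f(r_*) \cdot \mathrm{Re}(u' \bar{u}) + \tfrac{1}{2}f'(r_*)|u|^2 + y(r_*)|u|^2$, chosen so that the resulting bulk term controls $|u'|^2 + (1 + \text{\textgreek{w}}^2 + l^2)|u|^2$ with a degeneracy only at the unique critical point $r_{\text{\textgreek{w}}ml}$ of $V_{\text{\textgreek{w}}ml}$. The estimate (\ref{eq:TrappingEstimatePotential}) together with the nondegeneracy of the simple root supplies exactly the convexity needed for such an $f$ to exist in the non-superradiant regime $(\text{\textgreek{w}},m,l)\notin\mathfrak{A}_{sup}^{(a,M)}$, while (\ref{eq:ClosednessSchwarzschildPotential}) guarantees that the Schwarzschild-type multiplier construction can be perturbed continuously as $a \to 0$. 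The boundary terms at $r_* = \pm\infty$ will be controlled using the outgoing and horizon boundary conditions in (\ref{eq:ODEForTheFourierTransformNewSpacetime}), invoking positivity at infinity and at the horizon in the non-superradiant regime.

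Second, for the superradiant regime $\mathfrak{A}_{sup}^{(a,M)}$, I would exploit that this regime is controlled by $|\text{\textgreek{w}}|\le\frac{am}{8M^2}$ and in particular becomes trivial as $a\to 0$. In analogy with \cite{DafRod6}, one verifies that (i) superradiant frequencies are automatically non-trapped, since the relation $\text{\textgreek{w}}^2<\frac{am\text{\textgreek{w}}}{8M^2}$ and the smallness of $a$ force $\text{\textgreek{w}}^2$ to be small compared to $l(l+1)/r^2$ at $r=3M$, so that $\text{\textgreek{w}}^2 - V_{\text{\textgreek{w}}ml}<0$ on a fixed neighbourhood of the photon sphere, and (ii) on this region one can construct a separate multiplier yielding coercive bulk terms. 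Coupling this with the red-shift estimate of Dafermos--Rodnianski near $\{r=2M\}$ handles the negative horizon flux caused by superradiance. Summing the non-superradiant and superradiant estimates in $(\text{\textgreek{w}},m,l)$ via Plancherel produces (\ref{eq:DegenerateIled}), with the Hardy inequality converting the zeroth-order ODE terms into the $r^{-4}|\text{\textgreek{y}}|^2$ weight in physical space and with the $F$-dependent error terms in (\ref{eq:InhomogeneousErrorTerms}) arising from the inhomogeneity of (\ref{eq:ODEForTheFourierTransformNewSpacetime}) through the standard pairing against $f u' + \tfrac{1}{2} f' u$.

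Finally, to upgrade the degenerate estimate (\ref{eq:DegenerateIled}) to the non-degenerate estimate (\ref{eq:NonDegenerateIled}), I would commute the equation with $T$ (which is Killing and coincides with a smooth vector field on $\overline{\mathcal{M}}_0$) and combine the resulting degenerate ILED for $T\text{\textgreek{y}}$ with an elliptic/Poincaré argument on $\{r\approx 3M\}$ to absorb the degeneracy, exactly as in the Schwarzschild case. The horizon red-shift vector field $N$, available since $\mathcal{H}^+$ is a non-degenerate Killing horizon with positive surface gravity, removes the degeneration at $r=2M$. The main obstacle I anticipate is the interaction between trapping at the photon sphere and superradiance: one needs to rule out the existence of trapped superradiant frequencies for the deformed potential $V_{\text{\textgreek{w}}ml}$, and to do so uniformly for all admissible $(\text{\textgreek{w}},m,l)$. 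This is precisely why $a_0$ must be chosen small, and it is exactly the estimate (\ref{eq:TrappingEstimatePotential}) of Lemma \ref{lem:BehaviourOfPotentialNewSpacetime} combined with the smallness of the superradiant window that rescues this step.
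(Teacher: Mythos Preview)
Your strategy is correct and close to the paper's, but the paper organises the frequency decomposition differently and handles superradiance more cheaply. Rather than splitting into superradiant versus non-superradiant regimes, the paper splits by the relative size of $|\text{\textgreek{w}}|$ and $l$: the trapped range $|\text{\textgreek{w}}|\sim l$ is treated by constructing a multiplier $f_{\text{\textgreek{w}}ml}$ exploiting (\ref{eq:TrappingEstimatePotential}); the non-trapped ranges $|\text{\textgreek{w}}|\gg l$ and $\text{\textgreek{e}}_{0}\le|\text{\textgreek{w}}|\ll l$ are treated by directly invoking the Schwarzschild/slowly-rotating-Kerr estimates of \cite{DafRod9} via (\ref{eq:ClosednessSchwarzschildPotential}). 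Superradiance is never isolated as a separate regime: the boundary terms at $r_{*}=\pm\infty$ are converted by the microlocal $T$-energy identity
\[
\text{\textgreek{w}}^{2}|u_{\text{\textgreek{w}}ml}(+\infty)|^{2}-\text{\textgreek{w}}\big(\text{\textgreek{w}}-\tfrac{am}{8M^{2}}\big)|u_{\text{\textgreek{w}}ml}(-\infty)|^{2}=\int_{-\infty}^{+\infty} Im\big((r-2M)F_{\text{\textgreek{w}}ml}\cdot\text{\textgreek{w}}\bar{u}_{\text{\textgreek{w}}ml}\big)\,dr_{*},
\]
leaving only an $O(\text{\textgreek{d}}_{0}m^{2})|u_{\text{\textgreek{w}}ml}(-\infty)|^{2}$ error which is absorbed by the red-shift once summed in $(\text{\textgreek{w}},m,l)$. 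Your observation that superradiant frequencies are automatically non-trapped is correct, and would be needed for a separate superradiant multiplier as in \cite{DafRodSchlap}, but here the smallness of $a$ makes this unnecessary.

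One point you omit: the paper treats the very low frequency regime $|\text{\textgreek{w}}|\le\text{\textgreek{e}}_{0}$ separately, not by Morawetz currents but by invoking Proposition~6.1 of \cite{Moschidisb} directly in physical space. Your Morawetz construction, as stated, relies on (\ref{eq:TrappingEstimatePotential}), which only applies for $l\ge\max\{C_{0}^{-1}|\text{\textgreek{w}}|,C_{0}\}$; you would need to say something additional for small $l$ and small $|\text{\textgreek{w}}|$ simultaneously (where the potential is essentially the Schwarzschild zero-mode potential), and the paper chooses to outsource this rather than extend the multiplier construction.
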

\begin{rem*}
Note that for $a\ll M$, the uniform energy boundedness results of
\cite{DafRod5} apply on $(\overline{\mathcal{M}}_{0},g_{M,a}^{(1)})$.\end{rem*}
\begin{proof}
Let us assume first that the following estimates hold for the frequency
separated equation (\ref{eq:ODEForTheFourierTransformNewSpacetime})
for any $(\text{\textgreek{w}},m,l)\in\mathbb{R}\times\mathbb{Z}\times\mathbb{Z}_{\ge|m|}$,
any $R_{1}^{*}\gg1$ and any $\text{\textgreek{e}}_{1}>0$ sufficiently
small: 
\begin{align}
\int_{-R_{1}^{*}}^{R_{1}^{*}}\big(r^{-2}|u_{\text{\textgreek{w}}ml}^{\prime}|^{2}+ & r^{-2}(\text{\textgreek{w}}^{2}+l^{2}+r^{-2})|u_{\text{\textgreek{w}}ml}|^{2}\big)\, dr_{*}\le\label{eq:NonTrappingILED}\\
\le C_{R_{1}^{*}} & \text{\textgreek{d}}_{0}m^{2}|u_{\text{\textgreek{w}}ml}(-\infty)|^{2}+C\int_{R_{1}^{*}}^{\text{\textgreek{e}}_{1}R_{1}^{*}}\big(\text{\textgreek{e}}_{1}r^{-2}+r^{-3}\big)\text{\textgreek{w}}^{2}|u_{\text{\textgreek{w}}ml}|^{2}+\nonumber \\
 & +C_{R_{1}^{*}}\int_{-\infty}^{+\infty}Re\big\{(r-2M)F_{\text{\textgreek{w}}ml}\cdot\big(f_{\text{\textgreek{w}}ml}\bar{u}_{\text{\textgreek{w}}ml}^{\prime}+(r^{-1}h_{\text{\textgreek{w}}ml}+i\text{\textgreek{w}})\bar{u}_{\text{\textgreek{w}}ml}\big)\big\}\, dr_{*},\nonumber 
\end{align}
for $|\text{\textgreek{w}}|\gg l$ or $|\text{\textgreek{w}}|\ll l$,
and 
\begin{align}
\int_{-R_{1}^{*}}^{R_{1}^{*}}\big(r^{-2}|u_{\text{\textgreek{w}}ml}^{\prime}|^{2}+ & r^{-2}\{(1-\frac{r_{\text{\textgreek{w}}ml}}{r})^{2}(\text{\textgreek{w}}^{2}+l^{2})+r^{-2}\}|u_{\text{\textgreek{w}}ml}|^{2}\big)\, dr_{*}\le\label{eq:TrappingILED}\\
\le C_{R_{1}^{*}} & \text{\textgreek{d}}_{0}m^{2}|u_{\text{\textgreek{w}}ml}(-\infty)|^{2}+C\int_{R_{1}^{*}}^{\text{\textgreek{e}}_{1}R_{1}^{*}}\big(\text{\textgreek{e}}_{1}r^{-2}+r^{-3}\big)\text{\textgreek{w}}^{2}|u_{\text{\textgreek{w}}ml}|^{2}+\nonumber \\
 & +C_{R_{1}^{*}}\int_{-\infty}^{+\infty}Re\big\{(r-2M)F_{\text{\textgreek{w}}ml}\cdot\big(f_{\text{\textgreek{w}}ml}\bar{u}_{\text{\textgreek{w}}ml}^{\prime}+(r^{-1}h_{\text{\textgreek{w}}ml}+i\text{\textgreek{w}})\bar{u}_{\text{\textgreek{w}}ml}\big)\big\}\, dr_{*},\nonumber 
\end{align}
for $|\text{\textgreek{w}}|\sim l$, where the functions $f_{\text{\textgreek{w}}ml},h_{\text{\textgreek{w}}ml}$
depend on the precise choice of $\text{\textgreek{w}},m,l,R_{1}^{*},\text{\textgreek{e}}_{1}$,
and are bounded independently of $\text{\textgreek{w}},m,l$.

\medskip{}

\noindent \emph{Remark.} Notice that the constants in front of the
second terms of the right hand sides of (\ref{eq:NonTrappingILED}),
(\ref{eq:TrappingILED}) are independent of $R_{1}^{*}$.

\medskip{}

\noindent Then, combining (\ref{eq:NonTrappingILED}) and (\ref{eq:TrappingILED})
with the red shift type estimates of Section 7 of \cite{DafRod6}
(see also \cite{DafRod9}) associated to the $K$ vector field of
$\overline{\mathcal{M}}_{0}$ and the general $\partial_{r}$-Morawetz
type inequalities of \cite{Moschidisc} for the region $\{r\gtrsim R_{0}\}$,
one obtains both the integrated local energy decay estimates (\ref{eq:DegenerateIled})
and (\ref{eq:NonDegenerateIled}) (see e.\,g.~\cite{DafRod9,DafRodSchlap}). 

We will now proceed to establish (\ref{eq:NonTrappingILED}) and (\ref{eq:TrappingILED})
for all $(\text{\textgreek{w}},m,l)\in\mathbb{R}\times\mathbb{Z}\times\mathbb{Z}_{\ge|m|}$.
We will first deal with the very low frequency regime $|\text{\textgreek{w}}|\ll1$.
Fixing a small $\text{\textgreek{e}}_{0}>0$ (depending only on the
geometry of the family $(\mathcal{M}_{0},g_{M,a}^{(1)})$ for $a>0$),
inequalities (\ref{eq:DegenerateIled}) and (\ref{eq:NonDegenerateIled})
for solutions $\text{\textgreek{y}}$ to (\ref{eq:InhomogeneousWaveEquationNewSpacetime})
with frequency support contained in $\{|\text{\textgreek{w}}|\le\text{\textgreek{e}}_{0}\}$
follow readily by repeating the proof of Proposition 6.1 of \cite{Moschidisb}
for equation (\ref{eq:InhomogeneousWaveEquationNewSpacetime}). Thus,
it remains to obtain (\ref{eq:NonTrappingILED}) and (\ref{eq:TrappingILED})
in the case $|\text{\textgreek{w}}|\ge\text{\textgreek{e}}_{0}$.

In the case $|\text{\textgreek{w}}|\gg l$ or $\text{\textgreek{e}}_{0}\le|\text{\textgreek{w}}|\ll l$,
provided $a_{0}$ is sufficiently small, in view of (\ref{eq:ClosednessSchwarzschildPotential})
(and the boundary conditions of (\ref{eq:ODEForTheFourierTransformNewSpacetime})
for $u_{\text{\textgreek{w}}ml}$ at $r_{*}=\pm\infty$), it follows
immediately from the corresponding frequency separated estimates in
\cite{DafRod9} for Schwarzschild and very slowly rotating Kerr spacetimes
that 
\begin{align}
\int_{-R_{1}^{*}}^{R_{1}^{*}}\big(r^{-2}|u_{\text{\textgreek{w}}ml}^{\prime}|^{2}+ & r^{-2}(\text{\textgreek{w}}^{2}+l^{2}+r^{-2})|u_{\text{\textgreek{w}}ml}|^{2}\big)\, dr_{*}\le C_{R_{1}^{*}}\big((\text{\textgreek{w}}^{2}+\text{\textgreek{d}}_{0}m^{2})|u_{\text{\textgreek{w}}ml}(-\infty)|^{2}-\text{\textgreek{w}}^{2}|u_{\text{\textgreek{w}}ml}(+\infty)|^{2}\big)+\label{eq:NonTrappingILED-1}\\
+ & C_{R_{1}^{*}}\int_{-\infty}^{+\infty}Re\big\{(r-2M)F_{\text{\textgreek{w}}ml}\cdot\big(f_{\text{\textgreek{w}}ml}\bar{u}_{\text{\textgreek{w}}ml}^{\prime}+r^{-1}h_{\text{\textgreek{w}}ml}\bar{u}_{\text{\textgreek{w}}ml}\big)\big\}\, dr_{*}+C\int_{R_{1}^{*}}^{\text{\textgreek{e}}_{1}R_{1}^{*}}\big(\text{\textgreek{e}}_{1}r^{-2}+r^{-3}\big)\text{\textgreek{w}}^{2}|u_{\text{\textgreek{w}}ml}|^{2},\nonumber 
\end{align}
for suitable functions $f_{\text{\textgreek{w}}ml},h_{\text{\textgreek{w}}ml}$
with the properties described above. Therefore, inequality (\ref{eq:NonTrappingILED})
follows from (\ref{eq:NonTrappingILED-1}), in view of the the microlocal
$T$-energy identity 
\begin{equation}
\text{\textgreek{w}}^{2}|u_{\text{\textgreek{w}}ml}(+\infty)|^{2}-\text{\textgreek{w}}\big(\text{\textgreek{w}}-\frac{am}{8M^{2}}\big)|u_{\text{\textgreek{w}}ml}(-\infty)|^{2}=\int_{-\infty}^{+\infty}Im\big((r-2M)F_{\text{\textgreek{w}}ml}\cdot\text{\textgreek{w}}\bar{u}_{\text{\textgreek{w}}ml}\big)\, dr_{*},\label{eq:MicrolocalT-energy identity}
\end{equation}
where 
\begin{equation}
|u_{\text{\textgreek{w}}ml}(\pm\infty)|^{2}\doteq\lim_{r_{*}\rightarrow\pm\infty}|u_{\text{\textgreek{w}}ml}(r_{*})|^{2}.
\end{equation}

Finally, in the frequency regime $|\text{\textgreek{w}}|\sim l$ (where
trapping takes place), inequality (\ref{eq:TrappingILED}) follows
in a similar way as for the very slowly rotating Kerr spacetimes in
Section 5 of \cite{DafRod6}. Let us introduce a smooth function $f_{\text{\textgreek{w}}ml}:\mathbb{R}\rightarrow[-1,1]$
such that $f_{\text{\textgreek{w}}ml}(r_{*}(\cdot)):(2M,+\infty)\rightarrow[-1,1]$
extends smoothly up to $r=2M$, satisfying the following properties: 

\begin{enumerate}

\item $f_{\text{\textgreek{w}}ml}^{\prime}\ge0$ and $f_{\text{\textgreek{w}}ml}^{\prime}\ge c_{R_{1}^{*}}>0$
on $[-R_{1}^{*},R_{1}^{*}]$,

\item $f_{\text{\textgreek{w}}ml}<0$ for $r<r_{\text{\textgreek{w}}ml}$
and $f_{\text{\textgreek{w}}ml}>0$ for $r>r_{\text{\textgreek{w}}ml}$,

\item $-f_{\text{\textgreek{w}}ml}V_{\text{\textgreek{w}}ml}^{\prime}-\frac{1}{2}f_{\text{\textgreek{w}}ml}^{\prime\prime\prime}\ge0$
and $-f_{\text{\textgreek{w}}ml}V^{\prime}-\frac{1}{2}f_{\text{\textgreek{w}}ml}^{\prime\prime\prime}\ge c_{R_{1}^{*}}>0$
on $[-R_{1}^{*},R_{1}^{*}]$,

\item $f_{\text{\textgreek{w}}ml}$ is independent of $(\text{\textgreek{w}},m,l)$
for $r_{*}\ge r_{*}(R_{0})$.

\end{enumerate}

Such a function clearly exists, in view of the property (\ref{eq:TrappingEstimatePotential})
of $V_{\text{\textgreek{w}}ml}$ (see also the related constuction
in \cite{DafRodSchlap}). 

After multiplying (\ref{eq:ODEForTheFourierTransformNewSpacetime})
with $2f_{\text{\textgreek{w}}ml}\bar{u}^{\prime}+f_{\text{\textgreek{w}}ml}^{\prime}\bar{u}$
and integrating by parts (using also the boundary conditions of (\ref{eq:ODEForTheFourierTransformNewSpacetime})
for $u_{\text{\textgreek{w}}ml}$ at $r_{*}=\pm\infty$), we obtain:
\begin{align}
\int_{-\infty}^{+\infty}\big(2f_{\text{\textgreek{w}}ml}^{\prime} & |u^{\prime}|^{2}-(f_{\text{\textgreek{w}}ml}V_{\text{\textgreek{w}}ml}^{\prime}+f_{\text{\textgreek{w}}ml}^{\prime\prime\prime}\big)|u|^{2}\big)\, dr_{*}\le\label{eq:CurrentForTrapping}\\
\le & -\int_{-\infty}^{+\infty}Re\Big\{(r-2M)F\cdot\big(2f_{\text{\textgreek{w}}ml}\bar{u}^{\prime}+f_{\text{\textgreek{w}}ml}^{\prime}\bar{u}\big)\Big\}\, dr_{*}+2f_{\text{\textgreek{w}}ml}\cdot\Big(\big(\text{\textgreek{w}}-\frac{am}{8M^{2}}\big)^{2}|u_{\text{\textgreek{w}}ml}(-\infty)|^{2}-\text{\textgreek{w}}^{2}|u_{\text{\textgreek{w}}ml}(+\infty)|^{2}\Big).\nonumber 
\end{align}
Thus, from (\ref{eq:MicrolocalT-energy identity}), (\ref{eq:CurrentForTrapping})
and the properties of $f_{\text{\textgreek{w}}ml}$, we obtain (\ref{eq:TrappingILED})
in the case $|\text{\textgreek{w}}|\sim l$. Therefore, the proof
of Proposition \ref{Prop:ILEDNewSpacetime} is complete.\end{proof}
\begin{rem*}
Notice that, while the integrated local energy decay estimates of
Proposition \ref{Prop:ILEDNewSpacetime} were established for solutions
$\text{\textgreek{y}}$ of (\ref{eq:InhomogeneousWaveEquationNewSpacetime})
on $\overline{\mathcal{M}}_{0}$ which are square integrable in $t^{*}$,
Proposition \ref{Prop:ILEDNewSpacetime}, used as a black box (combined
with the fact that the uniform energy boundedness results of \cite{DafRod5}
hold on $(\overline{\mathcal{M}}_{0},g_{M,a}^{(1)})$), also implies
that an estimate of the form (\ref{eq:IledWithLossPhysical}) (with
$k=1$) holds for solutions $\text{\textgreek{y}}$ of (\ref{eq:InhomogeneousWaveEquationNewSpacetime})
which are not necessarily square integrable in $t^{*}$(arguing as
in the proof of Proposition \ref{prop:ILEDProductCase}, see Section
\ref{sec:SpectralConsequencesAppendix} of the Appendix). In particular,
equation (\ref{eq:InhomogeneousWaveEquationNewSpacetime}) for $F=0$
on $(\mathcal{M}_{0},g_{M,a}^{(1)})$ (for $a$ sufficiently small)
does not admit real outgoing mode solutions 

Furthermore, fixing $\bar{t}:\overline{\mathcal{M}}_{0}\rightarrow\mathbb{R}$
to be a smooth function satisfying $T(\bar{t})=1$, with level sets
which are spacelike hyperboloids terminating at $\mathcal{I}^{+}$
and intersecting $\mathcal{H}^{+}$, the results of \cite{Moschidisc}
imply (in view of (\ref{eq:NonDegenerateIled})) that smooth solutions
$\text{\textgreek{y}}$ of the wave equation on $(\overline{\mathcal{M}}_{0},g_{M,a}^{(1)})$
(for $a$ sufficiently small) with suitably decaying initial data
on a Cauchy hypersurface of $\mathcal{M}_{0}$ decay at a $\bar{t}^{-\frac{3}{2}}$
rate. 
\end{rem*}

\subsection{\label{sub:DeformedMetric}The deformed metric $g_{M,a}^{(h,R_{+})}$}

For any $M>0$, $a>0$, any positive constant $R_{+}>2M$ and any
smooth function $h:(2M,+\infty)\rightarrow[1,+\infty)$ such that
$h\equiv1$ for $r\le R_{+}$ and $h=O(1)$ as $r\rightarrow+\infty$,
we introduce the following metric on $\mathcal{M}_{0}$:

\begin{align}
g_{M,a}^{(h,R_{+})}=-\big( & 1-\frac{2M}{r}-\frac{a^{2}M^{2}\sin^{2}\text{\textgreek{j}}}{r^{4}}\big)dt^{2}-\frac{2Ma\cdot h(r)\sin^{2}\text{\textgreek{j}}}{r}dtd\text{\textgreek{f}}+\big(1-\frac{2M}{r}\big)^{-1}dr^{2}+\label{eq:DeformedMetricBlackHole}\\
 & +h^{2}(r)\cdot r^{2}\big(d\text{\textgreek{j}}^{2}+\sin^{2}\text{\textgreek{j}}d\text{\textgreek{f}}^{2}\big).\nonumber 
\end{align}
Since $h\equiv1$ on $\{r\le R_{+}\}$, we have $g_{M,a}^{(1)}\equiv g_{M,a}^{(h,R_{+})}$
on $\{2M<r<R_{+}\}$, and thus $g_{M,a}^{(h,R_{+})}$ also extends
as a smooth metric on $\overline{\mathcal{M}}_{0}$. 

Notice that 
\begin{equation}
g_{M,a}^{(1)}(T,T)=g_{M,a}^{(h,R_{+})}(T,T)
\end{equation}
everywhere on $\mathcal{M}_{0}$, and thus $(\mathcal{M}_{0},g_{M,a}^{(h,R_{+})})$
has the same ergoregion as $(\mathcal{M}_{0},g_{M,a}^{(1)})$. 

The wave operator associated to $g_{M,a}^{(h,R_{+})}$ takes the form:
\begin{align}
\square_{g_{M,a}^{(h,R_{+})}}\text{\textgreek{y}}= & r^{-2}h^{-2}\partial_{r}\big(r^{2}(1-\frac{2M}{r})h^{2}\partial_{r}\text{\textgreek{y}}\big)+r^{-2}h^{-2}(\sin\text{\textgreek{j}})^{-1}\partial_{\text{\textgreek{j}}}\big(\sin\text{\textgreek{j}}\partial_{\text{\textgreek{j}}}\text{\textgreek{y}}\big)+\label{eq:WaveOperatorDeformedBeforeSeperation}\\
 & +\frac{-\partial_{t}^{2}\text{\textgreek{y}}-2Mar^{-3}h^{-1}\partial_{t}\partial_{\text{\textgreek{f}}}\text{\textgreek{y}}+\big(\big(1-\frac{2M}{r}\big)(\sin\text{\textgreek{j}})^{-2}-a^{2}M^{2}r^{-4}\big)r^{-2}h^{-2}\partial_{\text{\textgreek{f}}}^{2}\text{\textgreek{y}}}{\big(1-\frac{2M}{r}\big)}.\nonumber 
\end{align}
Therefore, the wave equation 
\begin{equation}
\square_{g_{M,a}^{(h,R_{+})}}\text{\textgreek{y}}=0\label{eq:WaveEquationDeformedSpacetime}
\end{equation}
separates (as in the case of $g_{M,a}^{(1)}$): For any $(\text{\textgreek{w}},m,l)\in\mathbb{C}\times\mathbb{Z}\times\mathbb{Z}_{\ge|m|}$,
(\ref{eq:WaveEquationDeformedSpacetime}) admits solutions of the
form (\ref{eq:ProductSolution-2}), with $S_{ml}(\text{\textgreek{j}})$
satisfying (\ref{eq:AngularODE-1}), and $R_{\text{\textgreek{w}}ml}(r)$
satisfying the following ordinary differential equation (compare with
(\ref{eq:SimplifiedRadialODE})):
\begin{equation}
u_{\text{\textgreek{w}}ml}^{\prime\prime}+\big(\text{\textgreek{w}}^{2}-V_{\text{\textgreek{w}}ml;h}^{(h)}\big)u_{\text{\textgreek{w}}ml}=0,\label{eq:SimplifiedRadialODEDeformed}
\end{equation}
where $^{\prime}$ denotes differentiation with respect to the $r_{*}$
variable (defined by (\ref{eq:NewAuxiliaryFunction})), $u_{\text{\textgreek{w}}ml}$
is defined in terms of $R_{\text{\textgreek{w}}ml}$ as
\begin{equation}
u_{\text{\textgreek{w}}ml}(r)\doteq h(r)\cdot rR_{\text{\textgreek{w}}ml}(r)\label{eq:NewRescaledVariable}
\end{equation}
and $V_{\text{\textgreek{w}}ml;h}^{(h)}$ is defined as: 
\begin{equation}
V_{\text{\textgreek{w}}ml;h}^{(h)}(r)\doteq V_{\text{\textgreek{w}}ml}^{(h)}(r)+\frac{h^{\prime\prime}(r)}{h(r)}+2\big(1-\frac{2M}{r}\big)\frac{h^{\prime}(r)}{r\cdot h(r)}\label{eq:DeformedPotential}
\end{equation}
and 
\begin{equation}
V_{\text{\textgreek{w}}ml}^{(h)}(r)=\frac{\big(1-\frac{2M}{r}\big)\big(h(r)\big)^{-2}l(l+1)+2aMr^{-1}\big(h(r)\big)^{-1}\text{\textgreek{w}}m-a^{2}M^{2}r^{-4}\big(h(r)\big)^{-2}m^{2}}{r^{2}}+\big(1-\frac{2M}{r}\big)\frac{2M}{r^{3}}.\label{eq:ModelPotential}
\end{equation}
Similarly, the inhomogeneous wave equation 
\begin{equation}
\square_{g_{M,a}^{(h,R_{+})}}\text{\textgreek{y}}=F\label{eq:InhomogeneousWaveEquationDeformedSpacetime}
\end{equation}
separates as: 
\begin{equation}
u_{\text{\textgreek{w}}ml}^{\prime\prime}+\big(\text{\textgreek{w}}^{2}-V_{\text{\textgreek{w}}ml;h}^{(h)}\big)u_{\text{\textgreek{w}}ml}=h\cdot(r-2M)F_{\text{\textgreek{w}}ml}\label{eq:SeperatedInhomogeneousEquation}
\end{equation}

In addition to equation (\ref{eq:SimplifiedRadialODEDeformed}), in
the next sections we will also study the behaviour of solutions to
the following simplified version of (\ref{eq:SimplifiedRadialODEDeformed}):
\begin{equation}
u_{\text{\textgreek{w}}ml}^{\prime\prime}+\big(\text{\textgreek{w}}^{2}-V_{\text{\textgreek{w}}ml}^{(h)}\big)u_{\text{\textgreek{w}}ml}=0.\label{eq:ModelODEdeformation}
\end{equation}

\subsection{\label{sub:GeneralisationMainTheorem}A generalisation of Theorem
\hyperref[Theorem 1 detailed]{1}}

Let us fix a large parameter $R_{\infty}\gg1$, and let us introduce
the following class of real functions: 
\begin{equation}
\mathcal{B}_{R_{\infty}}=\big\{ h:\mathbb{R}\rightarrow[1,+\infty)\mbox{ continuous, such that }h\equiv1\mbox{ on }(-\infty,R_{\infty}]\big\}.\label{eq:MetricFunctionClass}
\end{equation}
In this Section, we will study for any $(\text{\textgreek{w}},m,l)\in(\mathbb{C}\backslash\{0\})\times\mathbb{Z}\times\mathbb{Z}_{\ge|m|}$
with $Im(\text{\textgreek{w}})\ge0$ and any function $h\in\mathcal{B}_{R_{\infty}}$
the ordinary differential equation (\ref{eq:ModelODEdeformation}).
Recall that the two variables $r_{*},r$ (which will both be used
as arguments for the functions considered in this section) are related
by (\ref{eq:NewAuxiliaryFunction}). We will also set $\text{\textgreek{w}}=\text{\textgreek{w}}_{R}+i\text{\textgreek{w}}_{I}$.

Let us define $u_{inf}^{(h)},u_{hor}^{(h)}:\mathbb{R}\rightarrow\mathbb{C}$
to be the unique solutions of (\ref{eq:ModelODEdeformation}) satisfying
the following asymptotic boundary conditions at $r_{*}=\pm\infty$
respectively: 
\begin{gather}
\lim_{r_{*}\rightarrow+\infty}\big(e^{-i\text{\textgreek{w}}r_{*}}u_{inf}^{(h)}(r_{*})\big)=u_{inf}(+\infty)\label{eq:InfinityLimit-1}\\
\lim_{r_{*}\rightarrow-\infty}\big(e^{i(\text{\textgreek{w}}-\frac{am}{8M^{2}})r_{*}}u_{hor}^{(h)}(r_{*})\big)=u_{hor}(-\infty)\label{eq:HorizonLimit-1}
\end{gather}
for some $u_{inf}(+\infty),u_{hor}(-\infty)\in\mathbb{C}\backslash\{0\}$
(fixed independently of the particular choice of the function $h$).
Then, the classical theory of ordinary differential equations (see
e.\,g.~Chapter 3, Section 3, and Chapter 4, Section 1, of \cite{Coppel1965})
yields the following stability result for equation (\ref{eq:ModelODEdeformation})
for any two functions $h_{1},h_{2}\in\mathcal{B}_{R_{\infty}}$ (using
also the fact that $h\ge1$ for any $h\in\mathcal{B}_{R_{\infty}}$):
\begin{gather}
e^{\text{\textgreek{w}}_{I}|r_{*}|}\big(\big|u_{hor}^{(h_{1})}-u_{hor}^{(h_{2})}\big|+\big|(u_{hor}^{(h_{1})})^{\prime}-(u_{hor}^{(h_{2})})^{\prime}\big|\big)(r_{*})\le C_{\text{\textgreek{w}}ml}\big|u_{hor}(-\infty)\big|\int_{R_{\infty}}^{\max\{r_{*},R_{\infty}\}}\big|V_{\text{\textgreek{w}}ml}^{(h_{1})}(r(x))-V_{\text{\textgreek{w}}ml}^{(h_{2})}(r(x))\big|\, dx,\label{eq:BundDifferenceHorizon}\\
e^{\text{\textgreek{w}}_{I}|r_{*}|}\big(\big|u_{inf}^{(h_{1})}-u_{inf}^{(h_{2})}\big|+\big|(u_{inf}^{(h_{1})})^{\prime}-(u_{inf}^{(h_{2})})^{\prime}\big|\big)(r_{*})\le C_{\text{\textgreek{w}}ml}\big|u_{inf}(+\infty)\big|\int_{\max\{r_{*},R_{\infty}\}}^{+\infty}\big|V_{\text{\textgreek{w}}ml}^{(h_{1})}(r(x))-V_{\text{\textgreek{w}}ml}^{(h_{2})}(r(x))\big|\, dx,\label{eq:BoundDifferenceInfinity}\\
C_{\text{\textgreek{w}}ml}^{-1}\min\big\{\big|u_{hor}(-\infty)\big|,\big|u_{inf}(-\infty)\big|\big\}\le e^{\text{\textgreek{w}}_{I}|r_{*}|}\Big(\big|u_{hor}^{(h_{1})}\big|+\big|(u_{hor}^{(h_{1})})^{\prime}\big|+\big|u_{inf}^{(h_{1})}\big|+\big|(u_{inf}^{(h_{1})})^{\prime}\big|\Big)\le C_{\text{\textgreek{w}}ml}\big(\big|u_{hor}(-\infty)\big|+\big|u_{inf}(-\infty)\big|\big).\label{eq:UniformBoundNotDifference}
\end{gather}

By repeating the proof of Theorem \hyperref[Theorem 1 detailed]{1}
for equation (\ref{eq:ModelODEdeformation}) in place of (\ref{eq:SeperatedODE}),
and with the boundary conditions (\ref{eq:InfinityLimit-1}) and (\ref{eq:HorizonLimit-1})
in place of (\ref{eq:InfinityLimit}) and (\ref{eq:HorizonLimit}),
and using Lemma \ref{lem:DifferenceV} in place of \ref{lem:PerturbationOfTheSimplerODE},
we readily deduce the following result (the details of the proof are
exactly the same and hence will be omitted):
\begin{lem}
\label{lem:CorollaryOfDeformedApplication}For any function $h\in\mathcal{B}_{R_{\infty}}$
and any frequency triad $(\text{\textgreek{w}}_{R},m,l)\in\mathfrak{A}_{sup}^{(a,M)}$,
there exist constants $C_{\text{\textgreek{w}}_{R}ml;h},C_{\text{\textgreek{w}}_{R}ml;h}^{(0)}\gg1$
such that for any $r_{0}\ge C_{\text{\textgreek{w}}_{R}ml;h}$ and
any $0\le\text{\textgreek{w}}_{I}\ll1$ sufficiently small in terms
of $\text{\textgreek{w}}_{R},m,l,r_{0}$ and $R_{\infty}$, there
exists a function $V^{(h,r_{0})}:\mathbb{R}\rightarrow[0,2\text{\textgreek{w}}_{R}^{2}]$
supported in $[r_{0},r_{0}+C_{\text{\textgreek{w}}_{R}ml;h}^{(0)}]$
so that equation 
\begin{equation}
(u^{(h)})^{\prime\prime}+\big((\text{\textgreek{w}}_{R}+i\text{\textgreek{w}}_{I})^{2}-V_{(\text{\textgreek{w}}_{R}+i\text{\textgreek{w}}_{I})ml}^{(h)}-V^{(h,r_{0})}\big)u^{(h)}=0\label{eq:DeformedOdePotential}
\end{equation}
admits a non-zero solution $u^{(h)}$ for which the limits $\lim_{r_{*}\rightarrow+\infty}\big(e^{-i(\text{\textgreek{w}}_{R}+i\text{\textgreek{w}}_{I})r_{*}}u^{(h)}(r_{*})\big)$
and $\lim_{r_{*}\rightarrow-\infty}\big(e^{i(\text{\textgreek{w}}_{R}+i\text{\textgreek{w}}_{I})-\frac{am}{8M^{2}})r_{*}}u^{(h)}(r_{*})\big)$
exist in $\mathbb{C}\backslash\{0\}$. 

Furthermore, for any $h_{1},h_{2}\in\mathcal{B}_{R_{\infty}}$ and
$\text{\textgreek{w}}_{I}$ as above, we can bound (provided $R_{\infty}$
has been fixed sufficiently large in terms of $\text{\textgreek{w}}_{R},m,l$
): 
\begin{equation}
\int_{r_{0}}^{r_{0}+C_{\text{\textgreek{w}}_{R}ml}^{(0)}}\big|V^{(h_{1},r_{0})}(r_{*})-V^{(h_{2},r_{0})}(r_{*})\big|\, dr_{*}\le C_{\text{\textgreek{w}}_{R}ml}\int_{R_{\infty}}^{+\infty}\big|V_{\text{\textgreek{w}}_{R}ml}^{(h_{1})}(r(r_{*}))-V_{\text{\textgreek{w}}_{R}ml}^{(h_{2})}(r(r_{*}))\big|\, dr_{*}.\label{eq:BoundPotentialDifference}
\end{equation}
\end{lem}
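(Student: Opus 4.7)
The plan is to transcribe the proof of Theorem \hyperref[Theorem 1 detailed]{1} in Section \ref{sub:Proof}, with the potential $V_{\omega_R ml}$ replaced throughout by $V_{\omega_R ml}^{(h)}$. First I would introduce the reference solutions $u_{hor}^{(h)}, u_{inf}^{(h)}$ of $u''+(\omega_R^{2}-V_{\omega_R ml}^{(h)})u=0$ satisfying (\ref{eq:HorizonLimit-1})--(\ref{eq:InfinityLimit-1}). Since $V_{\omega_R ml}^{(h)}$ is real-valued, $Im(u'\bar{u})$ is conserved (Lemma \ref{lem:ConservationOfEnergyFlux}), and evaluating it at $r_{*}=\pm\infty$ yields
\[
Im\big((u_{inf}^{(h)})'\bar{u}_{inf}^{(h)}\big)=|u_{inf}(+\infty)|^{2}\omega_R,\qquad Im\big((u_{hor}^{(h)})'\bar{u}_{hor}^{(h)}\big)=|u_{hor}(-\infty)|^{2}\big(\tfrac{am}{8M^{2}}-\omega_R\big).
\]
These two quantities have the same sign because $(\omega_R,m,l)\in\mathfrak{A}_{sup}^{(a,M)}$, so a real rescaling $u_{inf}^{(h)}\mapsto a_{3}u_{inf}^{(h)}$ by an $h$-independent constant $a_{3}\ne0$ (depending only on $|u_{inf}(+\infty)|,|u_{hor}(-\infty)|,\omega_R,am/(8M^{2})$) equalises them, providing the analogue of (\ref{eq:EqualityAngularMomenta}).

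Next I would follow the recipe of Section \ref{sub:Proof}: choose $C_{\omega_R ml;h}$ large enough that $0\le V_{\omega_R ml}^{(h)}\le\omega_R^{2}/2$ on $\{r\ge C_{\omega_R ml;h}\}$ (possible because $V_{\omega_R ml}^{(h)}\to0$ at infinity for any $h\in\mathcal{B}_{R_\infty}$ with $h\ge1$), fix $r_{0}>C_{\omega_R ml;h}$ and a smooth cutoff $\theta$ supported in $[r_{0},r_{0}+C_{\omega_R ml;h}^{(0)}]$, and use the ansatz $V^{(h,r_{0})}=-\theta\cdot(V_{\omega_R ml}^{(h)}-\tfrac{3\omega_R^{2}}{4})+V_{f}$. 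On the plateau $\{\theta\equiv1\}$, equation (\ref{eq:DeformedOdePotential}) reduces to the constant-frequency model $u''+((\omega_R/2)^{2}+\mathcal{Z}(\omega_R,\omega_I,V_{f},r_{*})-V_{f})u=0$, where $\mathcal{Z}=2i\omega_R\omega_I-\omega_I^{2}-V_{(\omega_R+i\omega_I)ml}^{(h)}+V_{\omega_R ml}^{(h)}$ absorbs the shift $\omega_R\mapsto\omega_R+i\omega_I$ and vanishes at $\omega_I=0$. After verifying uniform bounds analogous to (\ref{eq:ExplicitUniformBoundHorizon})--(\ref{eq:LargeInterval}) and a non-zero Wronskian (\ref{eq:NonZeroWronskian}) for the glued reference solutions coinciding with $u_{hor}^{(h)}$ and $a_{3}u_{inf}^{(h)}$ on the two sides of the middle region (perturbing $\theta$ slightly if needed), an application of Lemma \ref{lem:PerturbationOfTheSimplerODE} produces the required $V_{f}\ge0$ supported in $[r_{0}+2,r_{0}+C_{\omega_R ml;h}^{(0)}-2]$ and the mode solution $u^{(h)}$.

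For the stability bound (\ref{eq:BoundPotentialDifference}), I would fix the convex interpolation $h_{s}=(1-s)h_{1}+sh_{2}$, $s\in[0,1]$ (which stays in $\mathcal{B}_{R_\infty}$ since the class is closed under convex combinations), and rerun the construction using Lemma \ref{lem:DifferenceV} in place of Lemma \ref{lem:PerturbationOfTheSimplerODE}. The endpoint data $\mathscr{U}^{(s)}$ are built from the values of $u_{hor}^{(h_{s})}$ and $a_{3}u_{inf}^{(h_{s})}$ at the edges of the middle interval, so the stability estimates (\ref{eq:BundDifferenceHorizon})--(\ref{eq:UniformBoundNotDifference}) imply $|\tfrac{d}{ds}\mathscr{U}^{(s)}|\lesssim\int_{R_\infty}^{+\infty}|\partial_{s}V_{\omega_R ml}^{(h_{s})}|\,dr_{*}$; the same bound controls $\int|\partial_{s}\partial_{\omega_I}^{j}Z^{(s)}|\,dr_{*}$ since $Z^{(s)}$ depends on $h_{s}$ only through $V_{(\omega_R+i\omega_I)ml}^{(h_{s})}-V_{\omega_R ml}^{(h_{s})}$. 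Substituting into (\ref{eq:L1EstimatePotentials}) and using the pointwise identity $\int_{0}^{1}|\partial_{s}V_{\omega_R ml}^{(h_{s})}|\,ds\ge|V_{\omega_R ml}^{(h_{1})}-V_{\omega_R ml}^{(h_{2})}|$ then yields (\ref{eq:BoundPotentialDifference}). The main technical hurdle is ensuring that the implicit constants and the Wronskian lower bound (\ref{eq:LowerBoundDerivativeOriginWronskian}) remain uniform along the family $h_{s}$; this reduces to the fact that $R_\infty$ has been fixed large enough that the asymptotics of $u_{hor}^{(h_{s})},u_{inf}^{(h_{s})}$ are governed by the $s$-independent boundary conditions (\ref{eq:HorizonLimit-1})--(\ref{eq:InfinityLimit-1}) on the region $\{r\le R_\infty\}$ where $h_{s}\equiv1$ for all $s$, combined with the uniform bounds (\ref{eq:UniformBoundNotDifference}).
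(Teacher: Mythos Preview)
Your overall strategy is correct and matches the paper's: repeat the proof of Theorem~\hyperref[Theorem 1 detailed]{1} with $V_{\omega_R ml}^{(h)}$ in place of $V_{\omega_R ml}$, and for the stability estimate (\ref{eq:BoundPotentialDifference}) run Lemma~\ref{lem:DifferenceV} along the convex interpolation $h_s=(1-s)h_1+sh_2$, feeding in the differential bounds (\ref{eq:BundDifferenceHorizon})--(\ref{eq:UniformBoundNotDifference}) to control $\tfrac{d}{ds}\mathscr{U}^{(s)}$ and $\partial_s Z^{(s)}$. This is exactly what the paper sketches.

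There is, however, a genuine slip in your final step. After substituting into (\ref{eq:L1EstimatePotentials}) you obtain an upper bound for $\int|V^{(h_1,r_0)}-V^{(h_2,r_0)}|$ in terms of $\int_{R_\infty}^{+\infty}\int_0^1|\partial_s V_{\omega_R ml}^{(h_s)}|\,ds\,dr_*$. The triangle inequality you quote, $\int_0^1|\partial_s V_{\omega_R ml}^{(h_s)}|\,ds\ge|V_{\omega_R ml}^{(h_1)}-V_{\omega_R ml}^{(h_2)}|$, points the wrong way: you cannot replace an upper bound by something smaller. What you need is the \emph{reverse} inequality (up to a constant), and this is not automatic because $V_{\omega_R ml}^{(h)}$ contains both $h^{-1}$ and $h^{-2}$ terms with coefficients of possibly opposite sign.

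The fix is to exploit the explicit form (\ref{eq:ModelPotential}): write $V_{\omega_R ml}^{(h)}(r)=\alpha(r)h^{-2}+\beta(r)h^{-1}+\gamma(r)$. Since each power $h_s^{-k}$ is monotone in $s$ along the linear interpolation, one has $\int_0^1|\partial_s h_s^{-k}|\,ds=|h_1^{-k}-h_2^{-k}|$, and hence
\[
\int_0^1\big|\partial_s V_{\omega_R ml}^{(h_s)}\big|\,ds\le |\alpha|\,|h_1^{-2}-h_2^{-2}|+|\beta|\,|h_1^{-1}-h_2^{-1}|.
\]
This term-by-term bound is precisely what is invoked downstream (compare (\ref{eq:DifferencePotentialsLipschitz})), and it dominates $|V_{\omega_R ml}^{(h_1)}-V_{\omega_R ml}^{(h_2)}|$ rather than the other way around. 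So either argue additionally that $\partial_s V_{\omega_R ml}^{(h_s)}$ has a definite sign for $r\ge R_\infty$ (so that the triangle inequality becomes an equality), or---more in line with how the estimate is actually consumed---record the bound in the split form above.
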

\begin{rem*}
The estimate (\ref{eq:BoundPotentialDifference}) follows from the
estimate (\ref{eq:L1EstimatePotentials}) (for the family $u^{(s)}\doteq u^{(sh_{1}+(1-s)h_{2})}$)
of Lemma \ref{lem:DifferenceV}, in view also of the bounds (\ref{eq:BundDifferenceHorizon})--(\ref{eq:UniformBoundNotDifference}).
\end{rem*}
By inspecting the proof of Theorem \hyperref[Theorem 1 detailed]{1},
we readily infer, in view of the estimates (\ref{eq:BundDifferenceHorizon})
and (\ref{eq:BoundDifferenceInfinity}) (as well as the fact that
$h\ge1$ for any $h\in\mathcal{B}_{R_{\infty}}$), that the constants
$C_{\text{\textgreek{w}}ml;h},C_{\text{\textgreek{w}}ml;h}^{(0)}$
in the statement of Lemma \ref{lem:CorollaryOfDeformedApplication}
can be chosen independently of $h\in\mathcal{B}_{R_{\infty}}$ (we
will thus use the notation $C_{\text{\textgreek{w}}ml},C_{\text{\textgreek{w}}ml}^{(0)}$
for $C_{\text{\textgreek{w}}ml;h},C_{\text{\textgreek{w}}ml;h}^{(0)}$
from now on). 

Finally, let us also note that $V^{(1,r_{0})}$ is not identically
$0$ (since that would imply that the wave equation on the spacetime
$(\mathcal{M}_{0},g_{0})$ admits an outgoing real or exponentially
growing mode solution, violating the statement of Proposition \ref{Prop:ILEDNewSpacetime}).
In particular, the proof of Theorem \hyperref[Theorem 1 detailed]{1}
yields the following lower bound for $V^{(1,r_{0})}$ (uniform in
$r_{0}$, $\text{\textgreek{w}}_{I}$) holds: 
\begin{equation}
\int_{r_{0}}^{r_{0}+C_{\text{\textgreek{w}}_{R}ml}^{(0)}}V^{(1,r_{0})}(r_{*})\, dr_{*}\ge c_{\text{\textgreek{w}}_{R}ml}>0.\label{eq:PositivityFlatCase}
\end{equation}
Moreover, since any $h\in\mathcal{B}_{R_{\infty}}$ satisfies $h\ge1$,
from (\ref{eq:ModelPotential}) we deduce that 
\begin{equation}
\limsup_{R_{\infty}\rightarrow+\infty}\int_{R_{\infty}}^{+\infty}\big|V_{(\text{\textgreek{w}}_{R}+i\text{\textgreek{w}}_{I})ml}^{(h)}(r(r_{*}))\big|\, dr_{*}=0\label{eq:DecayAtZeroForL1Potential}
\end{equation}
uniformly in $h\in\mathcal{B}_{R_{\infty}}$ (for fixed $\text{\textgreek{w}}_{R},m,l$
and any $\text{\textgreek{w}}_{I}$ sufficiently small  in terms of
$\text{\textgreek{w}}_{R},m,l$). Thus, (\ref{eq:BoundPotentialDifference}),
(\ref{eq:PositivityFlatCase}) and (\ref{eq:DecayAtZeroForL1Potential})
imply that, provided $R_{\infty}$ is sufficiently large in terms
of $\text{\textgreek{w}}_{R},m,l$, there exists some $c_{\text{\textgreek{w}}_{R}ml}>0$
depending only on $\text{\textgreek{w}}_{R},m,l$ so that for any
$h\in\mathcal{B}_{R_{\infty}}$, any $r_{0}>C_{\text{\textgreek{w}}_{R}ml}$
and any $\text{\textgreek{w}}_{I}$ is sufficiently small in terms
of $\text{\textgreek{w}}_{R},m,l,r_{0}$ and $R_{\infty}$:
\begin{align}
\int_{r_{0}}^{r_{0}+C_{\text{\textgreek{w}}_{R}ml}^{(0)}}V^{(h,r_{0})}(r_{*})\, dr_{*} & \ge\int_{r_{0}}^{r_{0}+C_{\text{\textgreek{w}}_{R}ml}^{(0)}}V^{(1,r_{0})}(r_{*})\, dr_{*}-C_{\text{\textgreek{w}}_{R}ml}\int_{R_{\infty}}^{+\infty}\big(V_{(\text{\textgreek{w}}_{R}+i\text{\textgreek{w}}_{I})ml}^{(h)}(r(r_{*}))+V_{(\text{\textgreek{w}}_{R}+i\text{\textgreek{w}}_{I})ml}^{(1)}(r(r_{*}))\big)\, dr_{*}>\label{eq:LowerBoundL1Norm}\\
 & >c_{\text{\textgreek{w}}_{R}ml}>0.\nonumber 
\end{align}

\subsection{\label{sub:FinalProofDeformed}Proof of Theorem \hyperref[Theorem 3]{3}
and Proposition \ref{prop:HighFrequencyILED}}

We will now proceed to establish Theorem \hyperref[Theorem 3]{3}
and Proposition \ref{prop:HighFrequencyILED}. In view of (\ref{eq:DeformedMetricBlackHole}),
(\ref{eq:WaveOperatorDeformedBeforeSeperation}), (\ref{eq:SimplifiedRadialODEDeformed})
and the corresponding properties of the spacetimes $(\mathcal{M}_{0},g_{M,a}^{(1)})$,
we immediately obtain the following properties of $(\mathcal{M}_{0},g_{M,a}^{(h,R_{+})})$
for any $M>0$, $a>0$ and $R_{+}\gg1$:
\begin{enumerate}
\item In the $(t,r,\text{\textgreek{j}},\text{\textgreek{f}})$ coordinate
chart on $\mathcal{M}_{0}$, the vector fields $T=\partial_{t}$ and
$\text{\textgreek{F}}=\partial_{\text{\textgreek{f}}}$ are Killing
vector fields for $g_{M,a}^{(h,R_{+})}$.
\item The spacetime $(\mathcal{M}_{0},g_{M,a}^{(h,R_{+})})$ is asymptotically
conic.
\item The metric $g_{M,a}^{(h,R_{+})}$ can be smoothly extended up to $\mathcal{H}=\{r=2M\}$
in the ``Schwarzschild star'' coordinate chart on $\mathcal{M}_{0}$.
In this extension, $\mathcal{H}$ is the event horizon associated
to the asymptotically conic end of $(\mathcal{M}_{0},g_{M,a}^{(h,R_{+})})$.
Furthermore, $\mathcal{H}$ is a Killing horizon with positive surface
gravity.
\item For any $\text{\textgreek{e}}>0$, $k\in\mathbb{N}$, there exists
an $a_{0}>0$ so that if $a\le a_{0}$, the metrics $g_{M,a}^{(h,R_{+})}$
and $g_{M,a}$ are $\text{\textgreek{e}}$-close in the $C^{k}$ norm
defined with respect to the ``Schwarzschild star'' coordinate chart
on the closed subset $\{2M\le r\le R_{b}\}$ of $\mathcal{M}_{0}\cup\mathcal{H}^{+}$.
The ergoregion of $(\mathcal{M}_{0},g_{M,a}^{(h)})$ is non-empty,
contained in the set $\{r\le2M+\text{\textgreek{e}}\}$, and 
\begin{equation}
\sup_{\mathcal{M}_{M,0}}g_{M,a}^{(h)}(T,T)<\text{\textgreek{e}}.
\end{equation}

\item The wave equation (\ref{eq:WaveEquationDeformedSpacetime}) completely
separates in the $(t,r,\text{\textgreek{j}},\text{\textgreek{f}})$
coordinate chart.
\end{enumerate}
Furthermore, given $(\text{\textgreek{w}}_{R},m,l)\in\mathfrak{A}_{sup}^{(a,M)}$,
provided $R_{+}$ is sufficiently large in terms of $\text{\textgreek{w}}_{R},m,l,a,M$,
and, additionally, $\text{\textgreek{w}}_{I}$ is sufficiently small
 in terms of $\text{\textgreek{w}}_{0},m_{0},l_{0},a,M,R_{+}$ and
$a\ll M$, the function $h$ can be chosen appropriately so that the
following properties also hold on $(\mathcal{M}_{0},g_{M,a}^{(h,R_{+})})$
(thus completing the proof of Theorem \hyperref[Theorem 3]{3} and
Proposition \ref{prop:HighFrequencyILED}):
\begin{enumerate}[resume]
\item \label{enu:RealMode}The wave equation (\ref{eq:WaveEquationDeformedSpacetime})
admits an outgoing mode solution with frequency parameter $\text{\textgreek{w}}_{R}+i\text{\textgreek{w}}_{I}$.
\item \label{enu:ILED}The trapped set of $(\mathcal{M}_{0},g_{M,a}^{(h)})$
is normally hyperbolic. Furthermore, any solution $\text{\textgreek{y}}$
to the inhomogeneous wave equation (\ref{eq:InhomogeneousWaveEquationDeformedSpacetime})
on $(\mathcal{M}_{0},g_{M,a}^{(h)})$ which is smooth up to $\mathcal{H}^{+}$
satisfies, for any $\text{\textgreek{t}}_{1}\le\text{\textgreek{t}}_{2}$,
the estimates (\ref{eq:DegenerateIled-1-2}) and (\ref{eq:NonDegenerateIled-1-2}).
\end{enumerate}
The statements \ref{enu:RealMode} and \ref{enu:ILED} above follow
directly as corollaries of Propositions \ref{prop:ModeDeformedMetric}
and \ref{prop:FromL^2ILEDtoActualILED} respectively:
\begin{prop}
\label{prop:ModeDeformedMetric}For any $a>0$, $M>0$ and any superradiant
frequency triad $(\text{\textgreek{w}}_{R},m,l)\in\mathfrak{A}_{sup}^{(a,M)}$,
there exists an $R_{+}>2M$ large in terms of $\text{\textgreek{w}}_{R},m,l,a,M$,
so that for any $0\le\text{\textgreek{w}}_{I}\ll1$ small in terms
of $\text{\textgreek{w}}_{R},m,l,R_{+}$, there exists a smooth and
increasing function $h:\mathbb{R}\rightarrow[1,+\infty)$ satisfying
$h\equiv1$ on $(-\infty,R_{+}]$ and $h(r_{*})=C_{1}+O(r_{*}^{-1})$
as $r_{*}\rightarrow+\infty$, for which the wave equation (\ref{eq:WaveEquationDeformedSpacetime})
on $(\mathcal{M}_{0},g_{M,a}^{(h,R_{+})})$ admits an outgoing mode
solution with frequency parameter $\text{\textgreek{w}}_{R}+i\text{\textgreek{w}}_{I}$. \end{prop}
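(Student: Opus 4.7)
The proposition calls for a fixed-point construction, and the key observation that motivates it is the following. Comparing the separated radial equation \eqref{eq:SimplifiedRadialODEDeformed} on $(\mathcal{M}_0, g_{M,a}^{(h,R_+)})$, which can be written as
\[
u'' + \Big((\omega_R+i\omega_I)^2 - V^{(h)}_{(\omega_R+i\omega_I)ml} - Q[h]\Big)u = 0, \qquad Q[h] \doteq \frac{h''}{h} + \frac{2(1-2M/r)}{r}\frac{h'}{h},
\]
with the model equation \eqref{eq:DeformedOdePotential} of Lemma \ref{lem:CorollaryOfDeformedApplication}, which takes exactly the same form but with the compactly supported potential $V^{(h,R_+)}$ in place of $Q[h]$, we see that the two equations coincide precisely when
\[
Q[h] \;=\; V^{(h,R_+)}.
\]
If we can arrange this identity for some admissible $h$, then the outgoing mode produced by Lemma \ref{lem:CorollaryOfDeformedApplication} is automatically an outgoing mode for the true wave equation on $(\mathcal{M}_0, g_{M,a}^{(h,R_+)})$, which is exactly what is needed.

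To realize this identity I would set $r_0 = R_+$ and define a map $\Phi: \mathcal{B}_{R_\infty} \to \mathcal{B}_{R_\infty}$, with $R_\infty \doteq r_*(R_+)$, as follows. For $h \in \mathcal{B}_{R_\infty}$, let $V^{(h,R_+)}$ be the potential furnished by Lemma \ref{lem:CorollaryOfDeformedApplication} and let $\Phi(h)$ be the unique solution of the linear ODE
\[
(\Phi(h))'' + \frac{2(1-2M/r)}{r}(\Phi(h))' \;=\; V^{(h,R_+)} \cdot \Phi(h), \qquad {}' = d/dr_*,
\]
with initial data $\Phi(h) \equiv 1$ and $(\Phi(h))' \equiv 0$ on $(-\infty, R_\infty]$. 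Since $V^{(h,R_+)} \ge 0$, rewriting the ODE as $(r^2 (\Phi(h))')' = 4M (\Phi(h))' + r^2 V^{(h,R_+)} \Phi(h)$ and using a comparison/Grönwall argument shows that $\Phi(h) \ge 1$ and is monotone nondecreasing in $r_*$. The uniform bound $V^{(h,R_+)} \le 2\omega_R^2$ and its compact support of length $C^{(0)}_{\omega_R ml}$ yield a uniform upper bound $\Phi(h) \le K = K(\omega_R, m, l, M)$. Beyond the support of $V^{(h,R_+)}$, the ODE reduces to $(\Phi(h))'' + \frac{2(1-2M/r)}{r}(\Phi(h))' = 0$; a direct asymptotic analysis (using $(1-2M/r)/r \sim r_*^{-1}$ for large $r_*$) then shows $\Phi(h)(r_*) = C_1(h) + O(r_*^{-1})$ for some $C_1(h) \in [1, K]$, which gives the claimed asymptotic profile at infinity.

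The conclusion then follows from Schauder's fixed-point theorem applied to the restriction of $\Phi$ to the closed, convex, bounded subset $\mathcal{K} \doteq \{h \in \mathcal{B}_{R_\infty} : 1 \le h \le K, \ h \text{ nondecreasing}\}$ of $C([R_\infty, R_\infty + L])$ (for any $L$ large enough to cover the support of $V^{(h,R_+)}$; beyond this interval the map is determined by ODE continuation, so there is no loss). Continuity of $\Phi$ follows from the $L^1$-stability estimate \eqref{eq:BoundPotentialDifference} of Lemma \ref{lem:CorollaryOfDeformedApplication}, combined with the explicit rational dependence of $V^{(h)}_{\omega_R m l}$ on $h$ coming from \eqref{eq:ModelPotential} and standard Duhamel stability for linear ODEs with $L^1$ coefficient perturbations. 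Compactness of $\overline{\Phi(\mathcal{K})}$ is immediate from the uniform bound on $(\Phi(h))'$ via Arzelà--Ascoli. The resulting fixed point $h_*$ satisfies $Q[h_*] = V^{(h_*, R_+)}$, so the outgoing solution $u^{(h_*)}$ to \eqref{eq:DeformedOdePotential} provided by Lemma \ref{lem:CorollaryOfDeformedApplication} solves \eqref{eq:SimplifiedRadialODEDeformed} and therefore, after assembling via $u_{\omega m l} = h_*\, r\, R_{\omega m l}$ with the angular/azimuthal factors, yields an outgoing mode solution of \eqref{eq:WaveEquationDeformedSpacetime} with frequency parameter $\omega_R + i\omega_I$. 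Smoothness of $h_*$ follows because $V^{(h_*,R_+)}$ is smooth by the mollification step in the construction of Lemma \ref{lem:DifferenceV}, so the linear ODE defining $\Phi(h_*)$ has smooth coefficients.

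The main obstacle is matching the topologies between the ``$L^1$ estimate for $V^{(h, R_+)}$ as a function of $h$'' produced by Lemma \ref{lem:CorollaryOfDeformedApplication} and the ``$C^0$ topology in which $\mathcal{K}$ is compact''. This forces one to invoke the Duhamel representation for $\Phi(h)$ carefully: an $L^1$ perturbation in the coefficient translates into a $C^0$ perturbation of $\Phi(h)$ only because the coefficient is multiplicative and the evolution is uniformly bounded on $\mathcal{K}$. A subsidiary difficulty, which must be handled simultaneously, is monitoring the dependence of the constants $C_{\omega_R m l}$, $C^{(0)}_{\omega_R m l}$ and of the range of admissible $\omega_I$ in Lemma \ref{lem:CorollaryOfDeformedApplication} on $R_\infty$: the statement of the lemma already provides uniformity in $h \in \mathcal{B}_{R_\infty}$, so one must choose $R_+$ large in terms of $(\omega_R, m, l, a, M)$ first, and only then $\omega_I$ small in terms of $(\omega_R, m, l, a, M, R_+)$, so that the application of Lemma \ref{lem:CorollaryOfDeformedApplication} remains valid throughout the iteration.
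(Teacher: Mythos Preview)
Your reduction to the nonlocal problem $Q[h]=V^{(h,R_+)}$ and the map $\Phi$ coincide with the paper's setup (equation \eqref{eq:NonLocalOde}). The paper, however, solves this by a Picard iteration: it shows that $h_n=\Phi(h_{n-1})$ is Cauchy in $C^1([R_+,R_+^{(1)}])$ with contraction factor $\lesssim R_+^{-1}$, obtained by feeding \eqref{eq:BoundPotentialDifference} and the $r^{-2}$ decay of the $h$-dependent part of \eqref{eq:ModelPotential} into a Gr\"onwall argument (see \eqref{eq:GronwallBeforePicard}--\eqref{eq:CauchySequencePotentialRegion}). You propose Schauder instead, which is a legitimate alternative, but your domain is not correctly set up: the assignment $h\mapsto V^{(h,R_+)}$ depends on $h$ on \emph{all} of $[R_\infty,\infty)$, through the outgoing solution $u_{inf}^{(h)}$ of \eqref{eq:InfinityLimit-1}, so two elements of $\mathcal{B}_{R_\infty}$ that agree on $[R_\infty,R_\infty+L]$ can produce different potentials and hence different images under $\Phi$. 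A fixed point in $C([R_\infty,R_\infty+L])$ therefore need not lift to one in $\mathcal{B}_{R_\infty}$. The repair is mild: either work in $C^0\big([R_\infty,\infty]\big)$ (one-point compactification), where monotone bounded functions live and $\Phi(\mathcal{K})$ is equicontinuous at $\infty$ since $(\Phi h)'\lesssim r^{-2}$ beyond the support of $V^{(h,R_+)}$; or restrict $\mathcal{K}$ to functions solving the free equation $(r^2 h')'=0$ on $[R_\infty+L,\infty)$, which makes the extension canonical but forces you into the $C^1$ topology on the interval.

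Once repaired, both arguments rest on the same Lipschitz input \eqref{eq:BoundPotentialDifference} and both require $R_+$ large for Lemma~\ref{lem:CorollaryOfDeformedApplication} to apply. The contraction buys uniqueness of $h$ and an explicit $C^1$ convergence rate; Schauder avoids verifying that the contraction factor is below $1$ but yields only existence.
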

\begin{proof}
In view of Lemma \ref{lem:CorollaryOfDeformedApplication} and the
form (\ref{eq:DeformedPotential}) of the potential for $g_{M,a}^{(h,R_{+})}$,
in order to construct the function $h$ so that (\ref{eq:WaveEquationDeformedSpacetime})
has a mode solution with frequency parameter $\text{\textgreek{w}}_{R}+i\text{\textgreek{w}}_{I}$,
it suffices to solve the following initial value problem on $\mathbb{R}$:
\begin{equation}
\begin{cases}
h^{\prime\prime}(r_{*})+2r^{-1}\big(1-\frac{2M}{r}\big)h^{\prime}(r_{*})=V^{(h,R_{+})}(r_{*})\cdot h(r_{*}),\\
h(R_{+})=1,\mbox{ }h^{\prime}(R_{+})=0,
\end{cases}\label{eq:NonLocalOde}
\end{equation}
 where $V^{(h,R_{+})}:\mathbb{R}\rightarrow[0,2\text{\textgreek{w}}_{R}^{2}]$
is the function provided by Lemma \ref{lem:CorollaryOfDeformedApplication}
for $R_{+}$ in place of $r_{0}$. 

\medskip{}

\noindent \emph{Remark.} Notice that the relation (\ref{eq:NonLocalOde})
is not an ordinary differential equation, since $V^{(h,R_{+})}$ is
a non-local (and non-linear) operator acting on the function $h$.
Notice also that, since $V^{(h,R_{+})}$ is supported in $\{r_{*}\ge R_{+}\}$,
a solution $h$ to (\ref{eq:NonLocalOde}) (if it exists) will be
identically equal to $1$ on $(-\infty,R_{+}]$. 

\medskip{}

We will solve (\ref{eq:NonLocalOde}) using a Picard-type iteration
scheme, assuming $R_{+}$ has been fixed sufficiently large. Let $h_{n}:\mathbb{R}\rightarrow\mathbb{R}$,
$n\in\mathbb{N}$ be defined by the recursive relation
\begin{equation}
\begin{cases}
h_{n}^{\prime\prime}(r_{*})+2r^{-1}\big(1-\frac{2M}{r}\big)h_{n}^{\prime}(r_{*})-V^{(h_{n-1},R_{+})}(r_{*})\cdot h_{n}(r_{*})=0,\\
h_{n}|_{r=R_{+}}=1,\mbox{ }h_{n}^{\prime}|_{r=R_{+}}=0,
\end{cases}\label{eq:RecursiveRelation}
\end{equation}
with $h_{0}\equiv1$. In view of (\ref{eq:NewAuxiliaryFunction}),
equation (\ref{eq:RecursiveRelation}) can be rewritten as 
\begin{equation}
\begin{cases}
r^{-2}\big(r^{2}h_{n}^{\prime}\big)^{\prime}=V^{(h_{n-1},R_{+})}h_{n},\\
h_{n}|_{r=R_{+}}=1,\mbox{ }h_{n}^{\prime}|_{r=R_{+}}=0.
\end{cases}\label{eq:RecursiveRelation-1}
\end{equation}

In order for (\ref{eq:RecursiveRelation-1}) to be well defined, we
first need to establish that for all $n\in\mathbb{N}$, $h_{n}$ is
a smooth function belonging to the class $\mathcal{B}_{R_{\infty}}$,
i.\,e.~(\ref{eq:MetricFunctionClass}). To this end, it suffices
to show that $h_{n}(r_{*})\ge1$ for all $r_{*}\ge R_{+}$. This follows
readily by induction: Assuming that $h_{n-1}$ is smooth and belongs
to the class $\mathcal{B}_{R_{\infty}}$ (this statement is true for
$h_{0}$), the function $V^{(h_{n-1},R_{+})}$ is well defined by
Lemma \ref{lem:CorollaryOfDeformedApplication}, and thus a unique
smooth solution $h_{n}:\mathbb{R}\rightarrow\mathbb{R}$ of the (inhomogeneous)
linear ordinary differential equation (\ref{eq:RecursiveRelation-1})
exists. Since $V^{(h_{n-1},R_{+})}\ge0$, integrating (\ref{eq:RecursiveRelation-1})
yields 
\begin{equation}
h_{n}^{\prime}\ge0,\label{eq:IncreasingH_n}
\end{equation}
and hence $h_{n}(r_{*})\ge h_{n}(R_{+})=1$ for all $r_{*}\ge R_{+}$.
Since $V^{(h_{n-1},R_{+})}$ is supported in $[R_{+},+\infty)$, $h_{n}$
is identically $1$ on $(-\infty,R_{+}]$, in view of the conditions
$h_{n}|_{r=R_{+}}=1$, $h_{n}^{\prime}|_{r=R_{+}}=0$, and thus $h_{n}\in\mathcal{B}_{R_{\infty}}$.

By integrating equation (\ref{eq:RecursiveRelation-1}), we obtain
the following implicit formula for $h_{n}$ for any $r_{*}\ge R_{+}$:
\begin{align}
h_{n}(r_{*}) & =1+\int_{R_{+}}^{r_{*}}\Big(\frac{1}{r^{2}(\text{\textgreek{r}})}\big(\int_{R_{+}}^{\text{\textgreek{r}}}r^{2}(\text{\textgreek{sv}})V^{(h_{n-1},R_{+})}(\text{\textgreek{sv}})h_{n}(\text{\textgreek{sv}})\, d\text{\textgreek{sv}}\big)\, d\text{\textgreek{r}}\Big)=\label{eq:ExpressionH_n}\\
 & =1+\int_{R_{+}}^{r_{*}}r^{2}(\text{\textgreek{sv}})\big(\int_{\text{\textgreek{sv}}}^{r_{*}}r^{-2}(\text{\textgreek{r}})\, d\text{\textgreek{r}}\big)V^{(h_{n-1},R_{+})}(\text{\textgreek{sv}})h_{n}(\text{\textgreek{sv}})\, d\text{\textgreek{sv}}.\nonumber 
\end{align}
Let us set 
\begin{equation}
R_{+}^{(1)}\doteq R_{+}+C_{\text{\textgreek{w}}_{R}ml}^{(0)},
\end{equation}
where $C_{\text{\textgreek{w}}_{R}ml}^{(0)}$ is the constant in the
statement of Lemma \ref{lem:CorollaryOfDeformedApplication} for $h\equiv1$
(see the remarks below Lemma \ref{lem:CorollaryOfDeformedApplication}).
Since $V^{(h_{n-1},R_{+})}$ is supported in the interval $[R_{+},R_{+}^{(1)}]$
and $0\le V^{(h_{n-1},R_{+})}\le2\text{\textgreek{w}}_{R}^{2}$, an
application of Gronwall's inequality on (\ref{eq:ExpressionH_n})
readily yields that there exists some $C_{\text{\textgreek{w}}_{R}ml}$
depending only on $\text{\textgreek{w}}_{R},m,l$ (and $a,M$), so
that for all $n\in\mathbb{N}$:
\begin{equation}
\max_{r_{*}\in[R_{+},R_{+}^{(1)}]}\big(|h_{n}^{\prime}(r_{*})|+|h_{n}(r_{*})|\big)\le C_{\text{\textgreek{w}}_{R}ml}.\label{eq:UniformBoundednessPotentialRegion}
\end{equation}
Furthermore, in view of (\ref{eq:ExpressionH_n}) and (\ref{eq:LowerBoundL1Norm})
(as well as the fact that $h_{n}\ge1$), we obtain the following lower
bound for $h_{n}^{\prime}(R_{+}^{(1)})$ for some $c_{\text{\textgreek{w}}_{R}ml}>0$
depending only on $\text{\textgreek{w}}_{R},m,l$ (and $a,M$): 
\begin{equation}
h_{n}^{\prime}(R_{+}^{(1)})\ge c_{\text{\textgreek{w}}_{R}ml}>0.\label{eq:LowerBoundDerivative}
\end{equation}

Since $V^{(h_{n-1},R_{+})}\equiv0$ for $r_{*}\ge R_{+}^{(1)}$, we
obtain the following expression for any $r_{*}>R_{+}^{(1)}$ by integrating
(\ref{eq:RecursiveRelation-1}) over $[R_{+}^{(1)},r_{*}]$: 
\begin{equation}
h_{n}(r_{*})=h_{n}(R_{+}^{(1)})+\big(R_{+}^{(1)}\big)^{2}\cdot h_{n}^{\prime}(R_{+}^{(1)})\int_{R_{+}^{(1)}}^{r_{*}}r^{-2}(\text{\textgreek{r}})\, d\text{\textgreek{r}}.\label{eq:ExpressionFarAwayRegion}
\end{equation}
Thus, in view of (\ref{eq:UniformBoundednessPotentialRegion}), (\ref{eq:LowerBoundDerivative}),
(\ref{eq:ExpressionFarAwayRegion}) and (\ref{eq:NewAuxiliaryFunction})
(as well as the fact that $h_{n}\ge1$), we obtain for any $r_{*}\ge R_{+}^{(1)}$:
\begin{equation}
1+c_{\text{\textgreek{w}}_{R}ml}R_{+}^{(1)}\big(\frac{r_{*}-R_{+}^{(1)}}{r_{*}}\big)\le h_{n}(r_{*})\le C_{\text{\textgreek{w}}_{R}ml}R_{+}^{(1)}.\label{eq:BehaviourH_nFarAway}
\end{equation}

The expression (\ref{eq:ExpressionH_n}) yields the following formula
for the differences $h_{n}-h_{n-1}$ for any $r_{*}\ge R_{+}$ and
any $n\ge2$ (recall that $h_{n}-h_{n-1}\equiv0$ for $r_{*}\le R_{+}$):
\begin{align}
h_{n}(r_{*})-h_{n-1}(r_{*})=\int_{R_{+}}^{r_{*}} & r^{2}(\text{\textgreek{sv}})\big(\int_{\text{\textgreek{sv}}}^{r_{*}}r^{-2}(\text{\textgreek{r}})\, d\text{\textgreek{r}}\big)V^{(h_{n-1},R_{+})}(\text{\textgreek{sv}})\big(h_{n}(\text{\textgreek{sv}})-h_{n-1}(\text{\textgreek{sv}})\big)\, d\text{\textgreek{sv}}+\label{eq:DifferenceExpression}\\
 & +\int_{R_{+}}^{r_{*}}r^{2}(\text{\textgreek{sv}})\big(\int_{\text{\textgreek{sv}}}^{r_{*}}r^{-2}(\text{\textgreek{r}})\, d\text{\textgreek{r}}\big)\big(V^{(h_{n-1},R_{+})}(\text{\textgreek{sv}})-V^{(h_{n-2},R_{+})}(\text{\textgreek{sv}})\big)h_{n-1}(\text{\textgreek{sv}})\, d\text{\textgreek{sv}}.\nonumber 
\end{align}
Notice that for any $r_{*}\in[R_{+},R_{+}^{(1)}]$ and any $\text{\textgreek{sv}}\in[R_{+},r_{*}]$,
we can bound, provided $R_{+}\gg1$ (in view of (\ref{eq:NewAuxiliaryFunction})):
\begin{equation}
r^{2}(\text{\textgreek{sv}})\big(\int_{\text{\textgreek{sv}}}^{r_{*}}r^{-2}(\text{\textgreek{r}})\, d\text{\textgreek{r}}\big)\le C(R_{+}^{(1)}-R_{+})\le C_{\text{\textgreek{w}}_{R}ml}.\label{eq:BoundForTrickyTerm}
\end{equation}
In view of (\ref{eq:UniformBoundednessPotentialRegion}), (\ref{eq:BoundForTrickyTerm})
and the fact that $V^{(h_{k},R_{+})}$ is supported in $[R_{+},R_{+}^{(1)}]$
and satisfies $0\le V^{(h_{k},R_{+})}\le2\text{\textgreek{w}}_{R}^{2}$
for any $k\in\mathbb{N}$, from (\ref{eq:DifferenceExpression}) we
obtain the following estimates for any $r_{*}\in[R_{+},R_{+}^{(1)}]$:
\begin{equation}
\big|h_{n}(r_{*})-h_{n-1}(r_{*})\big|\le C_{\text{\textgreek{w}}_{R}ml}\int_{R_{+}}^{r_{*}}\big|h_{n}(\text{\textgreek{sv}})-h_{n-1}(\text{\textgreek{sv}})\big|\, d\text{\textgreek{sv}}+C_{\text{\textgreek{w}}_{R}ml}\int_{R_{+}}^{r_{*}}\big|V^{(h_{n-1},R_{+})}(\text{\textgreek{sv}})-V^{(h_{n-2},R_{+})}(\text{\textgreek{sv}})\big|\, d\text{\textgreek{sv}}\label{eq:EstimateBeforeGronwall}
\end{equation}
and 
\begin{equation}
\big|h_{n}^{\prime}(r_{*})-h_{n-1}^{\prime}(r_{*})\big|\le C_{\text{\textgreek{w}}_{R}ml}\int_{R_{+}}^{r_{*}}\big|h_{n}(\text{\textgreek{sv}})-h_{n-1}(\text{\textgreek{sv}})\big|\, d\text{\textgreek{sv}}+C_{\text{\textgreek{w}}_{R}ml}\int_{R_{+}}^{r_{*}}\big|V^{(h_{n-1},R_{+})}(\text{\textgreek{sv}})-V^{(h_{n-2},R_{+})}(\text{\textgreek{sv}})\big|\, d\text{\textgreek{sv}}.\label{eq:EstimateBeforeGronwallDerivative}
\end{equation}
Thus, an application of Gronwall's inequality yields:
\begin{equation}
\sup_{r_{*}\in[R_{+},R_{+}^{(1)}]}\big(\big|h_{n}(r_{*})-h_{n-1}(r_{*})\big|+\big|h_{n}^{\prime}(r_{*})-h_{n-1}^{\prime}(r_{*})\big|\big)\le C_{\text{\textgreek{w}}_{R}ml}\int_{R_{+}}^{R_{+}^{(1)}}\big|V^{(h_{n-1},R_{+})}(\text{\textgreek{sv}})-V^{(h_{n-2},R_{+})}(\text{\textgreek{sv}})\big|\, d\text{\textgreek{sv}}.\label{eq:GronwallBeforePicard}
\end{equation}
 Moreover, the following bound is a consequence of (\ref{eq:ExpressionFarAwayRegion})
and the fact that $R_{+}^{(1)}\le2R_{+}$ (provided $R_{+}$ is sufficiently
large in terms of $\text{\textgreek{w}}_{R},m,l,M,a$):
\begin{equation}
\sup_{r_{*}\in[R_{+},+\infty)}\big(\big|h_{n}(r_{*})-h_{n-1}(r_{*})\big|+\big|h_{n}^{\prime}(r_{*})-h_{n-1}^{\prime}(r_{*})\big|\big)\le C_{\text{\textgreek{w}}_{R}ml}R_{+}\sup_{r_{*}\in[R_{+},R_{+}^{(1)}]}\big(\big|h_{n}(r_{*})-h_{n-1}(r_{*})\big|+\big|h_{n}^{\prime}(r_{*})-h_{n-1}^{\prime}(r_{*})\big|\big).\label{eq:EstimateDifferenceEverywhere}
\end{equation}

Fix some $\text{\textgreek{e}}_{\text{\textgreek{w}}_{R}ml}>0$ small
in terms of $\text{\textgreek{w}}_{R},m,l,M,a$. Provided $R_{+}$
is sufficiently large in terms of $\text{\textgreek{w}}_{R},m,l,M,a$
and the specific value of $\text{\textgreek{e}}_{\text{\textgreek{w}}_{R}ml}$,
we will establish the following bound for the right hand side of (\ref{eq:GronwallBeforePicard})
for any $n\in\mathbb{N}$: 
\begin{equation}
\int_{R_{+}}^{R_{+}^{(1)}}\big|V^{(h_{n},R_{+})}(r_{*})-V^{(h_{n-1},R_{+})}(r_{*})\big|\, dr_{*}\le\text{\textgreek{e}}_{\text{\textgreek{w}}_{R}ml}\sup_{r_{*}\in[R_{+},R_{+}^{(1)}]}\big(\big|h_{n}(r_{*})-h_{n-1}(r_{*})\big|+\big|h_{n}^{\prime}(r_{*})-h_{n-1}^{\prime}(r_{*})\big|\big).\label{eq:TheRequiredLipschitzBound}
\end{equation}
In view of (\ref{eq:BoundPotentialDifference}) and (\ref{eq:ModelPotential})
(using also the fact that $h_{n}\equiv h_{n-1}$ for $r\le R_{+}$),
we can estimate (recall that $h_{n}\ge1$ for all $n\in\mathbb{N}$):
\begin{align}
\int_{R_{+}}^{R_{+}^{(1)}}\big|V^{(h_{n},R_{+})}(r_{*})-V^{(h_{n-1},R_{+})}(r_{*})\big|\, dr_{*} & \le C_{\text{\textgreek{w}}_{R}ml}\int_{R_{+}}^{\infty}\Big(\big|h_{n}^{-2}(r_{*})-h_{n-1}^{-2}(r_{*})\big|+r_{*}^{-1}\big|h_{n}^{-1}(r_{*})-h_{n-1}^{-1}(r_{*})\big|\Big)r_{*}^{-2}\, dr_{*}\label{eq:DifferencePotentialsLipschitz}\\
 & \le C_{\text{\textgreek{w}}_{R}ml}\int_{R_{+}}^{\infty}\frac{\big|h_{n}(r_{*})-h_{n-1}(r_{*})\big|}{h_{n}(r_{*})h_{n-1}(r_{*})}r_{*}^{-2}\, dr_{*}.\nonumber 
\end{align}
Notice that we can trivially bound 
\begin{equation}
\int_{R_{+}}^{R_{+}^{(1)}}\frac{\big|h_{n}(r_{*})-h_{n-1}(r_{*})\big|}{h_{n}(r_{*})h_{n-1}(r_{*})}r_{*}^{-2}\, dr_{*}\le C_{\text{\textgreek{w}}_{R}ml}R_{+}^{-2}\sup_{r_{*}\in[R_{+},R_{+}^{(1)}]}\big|h_{n}(r_{*})-h_{n-1}(r_{*})\big|.\label{eq:BoundPotentialRegion}
\end{equation}
Furthermore, in the region $r_{*}\ge R_{+}^{(1)}$, in view of the
expression (\ref{eq:ExpressionFarAwayRegion}) we can bound 
\begin{equation}
\big|h_{n}(r_{*})-h_{n-1}(r_{*})\big|\le C\Big(1+R_{+}^{(1)}\big(\frac{r_{*}-R_{+}^{(1)}}{r_{*}}\big)\Big)\big(\big|h_{n}(R_{+}^{(1)})-h_{n-1}(R_{+}^{(1)})\big|+\big|h_{n}^{\prime}(R_{+}^{(1)})-h_{n-1}^{\prime}(R_{+}^{(1)})\big|\big).\label{eq:UsefulBoundDifferenceAway}
\end{equation}
Thus, (\ref{eq:UsefulBoundDifferenceAway}) and the left hand side
of (\ref{eq:BehaviourH_nFarAway}) imply that 
\begin{equation}
\int_{R_{+}^{(1)}}^{\infty}\frac{\big|h_{n}(r_{*})-h_{n-1}(r_{*})\big|}{h_{n}(r_{*})h_{n-1}(r_{*})}r_{*}^{-2}\, dr_{*}\le C_{\text{\textgreek{w}}_{R}ml}\Big(\int_{R_{+}^{(1)}}^{\infty}r_{*}^{-2}\, dr_{*}\Big)\sup_{r_{*}\in[R_{+},R_{+}^{(1)}]}\big(\big|h_{n}(r_{*})-h_{n-1}(r_{*})\big|+\big|h_{n}^{\prime}(r_{*})-h_{n-1}^{\prime}(r_{*})\big|\big).\label{eq:FinalBoundDifferenceAway}
\end{equation}

Adding (\ref{eq:BoundPotentialRegion}) and (\ref{eq:FinalBoundDifferenceAway})
and using (\ref{eq:DifferencePotentialsLipschitz}), we deduce: 
\begin{equation}
\int_{R_{+}}^{R_{+}^{(1)}}\big|V^{(h_{n},R_{+})}(r_{*})-V^{(h_{n-1},R_{+})}(r_{*})\big|\, dr_{*}\le C_{\text{\textgreek{w}}_{R}ml}R_{+}^{-1}\sup_{r_{*}\in[R_{+},R_{+}^{(1)}]}\big(\big|h_{n}(r_{*})-h_{n-1}(r_{*})\big|+\big|h_{n}^{\prime}(r_{*})-h_{n-1}^{\prime}(r_{*})\big|\big),
\end{equation}
and thus (\ref{eq:TheRequiredLipschitzBound}) follows, provided $R_{+}$
has been fixed sufficiently large in terms of $\text{\textgreek{w}}_{R},m,l,M,a$
and the specific value of $\text{\textgreek{e}}_{\text{\textgreek{w}}_{R}ml}$.

We will now proceed to show (using (\ref{eq:TheRequiredLipschitzBound}))
that the sequence $\{h_{n}\}_{n\in\mathbb{N}}$ converges in $C^{1}(\mathbb{R})$
to a solution $h$ of (\ref{eq:NonLocalOde}) with the desired properties.
Provided $\text{\textgreek{e}}_{\text{\textgreek{w}}_{R}ml}$ was
chosen sufficiently small  in terms of $\text{\textgreek{w}}_{R},m,l,M,a$,
from (\ref{eq:GronwallBeforePicard}) and (\ref{eq:TheRequiredLipschitzBound})
we infer that for some $\text{\textgreek{j}}<1$ and any $n\ge2$:
\begin{equation}
\sup_{r_{*}\in[R_{+},R_{+}^{(1)}]}\big(\big|h_{n}(r_{*})-h_{n-1}(r_{*})\big|+\big|h_{n}^{\prime}(r_{*})-h_{n-1}^{\prime}(r_{*})\big|\big)<\text{\textgreek{j}}\sup_{r_{*}\in[R_{+},R_{+}^{(1)}]}\big(\big|h_{n-1}(r_{*})-h_{n-2}(r_{*})\big|+\big|h_{n}^{\prime}(r_{*})-h_{n-1}^{\prime}(r_{*})\big|\big).\label{eq:CauchySequencePotentialRegion}
\end{equation}
Then (\ref{eq:CauchySequencePotentialRegion}) implies that $\{h_{n}|_{[R_{+},R_{+}^{(1)}]}\}_{n\in\mathbb{N}}$
is a Cauchy sequence in $C^{1}\big([R_{+},R_{+}^{(1)}]\big)$, and
thus (\ref{eq:EstimateDifferenceEverywhere}) (in view of the fact
that $h_{n}\equiv1$ for $r_{*}\le R_{+}$) yields that $\{h_{n}\}_{n\in\mathbb{N}}$
is a Cauchy sequence in $C^{1}(\mathbb{R})$, converging to a function
$h\in C^{1}(\mathbb{R})$. Notice that we immediately obtain that
$h\in\mathcal{B}_{R_{\infty}}$, since $\mathcal{B}_{R_{\infty}}$
is a closed subset of $C^{1}(\mathbb{R})$. Furthermore, in view of
(\ref{eq:BoundPotentialDifference}) and the fact that $h_{n}$ converges
to $h$ in the $C^{1}$ norm, we obtain that 
\begin{equation}
\limsup_{n\rightarrow+\infty}\int_{\mathbb{R}}\big|V^{(h_{n},R_{+})}-V^{(h,R_{+})}\big|=0,
\end{equation}
and thus, from the expression (\ref{eq:ExpressionH_n}) we obtain
for any $r_{*}\ge R_{+}$: 
\begin{equation}
h(r_{*})=1+\int_{R_{+}}^{r_{*}}\Big(\frac{1}{r^{2}(\text{\textgreek{r}})}\big(\int_{R_{+}}^{\text{\textgreek{r}}}r^{2}(\text{\textgreek{sv}})V^{(h,R_{+})}(\text{\textgreek{sv}})h(\text{\textgreek{sv}})\, d\text{\textgreek{sv}}\big)\, d\text{\textgreek{r}}\Big).\label{eq:ExpressionH_n-1}
\end{equation}
Thus, $h$ is a smooth solution of equation (\ref{eq:NonLocalOde}),
satisfying $h\equiv1$ for $r_{*}\le R_{+}$ and 
\begin{equation}
h(r_{*})=C_{1}+C_{2}\int_{r_{*}}^{\infty}r^{-2}\, dr_{*}\label{eq:ExpressionHFarAway}
\end{equation}
 for $r_{*}\ge R_{+}^{(1)}$ and some constants $C_{1},C_{2}$ depending
on $\text{\textgreek{w}}_{R},m,l,M,a$ (in view of (\ref{eq:ExpressionFarAwayRegion})).
Furthermore, $h^{\prime}\ge0$ on all of $\mathbb{R}$, in view of
(\ref{eq:IncreasingH_n}).
\end{proof}
The following lemma will be used in the proof of Proposition \ref{prop:FromL^2ILEDtoActualILED}:
\begin{lem}
\label{lem:IledDeforemdMetric}Let $h,R_{+}$ be as in the statement
of Proposition \ref{prop:ModeDeformedMetric}. Provided that $a$
has been fixed sufficiently small in terms of $M$ (so that Proposition
\ref{Prop:ILEDNewSpacetime} applies on $(\mathcal{M}_{0},g_{M,a}^{(1)})$)
and $R_{+}$ has been fixed sufficiently large in terms of the geometry
of $(\mathcal{M}_{0},g_{M,a}^{(1)})$, there exists a constant $1\ll R_{-}<R_{+}$,
independent of $a,R_{+}$ and large in terms of the geometry of the
undeformed spacetime $(\mathcal{M}_{0},g_{M,a}^{(1)})$, such that
the following integrated local energy decay estimates hold for solutions
$\text{\textgreek{y}}$ to the inhomogeneous wave equation (\ref{eq:InhomogeneousWaveEquationDeformedSpacetime})
on $(\mathcal{M}_{0},g_{M,a}^{(h,R_{+})})$ with $\text{\textgreek{y}}$,
$F$ as in the statement of Proposition \ref{Prop:ILEDNewSpacetime}:
\begin{equation}
\begin{split}\int_{\mathcal{M}_{0}}\Big(\text{\textgreek{q}}_{r\neq3M}(r)\cdot r^{-2}J_{\text{\textgreek{m}}}^{N}(\text{\textgreek{y}})N^{\text{\textgreek{m}}} & +\big(1-\text{\textgreek{q}}_{r\neq3M}(r)\big)|\partial_{r}\text{\textgreek{y}}|^{2}+r^{-4}|\text{\textgreek{y}}|^{2}\Big)\, dg_{M,a}^{(h,R_{+})}\le\\
\le & C_{R_{-}}\mathcal{Z}[F,\text{\textgreek{y}};R_{-}]+C_{R_{-},R_{+}^{(1)}}\int_{\{R_{-}\le r_{*}\le R_{+}^{(1)}\}}\min\big\{|T\text{\textgreek{y}}|^{2},|\text{\textgreek{y}}|^{2}\big\}\, dg_{M,a}^{(h,R_{+})},
\end{split}
\label{eq:DegenerateIled-1}
\end{equation}
\begin{equation}
\begin{split}\int_{\mathcal{M}_{0}}\Big(r^{-2}J_{\text{\textgreek{m}}}^{N}(\text{\textgreek{y}})N^{\text{\textgreek{m}}}+ & r^{-4}|\text{\textgreek{y}}|^{2}\Big)\, dg_{M,a}^{(h,R_{+})}\le\\
\le & C_{R_{-}}\mathcal{Z}[F,\text{\textgreek{y}};R_{-}]+C_{R_{-}}\mathcal{Z}[TF,T\text{\textgreek{y}};R_{-}]+C_{R_{-},R_{+}^{(1)}}\int_{\{R_{-}\le r_{*}\le R_{+}^{(1)}\}}\min\big\{|T\text{\textgreek{y}}|^{2},|\text{\textgreek{y}}|^{2}\big\}\, dg_{M,a}^{(h,R_{+})},
\end{split}
\label{eq:NonDegenerateIled-1}
\end{equation}
 where $\mathcal{Z}[F,\text{\textgreek{y}};R_{-}]$ is defined as
in the statement of Proposition \ref{Prop:ILEDNewSpacetime} and $R_{+}^{(1)}$
is defined in terms of $R_{+},h$ as in the proof of Proposition \ref{prop:ModeDeformedMetric}.
The constants $C_{R_{-}}$ and $C_{R_{-},R_{+}^{(1)}}$ in the right
hand side of $\eqref{eq:DegenerateIled-1}$ and (\ref{eq:NonDegenerateIled-1})
depend only on the precise choice of $R_{-}$ and $R_{-},R_{+}^{(1)}$
respectively, as well as the geometry of the spacetime $(\mathcal{M}_{0},g_{M,a}^{(1)})$.\end{lem}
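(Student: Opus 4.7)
The plan is to establish \eqref{eq:DegenerateIled-1} and \eqref{eq:NonDegenerateIled-1} by a frequency-separated analysis, patching together the ODE estimates that underlie Proposition \ref{Prop:ILEDNewSpacetime} in the region $\{r\le R_+\}$ with a new multiplier estimate in the region $\{r\ge R_+\}$. After the Fourier decomposition in $t$, the Fourier series in $\text{\textgreek{f}}$, and the spherical harmonic expansion in $\text{\textgreek{j}}$, the study reduces to the ODE \eqref{eq:SeperatedInhomogeneousEquation} for $u_{\text{\textgreek{w}}ml}$ on $\mathbb{R}$, with the deformed potential $V_{\text{\textgreek{w}}ml;h}^{(h)}$ given by \eqref{eq:DeformedPotential}. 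Crucially, since $h\equiv 1$ on $(-\infty,r_*(R_+)]$, one has $V_{\text{\textgreek{w}}ml;h}^{(h)}=V_{\text{\textgreek{w}}ml}$ there, so the frequency-separated Morawetz estimates \eqref{eq:NonTrappingILED} and \eqref{eq:TrappingILED} from the proof of Proposition \ref{Prop:ILEDNewSpacetime} apply verbatim in the inner region, provided one cuts off at $r_*=r_*(R_+)$ and collects the resulting boundary current.

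In the outer region $r_*\ge r_*(R_+)$, I would use a Morawetz-type multiplier of the form $f_{\text{\textgreek{w}}ml}(r_*)\partial_{r_*}+g_{\text{\textgreek{w}}ml}(r_*)$, combined with a microlocal $T$-energy identity of the form \eqref{eq:MicrolocalT-energy identity}. The coefficients $f_{\text{\textgreek{w}}ml},g_{\text{\textgreek{w}}ml}$ will be chosen so that (i) the resulting bulk integrand is non-negative uniformly in $(\text{\textgreek{w}},m,l)$, (ii) the boundary current at $r_*(R_+)$ cancels the one produced by the inner multiplier, and (iii) in the far-field $r\ge R_+^{(1)}$ (where $V^{(h,R_+)}\equiv 0$ and $h(r_*)=C_1+O(r_*^{-1})$) it is compatible with the $r^p$-weighted estimates of \cite{DafRod7,Moschidisc}. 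The key qualitative input is the monotonicity $h'\ge 0$ built into the construction of $h$ in Proposition \ref{prop:ModeDeformedMetric}: inspection of \eqref{eq:DeformedPotential} and \eqref{eq:ModelPotential} shows that the extra terms $h''/h+2(1-2M/r)h'/(rh)$ and the suppression of the centrifugal contributions by powers of $h^{-1}$ have signs that are compatible with bulk positivity of the exterior Morawetz estimate. Since $R_+\gg 3M$, the photon sphere (and in particular the normally hyperbolic trapping) lies strictly inside the undeformed region, so no new trapping is created by the deformation.

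Summing the frequency-separated inequality over $(m,l)$ and integrating in $\text{\textgreek{w}}$ via Plancherel, then combining with a red-shift estimate at the non-degenerate Killing horizon $\mathcal{H}^+$ of $g_{M,a}^{(h,R_+)}$ (which has positive surface gravity), yields \eqref{eq:DegenerateIled-1}. The sharper estimate \eqref{eq:NonDegenerateIled-1}, without the trapping degeneracy factor $\text{\textgreek{q}}_{r\neq 3M}$, is obtained by commuting with $T$, at the cost of one additional derivative on the inhomogeneity, which explains the $\mathcal{Z}[TF,T\text{\textgreek{y}};R_-]$ term on the right-hand side. The residual bulk term $\int_{\{R_-\le r_*\le R_+^{(1)}\}}\min\{|T\text{\textgreek{y}}|^2,|\text{\textgreek{y}}|^2\}$ is unavoidable, since Proposition \ref{prop:ModeDeformedMetric} produces an outgoing real mode at some non-zero $\text{\textgreek{w}}_R$; the $\min$ captures the dichotomy between dominating such a mode by $|T\text{\textgreek{y}}|^2\sim\text{\textgreek{w}}_R^2|\text{\textgreek{y}}|^2$ at high frequency and by $|\text{\textgreek{y}}|^2$ at low frequency.

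The main obstacle is the construction of the exterior Morawetz multiplier $(f_{\text{\textgreek{w}}ml},g_{\text{\textgreek{w}}ml})$ uniformly in the frequency parameters, together with the matching of boundary currents at $r_*(R_+)$. Since $h$ is only implicit (produced by the Picard iteration in the proof of Proposition \ref{prop:ModeDeformedMetric}), one must work exclusively with its qualitative properties $h\ge 1$, $h'\ge 0$, $h\in C^\infty$, and $h(r_*)=C_1+O(r_*^{-1})$ at infinity. Fortunately, these are exactly the ingredients required both for bulk positivity of the Morawetz estimate in the exterior and for the $r^p$-weighted method in the asymptotically conic far-field, so the argument should close modulo careful bookkeeping of the boundary and error terms.
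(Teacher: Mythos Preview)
Your overall strategy is correct and matches the paper's approach: frequency separation, reuse of the Morawetz multipliers from Proposition~\ref{Prop:ILEDNewSpacetime} in the undeformed region (since $h\equiv 1$ there), exploitation of $h'\ge 0$ to control the deformation region, red-shift near $\mathcal{H}^+$, and commutation with $T$ to remove the trapping degeneracy.

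The paper's execution differs from your plan in one simplifying respect. Rather than cutting at $r_*(R_+)$ and constructing a separate outer multiplier whose boundary current matches the inner one, the paper observes that the seed functions $f_{\text{\textgreek{w}}ml}$ from the proof of Proposition~\ref{Prop:ILEDNewSpacetime} (with $\text{\textgreek{e}}_1=(R_1^*)^{-1}$) already satisfy $f_{\text{\textgreek{w}}ml}'=h_{\text{\textgreek{w}}ml}=0$ for $r_*\ge (R_1^*)^2$, so they can be used \emph{globally} on $\mathbb{R}$ with no boundary matching at all. The only new bulk contribution comes from $-f_{\text{\textgreek{w}}ml}\,(V_{\text{\textgreek{w}}ml;h}^{(h)})'$ in $\{r_*\ge R_+\}$, for which the monotonicity $h'\ge 0$ yields the one-sided bound $(V_{\text{\textgreek{w}}ml;h}^{(h)})'\ge 0$ on $[(R_1^*)^2,R_+]\cup[R_+^{(1)},\infty)$ and $\ge -C_{R_+,R_+^{(1)}}$ on $[R_+,R_+^{(1)}]$, uniformly in $(\text{\textgreek{w}},m,l)$. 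This directly produces the error term $\int_{R_+}^{R_+^{(1)}}|u_{\text{\textgreek{w}}ml}|^2$ without any interface bookkeeping. The remaining far-region control of $|T\text{\textgreek{y}}|^2$ is then obtained by a single \emph{physical-space} $\partial_{r_*}$-multiplier estimate (multiplying the rescaled equation by $\text{\textgreek{q}}_{R_1^*}(r_*)\,\tfrac{r^{\text{\textgreek{h}}}}{1+r^{\text{\textgreek{h}}}}\,\partial_{r_*}\bar{\text{\textgreek{Y}}}$), rather than a frequency-localized outer Morawetz current. Your route via a matched outer multiplier should also work, but the paper's version avoids the interface analysis and the need to design a new frequency-dependent current.
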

\begin{proof}
Let $R_{1}^{*}>0$ be any constant sufficiently large in terms of
the geometry of $(\mathcal{M}_{0},g_{M,a}^{(1)})$ as in the statement
of Proposition \ref{Prop:ILEDNewSpacetime}, assuming without loss
of generality that $R_{+}$ is sufficiently large in terms of the
geometry of $(\mathcal{M}_{0},g_{M,a}^{(1)})$ so that $R_{+}\gg(R_{1}^{*})^{2}$.
Let also $R_{+}^{(1)}$ be defined as in the proof of Proposition
\ref{prop:ModeDeformedMetric}. 

For any $(\text{\textgreek{w}},m,l)\in\mathbb{R}\times\mathbb{Z}\times\mathbb{Z}_{\ge|m|}$,
the frequency separated equation (\ref{eq:SeperatedInhomogeneousEquation})
is the same as (\ref{eq:ODEForTheFourierTransformNewSpacetime}) in
the region $\{r_{*}\le(R_{1}^{*})^{2}\}$ (since $h\equiv1$ there).
Furthermore, choosing $\text{\textgreek{e}}_{1}=(R_{1}^{*})^{-1}$
in the proof of Proposition \ref{Prop:ILEDNewSpacetime}, if $f_{\text{\textgreek{w}}ml}$
and $h_{\text{\textgreek{w}}ml}$ are the seed functions appearing
in the proof of Proposition \ref{Prop:ILEDNewSpacetime}, then 
\begin{equation}
f_{\text{\textgreek{w}}ml}^{\prime}=h_{\text{\textgreek{w}}ml}=0
\end{equation}
 in the region $\{r_{*}\ge(R_{1}^{*})^{2}\}$. Therefore, by repeating
the proof of Proposition \ref{Prop:ILEDNewSpacetime} for $\text{\textgreek{e}}_{1}=(R_{1}^{*})^{-1}$,
using exactly the same functions $f_{\text{\textgreek{w}}ml}$ and
$h_{\text{\textgreek{w}}ml}$ as in that proof, we readily obtain
the following estimates for the frequency separated equation (\ref{eq:SeperatedInhomogeneousEquation})
for any $(\text{\textgreek{w}},m,l)\in\mathbb{R}\times\mathbb{Z}\times\mathbb{Z}_{\ge|m|}$
(assuming that the limits $\lim_{r_{*}\rightarrow+\infty}\big(e^{-i\text{\textgreek{w}}r_{*}}u_{\text{\textgreek{w}}ml}\big)$
and $\lim_{r_{*}\rightarrow-\infty}\big(e^{i(\text{\textgreek{w}}-\frac{am}{8M^{2}})r_{*}}u_{\text{\textgreek{w}}ml})$
exist): 
\begin{align}
\int_{-R_{1}^{*}}^{R_{1}^{*}}\big(r^{-2}|u_{\text{\textgreek{w}}ml}^{\prime}|^{2}+ & r^{-2}(\text{\textgreek{w}}^{2}+l^{2}+r^{-2})|u_{\text{\textgreek{w}}ml}|^{2}\big)\, dr_{*}\le\label{eq:NonTrappingILED-2}\\
\le C_{R_{1}^{*}} & \text{\textgreek{d}}_{0}m^{2}|u_{\text{\textgreek{w}}ml}(-\infty)|^{2}+C\int_{R_{1}^{*}}^{(R_{1}^{*})^{2}}\big((R_{1}^{*})^{-1}r^{-2}+r^{-3}\big)\text{\textgreek{w}}^{2}|u_{\text{\textgreek{w}}ml}|^{2}+C_{R_{+},R_{+}^{(1)}}\int_{R_{+}}^{R_{+}^{(1)}}|u_{\text{\textgreek{w}}ml}|^{2}+\nonumber \\
 & +C_{R_{1}^{*}}\int_{-\infty}^{+\infty}Re\big\{(r-2M)hF_{\text{\textgreek{w}}ml}\cdot\big(f_{\text{\textgreek{w}}ml}\bar{u}_{\text{\textgreek{w}}ml}^{\prime}+(r^{-1}h_{\text{\textgreek{w}}ml}+i\text{\textgreek{w}})\bar{u}_{\text{\textgreek{w}}ml}\big)\big\}\, dr_{*},\nonumber 
\end{align}
for $|\text{\textgreek{w}}|\gg l$ or $|\text{\textgreek{w}}|\ll l$,
and 
\begin{align}
\int_{-R_{1}^{*}}^{R_{1}^{*}}\big(r^{-2}|u_{\text{\textgreek{w}}ml}^{\prime}|^{2}+ & r^{-2}(\{(1-\frac{r_{\text{\textgreek{w}}ml}}{r})^{2}(\text{\textgreek{w}}^{2}+l^{2})+r^{-2}\}|u_{\text{\textgreek{w}}ml}|^{2}\big)\, dr_{*}\le\label{eq:TrappingILED-1}\\
\le C_{R_{1}^{*}} & \text{\textgreek{d}}_{0}m^{2}|u_{\text{\textgreek{w}}ml}(-\infty)|^{2}+C\int_{R_{1}^{*}}^{(R_{1}^{*})^{2}}\big((R_{1}^{*})^{-1}r^{-2}+r^{-3}\big)\text{\textgreek{w}}^{2}|u_{\text{\textgreek{w}}ml}|^{2}+C_{R_{+},R_{+}^{(1)}}\int_{R_{+}}^{R_{+}^{(1)}}|u_{\text{\textgreek{w}}ml}|^{2}+\nonumber \\
 & +C_{R_{1}^{*}}\int_{-\infty}^{+\infty}Re\big\{(r-2M)hF_{\text{\textgreek{w}}ml}\cdot\big(f_{\text{\textgreek{w}}ml}\bar{u}_{\text{\textgreek{w}}ml}^{\prime}+(r^{-1}h_{\text{\textgreek{w}}ml}+i\text{\textgreek{w}})\bar{u}_{\text{\textgreek{w}}ml}\big)\big\}\, dr_{*},\nonumber 
\end{align}
for $|\text{\textgreek{w}}|\sim l$. Notice that in obtaining (\ref{eq:NonTrappingILED-2})
and (\ref{eq:TrappingILED-1}), we used the following one sided bound
for the derivative of $V_{\text{\textgreek{w}}ml;h}^{(h)}$ in the
region $\{r_{*}\ge(R_{1}^{*})^{2}\}$ (following from the non-negativity
condition $h^{\prime}\ge0$), provided $R_{1}^{*}$ is sufficiently
large in terms of the geometry of $(\mathcal{M}_{0},g_{M,a}^{(1)})$:
\begin{equation}
\big(V_{\text{\textgreek{w}}ml;h}^{(h)}\big)^{\prime}\ge\begin{cases}
0, & r_{*}\in[(R_{1}^{*})^{2},R_{+}]\cup[R_{+}^{(1)},+\infty)\\
-C_{R_{+},R_{+}^{(1)}}, & r_{*}\in[R_{+},R_{+}^{(1)}].
\end{cases}\label{eq:BoundDerivativeVh}
\end{equation}
Observe that the right hand side of (\ref{eq:BoundDerivativeVh})
is independent of the frequency parameters $(\text{\textgreek{w}},m,l)$.

From (\ref{eq:NonTrappingILED-2}) and (\ref{eq:TrappingILED-1})
we obtain after summing in $m,l$, integrating in $\text{\textgreek{w}}$,
and using the red shift type estimates of Section 7 of \cite{DafRod6}
in the region $\{r_{*}\le-R_{1}^{*}\}$:
\begin{equation}
\begin{split}\int_{\{r_{*}\le R_{1}^{*}\}}\Big(\text{\textgreek{q}}_{r\neq3M}(r)\cdot r^{-2}J_{\text{\textgreek{m}}}^{N}(\text{\textgreek{y}})N^{\text{\textgreek{m}}}+ & \big(1-\text{\textgreek{q}}_{r\neq3M}(r)\big)|\partial_{r}\text{\textgreek{y}}|^{2}+r^{-4}|\text{\textgreek{y}}|^{2}\Big)\, dg_{M,a}^{(1)}\le\\
\le C_{R_{1}^{*}} & \mathcal{Z}[F,\text{\textgreek{y}};R_{1}^{*}]+C\int_{\{R_{1}^{*}\le r_{*}\le(R_{1}^{*})^{2}\}}\big((R_{1}^{*})^{-1}r^{-2}+r^{-3}\big)|T\text{\textgreek{y}}|^{2}\, dg_{M,a}^{(1)}+\\
 & +C_{R_{+},R_{+}^{(1)}}\int_{\{R_{+}\le r_{*}\le R_{+}^{(1)}\}}|\text{\textgreek{y}}|^{2}\, dg_{M,a}^{(h,R_{+})}
\end{split}
\label{eq:DegenerateIled-1-1}
\end{equation}
and 
\begin{equation}
\begin{split}\int_{\{r_{*}\le R_{1}^{*}\}}\Big(r^{-2} & J_{\text{\textgreek{m}}}^{N}(\text{\textgreek{y}})N^{\text{\textgreek{m}}}+r^{-4}|\text{\textgreek{y}}|^{2}\Big)\, dg_{M,a}^{(1)}\le\\
\le C_{R_{1}^{*}} & \Big(\mathcal{Z}[F,\text{\textgreek{y}};R_{1}^{*}]+\mathcal{Z}[TF,T\text{\textgreek{y}};R_{1}^{*}]\Big)+C\int_{\{R_{1}^{*}\le r_{*}\le(R_{1}^{*})^{2}\}}\big((R_{1}^{*})^{-1}r^{-2}+r^{-3}\big)|T\text{\textgreek{y}}|^{2}\, dg_{M,a}^{(1)}+\\
 & +C_{R_{+},R_{+}^{(1)}}\int_{\{R_{+}\le r_{*}\le R_{+}^{(1)}\}}|\text{\textgreek{y}}|^{2}\, dg_{M,a}^{(h,R_{+})}.
\end{split}
\label{eq:NonDegenerateIled-1-1}
\end{equation}

In the $(t,r_{*},\text{\textgreek{j}},\text{\textgreek{f}})$ coordinate
chart, equation (\ref{eq:InhomogeneousWaveEquationDeformedSpacetime})
reads: 
\begin{align}
h\cdot(r-2M)F=-\partial_{t}^{2} & \text{\textgreek{Y}}+\partial_{r_{*}}^{2}\text{\textgreek{Y}}+\big(1-\frac{2M}{r}\big)h^{-2}r^{-2}\text{\textgreek{D}}_{\mathbb{S}^{2}}\text{\textgreek{Y}}+2aMr^{-3}h^{-1}\partial_{t}\partial_{\text{\textgreek{f}}}\text{\textgreek{Y}}-\label{eq:RescaledInhomogeneousEquation}\\
 & -a^{2}M^{2}r^{-6}h^{-2}\partial_{\text{\textgreek{f}}}^{2}\text{\textgreek{Y}}+\Big(-\big(1-\frac{2M}{r}\big)\frac{2M}{r^{3}}+\frac{h^{\prime\prime}(r)}{h(r)}+2\big(1-\frac{2M}{r}\big)\frac{h^{\prime}(r)}{r\cdot h(r)}\Big)\text{\textgreek{Y}},\nonumber 
\end{align}
where
\begin{equation}
\text{\textgreek{Y}}\doteq h\cdot r\text{\textgreek{y}}.
\end{equation}
Let $\text{\textgreek{q}}_{1}:\mathbb{R}\rightarrow[0,1]$ be a smooth
function satisfying $\text{\textgreek{q}}\equiv0$ on $(-\infty,\frac{1}{2}]$
and $\text{\textgreek{q}}\equiv1$ on $[1,+\infty)$, and let us set
$\text{\textgreek{q}}_{R_{1}^{*}}(r_{*})\doteq\text{\textgreek{q}}_{1}\big(\frac{r_{*}}{R_{1}^{*}}\big)$.
Multiplying (\ref{eq:RescaledInhomogeneousEquation}) with 
\begin{equation}
\text{\textgreek{q}}_{R_{1}^{*}}(r_{*})\cdot\frac{r^{\text{\textgreek{h}}}}{1+r^{\text{\textgreek{h}}}}\partial_{r_{*}}\bar{\text{\textgreek{Y}}}
\end{equation}
 for some $0<\text{\textgreek{h}}<1$ and integrating by parts over
$\mathcal{M}_{0}$, we obtain, provided $R_{1}^{*}$ is sufficiently
large in terms of $a,M$ (using also the fact that $h\ge1$): 
\begin{equation}
\begin{split}\int_{-\infty}^{+\infty}\int_{-\infty}^{+\infty}\int_{0}^{\text{\textgreek{p}}}\int_{0}^{2\text{\textgreek{p}}}\frac{1}{2}\text{\textgreek{q}}_{R_{1}^{*}}(r_{*})\Bigg\{\big( & \text{\textgreek{h}}r^{-1-\text{\textgreek{h}}}+O_{\text{\textgreek{h}}}(r^{-2-\text{\textgreek{h}}})\big)\big(|\partial_{r_{*}}\text{\textgreek{Y}}|^{2}+(1+O_{\text{\textgreek{h}}}(r^{-1}))\big|\partial_{t}\text{\textgreek{Y}}\big|^{2}\big)+\big(6Mr^{-4}+O_{\text{\textgreek{h}}}(r^{-5})\big)\big|\text{\textgreek{Y}}|^{2}+\\
+2\big(h^{\prime}h^{-3}r^{-2}+ & h^{-2}r^{-3}+O_{\text{\textgreek{h}}}(r^{-4})\big)\big(\big|\partial_{\text{\textgreek{j}}}\text{\textgreek{Y}}\big|^{2}+\frac{1+O(r^{-1})}{\sin^{2}\text{\textgreek{j}}}\big|\partial_{\text{\textgreek{f}}}\text{\textgreek{Y}}\big|^{2}\big)\Bigg\}\,\sin\text{\textgreek{j}}d\text{\textgreek{f}}d\text{\textgreek{j}}dr_{*}dt=\\
=-\int_{-\infty}^{+\infty}\int_{-\infty}^{+\infty}\int_{0}^{\text{\textgreek{p}}}\int_{0}^{2\text{\textgreek{p}}}\Bigg\{ & \text{\textgreek{q}}_{R_{1}^{*}}(r_{*})\cdot\frac{r^{\text{\textgreek{h}}}}{1+r^{\text{\textgreek{h}}}}Re\big\{ F\cdot\partial_{r_{*}}\bar{\text{\textgreek{Y}}}\big\}+\text{\textgreek{q}}_{R_{1}^{*}}^{\prime}(r_{*})\Big(O(1)\big|\partial\text{\textgreek{Y}}\big|^{2}+O(r^{-2})\big|\text{\textgreek{Y}}\big|^{2}\Big)+\\
 & +\frac{1}{2}\Big(\frac{h^{\prime\prime}(r)}{h(r)}+2\big(1-\frac{2M}{r}\big)\frac{h^{\prime}(r)}{r\cdot h(r)}\Big)^{\prime}\big|\text{\textgreek{Y}}\big|^{2}\Bigg\}\,\sin\text{\textgreek{j}}d\text{\textgreek{f}}d\text{\textgreek{j}}dr_{*}dt,
\end{split}
\label{eq:ILEDfarAway}
\end{equation}
where 
\begin{equation}
\big|\partial\text{\textgreek{Y}}\big|^{2}\doteq\big|\partial_{r_{*}}\text{\textgreek{Y}}|^{2}+\big|\partial_{t}\text{\textgreek{Y}}\big|^{2}+\big|\partial_{\text{\textgreek{j}}}\text{\textgreek{Y}}\big|^{2}+\sin^{-2}\text{\textgreek{j}}\big|\partial_{\text{\textgreek{f}}}\text{\textgreek{Y}}\big|^{2}.
\end{equation}
Thus, in view of the fact that $h^{\prime}\ge0$ on $\mathbb{R}$
and 
\[
\frac{h^{\prime\prime}(r)}{h(r)}+2\big(1-\frac{2M}{r}\big)\frac{h^{\prime}(r)}{r\cdot h(r)}=0
\]
 for $r_{*}\notin[R_{+},R_{+}^{(1)}]$, from (\ref{eq:ILEDfarAway})
we obtain (provided $R_{1}^{*}$ is sufficiently large in terms of
$\text{\textgreek{h}}$): 
\begin{equation}
\int_{\{r_{*}\ge R_{1}^{*}\}}r^{-1-\text{\textgreek{h}}}\big|T\text{\textgreek{y}}|^{2}\, dg_{M,a}^{(h,R_{+})}\le C_{\text{\textgreek{h}}}\Big(\int_{\frac{1}{2}R_{1}^{*}}^{R_{1}^{*}}\big(r^{-1}|\partial\text{\textgreek{y}}|^{2}+r^{-3}|\text{\textgreek{y}}|^{2}\big)\, dg_{M,a}^{(1)}+\int_{\{R_{+}\le r_{*}\le R_{+}^{(1)}\}}|\text{\textgreek{y}}|^{2}\, dg_{M,a}^{(h,R_{+})}+\mathcal{Z}[F,\text{\textgreek{y}};R_{1}^{*}]\Big).\label{eq:IledToUseForLowerOrderterms}
\end{equation}

From (\ref{eq:DegenerateIled-1-1}), (\ref{eq:NonDegenerateIled-1-1})
and (\ref{eq:IledToUseForLowerOrderterms}), we readily obtain: 
\begin{equation}
\begin{split}\int_{\mathcal{M}_{0}}\Big(\text{\textgreek{q}}_{r\neq3M}(r)\cdot r^{-2}J_{\text{\textgreek{m}}}^{N}(\text{\textgreek{y}})N^{\text{\textgreek{m}}}+ & \big(1-\text{\textgreek{q}}_{r\neq3M}(r)\big)|\partial_{r}\text{\textgreek{y}}|^{2}+r^{-4}|\text{\textgreek{y}}|^{2}\Big)\, dg_{M,a}^{(h,R_{+})}\le\\
\le & C_{R_{1}^{*}}\mathcal{Z}[F,\text{\textgreek{y}};R_{1}^{*}]+C_{R_{1}^{*},R_{+}^{(1)}}\int_{\{\frac{1}{2}R_{1}^{*}\le r_{*}\le R_{+}^{(1)}\}}|\text{\textgreek{y}}|^{2}\, dg_{M,a}^{(h,R_{+})}
\end{split}
\label{eq:DegenerateIled-1-1-2}
\end{equation}
and 
\begin{equation}
\begin{split}\int_{\mathcal{M}_{0}}\Big( & r^{-2}J_{\text{\textgreek{m}}}^{N}(\text{\textgreek{y}})N^{\text{\textgreek{m}}}+r^{-4}|\text{\textgreek{y}}|^{2}\Big)\, dg_{M,a}^{(h,R_{+})}\le\\
\le & C_{R_{1}^{*}}\Big(\mathcal{Z}[F,\text{\textgreek{y}};R_{1}^{*}]+\mathcal{Z}[TF,T\text{\textgreek{y}};R_{1}^{*}]\Big)+C_{R_{1}^{*},R_{+}^{(1)}}\int_{\{\frac{1}{2}R_{1}^{*}\le r_{*}\le R_{+}^{(1)}\}}|\text{\textgreek{y}}|^{2}\, dg_{M,a}^{(h,R_{+})}.
\end{split}
\label{eq:NonDegenerateIled-1-1-2}
\end{equation}
 Therefore, inequalities (\ref{eq:DegenerateIled-1}) and (\ref{eq:NonDegenerateIled-1})
readily follow by combining (\ref{eq:DegenerateIled-1-1}), (\ref{eq:NonDegenerateIled-1-1}),
(\ref{eq:DegenerateIled-1-1-2}) and (\ref{eq:NonDegenerateIled-1-1-2}).
\end{proof}
The statement \ref{enu:ILED} will now follow as a consequence of
the following proposition:
\begin{prop}
\label{prop:FromL^2ILEDtoActualILED}Let $h,R_{+},R_{-}$ be as in
the statement of Lemma \ref{lem:IledDeforemdMetric}. Provided that
$a$ has been fixed sufficiently small in terms of $M$, the integrated
local energy decay estimates (\ref{eq:DegenerateIled-1-2}) and (\ref{eq:NonDegenerateIled-1-2})
hold for any $\text{\textgreek{t}}_{1}\le\text{\textgreek{t}}_{2}$
and any solution $\text{\textgreek{y}}$ to the inhomogeneous wave
equation (\ref{eq:InhomogeneousWaveEquationDeformedSpacetime}) on
$(\mathcal{M}_{0},g_{M,a}^{(h,R_{+})})$ which is smooth up to $\mathcal{H}^{+}$.
In particular, the trapped set of $(\mathcal{M}_{0},g_{M,a}^{(h)})$
is normally hyperbolic.\end{prop}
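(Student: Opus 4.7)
\medskip

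\noindent\textbf{Proof proposal for Proposition \ref{prop:FromL^2ILEDtoActualILED}.} The plan is to deduce the time-truncated estimates \eqref{eq:DegenerateIled-1-2}--\eqref{eq:NonDegenerateIled-1-2} from the global-in-time estimates \eqref{eq:DegenerateIled-1}--\eqref{eq:NonDegenerateIled-1} of Lemma \ref{lem:IledDeforemdMetric} by means of a time cut-off argument, combined with a uniform $N$-energy boundedness statement for $(\mathcal{M}_0,g_{M,a}^{(h,R_+)})$ in the regime $a\ll M$. The first preliminary step is to establish such a boundedness statement, which for $a$ sufficiently small in terms of $M$ follows from combining the red-shift vector field estimates of \cite{DafRod6} in a neighborhood $\{r\le r_{red}\}$ of $\mathcal{H}^+$ (where the deformation is absent since $h\equiv 1$ there) with the $T$-energy identity in $\{r\ge r_{red}\}$; the ergoregion can be made arbitrarily small by choosing $a$ small (so $T$ is future directed timelike outside a thin neighborhood of $\mathcal{H}^+$), while the fact that $h'\ge 0$ (from Proposition \ref{prop:ModeDeformedMetric}) and $h=C_1+O(r_*^{-1})$ at infinity ensure that no harmful bulk terms are generated by the conformal factor in the far region. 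The outcome is
\[
\int_{\{\bar t=\tau_2\}}J^N_\mu(\psi)\bar n^\mu \;\le\; C\int_{\{\bar t=\tau_1\}}J^N_\mu(\psi)\bar n^\mu + C\int_{\{\tau_1\le\bar t\le\tau_2\}}|F|\cdot|N\psi|\,dg_{M,a}^{(h,R_+)},
\]
and analogously for $T\psi$.

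The second step is the time cut-off. For fixed $\tau_1\le\tau_2$ let $\chi:\mathbb{R}\to[0,1]$ be smooth with $\chi\equiv 1$ on $[\tau_1,\tau_2]$, $\chi\equiv 0$ outside $[\tau_1-1,\tau_2+1]$, and $|\chi'|+|\chi''|\lesssim 1$. Then $\tilde\psi\doteq\chi(\bar t)\psi$ is compactly supported in $\bar t$ (hence in $t^*$, so square integrable there) and satisfies
\[
\square_{g_{M,a}^{(h,R_+)}}\tilde\psi=\chi F+\mathcal{C}[\chi,\psi],
\]
where $\mathcal{C}[\chi,\psi]$ is a first-order operator in $\psi$ supported in the two slabs $\{\tau_1-1\le\bar t\le\tau_1\}\cup\{\tau_2\le\bar t\le\tau_2+1\}$, whose coefficients are bounded uniformly in $r$ since $T(\bar t)=1$ and $\bar t\equiv t$ for $r\ge 3M$. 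Applying Lemma \ref{lem:IledDeforemdMetric} to $\tilde\psi$, the left hand side of \eqref{eq:DegenerateIled-1} dominates the spacetime integral on $\{\tau_1\le\bar t\le\tau_2\}$ appearing in \eqref{eq:DegenerateIled-1-2} (since $\tilde\psi\equiv\psi$ there), while the source term $\mathcal{Z}[\chi F+\mathcal{C}[\chi,\psi],\tilde\psi;R_-]$ on the right hand side splits into a piece bounded by $\mathcal{Z}_{\tau_1,\tau_2}[F,\psi;R_-]$ and a commutator piece which is dominated by the $N$-energy fluxes of $\psi$ on the slabs $\{\tau_1-1\le\bar t\le\tau_1\}$ and $\{\tau_2\le\bar t\le\tau_2+1\}$.

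The third step is to reabsorb the commutator contributions using Step 1: the flux on $\{\tau_1-1\le\bar t\le\tau_1\}$ is bounded (after propagating the energy estimate backwards over a unit time interval, which costs only a multiplicative constant and the source term in the same slab) by $\int_{\{\bar t=\tau_1\}}J^N_\mu(\psi)\bar n^\mu+\mathcal{Z}_{\tau_1-1,\tau_1}[F,\psi;R_-]$, and similarly for the future slab in terms of $\tau_2$. To then upgrade the estimate so that $\int_{\{\bar t=\tau_2\}}J^N_\mu(\psi)\bar n^\mu$ appears on the left hand side of \eqref{eq:DegenerateIled-1-2}, one simply adds to the truncated bulk estimate a small multiple of the $N$-energy identity between $\{\bar t=\tau_1\}$ and $\{\bar t=\tau_2\}$, absorbing the resulting bulk error using the spacetime integral already on the left. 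The higher-order estimate \eqref{eq:NonDegenerateIled-1-2} follows by commuting with the Killing field $T$ (so that $T\psi$ solves the wave equation with source $TF$) and applying the just-established degenerate estimate to both $\psi$ and $T\psi$; the degeneracy at $r=3M$ is removed via a Hardy-type trade between the $T$-derivative of $\psi$ and the spatial derivatives transverse to the trapped set, in the standard manner of \cite{DafRod9,DafRodSchlap}.

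The normal hyperbolicity of the trapped set is then a consequence of the structural form of \eqref{eq:DegenerateIled-1-2}--\eqref{eq:NonDegenerateIled-1-2}, since losing only one derivative at a single codimension-one hypersurface (here $\{r=3M\}$) is characteristic of $r$-normally hyperbolic trapping; indeed, one can directly verify, using the form \eqref{eq:PotentialODENewSpacetime}--\eqref{eq:DeformedPotential} of the radial potential and Lemma \ref{lem:BehaviourOfPotentialNewSpacetime}, that the trapping at $r=3M$ is given by a nondegenerate maximum of the effective potential in the direction transverse to the energy shell, with the required transversality conditions holding uniformly in $a\le a_0$. The main technical obstacle in the above plan is the careful handling of the commutator contributions from $\mathcal{C}[\chi,\psi]$ near infinity: because the estimate \eqref{eq:NonDegenerateIled-1} only controls a weighted bulk norm of $\psi$ and not a flux at $\mathcal{I}^+$, the support of $\mathcal{C}[\chi,\psi]$ in the asymptotic region must be absorbed by pairing the factor $r^{-1/2}\partial\psi$ appearing there against the $r^{-1/2}$-weighted bulk norm on the left, using Cauchy-Schwarz and the smallness of the cut-off region in $\bar t$.
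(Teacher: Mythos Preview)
Your overall strategy---time cut-off plus Lemma \ref{lem:IledDeforemdMetric}, with the commutator contributions controlled via an $N$-energy boundedness statement---is exactly the approach the paper takes. The substantive difference is in how boundedness is obtained, and your Step 1 has a gap that the paper handles by a different mechanism.

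You claim that uniform $N$-energy boundedness (with no spacetime bulk error on the right) follows for $a\ll M$ from ``red-shift in $\{r\le r_{red}\}$ combined with the $T$-energy identity in $\{r\ge r_{red}\}$''. But $T$ is spacelike on $\mathcal{H}^+$ whenever $a>0$, so the $T$-flux through $\mathcal{H}^+$ may be negative; smallness of the ergoregion only makes this negative contribution small, it does not make it vanish, and neither of your two ingredients absorbs it. The paper instead uses the interpolated field $\tilde K=T+\theta_{hor}\cdot\frac{a}{8M^2}\Phi$, which is causal everywhere and coincides with the Hawking field $K$ near $\mathcal{H}^+$ (so its horizon flux is non-negative), together with the red-shift $N$ and a Gronwall step. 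The resulting estimate \eqref{eq:BoundednessEstimate} carries an additional bulk term $Ca\int_{\{9M/4\le r\le 10M/4\}\cap\{\tau_1\le\bar t\le\tau_2\}}J^{\tilde K}_\mu\tilde K^\mu$ on the right, coming from the $O(a)$ deformation tensor of $\tilde K$; this term is not controlled a priori. The paper therefore proves boundedness and the ILED \emph{together}: inserting the cut-off ILED \eqref{eq:Cut-Off-ILED} back into \eqref{eq:BoundednessEstimate} converts the $Ca$-bulk into $C_{R_-}a\int_{\{\bar t=\max\{\tau_1,\tau_2-1\}\}}J^N_\mu\bar n^\mu$, which for $a$ sufficiently small (relative to the fixed constant $C_{R_-}$) is absorbed into the $\sup_\tau$ on the left. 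This closed-loop/bootstrap structure is the main content of the proof, and it is exactly what your decoupled Step 1 would have to replace.

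Your alternative route is not hopeless: the remark after Proposition \ref{Prop:ILEDNewSpacetime} records that the boundedness results of \cite{DafRod5} apply to $(\overline{\mathcal{M}}_0,g_{M,a}^{(1)})$, and since $g_{M,a}^{(h,R_+)}\equiv g_{M,a}^{(1)}$ in the region containing the ergoregion (with $T$ timelike Killing in the deformed region $\{r\ge R_+\}$), one can plausibly extend that boundedness to the deformed metric. But this is not automatic from your sketch, and the paper's coupled argument is both more self-contained and more transparent about the precise role of the smallness of $a$.
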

\begin{proof}
Let $\text{\textgreek{q}}_{hor}:(2M,+\infty)\rightarrow[0,1]$ be
a smooth function such that $\text{\textgreek{q}}_{hor}\equiv1$ on
$(2M,\frac{9M}{4}]$ and $\text{\textgreek{q}}_{hor}\equiv0$ on $[\frac{10M}{4},+\infty)$,
and let us introduce the vector field 
\begin{equation}
\tilde{K}\doteq T+\text{\textgreek{q}}_{hor}\cdot\frac{a}{8M^{2}}\text{\textgreek{F}}.\label{eq:PerturbedHawkingField}
\end{equation}
Notice that $\tilde{K}\equiv K$ for $\{r\le\frac{9M}{4}\}$ and $\tilde{K}\equiv T$
for $\{r\ge\frac{10M}{4}\}$, and furthermore $\tilde{K}$ is everywhere
future directed and timelike on $\mathcal{M}_{M,0}$ (but merely null
on $\mathcal{H}^{+}$), provided $a$ is sufficiently small. In particular,
$\tilde{K}$ is a Killing vector field on $\{r\le\frac{9M}{4}\}\cup\{r\ge\frac{10M}{4}\}$,
while on $\{\frac{9M}{4}\le r\le\frac{10M}{4}\}$ we can bound for
any fixed and $T$-inveriant reference Riemannian metric $g_{Rm}$
on $\mathcal{M}_{0}$ (provided $a$ is sufficiently small): 
\begin{equation}
\Big|{}^{(\tilde{K})}\text{\textgreek{p}}_{g_{M,a}^{(h,R_{+})}}\Big|_{g_{Rm}}\le C_{g_{Rm}}a,\label{eq:SmallnessDeformationTensor}
\end{equation}
where $^{(\tilde{K})}\text{\textgreek{p}}_{g_{M,a}^{(h,R_{+})}}$
is the deformation tensor of $\tilde{K}$ with respect to $g_{M,a}^{(h,R_{+})}$.
In view of the energy identity for the vector field $\tilde{K}$ (and
the fact that $\tilde{K}$ is everywhere causal), the bound (\ref{eq:SmallnessDeformationTensor})
implies that for any $\text{\textgreek{t}}_{1}\le\text{\textgreek{t}}_{2}$
we can bound (omitting the volume form notation for simplicity) 
\begin{equation}
\sup_{\text{\textgreek{t}}_{1}\le\text{\textgreek{t}}\le\text{\textgreek{t}}_{2}}\int_{\{\bar{t}=\text{\textgreek{t}}\}}J_{\text{\textgreek{m}}}^{\tilde{K}}(\text{\textgreek{y}})\bar{n}^{\text{\textgreek{m}}}\le\int_{\{\bar{t}=\text{\textgreek{t}}_{1}\}}J_{\text{\textgreek{m}}}^{\tilde{K}}(\text{\textgreek{y}})\bar{n}^{\text{\textgreek{m}}}+Ca\int_{\{\frac{9M}{4}\le r\le\frac{10M}{4}\}\cap\{\text{\textgreek{t}}_{1}\le\bar{t}\le\text{\textgreek{t}}_{2}\}}J_{\text{\textgreek{m}}}^{\tilde{K}}(\text{\textgreek{y}})\tilde{K}^{\text{\textgreek{m}}}+C\int_{\{\text{\textgreek{t}}_{1}\le\bar{t}\le\text{\textgreek{t}}_{2}\}}|\tilde{K}\text{\textgreek{y}}||F|.\label{eq:DegenerateEnergyIdentity}
\end{equation}

Let $N$ be a $T$-invariant, smooth and everywhere timelike on $\mathcal{M}_{0}\cup\mathcal{H}^{+}$,
such that $N\equiv T$ on $\{r\ge\frac{10M}{4}\}$, and furthermore
satisfying on $\{r\le2M+c\}$ for some small $c>0$ depending only
on the geometry of $(\mathcal{M}_{0},g_{M,a}^{(1)})$ (and independent
of $a$, provided $a$ is sufficiently small): 
\begin{equation}
K^{N}(\text{\textgreek{y}})\ge cJ_{\text{\textgreek{m}}}^{N}(\text{\textgreek{y}})N^{\text{\textgreek{m}}}.\label{eq:RedShiftEstimate}
\end{equation}
Such a vector field $N$ can always be constructed, in view of the
fact that the surface gravity of $\mathcal{H}^{+}$ is positive; see
Section 7 of \cite{DafRod6}. Notice also that $N$ can be chosen
so that in the region $\{2M+c\le r\le\frac{10M}{4}\}$ the following
bound holds: 
\begin{equation}
\big|K^{N}(\text{\textgreek{y}})\big|\le CJ_{\text{\textgreek{m}}}^{N}(\text{\textgreek{y}})N^{\text{\textgreek{m}}}
\end{equation}
for some constant $C$ independent of $a$ (provided $a$ is sufficiently
small), while in the region $\{r\ge\frac{10M}{4}\}$, we have $K^{N}=K^{T}=0$.
Thus, the energy identity for $N$ yields for any $\text{\textgreek{t}}_{1}\le\text{\textgreek{t}}_{2}$:
\begin{equation}
\begin{split}\sup_{\text{\textgreek{t}}_{1}\le\text{\textgreek{t}}\le\text{\textgreek{t}}_{2}}\int_{\{\bar{t}=\text{\textgreek{t}}\}}J_{\text{\textgreek{m}}}^{N}(\text{\textgreek{y}})\bar{n}^{\text{\textgreek{m}}} & +c\int_{\{\text{\textgreek{t}}_{1}\le\bar{t}\le\text{\textgreek{t}}_{2}\}\cap\{r\le2M+c\}}J_{\text{\textgreek{m}}}^{N}(\text{\textgreek{y}})N^{\text{\textgreek{m}}}\le\\
\le & \int_{\{\bar{t}=\text{\textgreek{t}}_{1}\}}J_{\text{\textgreek{m}}}^{N}(\text{\textgreek{y}})\bar{n}^{\text{\textgreek{m}}}+C\int_{\{2M+c\le r\le\frac{10M}{4}\}\cap\{\text{\textgreek{t}}_{1}\le\bar{t}\le\text{\textgreek{t}}_{2}\}}J_{\text{\textgreek{m}}}^{N}(\text{\textgreek{y}})n^{\text{\textgreek{m}}}+C\int_{\{\text{\textgreek{t}}_{1}\le\bar{t}\le\text{\textgreek{t}}_{2}\}}|N\text{\textgreek{y}}||F|.
\end{split}
\label{eq:DegenerateEnergyIdentity-1}
\end{equation}

Using (\ref{eq:DegenerateEnergyIdentity}) (integrated in $\bar{t}$)
to control the second term of the right hand side of (\ref{eq:DegenerateEnergyIdentity-1}),
we readily obtain for any $\text{\textgreek{t}}_{1}\le\text{\textgreek{t}}_{2}$:
\begin{equation}
\sup_{\text{\textgreek{t}}_{1}\le\text{\textgreek{t}}\le\text{\textgreek{t}}_{2}}E_{N}[\text{\textgreek{y}}](\text{\textgreek{t}})+c\int_{\text{\textgreek{t}}_{1}}^{\text{\textgreek{t}}_{2}}E_{N}[\text{\textgreek{y}}](\text{\textgreek{t}})\, d\text{\textgreek{t}}\le E_{N}[\text{\textgreek{y}}](\text{\textgreek{t}}_{1})+C\cdot(\text{\textgreek{t}}_{2}-\text{\textgreek{t}}_{1})E_{N}[\text{\textgreek{y}}](\text{\textgreek{t}}_{1})+Ca\int_{\text{\textgreek{t}}_{1}}^{\text{\textgreek{t}}_{2}}\mathcal{G}[\text{\textgreek{y}}](\text{\textgreek{t}}_{1};\text{\textgreek{t}})\, d\text{\textgreek{t}}+C\mathcal{F}[F,\text{\textgreek{t}}_{2},\text{\textgreek{t}}_{1}],\label{eq:DegenerateEnergyIdentity-1-1}
\end{equation}
where 
\begin{gather*}
E_{N}[\text{\textgreek{y}}](\text{\textgreek{t}})\doteq\int_{\{\bar{t}=\text{\textgreek{t}}\}}J_{\text{\textgreek{m}}}^{N}(\text{\textgreek{y}})\bar{n}^{\text{\textgreek{m}}},\\
\mathcal{G}[\text{\textgreek{y}}](\text{\textgreek{t}}_{1};\text{\textgreek{t}})\doteq\int_{\{\frac{9M}{4}\le r\le\frac{10M}{4}\}\cap\{\text{\textgreek{t}}_{1}\le\bar{t}\le\text{\textgreek{t}}\}}J_{\text{\textgreek{m}}}^{\tilde{K}}(\text{\textgreek{y}})\tilde{K}^{\text{\textgreek{m}}},\\
\mathcal{F}[F,\text{\textgreek{t}}_{2},\text{\textgreek{t}}_{1}]\doteq\int_{\{\text{\textgreek{t}}_{1}\le\bar{t}\le\text{\textgreek{t}}_{2}\}}|N\text{\textgreek{y}}||F|+\int_{\text{\textgreek{t}}_{1}}^{\text{\textgreek{t}}_{1}}\Big(\int_{\{\text{\textgreek{t}}_{1}\le\bar{t}\le\text{\textgreek{t}}\}}|\tilde{K}\text{\textgreek{y}}||F|\Big)\, d\text{\textgreek{t}}.
\end{gather*}
From (\ref{eq:DegenerateEnergyIdentity-1-1}) and Gronwall's inequality,
we thus infer for any $\text{\textgreek{t}}_{1}\le\text{\textgreek{t}}_{2}$:
\begin{equation}
\sup_{\text{\textgreek{t}}_{1}\le\text{\textgreek{t}}\le\text{\textgreek{t}}_{2}}\int_{\{\bar{t}=\text{\textgreek{t}}\}}J_{\text{\textgreek{m}}}^{N}(\text{\textgreek{y}})\bar{n}^{\text{\textgreek{m}}}\le C\int_{\{\bar{t}=\text{\textgreek{t}}_{1}\}}J_{\text{\textgreek{m}}}^{N}(\text{\textgreek{y}})\bar{n}^{\text{\textgreek{m}}}+Ca\int_{\{\frac{9M}{4}\le r\le\frac{10M}{4}\}\cap\{\text{\textgreek{t}}_{1}\le\bar{t}\le\text{\textgreek{t}}_{2}\}}J_{\text{\textgreek{m}}}^{\tilde{K}}(\text{\textgreek{y}})\tilde{K}^{\text{\textgreek{m}}}+C\int_{\{\text{\textgreek{t}}_{1}\le\bar{t}\le\text{\textgreek{t}}_{2}\}}\big(|\tilde{K}\text{\textgreek{y}}|+|N\text{\textgreek{y}}|\big)\cdot|F|.\label{eq:BoundednessEstimate}
\end{equation}

Let $\text{\textgreek{q}}:\mathbb{R}\rightarrow[0,1]$ be a smooth
cut-off function, such that $\text{\textgreek{q}}\equiv1$ on $(-\infty,-1]$
and $\text{\textgreek{q}}\equiv0$ on $[0,+\infty)$, and let us define
for any $\text{\textgreek{t}}_{1}\le\text{\textgreek{t}}_{2}$ the
function $\text{\textgreek{q}}_{\text{\textgreek{t}}_{1},\text{\textgreek{t}}_{2}}:\mathcal{M}_{M,0}\rightarrow[0,1]$:
\begin{equation}
\text{\textgreek{q}}_{\text{\textgreek{t}}_{1},\text{\textgreek{t}}_{2}}\doteq\text{\textgreek{q}}(\bar{t}-\text{\textgreek{t}}_{2})\text{\textgreek{q}}(\text{\textgreek{t}}_{1}-\bar{t}).
\end{equation}
Notice that $\text{\textgreek{q}}_{\text{\textgreek{t}}_{1},\text{\textgreek{t}}_{2}}\text{\textgreek{y}}$
satisfies 
\begin{equation}
\square_{g_{M,a}^{(h,R_{+})}}(\text{\textgreek{q}}_{\text{\textgreek{t}}_{1},\text{\textgreek{t}}_{2}}\text{\textgreek{y}})=\text{\textgreek{q}}_{\text{\textgreek{t}}_{1},\text{\textgreek{t}}_{2}}F+2\nabla^{\text{\textgreek{m}}}\text{\textgreek{q}}_{\text{\textgreek{t}}_{1},\text{\textgreek{t}}_{2}}\nabla_{\text{\textgreek{m}}}\text{\textgreek{y}}+\big(\square_{g_{M,a}^{(h,R_{+})}}\text{\textgreek{q}}_{\text{\textgreek{t}}_{1},\text{\textgreek{t}}_{2}}\big)\cdot\text{\textgreek{y}},\label{eq:NewInhomogeneousTerm}
\end{equation}
and both $\text{\textgreek{q}}_{\text{\textgreek{t}}_{1},\text{\textgreek{t}}_{2}}\text{\textgreek{y}}$
and $\square_{g_{M,a}^{(h,R_{+})}}(\text{\textgreek{q}}_{\text{\textgreek{t}}_{1},\text{\textgreek{t}}_{2}}\text{\textgreek{y}})$
have compact support in the $t^{*}$ variable. Hence, applying Lemma
\ref{lem:IledDeforemdMetric} for $\text{\textgreek{q}}_{\text{\textgreek{t}}_{1},\text{\textgreek{t}}_{2}}\text{\textgreek{y}}$
in place of $\text{\textgreek{y}}$, we readily obtain
\begin{equation}
\begin{split}\int_{\{\text{\textgreek{t}}_{1}\le\bar{t}\le\text{\textgreek{t}}_{2}\}}\Big(\text{\textgreek{q}}_{r\neq3M}(r) & \cdot r^{-2}J_{\text{\textgreek{m}}}^{N}(\text{\textgreek{y}})N^{\text{\textgreek{m}}}+r^{-4}|\text{\textgreek{y}}|^{2}\Big)\le\\
\le & C_{R_{-}}\mathcal{Z}_{\text{\textgreek{t}}_{1},\text{\textgreek{t}}_{2}}[F,\text{\textgreek{y}};R_{-}]+C_{R_{-}}\int_{\{\bar{t}=\text{\textgreek{t}}_{1}\}}J_{\text{\textgreek{m}}}^{N}(\text{\textgreek{y}})\bar{n}^{\text{\textgreek{m}}}+C_{R_{-}}\int_{\big\{\bar{t}=\max\{\text{\textgreek{t}}_{1},\text{\textgreek{t}}_{2}-1\}\big\}}J_{\text{\textgreek{m}}}^{N}(\text{\textgreek{y}})\bar{n}^{\text{\textgreek{m}}}+\\
 & \hphantom{C_{R_{-}}}+C_{R_{-},R_{+}^{(1)}}\int_{\{R_{-}\le r_{*}\le R_{+}^{(1)}\}\cap\{\text{\textgreek{t}}_{1}\le\bar{t}\le\text{\textgreek{t}}_{2}\}}\min\big\{|T\text{\textgreek{y}}|^{2},|\text{\textgreek{y}}|^{2}\big\}.
\end{split}
\label{eq:Cut-Off-ILED}
\end{equation}

\medskip{}

\noindent \emph{Remark.} Notice that, in obtaining (\ref{eq:Cut-Off-ILED})
from (\ref{eq:DegenerateIled-1}) and (\ref{eq:NewInhomogeneousTerm}),
the following estimate is used: 
\begin{equation}
\mathcal{Z}[2\nabla^{\text{\textgreek{m}}}\text{\textgreek{q}}_{\text{\textgreek{t}}_{1},\text{\textgreek{t}}_{2}}\nabla_{\text{\textgreek{m}}}\text{\textgreek{y}}+\big(\square_{g_{M,a}^{(h,R_{+})}}\text{\textgreek{q}}_{\text{\textgreek{t}}_{1},\text{\textgreek{t}}_{2}}\big)\cdot\text{\textgreek{y}},\text{\textgreek{q}}_{\text{\textgreek{t}}_{1},\text{\textgreek{t}}_{2}}\text{\textgreek{y}};R_{-}]\le C\Big\{\int_{\{\bar{t}=\text{\textgreek{t}}_{1}\}}J_{\text{\textgreek{m}}}^{N}(\text{\textgreek{y}})\bar{n}^{\text{\textgreek{m}}}+\int_{\big\{\bar{t}=\max\{\text{\textgreek{t}}_{1},\text{\textgreek{t}}_{2}-1\}\big\}}J_{\text{\textgreek{m}}}^{N}(\text{\textgreek{y}})\bar{n}^{\text{\textgreek{m}}}\Big\}.\label{eq:BoundForCutOff}
\end{equation}
Inequality (\ref{eq:BoundForCutOff}) is inferred using integrations
by parts in $\partial_{r_{*}}$ and $T$, combined with local-in-time
energy estimates for $\text{\textgreek{y}}$, in the spirit of \cite{DafRod9,DafRodSchlap}. 

\medskip{}

From (\ref{eq:BoundednessEstimate}) and (\ref{eq:Cut-Off-ILED})
we obtain:
\begin{align}
\sup_{\text{\textgreek{t}}_{1}\le\text{\textgreek{t}}\le\text{\textgreek{t}}_{2}}\int_{\{\bar{t}=\text{\textgreek{t}}\}}J_{\text{\textgreek{m}}}^{N}(\text{\textgreek{y}})\bar{n}^{\text{\textgreek{m}}}\le C & \int_{\{\bar{t}=\text{\textgreek{t}}_{1}\}}J_{\text{\textgreek{m}}}^{N}(\text{\textgreek{y}})\bar{n}^{\text{\textgreek{m}}}+C_{R_{-}}a\int_{\{\bar{t}=\text{\textgreek{t}}_{1}\}}J_{\text{\textgreek{m}}}^{N}(\text{\textgreek{y}})\bar{n}^{\text{\textgreek{m}}}+C_{R_{-}}a\int_{\big\{\bar{t}=\max\{\text{\textgreek{t}}_{1},\text{\textgreek{t}}_{2}-1\}\big\}}J_{\text{\textgreek{m}}}^{N}(\text{\textgreek{y}})\bar{n}^{\text{\textgreek{m}}}+\label{eq:BeforeBuondedness}\\
 & +C_{R_{-},R_{+}^{(1)}}\int_{\{R_{-}\le r_{*}\le R_{+}^{(1)}\}\cap\{\text{\textgreek{t}}_{1}\le\bar{t}\le\text{\textgreek{t}}_{2}\}}\min\big\{|T\text{\textgreek{y}}|^{2},|\text{\textgreek{y}}|^{2}\big\}+C_{R_{-}}\mathcal{Z}_{\text{\textgreek{t}}_{1},\text{\textgreek{t}}_{2}}[F,\text{\textgreek{y}};R_{-}].\nonumber 
\end{align}
Recall that the constant $R_{-}$ can be chosen independently of $a$
(provided $a$ is sufficiently small). Therefore, for $a$ sufficiently
small, the third term of the right hand side of (\ref{eq:BeforeBuondedness})
can be absobed into the left hand side, yielding: 
\begin{align}
\sup_{\text{\textgreek{t}}_{1}\le\text{\textgreek{t}}\le\text{\textgreek{t}}_{2}}\int_{\{\bar{t}=\text{\textgreek{t}}\}}J_{\text{\textgreek{m}}}^{N}(\text{\textgreek{y}})\bar{n}^{\text{\textgreek{m}}}\le C_{R_{-}} & \int_{\{\bar{t}=\text{\textgreek{t}}_{1}\}}J_{\text{\textgreek{m}}}^{N}(\text{\textgreek{y}})\bar{n}^{\text{\textgreek{m}}}+C_{R_{-},R_{+}^{(1)}}\int_{\{R_{-}\le r_{*}\le R_{+}^{(1)}\}\cap\{\text{\textgreek{t}}_{1}\le\bar{t}\le\text{\textgreek{t}}_{2}\}}\min\big\{|T\text{\textgreek{y}}|^{2},|\text{\textgreek{y}}|^{2}\big\}+\label{eq:BeforeBuondedness-1}\\
 & +C_{R_{-}}\mathcal{Z}_{\text{\textgreek{t}}_{1},\text{\textgreek{t}}_{2}}[F,\text{\textgreek{y}};R_{-}].\nonumber 
\end{align}
 From (\ref{eq:Cut-Off-ILED}) and (\ref{eq:BeforeBuondedness-1}),
we hence obtain the estimate (\ref{eq:DegenerateIled-1-2}). Inequality
(\ref{eq:NonDegenerateIled-1-2}) then follows by applying the same
procedure for $T\text{\textgreek{y}}$ in place of $\text{\textgreek{y}}$,
and using (\ref{eq:NonDegenerateIled-1}) instead of (\ref{eq:DegenerateIled-1}).

The normal hyperbolicity of the trapped set of $(\mathcal{M}_{0},g_{M,a}^{(h)})$
can be readily inferred by a simple computation (using the fact that
$h^{\prime}\ge0$).
\end{proof}

\section{\label{sec:Criterion}A zero-frequency continuity criterion for decay
and its failure in the presence of superradiance}

In Section \ref{sub:Superradiance}, we discussed some differences
between superradiant and non-superadiant spacetimes concerning the
behaviour of solutions to equation (\ref{eq:PotentialWaveGeneralIntro}).
In the context of this discussion, we will present a specific example
of such a difference: As a consequence of Theorem \hyperref[Theorem 1 detailed]{1},
we will show that a simple zero-frequency continuity criterion for
extending an integrated local energy decay estimate for equation 
\begin{equation}
\square_{g}\text{\textgreek{y}}=0\label{eq:WaveEquationGeneral}
\end{equation}
 on a non-superradiant spacetime $(\mathcal{M},g)$ to the family
of equations 
\begin{equation}
\square_{g}\text{\textgreek{y}}-V_{\text{\textgreek{l}}}\text{\textgreek{y}}=0,\label{eq:PotentialOperators}
\end{equation}
 where $V_{\text{\textgreek{l}}}$ is time independent and depends
smoothly on $\text{\textgreek{l}}\in[0,1]$ (with $V_{0}=0$), utterly
fails in the superradiant case.

The outline of this section is as follows: In Section \ref{sub:The-resolvent-operator},
we will introduce the definition of the resolvent operator associated
to the family (\ref{eq:PotentialOperators}) on a general class of
stationary and asymptotically flat spacetimes and derive some of its
main properties in the case when equation (\ref{eq:WaveEquationGeneral})
satisfies a suitable integrated local energy decay estimate. In Section
\ref{sub:A-zero-frequency-continuity}, we will state a zero-frequency
continuity criterion for integrated local energy decay for the family
(\ref{eq:PotentialOperators}), valid on non-superradiant spacetimes.
Finally, in Section \ref{sub:Failure-of-the}, we will show how the
analogue of this continuity criterion fails in the presence of superradiance.

\subsection{\label{sub:The-resolvent-operator}The resolvent operator}

Before stating the aforementioned continuity criterion, we will introduce
the notion of the resolvent operator for the family (\ref{eq:PotentialOperators})
on a general stationary and asymptotically flat spacetime, and we
will state the main properties satisfied by the resolvent operator
under the assumption that an integrated local energy decay estimate
holds for (\ref{eq:PotentialOperators}) when $\text{\textgreek{l}}=0$.

\begin{figure}[h] 
\centering 
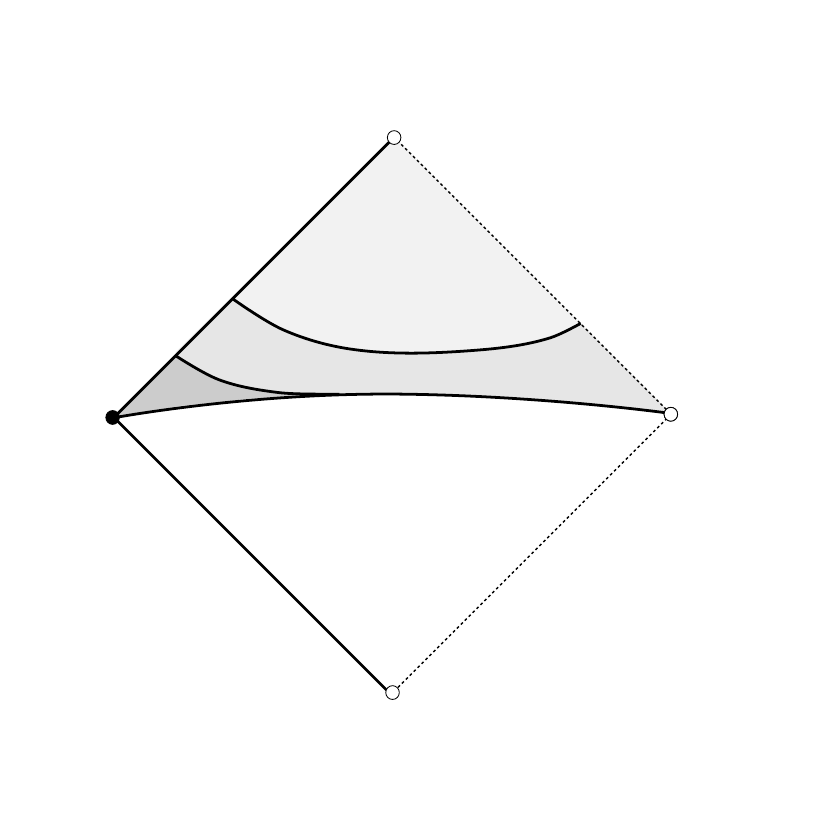 
\caption{In the case when $(\mathcal{M},g)$ is the subextremal Kerr exterior spacetime $(\mathcal{M}_{M,a},g_{M,a})$, the intersection of the hypersurfaces $\tilde{\text{\textgreek{S}}}$, $\text{\textgreek{S}}$ and $\mathcal{S}$ with the $1+1$ dimensional slice $(\theta,\phi)=(\pi/2,0)$ is schematically as depicted above.} 
\end{figure}

Let $(\mathcal{M}^{d+1},g)$, $d\ge3$, be a smooth, globally hyperbolic,
stationary and asymptotically flat spacetime,%
\footnote{The results of this section also apply on asymptotically conic spacetimes
without any change.%
} possibly bounded by an event horizon $\mathcal{H}$ (see \cite{Moschidisb}
for the relevant definition). Let $T$ be the stationary Killing field
of $(\mathcal{M},g)$, normalised so that it is future directed in
the asymptotically flat region of $(\mathcal{M},g)$, and let $\tilde{\text{\textgreek{S}}}$
be a Cauchy hypersurface of $(\mathcal{M},g)$. We will assume without
loss of generality that $\tilde{\text{\textgreek{S}}}$ can be chosen
so that $T$ is everywhere transversal to $\tilde{\text{\textgreek{S}}}$.
Let also $\text{\textgreek{S}}$ be a smooth, spacelike hypersurface
of $\mathcal{M}$, which is not necessarily a Cauchy hypersurface
of $\mathcal{M}$, such that $\text{\textgreek{S}}$ differs from
$\tilde{\text{\textgreek{S}}}$ only near $\mathcal{H}$ (and coincides
everywhere with $\text{\textgreek{S}}$ in the case $\mathcal{H}=\emptyset$)
and satisfies $\mathcal{H}\cap\text{\textgreek{S}}\subset I^{+}(\tilde{\text{\textgreek{S}}})\backslash\tilde{\text{\textgreek{S}}}$.
For any set $A\subset\mathcal{M}$, we will denote with $\mathcal{J}_{\text{\textgreek{t}}}(A)$
the image of $A$ under the flow of $T$ for time $\text{\textgreek{t}}$.
We will also fix a $T$-invariant globally timelike vector field $N$
on $(\mathcal{M},g)$, such that $N\equiv T$ in the asymptotically
flat region of $(\mathcal{M},g)$.

Let $t:\mathcal{M}\rightarrow\mathbb{R}$ be defined by the condition
$t|_{\text{\textgreek{S}}}=0$ and $T(t)=1$ (hence, $\{t=\text{\textgreek{t}}\}=\mathcal{J}_{\text{\textgreek{t}}}(\text{\textgreek{S}})$).
Notice that in the case of Kerr spacetime $(\mathcal{M}_{M,a},g_{M,a})$,
this is a different function than the Kerr $t$ coordinate (see Figure
4.1). Let $V_{\text{\textgreek{l}}}:\mathcal{M}\rightarrow\mathbb{R}$
be a family of smooth $T$-invariant functions supported in the same
set $\cup_{\text{\textgreek{t}}\in\mathbb{R}}\mathcal{J}_{\text{\textgreek{t}}}(K)$,
with $K\subset\text{\textgreek{S}}\backslash\mathcal{H}$ compact,
such that $V_{\text{\textgreek{l}}}$ depends continuously (in the
$C^{\infty}$ topology) on the parameter $\text{\textgreek{l}}\in[0,1]$
and $V_{0}=0$. Finally, let us fix a spacelike hyperboloidal hypersurface
$\mathcal{S}$ in the future of $\text{\textgreek{S}}$, terminating
at future null infinity $\mathcal{I}^{+}$ as in Section 3.1 of \cite{Moschidisc},
with $\mathcal{S}$ intersecting $\mathcal{H}$ transversally and
satisfying $\mathcal{H}\cap\mathcal{S}\subset I^{+}(\tilde{\text{\textgreek{S}}})$
in the case $\mathcal{H}\neq\emptyset$. We will define the function
$\bar{t}:\mathcal{M}\rightarrow\mathbb{R}$ associated to $\mathcal{S}$
by solving $T(\bar{t})=1$, $\bar{t}|_{\mathcal{S}}=0$. 

We will assume that for any smooth function $F:\mathcal{M}\rightarrow\mathbb{R}$
which is compacly supported when restricted to the $\{t=const\}$
hypersurfaces, the (unique) smooth solution $\text{\textgreek{y}}$
to 
\begin{equation}
\begin{cases}
\square_{g}\text{\textgreek{y}}=F,\\
(\text{\textgreek{y}},T\text{\textgreek{y}})|_{t=0}=(0,0),
\end{cases}\label{eq:WithoutPotential}
\end{equation}
on $\{t\ge0\}$ satisfies the following integrated local energy decay
estimate (for any $0<\text{\textgreek{h}}<\frac{1}{2}$) for some
$k\in\mathbb{N}$ and any $t_{f}\ge0$: 
\begin{equation}
\int_{\{0\le t\le t_{f}\}}(1+r)^{-1-2\text{\textgreek{h}}}\big(J_{\text{\textgreek{m}}}^{N}(\text{\textgreek{y}})N^{\text{\textgreek{m}}}+(1+r)^{-2}|\text{\textgreek{y}}|^{2}\big)\, dg\lesssim_{\text{\textgreek{h}}}\sum_{j=0}^{k}\int_{\{0\le t\le t_{f}\}}(1+r)^{1+2\text{\textgreek{h}}}|T^{j}F|^{2}\, dg.\label{eq:IledWithLossPhysical}
\end{equation}

The integrated local energy decay estimate (\ref{eq:IledWithLossPhysical})
allows us to define the free ``resolvent'' operator $R(\square_{g};\text{\textgreek{w}})$
for any $\text{\textgreek{w}}\in\mathbb{C}$ with $Im(\text{\textgreek{w}})\ge0$: 
\begin{prop}
\label{prop:DefinitionResolvent}Let $(\mathcal{M},g)$ be a spacetime
as above, so that any smooth solution $\text{\textgreek{y}}$ to (\ref{eq:WithoutPotential})
satisfies the integrated local energy decay estimate (\ref{eq:IledWithLossPhysical}).
For any $\text{\textgreek{w}}\in\mathbb{C}$ with $Im(\text{\textgreek{w}})\ge0$
and any function $\mathcal{F}\in L_{cp}^{2}(\text{\textgreek{S}})$,%
\footnote{Here, $L_{cp}^{2}(\text{\textgreek{S}})$ denotes the subspace of
$L^{2}(\text{\textgreek{S}})$ spanned by functions of compact support.%
} there exists a unique function $\text{\textgreek{f}}\in H_{loc}^{1}(\text{\textgreek{S}})$,
such that the functions $e^{-i\text{\textgreek{w}}t}\mathcal{F}$
and $e^{-i\text{\textgreek{w}}t}\text{\textgreek{f}}$ on $\mathcal{\cup_{\text{\textgreek{t}}\in\mathbb{R}}\mathcal{J}_{\text{\textgreek{t}}}}(\text{\textgreek{S}})\simeq\mathbb{R}\times\text{\textgreek{S}}$
(where $\mathcal{\cup_{\text{\textgreek{t}}\in\mathbb{R}}\mathcal{J}_{\text{\textgreek{t}}}}(\text{\textgreek{S}})\subseteq\mathcal{M}$
coincides with $\mathcal{M}$ in the case $\mathcal{H}=\emptyset$)
satisfy

\begin{equation}
\begin{cases}
\square_{g}(e^{-i\text{\textgreek{w}}t}\text{\textgreek{f}})=e^{-i\text{\textgreek{w}}t}\mathcal{F},\\
\int_{\mathcal{S}}J_{\text{\textgreek{m}}}^{N}(e^{-i\text{\textgreek{w}}t}\text{\textgreek{f}})n_{\mathcal{S}}^{\text{\textgreek{m}}}<+\infty\mbox{ and }\lim_{r\rightarrow+\infty}(e^{-i\text{\textgreek{w}}t}\text{\textgreek{f}})|_{\mathcal{S}}=0,
\end{cases}\label{eq:DefinitionResolventFree}
\end{equation}
where $n_{\mathcal{S}}$ is the future directed unit normal to the
hypersurface $\mathcal{S}$. The operator $R(\square_{g};\text{\textgreek{w}}):L_{cp}^{2}(\text{\textgreek{S}})\rightarrow H_{loc}^{1}(\text{\textgreek{S}})$
defined by 
\begin{equation}
R(\square_{g};\text{\textgreek{w}})\mathcal{F}\doteq\text{\textgreek{f}}\label{eq:ResolventDefinition}
\end{equation}
 is uniformly bounded on $\{\text{\textgreek{w}}\in\mathbb{C}\,|\, Im(\text{\textgreek{w}})\ge0\}$
with respect to the operator norm 
\begin{equation}
||R(\square_{g};\text{\textgreek{w}})||_{\mathcal{L},\text{\textgreek{h}}}\doteq\sup_{\mathcal{F}\in C_{0}^{\infty}(\text{\textgreek{S}})}\frac{||(1+r)^{-\frac{1}{2}-\text{\textgreek{h}}}\nabla_{g_{\text{\textgreek{S}}}}R(\square_{g};\text{\textgreek{w}})\mathcal{F}||_{L^{2}(\text{\textgreek{S}})}+||(1+r)^{-\frac{1}{2}-\text{\textgreek{h}}}(|\text{\textgreek{w}}|+(1+r)^{-1})R(\square_{g};\text{\textgreek{w}})\mathcal{F}||_{L^{2}(\text{\textgreek{S}})}}{(1+|\text{\textgreek{w}}|^{k})||(1+r)^{\frac{1}{2}+\text{\textgreek{h}}}\mathcal{F}||_{L^{2}(\text{\textgreek{S}})}}\label{eq:ResolventNorm-1}
\end{equation}
for any $0<\text{\textgreek{h}}<\frac{1}{2}$ (where $k\in\mathbb{N}$
is the same number appearing in the right hand side of (\ref{eq:IledWithLossPhysical})).
Furthermore, $R(\square_{g};\text{\textgreek{w}})$ is H\"older continuous
in $\text{\textgreek{w}}$ for $Im(\text{\textgreek{w}})\ge0$ with
respect to the norm 
\begin{equation}
||R(\square_{g};\text{\textgreek{w}})||_{\mathcal{L},\text{\textgreek{h}},\text{\textgreek{h}}_{0}}\doteq\sup_{\mathcal{F}\in C_{0}^{\infty}(\text{\textgreek{S}})}\frac{||(1+r)^{-\frac{1}{2}-\text{\textgreek{h}}}\nabla_{g_{\text{\textgreek{S}}}}R(\square_{g};\text{\textgreek{w}})\mathcal{F}||_{L^{2}(\text{\textgreek{S}})}+||(1+r)^{-\frac{1}{2}-\text{\textgreek{h}}}(|\text{\textgreek{w}}|+(1+r)^{-1})R(\square_{g};\text{\textgreek{w}})\mathcal{F}||_{L^{2}(\text{\textgreek{S}})}}{||(1+r)^{\frac{1}{2}+\text{\textgreek{h}}+\text{\textgreek{h}}_{0}}\mathcal{F}||_{L^{2}(\text{\textgreek{S}})}}\label{eq:ResolventNormHolder}
\end{equation}
 for any $0<\text{\textgreek{h}}<\frac{1}{2}$ and any $0<\text{\textgreek{h}}_{0}<\frac{1}{2}-\text{\textgreek{h}}$
and, for any $\mathcal{F},\text{\textgreek{f}}_{0}\in L_{cp}^{2}(\text{\textgreek{S}})$,
the inner product $\left\langle \text{\textgreek{f}}_{0},R(\square_{g};\text{\textgreek{w}})\mathcal{F}\right\rangle _{L^{2}(\text{\textgreek{S}})}$
is a holomorphic function of $\text{\textgreek{w}}$ for $Im(\text{\textgreek{w}})>0$.\end{prop}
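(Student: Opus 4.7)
The plan is to establish Proposition \ref{prop:DefinitionResolvent} by first constructing $R(\square_{g};\omega)$ for $Im(\omega)$ large via an absolutely convergent Laplace transform of the forward evolution, and then extending the construction down to the real axis using a limiting absorption argument driven by the ILED estimate (\ref{eq:IledWithLossPhysical}). Let $U(\tau)$ denote the forward evolution of the Cauchy problem for $\square_{g}$, mapping Cauchy data on $\text{\textgreek{S}}$ to data on $\mathcal{J}_{\tau}(\text{\textgreek{S}})$. For any $T$-invariant reference Riemannian metric on $\mathcal{M}$, standard energy estimates based on comparing $T$ with a timelike vector field yield an exponential local bound of the form $Ce^{A\tau}$ for some $A\ge 0$. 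For $Im(\omega)>A$, I would then set
\begin{equation*}
R(\square_{g};\omega)\mathcal{F}\doteq\int_{0}^{\infty}e^{i\omega\tau}\big[\pi_{0}\circ U(\tau)\big](0,-\mathcal{F})\,d\tau,
\end{equation*}
where $\pi_{0}$ is the projection onto the first component of Cauchy data. The integral converges absolutely in $H^{1}_{loc}(\text{\textgreek{S}})$, and a direct Duhamel computation shows that $e^{-i\omega t}R(\square_{g};\omega)\mathcal{F}$ satisfies all the conditions of (\ref{eq:DefinitionResolventFree}).

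To extend $R(\square_{g};\omega)$ down to $\{Im(\omega)\ge 0\}$ with the uniform bound (\ref{eq:ResolventNorm-1}), I would apply (\ref{eq:IledWithLossPhysical}) to the truncated forward problem $\square_{g}\psi_{T}=e^{-i\omega t}\chi_{T}(t)\mathcal{F}$, with $\chi_{T}$ a smooth cutoff of $[0,T]$. For $Im(\omega)>A$, the identity $\psi_{T}(t,\cdot)=e^{-i\omega t}R(\square_{g};\omega)\mathcal{F}$ holds for $1\ll t\ll T$; the right-hand side of (\ref{eq:IledWithLossPhysical}) is bounded by $C(1+|\omega|^{2k})T\,\|(1+r)^{1/2+\text{\textgreek{h}}}\mathcal{F}\|^{2}_{L^{2}(\text{\textgreek{S}})}$, while the left-hand side, restricted to $\{1\le t\le T\}$ and expressed in terms of the mode solution, is proportional to $T$ times the squared weighted $H^{1}$-norm appearing on the numerator of (\ref{eq:ResolventNorm-1}). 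The factors of $T$ cancel and give (\ref{eq:ResolventNorm-1}) uniformly in $Im(\omega)>A$. The bound then propagates to all of $\{Im(\omega)\ge 0\}$ by analytic continuation; the real axis is reached via weak compactness in $H^{1}_{loc}$ applied along sequences $\omega_{j}$ with $Im(\omega_{j})\to 0^{+}$, the limit being identified by the uniqueness statement, which itself follows by applying (\ref{eq:IledWithLossPhysical}) to the forward evolution of any candidate null mode after a cutoff in $t$ (the cutoff error being handled by a local-in-time energy estimate controlled by the finite $N$-energy on $\mathcal{S}$).

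Holomorphy of $\langle\text{\textgreek{f}}_{0},R(\square_{g};\omega)\mathcal{F}\rangle_{L^{2}(\text{\textgreek{S}})}$ on $\{Im(\omega)>0\}$ is immediate from the Laplace representation (after analytic continuation from $\{Im(\omega)>A\}$). Hölder continuity in $\omega$ measured by (\ref{eq:ResolventNormHolder}) is obtained by applying the same truncated scheme to the difference $R(\square_{g};\omega_{2})\mathcal{F}-R(\square_{g};\omega_{1})\mathcal{F}$, whose effective source is of order $(\omega_{2}-\omega_{1})t\,\mathcal{F}$: the linear factor of $t$ is absorbed by the extra spatial weight $(1+r)^{\text{\textgreek{h}}_{0}}$ after trading temporal for spatial decay in (\ref{eq:IledWithLossPhysical}). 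The main technical obstacle will be the limiting absorption step $Im(\omega)\to 0^{+}$, specifically verifying that the weak limit in $H^{1}_{loc}$ retains the asymptotic condition $\lim_{r\to\infty}(e^{-i\omega t}\phi)|_{\mathcal{S}}=0$ rather than acquiring an incoming radiation component at $\mathcal{I}^{+}$; it is precisely at this point that the ILED (which in particular prohibits real outgoing mode solutions of $\square_{g}\psi=0$) becomes indispensable.
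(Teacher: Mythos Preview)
Your overall strategy---use the forward problem with a time-periodic source, apply the ILED (\ref{eq:IledWithLossPhysical}), and read off bounds on the mode---is the same as the paper's. But two steps in your execution do not close.

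\textbf{Uniqueness for $\omega\in\mathbb{R}$.} You propose to show $\phi\equiv 0$ by cutting off $e^{-i\omega t}\phi$ in $t$ and applying (\ref{eq:IledWithLossPhysical}), with the cutoff error ``handled by a local-in-time energy estimate controlled by the finite $N$-energy on $\mathcal{S}$''. This fails: the right-hand side of (\ref{eq:IledWithLossPhysical}) requires the source in $(1+r)^{1+2\eta}$-weighted $L^{2}$, but for a real outgoing mode the cutoff error, supported on $\{0\le t\le 1\}$, behaves like $|\phi|\sim r^{-(d-1)/2}$, so $\int_{\Sigma}(1+r)^{1+2\eta}|\phi|^{2}\,dg_{\Sigma}=+\infty$. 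A local-in-time energy estimate from $\mathcal{S}$ to $\Sigma$ only yields unweighted $H^{1}$ control, not this weighted norm. The paper's device here is to multiply by $e^{a\bar t}$ (with $a>0$ small) before cutting off: since $\bar t\le t-\tfrac{1}{2}r-C$, on $\{t=\tau\}$ one has $(1+r)^{\beta}e^{2a\bar t}\lesssim a^{-\beta}e^{2a\tau}$, so the extra exponential in $r$ makes the weighted norm of the cutoff error finite (at the cost of an $a^{-\beta}$ factor). The $a$-dependent error term $\mathcal{A}_{\exp}\sim a\,e^{a\bar t}\nabla\bar t\cdot\nabla(e^{-i\omega t}\phi)$ is then controlled precisely by the finite energy on the hyperboloidal slices $\{\bar t=\text{const}\}$. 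Choosing $t_{f}=a^{-1}$ and sending $a\to 0$ forces $\phi=0$. The same $e^{a\bar t}$ trick is what drives the paper's H\"older estimate; your ``trade temporal for spatial decay via the extra weight $(1+r)^{\eta_{0}}$'' runs into the identical obstruction.

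\textbf{Existence and uniform bounds on the full half-plane.} You derive the bound (\ref{eq:ResolventNorm-1}) only for $Im(\omega)>A$, where the Laplace integral converges and $\psi_{T}(t,\cdot)$ genuinely approximates $e^{-i\omega t}R(\square_{g};\omega)\mathcal{F}$ for large $t$, and then write that ``the bound propagates to all of $\{Im(\omega)\ge 0\}$ by analytic continuation''. Analytic continuation extends the operator across the strip $0<Im(\omega)\le A$ once one knows there are no poles (and ILED does rule those out), but it does not propagate a uniform operator-norm bound. The paper avoids this by working directly at every fixed $\omega$ with $Im(\omega)\ge 0$: it applies (\ref{eq:IledWithLossPhysical}) to the forward solution $\psi$ with source $\theta(t)e^{-i\omega t}\mathcal{F}$, normalizes by $\int_{1}^{t_{f}}e^{2\,Im(\omega)t}\,dt$, and---since $e^{i\omega t}\psi(t,\cdot)$ is bounded but not a priori convergent---uses a pigeonhole argument to select a sequence $t_{n}\to\infty$ along which the translates $e^{i\omega t_{n}}\psi(\cdot+t_{n},\cdot)$ have a weak $H^{1}_{loc}$ limit. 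Commuting (\ref{eq:FunctionForExistenceResolvent}) with $T+i\omega$ and applying ILED again shows the limit is exactly of the form $e^{-i\omega t}\phi$, which is the missing step needed to identify $\psi_{T}$ with a mode solution when $Im(\omega)\le A$.
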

\begin{rem*}
In the case of a product Lorentzian metric $g=-dt^{2}+g_{\text{\textgreek{S}}}$,
our definition of the free resolvent operator $R(\square_{g};\text{\textgreek{w}})$
coincides with the more standard definition of the free resolvent
operator $\mathcal{R}(\text{\textgreek{D}}_{g_{\text{\textgreek{S}}}};\text{\textgreek{w}})$
associated to the time independent operator $\text{\textgreek{D}}_{g_{\text{\textgreek{S}}}}$,
appearing for instance in \cite{Melrose1995}.
\end{rem*}
For the proof of Proposition \ref{prop:DefinitionResolvent}, see
Section \ref{sec:ResolventDefinitionAppendix} of the Appendix.

We will now turn to the family of operators (\ref{eq:PotentialOperators}).
It can be readily established, through a simple application of Gronwall's
inequality, that there exists a $C_{0}>0$ depending on the geometry
of $(\mathcal{M},g)$ and the family $V_{\text{\textgreek{l}}}$,
such that, for any $\text{\textgreek{l}}\in[0,1]$ and any solution
$\text{\textgreek{y}}:\{t\ge0\}\rightarrow\mathbb{C}$ to 
\begin{equation}
\begin{cases}
(\square_{g}-V_{\text{\textgreek{l}}})\text{\textgreek{y}}=0\\
(\text{\textgreek{y}},T\text{\textgreek{y}})|_{t=0}=(\text{\textgreek{y}}_{0},\text{\textgreek{y}}_{1}),
\end{cases}\label{eq:WavePotentialInitialData}
\end{equation}
where the initial data $(\text{\textgreek{y}}_{0},\text{\textgreek{y}}_{1}):\text{\textgreek{S}}\rightarrow\mathbb{C}^{2}$
are smooth and compactly supported, we can estimate for any $\text{\textgreek{t}}\ge0$
\begin{equation}
\int_{\{t=\text{\textgreek{t}}\}}J_{\text{\textgreek{m}}}^{N}(\text{\textgreek{y}})n^{\text{\textgreek{m}}}\le e^{C_{0}\text{\textgreek{t}}}\int_{\{t=0\}}J_{\text{\textgreek{m}}}^{N}(\text{\textgreek{y}})n^{\text{\textgreek{m}}}.\label{eq:ExponentialBoundedness}
\end{equation}
The bound (\ref{eq:ExponentialBoundedness}), combined with an application
of the Fourier--Laplace transformation in the $t$ variable on (\ref{eq:WavePotentialInitialData}),
readily yields that, for any $\mathcal{F}\in L_{cp}^{2}(\text{\textgreek{S}})$
and any $\text{\textgreek{w}}\in\mathbb{C}$ with $Im(\text{\textgreek{w}})>C_{0}$,
there exists a unique $\text{\textgreek{f}}\in H_{loc}^{1}(\text{\textgreek{S}})$
solving 
\begin{equation}
\begin{cases}
(\square_{g}-V_{\text{\textgreek{l}}})(e^{-i\text{\textgreek{w}}t}\text{\textgreek{f}})=e^{-i\text{\textgreek{w}}t}\mathcal{F},\\
\int_{\mathcal{S}}J_{\text{\textgreek{m}}}^{N}(e^{-i\text{\textgreek{w}}t}\text{\textgreek{f}})n_{\mathcal{S}}^{\text{\textgreek{m}}}<+\infty.
\end{cases}\label{eq:DefinitionResolventPotential}
\end{equation}
Thus, we can introduce the following definition:
\begin{defn*}
For any $\text{\textgreek{w}}\in\mathbb{C}$ with $Im(\text{\textgreek{w}})>C_{0}$
and any $\text{\textgreek{l}}\in[0,1]$, we define the operator $R(\square_{g}-V_{\text{\textgreek{l}}};\text{\textgreek{w}}):L_{cp}^{2}(\text{\textgreek{S}})\rightarrow H_{loc}^{1}(\text{\textgreek{S}})$
so that, for any $\mathcal{F}\in L_{cp}^{2}(\text{\textgreek{S}})$,
$\text{\textgreek{f}}=R(\square_{g}-V_{\text{\textgreek{l}}};\text{\textgreek{w}})\mathcal{F}$
is the unique solution of (\ref{eq:DefinitionResolventPotential}).\end{defn*}
\begin{rem*}
It can be readily shown that $R(\square_{g}-V_{\text{\textgreek{l}}};\text{\textgreek{w}})$
is holomorphic in $\text{\textgreek{w}}$ for $Im(\text{\textgreek{w}})>C_{0}$. 
\end{rem*}
We will now show the following:
\begin{lem}
\label{lem:PropertiesResolventNotFree}Let $(\mathcal{M},g)$ be as
in proposition \ref{prop:DefinitionResolvent}. For any $\text{\textgreek{l}}\in[0,1]$,
the operator $R(\square_{g}-V_{\text{\textgreek{l}}};\text{\textgreek{w}}):L_{cp}^{2}\rightarrow H_{loc}^{1}$,
defined by the relation (\ref{eq:DefinitionResolventPotential}),
can be extended as a meromorphic operator-valued function of $\text{\textgreek{w}}$
on the whole of $\{\text{\textgreek{w}}\in\mathbb{C}:Im(\text{\textgreek{w}})>0\}$,
bounded up to $\text{\textgreek{w}}\in\mathbb{R}$ with respect to
the norm (\ref{eq:ResolventNorm-1}), except, possibly, at $\text{\textgreek{w}}=0$
and at all the values $\text{\textgreek{w}}\in\mathbb{R}\backslash\{0\}$
for which equation (\ref{eq:PotentialOperators}) admits an outgoing
mode solution with frequency parameter $\text{\textgreek{w}}$ (see
the definition at the end of Section \ref{sub:Modes}). Furthermore,
the poles of $R(\square_{g}-V_{\text{\textgreek{l}}};\text{\textgreek{w}})$
for $Im(\text{\textgreek{w}})>0$ depend continuously on $\text{\textgreek{l}}$,
except at the values of $\text{\textgreek{l}}$ where they reach the
real axis.\end{lem}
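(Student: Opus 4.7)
My plan is to reformulate the resolvent equation for $\square_{g}-V_{\text{\textgreek{l}}}$ via a standard perturbative identity and then apply the analytic Fredholm theorem. Writing $(\square_{g}-V_{\text{\textgreek{l}}})(e^{-i\text{\textgreek{w}}t}\text{\textgreek{f}})=e^{-i\text{\textgreek{w}}t}\mathcal{F}$ as $\square_{g}(e^{-i\text{\textgreek{w}}t}\text{\textgreek{f}})=e^{-i\text{\textgreek{w}}t}(V_{\text{\textgreek{l}}}\text{\textgreek{f}}+\mathcal{F})$ and applying the free resolvent from Proposition \ref{prop:DefinitionResolvent}, one obtains, for $Im(\text{\textgreek{w}})>C_{0}$, the formal identity
\[
R(\square_{g}-V_{\text{\textgreek{l}}};\text{\textgreek{w}})\;=\;\bigl(I-R(\square_{g};\text{\textgreek{w}})\,V_{\text{\textgreek{l}}}\bigr)^{-1}\circ R(\square_{g};\text{\textgreek{w}}).
\]
The aim is then to show that the operator $R(\square_{g};\text{\textgreek{w}})\,V_{\text{\textgreek{l}}}$, suitably localised, forms a compact holomorphic family in $\text{\textgreek{w}}$ and a continuous family in $\text{\textgreek{l}}$, to which the analytic Fredholm theorem applies.

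To this end, I would fix a cut-off $\text{\textgreek{q}}\in C_{c}^{\infty}(\text{\textgreek{S}})$ which equals $1$ on an open neighborhood of the compact set $K\subset\text{\textgreek{S}}\setminus\mathcal{H}$ containing the supports of $V_{\text{\textgreek{l}}}|_{\text{\textgreek{S}}}$ for every $\text{\textgreek{l}}\in[0,1]$, and consider $\mathcal{T}(\text{\textgreek{w}},\text{\textgreek{l}}):=\text{\textgreek{q}}\,R(\square_{g};\text{\textgreek{w}})\,V_{\text{\textgreek{l}}}$ as a bounded operator on $L^{2}(\mathrm{supp}(\text{\textgreek{q}}))$. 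Because $V_{\text{\textgreek{l}}}$ has compact support in $\mathrm{supp}(\text{\textgreek{q}})$, the bound (\ref{eq:ResolventNorm-1}) shows that the range of $\mathcal{T}(\text{\textgreek{w}},\text{\textgreek{l}})$ lies in $H^{1}(\mathrm{supp}(\text{\textgreek{q}}))$, so by Rellich--Kondrachov compactness $\mathcal{T}(\text{\textgreek{w}},\text{\textgreek{l}})$ is compact on $L^{2}(\mathrm{supp}(\text{\textgreek{q}}))$. Holomorphy in $\text{\textgreek{w}}$ on $\{Im(\text{\textgreek{w}})>0\}$, H\"older continuity up to $\mathbb{R}$ (outside $\text{\textgreek{w}}=0$), and continuous dependence on $\text{\textgreek{l}}$ all follow from the corresponding statements in Proposition \ref{prop:DefinitionResolvent} together with the $C^{\infty}$-continuous dependence of $V_{\text{\textgreek{l}}}$ on $\text{\textgreek{l}}$. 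Since $I-\mathcal{T}(\text{\textgreek{w}},\text{\textgreek{l}})$ is invertible for $Im(\text{\textgreek{w}})>C_{0}$ by the Fourier--Laplace construction based on (\ref{eq:ExponentialBoundedness}), the analytic Fredholm theorem yields that $(I-\mathcal{T}(\text{\textgreek{w}},\text{\textgreek{l}}))^{-1}$ is meromorphic in $\text{\textgreek{w}}$ on $\{Im(\text{\textgreek{w}})>0\}$, which in turn yields the required meromorphic extension of $R(\square_{g}-V_{\text{\textgreek{l}}};\text{\textgreek{w}})$ together with its boundedness (with respect to the norm (\ref{eq:ResolventNorm-1})) away from a locally finite set of poles.

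The characterisation of singularities on $\mathbb{R}\setminus\{0\}$ would then proceed by the Fredholm alternative: non-invertibility of $I-\mathcal{T}(\text{\textgreek{w}}_{0},\text{\textgreek{l}})$ at $\text{\textgreek{w}}_{0}\in\mathbb{R}\setminus\{0\}$ is equivalent to the existence of a non-zero $\text{\textgreek{f}}\in L^{2}(\mathrm{supp}(\text{\textgreek{q}}))$ with $\text{\textgreek{f}}=\text{\textgreek{q}}\,R(\square_{g};\text{\textgreek{w}}_{0})(V_{\text{\textgreek{l}}}\text{\textgreek{f}})$; setting $\tilde{\text{\textgreek{f}}}:=R(\square_{g};\text{\textgreek{w}}_{0})(V_{\text{\textgreek{l}}}\text{\textgreek{f}})$, the function $e^{-i\text{\textgreek{w}}_{0}t}\tilde{\text{\textgreek{f}}}$ solves $(\square_{g}-V_{\text{\textgreek{l}}})(\cdot)=0$ and, by the defining conditions (\ref{eq:DefinitionResolventFree}) for the free resolvent, has finite $N$-energy flux through $\mathcal{S}$, hence is an outgoing mode solution with frequency parameter $\text{\textgreek{w}}_{0}$ in the sense of Section \ref{sub:Modes}; conversely, restricting an outgoing mode to $\mathrm{supp}(\text{\textgreek{q}})$ produces such an $\text{\textgreek{f}}$. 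Continuity of the poles of $R(\square_{g}-V_{\text{\textgreek{l}}};\text{\textgreek{w}})$ in $\text{\textgreek{l}}$ on $\{Im(\text{\textgreek{w}})>0\}$ follows from standard perturbation theory for holomorphic Fredholm families applied to $\mathcal{T}(\text{\textgreek{w}},\text{\textgreek{l}})$, which depends continuously on $\text{\textgreek{l}}$ in operator norm. The main obstacle lies exactly at the real axis: poles can migrate towards $\mathbb{R}$ as $\text{\textgreek{l}}$ varies, and the mere H\"older (rather than holomorphic) regularity of $\mathcal{T}(\text{\textgreek{w}},\text{\textgreek{l}})$ at $Im(\text{\textgreek{w}})=0$ precludes a continuous tracking statement through the real axis---this is precisely the exception recorded in the statement of the lemma, and it is this loss of continuity upon reaching the real axis that Section \ref{sub:Failure-of-the} will exploit, via Theorem \hyperref[Theorem 1 detailed]{1}, to produce the instability scenario for (\ref{eq:FamilyKerr}).
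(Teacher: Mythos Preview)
Your proposal is correct and follows essentially the same route as the paper: the perturbative identity $R(\square_{g}-V_{\text{\textgreek{l}}};\text{\textgreek{w}})=(I-R(\square_{g};\text{\textgreek{w}})V_{\text{\textgreek{l}}})^{-1}R(\square_{g};\text{\textgreek{w}})$, compactness of $R(\square_{g};\text{\textgreek{w}})V_{\text{\textgreek{l}}}$ via Rellich, the analytic Fredholm theorem for the meromorphic extension, the Fredholm alternative for the characterisation of boundary singularities, and standard perturbation theory (the paper cites Kato) for the continuous dependence of poles on $\text{\textgreek{l}}$. The only cosmetic difference is that you localise with a cutoff $\text{\textgreek{q}}$ and work on $L^{2}(\mathrm{supp}(\text{\textgreek{q}}))$, whereas the paper works directly with $R(\square_{g};\text{\textgreek{w}})\circ V_{\text{\textgreek{l}}}$ on the weighted space $H_{\text{\textgreek{w}}}^{1,\frac{1}{2}+\text{\textgreek{h}}}(\text{\textgreek{S}})$; both choices are standard and equivalent here since $V_{\text{\textgreek{l}}}$ has fixed compact support.
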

\begin{proof}
As a consequence of Rellich's embedding theorem, in view also of the
boundedness of $R(\square_{g};\text{\textgreek{w}})$ with respect
to the norm (\ref{eq:ResolventNorm-1}) and the compact support in
space of $V_{\text{\textgreek{l}}}$, the operator $R(\square_{g};\text{\textgreek{w}})\circ V_{\text{\textgreek{l}}}:H_{\text{\textgreek{w}}}^{1,\frac{1}{2}+\text{\textgreek{h}}}(\text{\textgreek{S}})\rightarrow H_{\text{\textgreek{w}}}^{1,\frac{1}{2}+\text{\textgreek{h}}}(\text{\textgreek{S}})$,
where 
\begin{equation}
||\mathcal{G}||_{H_{\text{\textgreek{w}}}^{1,\frac{1}{2}+\text{\textgreek{h}}}(\text{\textgreek{S}})}\doteq||(1+r)^{-\frac{1}{2}-\text{\textgreek{h}}}\nabla_{g_{\text{\textgreek{S}}}}\mathcal{G}||_{L^{2}(\text{\textgreek{S}})}+||(1+r)^{-\frac{1}{2}-\text{\textgreek{h}}}(|\text{\textgreek{w}}|+(1+r)^{-1})\mathcal{G}||_{L^{2}(\text{\textgreek{S}})},
\end{equation}
is bounded and compact when $Im(\text{\textgreek{w}})\ge0$. Furthermore,
it also follows (in view of Lemma \ref{prop:DefinitionResolvent})
that $R(\square_{g};\text{\textgreek{w}})\circ V_{\text{\textgreek{l}}}$
is holomorphic in $\text{\textgreek{w}}$ when $Im(\text{\textgreek{w}})>0$. 

Therefore, the Fredholm alternative implies that the operator $\big(1-R(\square_{g};\text{\textgreek{w}})\circ V_{\text{\textgreek{l}}}\big)^{-1}$is
a meromorphic function of $\text{\textgreek{w}}$ for $Im(\text{\textgreek{w}})>0$,
bounded up to $\text{\textgreek{w}}\in\mathbb{R}$ with respect to
the operator norm $||\cdot||_{H_{\text{\textgreek{w}}}^{1,\frac{1}{2}+\text{\textgreek{h}}}(\text{\textgreek{S}})\rightarrow H_{\text{\textgreek{w}}}^{1,\frac{1}{2}+\text{\textgreek{h}}}(\text{\textgreek{S}})}$,
except at those frequencies $\text{\textgreek{w}}\in\mathbb{R}$ for
which 
\begin{equation}
\begin{cases}
\square_{g}(e^{-i\text{\textgreek{w}}t}\text{\textgreek{f}})=0,\\
\int_{\mathcal{S}}J_{\text{\textgreek{m}}}^{N}(e^{-i\text{\textgreek{w}}t}\text{\textgreek{f}})n_{\mathcal{S}}^{\text{\textgreek{m}}}<+\infty\mbox{ and }\lim_{r\rightarrow+\infty}(e^{-i\text{\textgreek{w}}t}\text{\textgreek{f}})|_{\mathcal{S}}=0
\end{cases}\label{eq:DefinitionResolventFree-2}
\end{equation}
admits a non-trivial solution $\text{\textgreek{f}}\in H_{loc}^{1}(\text{\textgreek{S}})$.
Furthermore, in view of the continuous dependence of $V_{\text{\textgreek{l}}}$,
the poles of $\big(1-R(\square_{g};\text{\textgreek{w}})\circ V_{\text{\textgreek{l}}}\big)^{-1}$
for $Im(\text{\textgreek{w}})>0$ depend continuously on $\text{\textgreek{l}}$
(except, of course, at the values of $\text{\textgreek{l}}$ where
they reach the real axis); see \cite{Kato2013}. 

In the region $Im(\text{\textgreek{w}})>C_{0}$ where $R(\square_{g}-V_{\text{\textgreek{l}}};\text{\textgreek{w}})$
was defined, the following relation holds:

\begin{equation}
R(\square_{g}-V_{\text{\textgreek{l}}};\text{\textgreek{w}})=\big(Id-R(\square_{g};\text{\textgreek{w}})\circ V_{\text{\textgreek{l}}}\big)^{-1}\circ R(\square_{g};\text{\textgreek{w}}).\label{eq:FormalEquality}
\end{equation}
Thus, using the relation (\ref{eq:FormalEquality}), the operator
$R(\square_{g}-V_{\text{\textgreek{l}}};\text{\textgreek{w}})$ can
be extended as a meromorphic operator-valued function of $\text{\textgreek{w}}$
on $\{\text{\textgreek{w}}\in\mathbb{C}:\, Im(\text{\textgreek{w}})>0\}$
with the required properties. Furthermore, on $\{\text{\textgreek{w}}\in\mathbb{C}:\, Im(\text{\textgreek{w}})>0\}\backslash\mathcal{P}_{\text{\textgreek{l}}}$,
where $\mathcal{P}_{\text{\textgreek{l}}}\subset\{\text{\textgreek{w}}\in\mathbb{C}:\, Im(\text{\textgreek{w}})>0\}$
are the poles of $R(\square_{g}-V_{\text{\textgreek{l}}};\text{\textgreek{w}})$,
the relation (\ref{eq:FormalEquality}) implies that, for any $\mathcal{F}\in L_{cp}^{2}(\text{\textgreek{S}})$,
the function $\text{\textgreek{f}}=R(\square_{g}-V_{\text{\textgreek{l}}};\text{\textgreek{w}})\mathcal{F}$
satisfies (\ref{eq:DefinitionResolventPotential}). Note that the
finiteness of $\int_{\mathcal{S}}J_{\text{\textgreek{m}}}^{N}(e^{-i\text{\textgreek{w}}t}\text{\textgreek{f}})n_{\mathcal{S}}^{\text{\textgreek{m}}}$
follows from the relation 
\begin{equation}
\big(Id-R(\square_{g};\text{\textgreek{w}})\circ V_{\text{\textgreek{l}}}\big)^{-1}\circ R(\square_{g};\text{\textgreek{w}})=R(\square_{g};\text{\textgreek{w}})\circ\Big(Id+V_{\text{\textgreek{l}}}\circ\big(1-R(\square_{g};\text{\textgreek{w}})\circ V_{\text{\textgreek{l}}}\big)^{-1}\circ R(\square_{g};\text{\textgreek{w}})\Big)
\end{equation}
and the fact that, for any $\mathcal{G}\in L_{cp}^{2}(\text{\textgreek{S}})$,
we have $\int_{\mathcal{S}}J_{\text{\textgreek{m}}}^{N}(e^{-i\text{\textgreek{w}}t}R(\square_{g};\text{\textgreek{w}})\mathcal{G})n_{\mathcal{S}}^{\text{\textgreek{m}}}<+\infty$. 
\end{proof}

\subsection{\label{sub:A-zero-frequency-continuity}A zero-frequency continuity
criterion for decay for the family (\ref{eq:PotentialOperators})
in the non-superradiant case}

Having introduced the resolvent family $R(\square_{g}-V_{\text{\textgreek{l}}};\text{\textgreek{w}})$
and stated its basic properties stemming from the integrated local
energy decay estimate (\ref{eq:IledWithLossPhysical}), we can now
proceed to examine sufficient zero-frequency conditions on the family
$V_{\text{\textgreek{l}}}$ (i.\,e.~conditions related to the boundedness
of $R(\square_{g}-V_{\text{\textgreek{l}}};\text{\textgreek{w}})$
near $\text{\textgreek{w}}=0$) so that (\ref{eq:IledWithLossPhysical})
also holds for equation (\ref{eq:PotentialOnceMore}). We will first
consider the case when $(\mathcal{M},g)$ is the product Lorentzian
manifold $(\mathbb{R}\times\text{\textgreek{S}},-dt^{2}+g_{\text{\textgreek{S}}})$,
where $(\text{\textgreek{S}},g_{\text{\textgreek{S}}})$ is a complete
asymptotically flat Riemannian manifold, with $-dt^{2}+g_{\text{\textgreek{S}}}$
having the asymptotics described by Assumption 1 of \cite{Moschidisb}.
In that case, the following result can be readily established:
\begin{prop}
\label{prop:ILEDProductCase}Let $(\mathcal{M},g)=(\mathbb{R}\times\text{\textgreek{S}},-dt^{2}+g_{\text{\textgreek{S}}})$,
with $-dt^{2}+g_{\text{\textgreek{S}}}$ having the asymptotics described
by Assumption 1 of \cite{Moschidisb}, and let $V_{\text{\textgreek{l}}}:\mathcal{M}\rightarrow\mathbb{R}$,
$\text{\textgreek{l}}\in[0,1]$, be as described above. Assume that
there exists some (small) $\text{\textgreek{e}}>0$, so that the near-zero
frequency bound 
\begin{equation}
\sup_{|\text{\textgreek{w}}|\le\text{\textgreek{e}}}||R(\square_{g}-V_{\text{\textgreek{l}}};\text{\textgreek{w}})||_{\mathcal{L},\text{\textgreek{h}}}<+\infty\label{eq:NearZeroFrequencyBound}
\end{equation}
 holds for all $\text{\textgreek{l}}\in[0,1]$. Then, for any $\text{\textgreek{l}}\in[0,1]$
and any smooth $F:\mathcal{M}\rightarrow\mathbb{R}$ which has compact
support when restricted on the $\{t=const\}$ hypersurfaces, the solution
$\text{\textgreek{y}}$ to 
\begin{equation}
\begin{cases}
\square_{g}\text{\textgreek{y}}-V_{\text{\textgreek{l}}}\text{\textgreek{y}}=F,\\
(\text{\textgreek{y}},T\text{\textgreek{y}})|_{t=0}=(0,0),
\end{cases}\label{eq:PotentialOnceMore}
\end{equation}
 satisfies the integrated local energy decay estimate (\ref{eq:IledWithLossPhysical}).\end{prop}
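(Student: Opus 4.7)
The plan is to take the Fourier transform in $t$ and convert the physical-space estimate \eqref{eq:IledWithLossPhysical} into a uniform $\omega$-estimate on the resolvent $R(\square_g - V_\lambda; \omega)$ up to $\mathrm{Im}(\omega) = 0$. By finite speed of propagation and a density argument, I would reduce to the case where $F$ has compact support in $t$; since $V_\lambda$ is a bounded time-independent perturbation, the solution $\psi$ to \eqref{eq:PotentialOnceMore} has at most exponential growth in $t$ by \eqref{eq:ExponentialBoundedness}, so for $a > C_0$ the weighted transform $\widetilde{\psi}(\omega,\cdot) = \int_0^\infty e^{i(\omega+ia)t}\psi(t,\cdot)\,dt$ is well-defined for all real $\omega$ and satisfies $\widetilde{\psi}(\omega,\cdot) = R(\square_g - V_\lambda; \omega+ia)\widetilde{F}(\omega,\cdot)$. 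A Plancherel identity then yields a weighted $L^2_t$ estimate for $\psi$ as a contour integral of the resolvent along $\{\mathrm{Im}(\omega) = a\}$; to remove the weight $e^{-at}$, I would shift this contour down to $\{\mathrm{Im}(\omega)=0\}$, which requires the resolvent to be holomorphic and bounded (in the norm \eqref{eq:ResolventNorm-1}) on the entire closed upper half-plane, uniformly in $\lambda \in [0,1]$.

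The heart of the proof is therefore the following claim: for all $\lambda \in [0,1]$, the resolvent $R(\square_g - V_\lambda; \omega)$ extends as a bounded operator family on $\{\mathrm{Im}(\omega) \ge 0\}$. I would partition the closed upper half-plane into four regions. First, a punctured neighborhood of $\omega = 0$ is controlled by the hypothesis \eqref{eq:NearZeroFrequencyBound}. Second, the high-frequency regime $|\omega| \gg 1$ is handled perturbatively: at $\lambda = 0$ one has high-frequency resolvent bounds from the starting integrated local energy decay for $\square_g$ (recall $g$ is a product metric, so there is no trapping to worry about), and since $V_\lambda$ is a zeroth-order term of bounded magnitude, a standard Neumann series argument produces a uniform bound for all $\lambda \in [0,1]$ once $|\omega|$ is sufficiently large. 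Third, on any compact subset of $\mathbb{R} \setminus \{0\}$, Lemma \ref{lem:PropertiesResolventNotFree} reduces boundedness to the absence of real poles; such a real pole would correspond to an outgoing real mode solution or an $L^2$ eigenfunction at a frequency $\omega \in \mathbb{R}\setminus\{0\}$, both of which are excluded by Statements 1.a and 1.b of Section \ref{sub:NonSuperradiant}, whose hypotheses are met because $T$ is globally timelike and $(\mathcal{M},g)$ has no event horizon. Fourth, the open upper half-plane $\{\mathrm{Im}(\omega)>0\}$ is handled by continuity in $\lambda$.

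The most delicate step is this fourth one, and it is where the non-superradiant structure is essential. At $\lambda = 0$, the operator $R(\square_g; \omega)$ is holomorphic on $\{\mathrm{Im}(\omega) > 0\}$ by the hypothesis of integrated local energy decay at $\lambda = 0$. Setting
\[
\lambda_* = \sup\bigl\{\lambda \in [0,1] : R(\square_g - V_{\lambda'};\cdot) \text{ has no pole in } \{\mathrm{Im}(\omega)>0\} \text{ for every } \lambda' \le \lambda\bigr\},
\]
the continuous dependence of poles on $\lambda$ provided by Lemma \ref{lem:PropertiesResolventNotFree} implies that if $\lambda_* < 1$, then a pole must enter $\{\mathrm{Im}(\omega) > 0\}$ by some mechanism as $\lambda \uparrow \lambda_*$; it either drifts in from $\mathrm{Im}(\omega) = +\infty$ (excluded by the high-frequency bound of the second region), or crosses the real axis. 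A crossing at $\omega = 0$ contradicts \eqref{eq:NearZeroFrequencyBound}, while a crossing at $\omega \in \mathbb{R}\setminus\{0\}$ contradicts the uniform-in-$\lambda$ operator bound on compact subsets of $\mathbb{R}\setminus\{0\}$ obtained in the third region from Statements 1.a and 1.b. Hence $\lambda_* = 1$, and the uniform resolvent bound on $\{\mathrm{Im}(\omega) \ge 0\}$ is established. The contour shift sketched in the first paragraph then delivers \eqref{eq:IledWithLossPhysical}, with the loss of $k$ derivatives arising precisely from the factor $(1 + |\omega|^k)$ in the norm \eqref{eq:ResolventNorm-1}, which is absorbed by replacing $F$ with its $T$-derivatives on the right-hand side via Plancherel.
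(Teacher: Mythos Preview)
Your overall strategy---establish a uniform resolvent bound on the closed upper half-plane by a continuity-in-$\lambda$ argument, then convert this to the physical-space estimate via Fourier/Plancherel---matches the paper's in spirit but differs in two places, and in one of them your argument has a genuine gap.

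The gap is in your second region (high frequency). You claim a Neumann series handles $|\omega|\gg 1$ perturbatively, relying on the parenthetical ``$g$ is a product metric, so there is no trapping to worry about.'' But the hypothesis only imposes asymptotic flatness on $g_\Sigma$; closed geodesics in $(\Sigma,g_\Sigma)$ are not excluded, and the integer $k$ in \eqref{eq:IledWithLossPhysical} may well be $\ge 1$. In that case the free resolvent obeys only $\|R(\square_g;\omega)\|_{\mathcal L,\eta}\lesssim 1$, i.e.\ the unweighted operator norm is of order $(1+|\omega|^k)$, and the composition $R(\square_g;\omega)\circ V_\lambda$ acting on $H^{1,\frac12+\eta}_\omega(\Sigma)$ has norm of order $(1+|\omega|^k)/|\omega|$, which does not become small. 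The paper avoids this by exploiting the product structure directly: since $\Delta_{g_\Sigma}-V_\lambda$ is essentially self-adjoint, any pole of $R(\square_g-V_\lambda;\cdot)$ in the open upper half-plane must lie on the imaginary axis, and the a priori exponential bound \eqref{eq:ExponentialBoundedness} then confines such poles to the compact segment $\{ia:0<a\le C_0\}$. Combined with the strip bound near the real axis and continuity in $\lambda$, this rules them out entirely.

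The second difference is in the passage from resolvent bound to \eqref{eq:IledWithLossPhysical}. Your contour shift from $\{\mathrm{Im}(\omega)=a>C_0\}$ down to the real axis presupposes both that $e^{-at}\psi\in L^2_t$ for each $a>0$ along the way (so that Plancherel applies) and that the resolvent bound is uniform on the whole strip $\{0<\mathrm{Im}(\omega)\le C_0\}$; neither is immediate from what you have established, and the first is close to what you are trying to prove. The paper sidesteps this circularity: from the real-axis resolvent bound it extracts an estimate valid only for $L^2$-in-$t$ test functions, applies it to the physical-space cutoff $\theta(\delta\bar t)\psi$ (with $\bar t$ the hyperboloidal time), controls the commutator errors using positivity of the $T$-energy flux---itself a consequence of the absence of negative eigenvalues for $-\Delta_{g_\Sigma}+V_\lambda$, i.e.\ of ``no poles''---together with the $r^p$-weighted estimates of \cite{Moschidisc}, and lets $\delta\to 0$. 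Your contour-shift route can be repaired, but it too requires first deducing from the absence of poles that the $T$-energy is coercive and hence $\psi$ is uniformly bounded in energy norm; this is essentially the same input the paper makes explicit.

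A minor point: for your third region the paper invokes the quantitative limiting absorption bounds of \cite{Rodnianski2011} rather than the softer Statements 1.a and 1.b. Your route via Lemma~\ref{lem:PropertiesResolventNotFree} together with 1.a/1.b does give absence of real poles away from zero and hence boundedness on compacta, which suffices for the continuity argument.
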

\begin{rem*}
The proof of Proposition \ref{prop:ILEDProductCase} consists of showing
that the spectrum of the operator $\text{\textgreek{D}}_{g_{\text{\textgreek{S}}}}-V_{\text{\textgreek{l}}}$
does not obtain a discrete component as $\text{\textgreek{l}}$ varies
in $[0,1]$. Note that, in case the potantials $V_{\text{\textgreek{l}}}$
are assumed to be small, Proposition \ref{prop:ILEDProductCase} holds
without assuming (\ref{eq:NearZeroFrequencyBound}), even under substantially
weaker assumptions on the regularity and decay properties of the potentials
$V_{\text{\textgreek{l}}}$, see e.\,g.~\cite{Kato1966,Rodnianski2004}). 
\end{rem*}
For the proof of Proposition \ref{prop:ILEDProductCase}, see Section
\ref{sec:SpectralConsequencesAppendix} of the Appendix. Let us remark
that Proposition \ref{prop:ILEDProductCase} also applies on stationary
spacetimes $(\mathcal{M},g)$, possibly bounded by an event horizon
$\mathcal{H},$ where the stationary vector field $T$ is everywhere
timelike on $\mathcal{M}\backslash\mathcal{H}$ and $\mathcal{H}$
has positive surface gravity (so that the red-shift estimates of Section
7 of \cite{DafRod6} apply).

\subsection{\label{sub:Failure-of-the}Failure of the zero-frequency continuity
criterion in the presence of superradiance}

Let us now examine the case of a stationary spacetime $(\mathcal{M},g)$
with a non-empty ergoregion, i.\,e.~$\{g(T,T)>0\}\neq\emptyset$.
There is no point in considering the case when either $\mathcal{H}^{+}=\emptyset$,
or $\mathcal{H}^{+}\neq\emptyset$ but $\mathcal{H}^{+}\cap\{g(T,T)>0\}=\emptyset$,
since, according to \cite{Friedman1978} (see also our forthcoming
\cite{Moschidis}), the integrated local energy decay estimate (\ref{eq:IledWithLossPhysical})
does not hold in this case. Therefore, we will only consider spacetimes
$(\mathcal{M},g)$ for which $\mathcal{H}^{+}\neq\emptyset$ and $\{g(T,T)>0\}\cap\mathcal{H}^{+}\neq\emptyset$. 

On such a spacetime $(\mathcal{M},g)$, we will show that the zero-frequency
condition of Proposition \ref{prop:ILEDProductCase} is no longer
sufficient to guarantee an integrated local energy decay estimate
of the form (\ref{eq:IledWithLossPhysical}) for the family (\ref{eq:PotentialOnceMore}),
$\text{\textgreek{l}}\in[0,1]$. In particular, as a corollary of
Theorem \hyperref[Theorem 1 detailed]{1}, we will establish the following: 
\begin{cor}
\label{cor:FailureCriterion}On Kerr exterior spacetime $(\mathcal{M}_{M,a},g_{M,a})$
with $0<|a|<M$, for any fixed $(\text{\textgreek{w}}_{R},m,l)\in(\mathbb{R}\backslash\{0\})\times\mathbb{Z}\times\mathbb{Z}_{\ge|m|}$
in the superradiant regime (\ref{eq:SuperradiantRegimeKerr}) and
$r_{0}\gg1$ as in the statement of Theorem \hyperref[Theorem 1 detailed]{1},
defining $V:[r_{+},+\infty)\rightarrow[0,+\infty)$ as the potential
function of Theorem \hyperref[Theorem 1 detailed]{1} for $\text{\textgreek{w}}_{I}=0$,
the family of equations 
\begin{equation}
\square_{g_{M,a}}\text{\textgreek{y}}-\text{\textgreek{l}}\frac{(r^{2}+a^{2})^{2}}{(r-r_{+})(r-r_{-})\text{\textgreek{r}}^{2}}V(r)\text{\textgreek{y}}=0,\label{eq:Family Kerr}
\end{equation}
$\text{\textgreek{l}}\in[0,1]$, has the following properites:

\begin{enumerate}

\item There exists a $\text{\textgreek{l}}_{0}\in(0,1]$ such that
(\ref{eq:Family Kerr}) satisfies an integrated local energy decay
estimate for all $\text{\textgreek{l}}\in[0,\text{\textgreek{l}}_{0})$.

\item There exists an $\text{\textgreek{e}}>0$ so that for all $\text{\textgreek{l}}\in[0,1]$
we can bound: 
\begin{equation}
\sup_{|\text{\textgreek{w}}|\le\text{\textgreek{e}}}||R(\square_{g_{M,a}}-\text{\textgreek{l}}\frac{(r^{2}+a^{2})^{2}}{(r-r_{+})(r-r_{-})\text{\textgreek{r}}^{2}}V(r);\text{\textgreek{w}})||_{\mathcal{L},\text{\textgreek{h}}}<+\infty.\label{eq:BoundResolventKerr}
\end{equation}

\item There exists a $\text{\textgreek{l}}_{1}\in(0,1]$ such that
(\ref{eq:Family Kerr}) does not admit an outgoing real or exponentially
growing mode solution for $\text{\textgreek{l}}<\text{\textgreek{l}}_{1}$,
but admits a real mode solution for $\text{\textgreek{l}}=\text{\textgreek{l}}_{1}$.
As a consequence, no integrated local energy decay estimate of the
form (\ref{eq:IledWithLossPhysical}) holds for (\ref{eq:Family Kerr})
when $\text{\textgreek{l}}=\text{\textgreek{l}}_{1}$.

\end{enumerate}

Therefore, the analogue of Proposition \ref{prop:ILEDProductCase}
does not hold on $(\mathcal{M}_{M,a},g_{M,a})$, $a\neq0$.\end{cor}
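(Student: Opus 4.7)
The plan is to treat the three assertions through a single continuity-in-$\lambda$ argument for the resolvents $R_\lambda(\omega) \doteq R(\square_{g_{M,a}} - \lambda \mathcal{V}; \omega)$, where $\mathcal{V}(r) \doteq \frac{(r^{2}+a^{2})^{2}}{(r-r_{+})(r-r_{-})\rho^{2}}V(r)$ is the potential appearing in (\ref{eq:Family Kerr}), combining Lemma \ref{lem:PropertiesResolventNotFree} with the robustness of \cite{Shlap,DafRodSchlap} at $\lambda=0$ and the real mode supplied by Theorem \hyperref[Theorem 1 detailed]{1} at $\lambda=1$.

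For assertion (1), define $\lambda_0 \doteq \sup\{\lambda \in [0,1] : \text{the family }\{R_{\lambda'}(\omega)\}_{\lambda'\in[0,\lambda]}\text{ satisfies (\ref{eq:IledWithLossPhysical})}\}$. That $\lambda_0>0$ follows from the robustness of the frequency-localised estimates of \cite{Shlap,DafRodSchlap} under small, stationary, compactly supported potential perturbations: inserting the extra term $-\lambda\mathcal{V}u_{\omega m \ell}$ into the radial equation produces, for $\lambda\ll 1$, an error that can be absorbed into the positive bulk terms of the multipliers used in \cite{DafRodSchlap}. This is precisely the ``small short-range potential perturbation'' regime pointed out in the introduction as being within the scope of those methods.

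For assertion (2), the key point is that $\mathcal{V}\ge 0$ is supported in $\{r\ge r_0\}$, far from both the horizon and the ergoregion. The near-zero bound at $\lambda=0$ follows from the low-frequency Carleman-type analysis of \cite{Moschidisb,Rodnianski2011} adapted to the Kerr background as in the proof of Proposition \ref{Prop:ILEDNewSpacetime}. For general $\lambda\in[0,1]$ one argues, via the Fredholm scheme of Lemma \ref{lem:PropertiesResolventNotFree}, that neither a zero eigenvalue nor a zero resonance can occur: the sign $\mathcal{V}\ge 0$ together with the fact that $V$ vanishes in the asymptotic region $\{r\gg 1\}$ (where a standard unique continuation / Rellich-type argument applies) prevents any non-trivial solution of $(\square_{g_{M,a}}-\lambda\mathcal{V})\psi=0$ with $T\psi=0$ and the required decay. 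Combined with the H\"older continuity of $R_\lambda(\omega)$ in $\omega$ from Lemma \ref{lem:PropertiesResolventNotFree}, this yields a uniform-in-$\lambda$ bound on $\|R_\lambda(\omega)\|_{\mathcal{L},\eta}$ over a small closed disk $\{|\omega|\le \epsilon\}$.

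For assertion (3), set $\lambda_1 \doteq \inf\{\lambda \in [0,1] : (\ref{eq:Family Kerr})\text{ admits an outgoing real or exponentially growing mode}\}$. Theorem \hyperref[Theorem 1 detailed]{1} (with $\omega_I=0$) gives $\lambda_1\le 1$, while (1) together with the fact that a real or exponentially growing mode precludes any estimate of the form (\ref{eq:IledWithLossPhysical}) gives $\lambda_1\ge \lambda_0 > 0$. To identify the first mode appearing as $\lambda$ increases past $\lambda_1$ as a real one, invoke the continuity of the poles of $R_\lambda(\omega)$ in $\{\mathrm{Im}(\omega)>0\}$ from Lemma \ref{lem:PropertiesResolventNotFree}: since no pole lies in the open upper half-plane for $\lambda<\lambda_1$, the first pole that appears at $\lambda=\lambda_1$ must enter through the real axis, and assertion (2) forces its location to be some $\omega_0\in\mathbb{R}\setminus\{0\}$, producing the sought outgoing real mode. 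The final sentence of (3) is then immediate: truncating $e^{-i\omega_0 t}\psi_{\omega_0}$ smoothly in $t$ produces finite-energy solutions whose local spacetime $L^2$-norms grow at least linearly in $t_f$, incompatible with (\ref{eq:IledWithLossPhysical}).

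The main obstacle will be (2), the uniform-in-$\lambda$ bound on $R_\lambda(\omega)$ near $\omega=0$. Excluding a zero eigenvalue at fixed $\lambda$ is straightforward given $V\ge 0$ and the unique continuation structure, but obtaining the uniform H\"older continuity of $R_\lambda(\omega)$ at $\omega=0$ across the entire interval $\lambda\in[0,1]$ requires extending the low-frequency analysis of \cite{Moschidisb,DafRodSchlap} to accommodate the potential term and to handle the $\omega$-dependence of the angular eigenvalues $\Lambda_{\omega m \ell}$ uniformly; this is the one step where some genuinely new work, beyond citing existing results and Theorem \hyperref[Theorem 1 detailed]{1}, is required.
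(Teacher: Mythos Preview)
Your overall strategy matches the paper's: use the robustness of \cite{DafRodSchlap} for small $\lambda$, a near-zero resolvent bound uniform in $\lambda$, and a continuity-in-$\lambda$ argument for the poles to locate the first mode on the real axis. Two points of divergence are worth noting.

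First, you have misidentified the main obstacle. You flag assertion (2) as ``the one step where some genuinely new work \ldots is required'' and propose a Fredholm/unique-continuation scheme to exclude zero eigenvalues and resonances. The paper's proof is far simpler here: the near-zero frequency estimates of Section~8.7 of \cite{DafRodSchlap} apply to \eqref{eq:Family Kerr} \emph{verbatim} for all $\lambda\in[0,1]$, precisely because of the sign condition $V\ge 0$. No new analysis is needed; the non-negativity of the potential preserves the positivity structure on which those low-frequency estimates rely. Your proposed route through unique continuation and H\"older continuity is not wrong, but it is unnecessary machinery for what the paper treats as a direct citation.

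Second, in assertion (3) your continuity argument for localising the first mode on the real axis is incomplete as stated: you need to know that the poles cannot escape to infinity as $\lambda$ varies. The paper closes this gap by invoking the high-frequency estimates of \cite{DafRodSchlap}, which give that $Id - R(\square_{g_{M,a}};\omega)\circ V_\lambda$ is invertible \emph{uniformly in $\lambda$} when $|\omega|\gg 1$; this confines the poles to a compact region and makes the limiting argument work. Finally, for the ``no ILED at $\lambda_1$'' conclusion, the paper argues by contradiction via the uniqueness part of Proposition~\ref{prop:DefinitionResolvent} (if ILED held, the real-mode solution of \eqref{eq:DefinitionResolventFree-2-1} would be forced to vanish), rather than your time-truncation argument; both work, but the paper's route is cleaner given the machinery already in place.
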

\begin{proof}
The robustness of the estimates of \cite{DafRodSchlap} guarantee
that (\ref{eq:Family Kerr}) satisfies an integrated local energy
decay estimate when $\text{\textgreek{l}}\in[0,\text{\textgreek{l}}_{0}]$
with $\text{\textgreek{l}}_{0}\ll1$, while the near-zero frequency
estimates of Section $8.7$ of \cite{DafRodSchlap} apply to (\ref{eq:Family Kerr})
for all $\text{\textgreek{l}}\in[0,1]$ (without any change in the
proof), in view of the sign condition $V\ge0$, yielding (\ref{eq:BoundResolventKerr}).
It thus remains to show the existence of a value $\text{\textgreek{l}}_{1}\in(0,1]$
such that (\ref{eq:Family Kerr}) does not admit an outgoing real
or exponentially growing mode solution for $\text{\textgreek{l}}<\text{\textgreek{l}}_{1}$,
but admits a real mode solution for $\text{\textgreek{l}}=\text{\textgreek{l}}_{1}$.
We will also show that this implies that no integrated local energy
decay estimate holds for (\ref{eq:Family Kerr}) when $\text{\textgreek{l}}=\text{\textgreek{l}}_{1}$.

Let us set for any $\text{\textgreek{l}}\in[0,1]$: 
\begin{equation}
V_{\text{\textgreek{l}}}\doteq\text{\textgreek{l}}\frac{(r^{2}+a^{2})^{2}}{(r-r_{+})(r-r_{-})\text{\textgreek{r}}^{2}}V(r).
\end{equation}
Let $\text{\textgreek{l}}_{1}$ be defined as the infimum of all $\text{\textgreek{l}}\in(0,1]$
for which equation (\ref{eq:Family Kerr}) admits an outgoing real
or exponentially growing mode solution. In view of our aforementioned
remarks, it is necessary that $\text{\textgreek{l}}_{1}>0$. The estimates
of \cite{DafRodSchlap} imply that, when $Re(\text{\textgreek{w}})\gg1$,
the operators $Id-R(\square_{g_{M,a}};\text{\textgreek{w}})\circ V_{\text{\textgreek{l}}}$
are invertible on the space $H_{\text{\textgreek{w}}}^{1,\frac{1}{2}+\text{\textgreek{h}}}(\text{\textgreek{S}})$
uniformly in $\text{\textgreek{l}}$ (see the proof of Lemma \ref{lem:PropertiesResolventNotFree}
for the relevant notation). Thus, the identity (\ref{eq:FormalEquality})
(see also the remarks in the proof of Lemma \ref{lem:PropertiesResolventNotFree}
on the compactness of $R(\square_{g};\text{\textgreek{w}})\circ V_{\text{\textgreek{l}}}$
and dependence of the poles of $R(\square_{g}-V_{\text{\textgreek{l}}};\text{\textgreek{w}})$
on $\text{\textgreek{l}}$) implies that, when $\text{\textgreek{l}}=\text{\textgreek{l}}_{1}$,
there exists at least one $\text{\textgreek{w}}\in\mathbb{R}$ such
that the operator $Id-R(\square_{g_{M,a}};\text{\textgreek{w}})\circ V_{\text{\textgreek{l}}}$
has non trivial kernel, i.\,e.~equation (\ref{eq:Family Kerr})
admits an outgoing real mode solution.

For $\text{\textgreek{l}}=\text{\textgreek{l}}_{1}$ and $\text{\textgreek{w}}\in\mathbb{R}$
as above, let $\text{\textgreek{f}}:\mathcal{M}_{M,a}\rightarrow\mathbb{C}$
be a non-trivial $T$-invariant function solving 
\begin{equation}
\begin{cases}
(\square_{g_{M,a}}-V_{\text{\textgreek{l}}_{1}})(e^{-i\text{\textgreek{w}}t}\text{\textgreek{f}})=0,\\
\int_{\mathcal{S}}J_{\text{\textgreek{m}}}^{N}(e^{-i\text{\textgreek{w}}t}\text{\textgreek{f}})n_{\mathcal{S}}^{\text{\textgreek{m}}}<+\infty\mbox{ and }\lim_{r\rightarrow+\infty}(e^{-i\text{\textgreek{w}}t}\text{\textgreek{f}})|_{\mathcal{S}}=0,
\end{cases}\label{eq:DefinitionResolventFree-2-1}
\end{equation}
where $\mathcal{S}\subset\mathcal{M}_{M,a}$ is a spacelike hyperboloidal
hypersurface terminating at $\mathcal{I}^{+}$ and intersecting $\mathcal{H}^{+}\backslash\mathcal{H}^{-}$
transversally, defined as in Section (\ref{sub:The-resolvent-operator}).
We will show that (\ref{eq:DefinitionResolventFree-2-1}) implies
that no estimate of the form (\ref{eq:IledWithLossPhysical}) can
hold for equation (\ref{eq:Family Kerr}) when $\text{\textgreek{l}}=\text{\textgreek{l}}_{1}$.
Assume, for the sake of contradiction, that (\ref{eq:IledWithLossPhysical})
holds for equation (\ref{eq:Family Kerr}). Then, the uniqueness part
of the proof of Lemma \ref{prop:DefinitionResolvent} (see Section
\ref{sec:ResolventDefinitionAppendix} of the Appendix) applies, without
any change, yielding that any function $\text{\textgreek{f}}$ satisfying
(\ref{eq:DefinitionResolventFree-2-1}) must be identically $0$;
thus, we obtain a contradiction.
\end{proof}
The fact that Proposition \ref{prop:ILEDProductCase} fails to apply
on $(\mathcal{M}_{M,a},g_{M,a})$ highlights that, generally, there
is no ``cheap'' way of controlling the resolvent operator $R(\square_{g}-V;\cdot)$
for frequencies in a neighborhood of the real axis on superradiant
spacetimes $(\mathcal{M},g)$, as is the case on spacetimes with an
everywhere timelike Killing field $T$. Therefore, controlling the
behaviour of the resolvent operator $R(\square_{g}-V_{\text{\textgreek{l}}};\text{\textgreek{w}})$
in the near zero frequency regime $\{|\text{\textgreek{w}}|\ll1\}$
is not enough to exclude the emergence of ``spectral'' instabilities
for the family (\ref{eq:PotentialOnceMore}) elsewhere on the real
axis as $\text{\textgreek{l}}$ varies in $[0,1]$.

\appendix

\section{\label{sec:ResolventDefinitionAppendix}Proof of Proposition \ref{prop:DefinitionResolvent}}

The proof of Proposition \ref{prop:DefinitionResolvent} will consist
of several steps. We will first show that for any $\mathcal{F}\in L_{cp}^{2}(\text{\textgreek{S}})$
and any $\text{\textgreek{w}}\in\mathbb{C}$ with $Im(\text{\textgreek{w}})\ge0$,
there exists a solution $\text{\textgreek{f}}\in H_{loc}^{1}(\text{\textgreek{S}})$
to (\ref{eq:DefinitionResolventFree}), which is also unique, establishing,
thus, that the free resolvent operator $R(\square_{g};\text{\textgreek{w}})$
is well defined. In this step, we will also obtain some useful estimates
for the operator $R(\square_{g};\text{\textgreek{w}})$. Then, we
will proceed to establish the H\"older continuity and holomorphicity
properties of the operator $R(\square_{g};\text{\textgreek{w}})$.

\subsubsection*{1. Existence}

We will first show that, for any $\mathcal{F}\in L_{cp}^{2}(\text{\textgreek{S}})$
and any $\text{\textgreek{w}}\in\mathbb{C}$ with $Im(\text{\textgreek{w}})\ge0$,
there exists a solution $\text{\textgreek{f}}\in H_{loc}^{1}(\text{\textgreek{S}})$
to (\ref{eq:DefinitionResolventFree}). Let $\text{\textgreek{q}}:\mathbb{R}\rightarrow[0,1]$
be a smooth function such that $\text{\textgreek{q}}(t)=0$ for $t\le0$
and $\text{\textgreek{q}}(t)=1$ for $t\ge1$, and let us define the
function $\text{\textgreek{y}}:\{t\ge0\}\rightarrow\mathbb{C}$ as
the unique $H_{loc}^{1}$ solution of the initial value problem
\begin{equation}
\begin{cases}
\square_{g}\text{\textgreek{y}}=\text{\textgreek{q}}(t)e^{-i\text{\textgreek{w}}t}\mathcal{F},\\
(\text{\textgreek{y}},T\text{\textgreek{y}})|_{t=0}=(0,0).
\end{cases}\label{eq:FunctionForExistenceResolvent}
\end{equation}
In view of the integrated local energy decay estimate (\ref{eq:IledWithLossPhysical}),
we can bound for any $t_{f}\ge2$ and any $0<\text{\textgreek{h}}<\frac{1}{2}$
after multiplying both sides of (\ref{eq:IledWithLossPhysical}) with
$\Big(\int_{1}^{t_{f}}e^{2Im(\text{\textgreek{w}})t}\, dt\Big)^{-1}$:
\begin{align}
\Big(\int_{1}^{t_{f}}e^{2Im(\text{\textgreek{w}})t}\, dt\Big)^{-1}\int_{\{0\le t\le t_{f}\}}(1+r)^{-1-2\text{\textgreek{h}}} & \big(J_{\text{\textgreek{m}}}^{N}(\text{\textgreek{y}})N^{\text{\textgreek{m}}}+(1+r)^{-2}|\text{\textgreek{y}}|^{2}\big)\, dg\le\label{eq:BasicILEDforexistence}\\
 & \le C_{\text{\textgreek{h}}\text{\textgreek{q}}}(1+|\text{\textgreek{w}}|^{k})\int_{\text{\textgreek{S}}}(1+r)^{1+2\text{\textgreek{h}}}|\mathcal{F}|^{2}\, dg_{\text{\textgreek{S}}},\nonumber 
\end{align}
where $C_{\text{\textgreek{h}}\text{\textgreek{q}}}$ depends on $\text{\textgreek{h}}$
and $\sum_{j=0}^{k}\sup_{t\in[0,1]}|\frac{d^{j}\text{\textgreek{q}}}{dt^{j}}(t)|$. 

Commuting (\ref{eq:FunctionForExistenceResolvent}) with $T+i\text{\textgreek{w}}$
and applying (\ref{eq:IledWithLossPhysical}) for the commuted equation
(after multiplying, again, both sides of (\ref{eq:IledWithLossPhysical})
with $\Big(\int_{1}^{t_{f}}e^{Im(\text{\textgreek{w}})t}\, dt\Big)^{-1}$),
we obtain: 
\begin{align}
\Big(\int_{1}^{t_{f}}e^{2Im(\text{\textgreek{w}})t}\, dt\Big)^{-1} & \int_{\{0\le t\le t_{f}\}}(1+r)^{-1-2\text{\textgreek{h}}}\big(J_{\text{\textgreek{m}}}^{N}(T\text{\textgreek{y}}+i\text{\textgreek{w}}\text{\textgreek{y}})N^{\text{\textgreek{m}}}+(1+r)^{-2}|T\text{\textgreek{y}}+i\text{\textgreek{w}}\text{\textgreek{y}}|^{2}\big)\, dg\le\label{eq:BasicEstimateForPeriodicity}\\
 & \le C_{\text{\textgreek{h}}}\Big(\int_{1}^{t_{f}}e^{2Im(\text{\textgreek{w}})t}\, dt\Big)^{-1}\sum_{j=0}^{k}\int_{\{0\le t\le t_{f}\}}(1+r)^{1+2\text{\textgreek{h}}}|T^{j}(T+i\text{\textgreek{w}})(\text{\textgreek{q}}(t)e^{-i\text{\textgreek{w}}t}\mathcal{F})|^{2}\, dg\le\nonumber \\
 & \le C_{\text{\textgreek{h}}\text{\textgreek{q}}}\Big(\int_{1}^{t_{f}}e^{2Im(\text{\textgreek{w}})t}\, dt\Big)^{-1}\int_{\text{\textgreek{S}}}(1+r)^{1+2\text{\textgreek{h}}}|\mathcal{F}|^{2}\, dg_{\text{\textgreek{S}}},\nonumber 
\end{align}
where we used the fact that $(T+i\text{\textgreek{w}})(e^{-i\text{\textgreek{w}}t}\mathcal{F})=0$
and $T\text{\textgreek{q}}$ is supported in $\{0\le t\le1\}$. 

Using the $r^{p}$-weighted energy estimate of Theorem 5.1 of \cite{Moschidisc}
for $p=1$, $\text{\textgreek{t}}_{1}=0$ and $\text{\textgreek{t}}_{2}=t_{f}$,
in conjuction with the integrated local energy decay estimate (\ref{eq:IledWithLossPhysical}),
we readily obtain
\begin{align}
\int_{0}^{t_{f}} & \Big(\int_{\{\bar{t}=\text{\textgreek{t}}\}}J_{\text{\textgreek{m}}}^{N}(\text{\textgreek{y}})n_{\mathcal{S}}^{\text{\textgreek{m}}}\Big)\, d\text{\textgreek{t}}\le\label{eq:BeforeEnergyFlux}\\
 & \le C_{\text{\textgreek{q}}}(1+|\text{\textgreek{w}}|^{k})\int_{\{0\le\bar{t}\le t_{f}\}}(1+r)^{2}|e^{-i\text{\textgreek{w}}t}\mathcal{F}|^{2}\, dg,\nonumber 
\end{align}
where $n_{\mathcal{S}}$ is the future directed unit normal to the
$\{\bar{t}=const\}$ hypersurfaces. Notice that, while $e^{-i\text{\textgreek{w}}t}$
is unbounded on the $\{\bar{t}=const\}$ hypersurfaces when $Im(\text{\textgreek{w}})>0$,
the right hand side of (\ref{eq:BasicEstimateForEnergyFlux}) is finite,
since $\mathcal{F}$ was assumed to have compact support on $\text{\textgreek{S}}$.
In particular, (\ref{eq:BeforeEnergyFlux}) implies that, for any
$t_{f}\ge2$: 
\begin{equation}
\Big(\int_{1}^{t_{f}}e^{2Im(\text{\textgreek{w}})t}\, dt\Big)^{-1}\int_{0}^{t_{f}}\Big(\int_{\{\bar{t}=\text{\textgreek{t}}\}}J_{\text{\textgreek{m}}}^{N}(\text{\textgreek{y}})n_{\mathcal{S}}^{\text{\textgreek{m}}}\Big)\, d\text{\textgreek{t}}\le C_{\text{\textgreek{q}}\text{\textgreek{w}}\mathcal{F}}\label{eq:BasicEstimateForEnergyFlux}
\end{equation}
where $C_{\text{\textgreek{q}}\text{\textgreek{w}}\mathcal{F}}$ depends
on $\text{\textgreek{q}},\text{\textgreek{w}},\mathcal{F}$. Similarly,
using Theorem 5.3 of \cite{Moschidisc} for $p=2\text{\textgreek{h}}$,
$t_{1}=0$ and $t_{2}=t_{f}$, combined with (\ref{eq:IledWithLossPhysical}),
we can estimate: 
\begin{align}
\Big(\int_{1}^{t_{f}}e^{2Im(\text{\textgreek{w}})t}\, dt\Big)^{-1}\int_{\{0\le t\le t_{f}\}}(1+r)^{-1+2\text{\textgreek{h}}} & \Big(\big(\nabla_{\text{\textgreek{m}}}\bar{t}\nabla^{\text{\textgreek{m}}}\text{\textgreek{y}}\big)^{2}+(1+r)^{-2}|\text{\textgreek{y}}|^{2}\Big)\, dg\le\label{eq:BasicEstimateForExistenceRp}\\
 & \le C_{\text{\textgreek{h}}\text{\textgreek{q}}}(1+|\text{\textgreek{w}}|^{k})\int_{\text{\textgreek{S}}}(1+r)^{1+2\text{\textgreek{h}}}|\mathcal{F}|^{2}\, dg_{\text{\textgreek{S}}}.\nonumber 
\end{align}
Adding (\ref{eq:BasicILEDforexistence}) and (\ref{eq:BasicEstimateForExistenceRp}),
we thus obtain: 
\begin{align}
\Big(\int_{1}^{t_{f}}e^{2Im(\text{\textgreek{w}})t}\, dt\Big)^{-1}\int_{\{0\le t\le t_{f}\}}(1+r)^{-1+2\text{\textgreek{h}}}\big((\nabla_{\text{\textgreek{m}}}\bar{t}\nabla^{\text{\textgreek{m}}}\text{\textgreek{y}})^{2}+ & (1+r)^{-4\text{\textgreek{h}}}J_{\text{\textgreek{m}}}^{N}(\text{\textgreek{y}})N^{\text{\textgreek{m}}}+(1+r)^{-2}|\text{\textgreek{y}}|^{2}\big)\, dg\le\label{eq:BasicEstimateForExistence}\\
 & \le C_{\text{\textgreek{h}}\text{\textgreek{q}}}(1+|\text{\textgreek{w}}|^{k})\int_{\text{\textgreek{S}}}(1+r)^{1+2\text{\textgreek{h}}}|\mathcal{F}|^{2}\, dg_{\text{\textgreek{S}}}.\nonumber 
\end{align}

For any integer $j\in\{0,\ldots,\lfloor t_{f}\rfloor-1\}$, let us
define the non-negative quantities 
\begin{align}
f_{j}= & \frac{\int_{\{j\le t\le j+1\}}(1+r)^{-1+2\text{\textgreek{h}}}\big((\nabla_{\text{\textgreek{m}}}\bar{t}\nabla^{\text{\textgreek{m}}}\text{\textgreek{y}})^{2}+(1+r)^{-4\text{\textgreek{h}}}J_{\text{\textgreek{m}}}^{N}(\text{\textgreek{y}})N^{\text{\textgreek{m}}}+(1+r)^{-2}|\text{\textgreek{y}}|^{2}\big)\, dg}{C_{\text{\textgreek{h}}\text{\textgreek{q}}}(1+|\text{\textgreek{w}}|^{k})\int_{\text{\textgreek{S}}}(1+r)^{1+2\text{\textgreek{h}}}|\mathcal{F}|^{2}\, dg_{\text{\textgreek{S}}}}+\label{eq:FirstPositiveQuantity}\\
 & +\frac{\int_{\{j\le t\le j+1\}}(1+r)^{-1-2\text{\textgreek{h}}}\big(J_{\text{\textgreek{m}}}^{N}(T\text{\textgreek{y}}+i\text{\textgreek{w}}\text{\textgreek{y}})N^{\text{\textgreek{m}}}+(1+r)^{-2}|T\text{\textgreek{y}}+i\text{\textgreek{w}}\text{\textgreek{y}}|^{2}\big)\, dg}{C_{\text{\textgreek{h}}\text{\textgreek{q}}}\Big(\int_{0}^{t_{f}}e^{2Im(\text{\textgreek{w}})t}\, dt\Big)^{-1}\int_{\text{\textgreek{S}}}(1+r)^{1+2\text{\textgreek{h}}}|\mathcal{F}|^{2}\, dg_{\text{\textgreek{S}}}}+\frac{\int_{j}^{j+1}\Big(\int_{\{\bar{t}=\text{\textgreek{t}}\}}J_{\text{\textgreek{m}}}^{N}(\text{\textgreek{y}})n_{\mathcal{S}}^{\text{\textgreek{m}}}\Big)\, d\text{\textgreek{t}}}{C_{\text{\textgreek{q}}\text{\textgreek{w}}\mathcal{F}}},\nonumber 
\end{align}
and 
\begin{equation}
g_{j}=\int_{j}^{j+1}e^{2Im(\text{\textgreek{w}})t}\, dt,
\end{equation}
where the denominators of the terms in the right hand side of (\ref{eq:FirstPositiveQuantity})
are precisely the right hand sides of inequalities (\ref{eq:BasicEstimateForExistence}),
(\ref{eq:BasicEstimateForPeriodicity}) and (\ref{eq:BasicEstimateForEnergyFlux})
respectively. Then, the inequalities (\ref{eq:BasicEstimateForExistence}),
(\ref{eq:BasicEstimateForPeriodicity}) and (\ref{eq:BasicEstimateForEnergyFlux})
imply that, for any $t_{f}\ge2$: 
\begin{equation}
\Big(\sum_{j=1}^{\lfloor t_{f}\rfloor-1}g_{j}\Big)^{-1}\sum_{j=1}^{\lfloor t_{f}\rfloor-1}f_{j}\le3.\label{eq:ForPidgeonHolePrinciple}
\end{equation}
An application of the pidgeonhole principle on the relation (\ref{eq:ForPidgeonHolePrinciple})
yields that there exists a $j_{0}\in\{1,\ldots,\lfloor t_{f}\rfloor-1\}$
such that 
\begin{equation}
\frac{f_{j_{0}}}{g_{j_{0}}}\le3.
\end{equation}
Therefore, we deduce that, for any $t_{f}\ge2$, there exists a $t_{in}=t_{in}(t_{f})\in[1,t_{f}-1]$
such that 
\begin{align}
\Big(\int_{t_{in}}^{t_{in}+1}e^{2Im(\text{\textgreek{w}})t}\, dt\Big)^{-1}\int_{\{t_{in}\le t\le t_{in}+1\}}\int_{\{0\le t\le t_{f}\}}(1+r)^{-1+2\text{\textgreek{h}}}\big((\nabla_{\text{\textgreek{m}}}\bar{t}\nabla^{\text{\textgreek{m}}}\text{\textgreek{y}})^{2}+ & (1+r)^{-4\text{\textgreek{h}}}J_{\text{\textgreek{m}}}^{N}(\text{\textgreek{y}})N^{\text{\textgreek{m}}}+(1+r)^{-2}|\text{\textgreek{y}}|^{2}\big)\, dg\le\label{eq:BasicEstimateForExistence-1}\\
 & \le C_{\text{\textgreek{h}}\text{\textgreek{q}}}(1+|\text{\textgreek{w}}|^{k})\int_{\text{\textgreek{S}}}(1+r)^{1+2\text{\textgreek{h}}}|\mathcal{F}|^{2}\, dg_{\text{\textgreek{S}}},\nonumber 
\end{align}
\begin{align}
\Big(\int_{t_{in}}^{t_{in}+1}e^{2Im(\text{\textgreek{w}})t}\, dt\Big)^{-1} & \int_{\{t_{in}\le t\le t_{in}+1\}}(1+r)^{-1-2\text{\textgreek{h}}}\big(J_{\text{\textgreek{m}}}^{N}(T\text{\textgreek{y}}+i\text{\textgreek{w}}\text{\textgreek{y}})N^{\text{\textgreek{m}}}+(1+r)^{-2}|T\text{\textgreek{y}}+i\text{\textgreek{w}}\text{\textgreek{y}}|^{2}\big)\, dg\le\label{eq:BasicEstimateForPeriodicity-1}\\
 & \le C_{\text{\textgreek{h}}\text{\textgreek{q}}}\Big(\int_{1}^{t_{f}}e^{2Im(\text{\textgreek{w}})t}\, dt\Big)^{-1}\int_{\text{\textgreek{S}}}(1+r)^{1+2\text{\textgreek{h}}}|\mathcal{F}|^{2}\, dg_{\text{\textgreek{S}}}\nonumber 
\end{align}
and 
\begin{equation}
\Big(\int_{t_{in}}^{t_{in}+1}e^{2Im(\text{\textgreek{w}})t}\, dt\Big)^{-1}\int_{t_{in}}^{t_{in}+1}\Big(\int_{\{\bar{t}=\text{\textgreek{t}}\}}J_{\text{\textgreek{m}}}^{N}(\text{\textgreek{y}})n_{\mathcal{S}}^{\text{\textgreek{m}}}\Big)\, d\text{\textgreek{t}}\le C_{\text{\textgreek{q}}\text{\textgreek{w}}\mathcal{F}}.\label{eq:BasicEstimateForEnergyFlux-1}
\end{equation}

Let us fix a sequence $\{t_{f,n}\}_{n\in\mathbb{N}}$ of positive
numbers such that $\lim_{n}t_{f,n}=+\infty$, and let us define the
sequence $\{t_{n}\}_{n\in\mathbb{N}}\ge1$ by the relation 
\begin{equation}
t_{n}=t_{in}(t_{f,n}).
\end{equation}
We also define the sequence of functions $\text{\textgreek{y}}_{n}:\{0\le t\le1\}\rightarrow\mathbb{C}$
by the relation 
\begin{equation}
\text{\textgreek{y}}_{n}(t,x)\doteq e^{i\text{\textgreek{w}}t_{n}}\text{\textgreek{y}}(t+t_{n},x),\label{eq:DefinitionSequenceConvergence}
\end{equation}
noting that (\ref{eq:FunctionForExistenceResolvent}) (and the fact
that $\text{\textgreek{q}}(t)=1$ for $t\ge1$) imply that $\text{\textgreek{y}}_{n}$
satisfies on $\{0\le t\le1\}$ 
\begin{equation}
\square_{g}\text{\textgreek{y}}_{n}=e^{-i\text{\textgreek{w}}t}\mathcal{F}.\label{eq:InhomogeneousWaveEquationSubsequence}
\end{equation}

In view of (\ref{eq:BasicEstimateForExistence-1}), (\ref{eq:BasicEstimateForPeriodicity-1})
and (\ref{eq:BasicEstimateForEnergyFlux-1}) for $t_{n},t_{f,n}$
in place of $t_{in},t_{f}$, as well as the relation (\ref{eq:DefinitionSequenceConvergence})
and the definition of the function $\bar{t}$, we can bound for any
$n\in\mathbb{N}$: 
\begin{align}
\int_{\{0\le t\le1\}}\int_{\{0\le t\le t_{f}\}}(1+r)^{-1+2\text{\textgreek{h}}}\big((\nabla_{\text{\textgreek{m}}}\bar{t}\nabla^{\text{\textgreek{m}}}\text{\textgreek{y}}_{n})^{2}+ & (1+r)^{-4\text{\textgreek{h}}}J_{\text{\textgreek{m}}}^{N}(\text{\textgreek{y}}_{n})N^{\text{\textgreek{m}}}+(1+r)^{-2}|\text{\textgreek{y}}_{n}|^{2}\big)\, dg\le\label{eq:BasicEstimateForExistence-1-1}\\
 & \le C_{\text{\textgreek{h}}\text{\textgreek{q}}}e^{2Im(\text{\textgreek{w}})}(1+|\text{\textgreek{w}}|^{k})\int_{\text{\textgreek{S}}}(1+r)^{1+2\text{\textgreek{h}}}|\mathcal{F}|^{2}\, dg_{\text{\textgreek{S}}},\nonumber 
\end{align}
\begin{align}
\int_{\{0\le t\le1\}} & (1+r)^{-1-2\text{\textgreek{h}}}\big(J_{\text{\textgreek{m}}}^{N}(T\text{\textgreek{y}}_{n}+i\text{\textgreek{w}}\text{\textgreek{y}}_{n})N^{\text{\textgreek{m}}}+(1+r)^{-2}|T\text{\textgreek{y}}_{n}+i\text{\textgreek{w}}\text{\textgreek{y}}_{n}|^{2}\big)\, dg\le\label{eq:BasicEstimateForPeriodicity-1-1}\\
 & \le C_{\text{\textgreek{h}}\text{\textgreek{q}}}e^{2Im(\text{\textgreek{w}})}\Big(\int_{1}^{t_{f,n}}e^{2Im(\text{\textgreek{w}})t}\, dt\Big)^{-1}\int_{\text{\textgreek{S}}}(1+r)^{1+2\text{\textgreek{h}}}|\mathcal{F}|^{2}\, dg_{\text{\textgreek{S}}}\nonumber 
\end{align}
and 
\begin{equation}
\int_{0}^{1}\Big(\int_{\{\bar{t}=\text{\textgreek{t}}\}}J_{\text{\textgreek{m}}}^{N}(\text{\textgreek{y}}_{n})n_{\mathcal{S}}^{\text{\textgreek{m}}}\Big)\, d\text{\textgreek{t}}\le C_{\text{\textgreek{q}}\text{\textgreek{w}}\mathcal{F}}.\label{eq:BasicEstimateForEnergyFlux-1-1}
\end{equation}

The bound (\ref{eq:BasicEstimateForExistence-1-1}) implies that,
as $n\rightarrow+\infty$, there exists a subsequence of $\{\text{\textgreek{y}}_{n}\}_{n\in\mathbb{N}}$
(assuming, without loss of generality, that this subsequence is in
fact the whole sequence $\{\text{\textgreek{y}}_{n}\}_{n\in\mathbb{N}}$)
converging weakly in $H_{loc}^{1}(\{0\le t\le1\})$ to a function
$\tilde{\text{\textgreek{y}}}$ satisfying the bound: 
\begin{align}
\int_{\{0\le t\le1\}}\int_{\{0\le t\le t_{f}\}}(1+r)^{-1+2\text{\textgreek{h}}}\big((\nabla_{\text{\textgreek{m}}}\bar{t}\nabla^{\text{\textgreek{m}}}\tilde{\text{\textgreek{y}}})^{2}+ & (1+r)^{-4\text{\textgreek{h}}}J_{\text{\textgreek{m}}}^{N}(\tilde{\text{\textgreek{y}}})N^{\text{\textgreek{m}}}+(1+r)^{-2}|\tilde{\text{\textgreek{y}}}|^{2}\big)\, dg\le\label{eq:BoundWeakLimit}\\
 & \le C_{\text{\textgreek{h}}\text{\textgreek{q}}}e^{2Im(\text{\textgreek{w}})}(1+|\text{\textgreek{w}}|^{k})\int_{\text{\textgreek{S}}}(1+r)^{1+2\text{\textgreek{h}}}|\mathcal{F}|^{2}\, dg_{\text{\textgreek{S}}}.\nonumber 
\end{align}
Since the functions $\text{\textgreek{y}}_{n}$ satisfy (\ref{eq:InhomogeneousWaveEquationSubsequence})
and $\text{\textgreek{y}}_{n}$ converge weakly in $H_{loc}^{1}$
to $\tilde{\text{\textgreek{y}}}$, we readily infer that $\tilde{\text{\textgreek{y}}}$
also satisfies (\ref{eq:InhomogeneousWaveEquationSubsequence}). In
view of (\ref{eq:BasicEstimateForPeriodicity-1-1}) and the fact that
$t_{f,n}\rightarrow+\infty$ as $n\rightarrow+\infty$, we infer that
\begin{equation}
T\tilde{\text{\textgreek{y}}}+i\text{\textgreek{w}}\tilde{\text{\textgreek{y}}}=0,
\end{equation}
i.\,e.~$\tilde{\text{\textgreek{y}}}$ is of the form 
\begin{equation}
\tilde{\text{\textgreek{y}}}=e^{-i\text{\textgreek{w}}t}\text{\textgreek{f}}\label{eq:Periodicity}
\end{equation}
for some $\text{\textgreek{f}}\in H_{loc}^{1}(\text{\textgreek{S}})$.
We will extend $\tilde{\text{\textgreek{y}}}$ on the whole of $\mathcal{\cup_{\text{\textgreek{t}}\in\mathbb{R}}\mathcal{J}_{\text{\textgreek{t}}}}(\text{\textgreek{S}})$
by the relation (\ref{eq:Periodicity}). 

Since the functions $\text{\textgreek{y}}_{n}$ satisfy (\ref{eq:InhomogeneousWaveEquationSubsequence})
and $\{\text{\textgreek{y}}_{n}\}_{n\in\mathbb{N}}$ converges weakly
in $H_{loc}^{1}(\{0\le t\le1\})$ to $\tilde{\text{\textgreek{y}}}$,
we readily infer that $\tilde{\text{\textgreek{y}}}$ also satisfies
(\ref{eq:InhomogeneousWaveEquationSubsequence}) on $\{0\le t\le1\}$.
Since $\tilde{\text{\textgreek{y}}}$ was extended on the whole of
$\mathcal{\cup_{\text{\textgreek{t}}\in\mathbb{R}}\mathcal{J}_{\text{\textgreek{t}}}}(\text{\textgreek{S}})$
by the relation (\ref{eq:Periodicity}) and the metric $g$ is $T$-invariant,
we infer that $\tilde{\text{\textgreek{y}}}$ satisfies (\ref{eq:InhomogeneousWaveEquationSubsequence})
on the whole of $\mathcal{\cup_{\text{\textgreek{t}}\in\mathbb{R}}\mathcal{J}_{\text{\textgreek{t}}}}(\text{\textgreek{S}})$.
Thus, the function $\text{\textgreek{f}}$ satisfies (\ref{eq:DefinitionResolventFree}),
with the condition that $e^{-i\text{\textgreek{w}}t}\text{\textgreek{f}}$
having finite energy flux through $\mathcal{S}$ and $\lim_{r\rightarrow+\infty}(e^{-i\text{\textgreek{w}}t}\text{\textgreek{f}})|_{\mathcal{S}}$
being a direct consequence of (\ref{eq:BasicEstimateForEnergyFlux-1-1}).
Furthermore, $\text{\textgreek{f}}$ satisfies 
\begin{align}
\int_{\text{\textgreek{S}}}(1+r)^{-1-2\text{\textgreek{h}}}\Big(|\nabla_{g_{\text{\textgreek{S}}}}\text{\textgreek{f}}|_{g_{\text{\textgreek{S}}}}^{2}+ & \big(|\text{\textgreek{w}}|^{2}+(1+r)^{-2}\big)|\text{\textgreek{f}}|^{2}\big)\, dg_{\text{\textgreek{S}}}\le\label{eq:BoundForResolventNorm}\\
 & \le C_{\text{\textgreek{h}}}(1+|\text{\textgreek{w}}|^{k})\int_{\text{\textgreek{S}}}(1+r)^{1+2\text{\textgreek{h}}}|\mathcal{F}|^{2}\, dg_{\text{\textgreek{S}}}\nonumber 
\end{align}
and 
\begin{align}
\int_{\text{\textgreek{S}}}(1+r)^{-1+2\text{\textgreek{h}}}\Big(\big(\nabla_{\text{\textgreek{m}}}\bar{t}\nabla^{\text{\textgreek{m}}}(e^{-i\text{\textgreek{w}}t}\text{\textgreek{f}} & )\big)^{2}|_{t=0}+(1+r)^{-2}|\text{\textgreek{f}}|^{2}\Big)\, dg_{\text{\textgreek{S}}}\le\label{eq:BoundFromNewMethod}\\
 & \le C_{\text{\textgreek{h}}}(1+|\text{\textgreek{w}}|^{k})\int_{\text{\textgreek{S}}}(1+r)^{1+2\text{\textgreek{h}}}|\mathcal{F}|^{2}\, dg_{\text{\textgreek{S}}}\nonumber 
\end{align}
 in view of (\ref{eq:BoundWeakLimit}) (and the fact that $T$ is
everywhere transversal to $\text{\textgreek{S}}$).

\subsubsection*{2. Uniqueness}

Having established the existence of a solution $\text{\textgreek{f}}\in H_{loc}^{1}(\text{\textgreek{S}})$
to (\ref{eq:DefinitionResolventFree}), we will now proceed to show
that this solution is unique. In particular, we will show that, for
any $\text{\textgreek{w}}\in\mathbb{C}$ with $Im(\text{\textgreek{w}})\ge0$,
if $\text{\textgreek{f}}\in H_{loc}^{1}(\text{\textgreek{S}})$ satisfies
\begin{equation}
\begin{cases}
\square_{g}(e^{-i\text{\textgreek{w}}t}\text{\textgreek{f}})=0,\\
\int_{\mathcal{S}}J_{\text{\textgreek{m}}}^{N}(e^{-i\text{\textgreek{w}}t}\text{\textgreek{f}})n_{\mathcal{S}}^{\text{\textgreek{m}}}<+\infty\mbox{ and }\lim_{r\rightarrow+\infty}(e^{-i\text{\textgreek{w}}_{1}t}\text{\textgreek{f}})|_{\mathcal{S}}=0,
\end{cases}\label{eq:DefinitionResolventFree-1}
\end{equation}
then $\text{\textgreek{f}}\equiv0$. 

For any $0<a\le1$, we will introduce the auxiliary function 
\begin{equation}
\text{\textgreek{f}}_{a}=\text{\textgreek{q}}(t)e^{a\bar{t}}e^{-i\text{\textgreek{w}}t}\text{\textgreek{f}}.\label{eq:AuxiliaryFunction}
\end{equation}
In view of (\ref{eq:DefinitionResolventFree-1}), as well as the fact
that $\text{\textgreek{q}}\equiv0$ for $t\le0$, $\text{\textgreek{f}}_{a}$
satisfies

\begin{equation}
\begin{cases}
\square_{g}\text{\textgreek{f}}_{a}=\mathcal{A}_{cut}+\mathcal{A}_{exp},\\
(\text{\textgreek{y}}_{a},T\text{\textgreek{y}}_{a})|_{t=0}=(0,0),
\end{cases}\label{eq:WaveForCutOffDifference}
\end{equation}
where 
\begin{equation}
\mathcal{A}_{cut}\doteq2\text{\textgreek{q}}^{\prime}(t)\nabla_{\text{\textgreek{m}}}t\nabla^{\text{\textgreek{m}}}(e^{a\bar{t}}e^{-i\text{\textgreek{w}}t}\text{\textgreek{f}})+\square_{g}(\text{\textgreek{q}}(t))e^{a\bar{t}}e^{-i\text{\textgreek{w}}t}\text{\textgreek{f}}\label{eq:ErrorsFromCutOff}
\end{equation}
and 
\begin{equation}
\mathcal{A}_{exp}\doteq2ae^{a\bar{t}}\nabla_{\text{\textgreek{m}}}\bar{t}\nabla^{\text{\textgreek{m}}}\big(e^{-i\text{\textgreek{w}}t}\text{\textgreek{f}}\big)+\big(a^{2}\nabla_{\text{\textgreek{m}}}\bar{t}\nabla^{\text{\textgreek{m}}}\bar{t}+a\square_{g}\bar{t}\big)e^{a\bar{t}}e^{-i\text{\textgreek{w}}t}\text{\textgreek{f}}.\label{eq:ErrorsExponential}
\end{equation}

In view of the fact that 
\begin{equation}
\bar{t}\le t-\frac{1}{2}r-C,\label{eq:LowerBoundTBar}
\end{equation}
we can estimate for any $\text{\textgreek{t}}\ge0$ and any $\text{\textgreek{b}}>0$:
\begin{equation}
\sup_{t=\text{\textgreek{t}}}(1+r)^{\text{\textgreek{b}}}e^{2a\bar{t}}\le C_{\text{\textgreek{b}}}a^{-\text{\textgreek{b}}}e^{2a\text{\textgreek{t}}},\label{eq:BoundExponentialOnTimeSlices}
\end{equation}
where $C_{\text{\textgreek{b}}}>0$ depends only on $\text{\textgreek{b}}$.
In view of the flat asymptotics of $g$ and the fact that $\mathcal{F}$
is compactly supported, the conditions $Im(\text{\textgreek{w}})\ge0$
and $\sum_{j=1}^{2}\int_{\mathcal{S}}J_{\text{\textgreek{m}}}^{N}(e^{i\text{\textgreek{w}}_{j}t}\text{\textgreek{f}}_{j})n^{\mathcal{S}}$
imply that there exists a $\text{\textgreek{b}}>0$ such that 
\begin{equation}
\int_{\text{\textgreek{S}}}\Big((1+r)^{-\text{\textgreek{b}}}|\nabla_{\text{\textgreek{S}}}\text{\textgreek{f}}|_{g_{\text{\textgreek{S}}}}^{2}+(1+r)^{-\text{\textgreek{b}}-2}|\text{\textgreek{f}}|^{2}\Big)<+\infty\label{eq:PolynomialBoundL2}
\end{equation}
(note that, in the case $Im(\text{\textgreek{w}})>0$, $\text{\textgreek{f}}$
in fact belongs to the space $H^{1}(\text{\textgreek{S}})$; the bound
(\ref{eq:PolynomialBoundL2}) becomes non-trivial only when $Im(\text{\textgreek{w}})=0$).
Therefore, (\ref{eq:ErrorsFromCutOff}), (\ref{eq:BoundExponentialOnTimeSlices})
and (\ref{eq:PolynomialBoundL2}), combined with the fact that $\text{\textgreek{q}}^{\prime}\equiv0$
for $t\ge1$, imply that, for any $\text{\textgreek{t}}\ge0$: 
\begin{equation}
\sum_{j=0}^{k}\int_{\{t=\text{\textgreek{t}}\}}(1+r)^{2}|T^{j}\mathcal{A}_{cut}|^{2}\, dg_{\text{\textgreek{S}}}\le\begin{cases}
C_{\text{\textgreek{q}}\text{\textgreek{w}}\text{\textgreek{f}}}a^{-\text{\textgreek{b}}-2}, & 0\le\text{\textgreek{t}}\le1,\\
0, & \text{\textgreek{t}}\ge1,
\end{cases}\label{eq:BoundForCutOffUniqueness}
\end{equation}
where $C_{\text{\textgreek{q}}\text{\textgreek{w}}\text{\textgreek{f}}}>0$
depends on $\text{\textgreek{q}},\text{\textgreek{w}},\text{\textgreek{f}}$.

The finiteness of the $J^{N}$-energy flux of $e^{-i\text{\textgreek{w}}t}\text{\textgreek{f}}$
through $\mathcal{S}$, combined with a Hardy type inequality, implies
that, for any $\text{\textgreek{t}}\ge0$ 
\begin{equation}
\int_{\{\bar{t}=\text{\textgreek{t}}\}}\Big(\big|\nabla_{\text{\textgreek{m}}}\bar{t}\nabla^{\text{\textgreek{m}}}(e^{-i\text{\textgreek{w}}t}\text{\textgreek{f}})\big|^{2}+(1+r)^{-2}|e^{-i\text{\textgreek{w}}t}\text{\textgreek{f}}|\Big)\le Ce^{2Im(\text{\textgreek{w}})\text{\textgreek{t}}}\int_{\mathcal{S}}J_{\text{\textgreek{m}}}^{N}(e^{-i\text{\textgreek{w}}t}\text{\textgreek{f}})n_{\mathcal{S}}^{\text{\textgreek{m}}}<+\infty.\label{eq:BoundFromEnergySlice}
\end{equation}
Since $Im(\text{\textgreek{w}})>0$ and $\mathcal{S}\subset J^{+}(\text{\textgreek{S}})$,
from (\ref{eq:BoundFromEnergySlice}) we can deduce the following
estimate on the slice $\{t=\text{\textgreek{t}}\}$ for any $\text{\textgreek{t}}\ge0$:
\begin{equation}
\int_{\{t=\text{\textgreek{t}}\}}\Big(\big|\nabla_{\text{\textgreek{m}}}\bar{t}\nabla^{\text{\textgreek{m}}}(e^{-i\text{\textgreek{w}}t}\text{\textgreek{f}})\big|^{2}+(1+r)^{-2}|e^{-i\text{\textgreek{w}}t}\text{\textgreek{f}}|\Big)\, dg_{\text{\textgreek{S}}}\le Ce^{2Im(\text{\textgreek{w}})\text{\textgreek{t}}}\int_{\mathcal{S}}J_{\text{\textgreek{m}}}^{N}(e^{-i\text{\textgreek{w}}t}\text{\textgreek{f}})n_{\mathcal{S}}^{\text{\textgreek{m}}}<+\infty.\label{eq:BoundFromEnergySlice-1}
\end{equation}
Therefore, on $\{t\ge0\}$, from (\ref{eq:ErrorsExponential}) and
(\ref{eq:BoundFromEnergySlice-1}), as well as the fact that 
\begin{equation}
|\square_{g}\bar{t}|\le C(1+r)^{-1}\label{eq:WaveTbar}
\end{equation}
(following from the flat asymptotics of $(\mathcal{M},g)$ and the
definition of the hyperboloidal hypersurface $\mathcal{S}$, see also
Section 3.1 of \cite{Moschidisc}), we infer that, for any $\text{\textgreek{t}}\ge0$
and any $0<\text{\textgreek{h}}<\frac{1}{2}$: 
\begin{equation}
\sum_{j=0}^{k}\int_{\{t=\text{\textgreek{t}}\}}(1+r)^{1+2\text{\textgreek{h}}}|T^{j}\mathcal{A}_{exp}|^{2}\, dg_{\text{\textgreek{S}}}\le C_{\text{\textgreek{h}}\text{\textgreek{w}}\text{\textgreek{f}}}a^{1-2\text{\textgreek{h}}}e^{2(a+Im(\text{\textgreek{w}}))\text{\textgreek{t}}}.\label{eq:BoundForExponentialUniqueness}
\end{equation}

Applying the integrated local energy decay estimate (\ref{eq:IledWithLossPhysical})
for (\ref{eq:WaveForCutOffDifference}), we obtain for any $t_{f}\ge0$
in view of (\ref{eq:BoundForCutOffUniqueness}) and (\ref{eq:BoundForExponentialUniqueness}):
\begin{equation}
\int_{\{0\le t\le t_{f}\}}(1+r)^{-1-2\text{\textgreek{h}}}\big(J_{\text{\textgreek{m}}}^{N}(\text{\textgreek{f}}_{a})N^{\text{\textgreek{m}}}+(1+r)^{-2}|\text{\textgreek{f}}_{a}|^{2}\big)\, dg\le C_{\text{\textgreek{h}}\text{\textgreek{q}}\text{\textgreek{w}}\text{\textgreek{f}}}\big(a^{-\text{\textgreek{b}}-2}+a^{1-2\text{\textgreek{h}}}\int_{0}^{t_{f}}e^{2(a+Im(\text{\textgreek{w}}))t}\, dt\big).\label{eq:IledWithLossPhysical-1}
\end{equation}
Thus, in view of the relation (\ref{eq:AuxiliaryFunction}), (\ref{eq:IledWithLossPhysical-1})
yields for any $0<a\le1$, any $t_{f}\ge0$ and any $0<\text{\textgreek{h}}<\frac{1}{2}$:
\begin{equation}
\int_{\text{\textgreek{S}}}(1+r)^{-3-2\text{\textgreek{h}}}|\text{\textgreek{f}}|^{2}\, dg_{\text{\textgreek{S}}}\le C_{\text{\textgreek{h}}\text{\textgreek{q}}\text{\textgreek{w}}\text{\textgreek{f}}}\Big\{ a^{-\text{\textgreek{b}}-2}\Big(\int_{0}^{t_{f}}e^{2(a+Im(\text{\textgreek{w}}))t}\, dt\Big)^{-1}+a^{1-2\text{\textgreek{h}}}\Big\}.\label{eq:BoundForZeroUniqueness}
\end{equation}
Choosing $t_{f}=a^{-1}$ in (\ref{eq:BoundForZeroUniqueness}) and
letting $a\rightarrow0$, we thus infer that $\text{\textgreek{f}}\equiv0$.
Hence, we have established the uniqueness for solutions to (\ref{eq:DefinitionResolventFree}),
and, thus, the resolvent operator $R(\square_{g};\text{\textgreek{w}})$
given by (\ref{eq:ResolventDefinition}) is well defined, with $R(\square_{g};\text{\textgreek{w}})$
being uniformaly bounded on $\{\text{\textgreek{w}}\in\mathbb{C}\,|\, Im(\text{\textgreek{w}})\ge0\}$
with respect to the norm (\ref{eq:ResolventNorm-1}) in view of (\ref{eq:BoundForResolventNorm}).
Furthermore, for any $\mathcal{F}\in L_{cp}^{2}(\text{\textgreek{S}})$,
$\text{\textgreek{f}}=R(\square_{g};\text{\textgreek{w}})\mathcal{F}$
satisfies the bound (\ref{eq:BoundFromNewMethod}).

\subsubsection*{3. H\"older continuity}

We will now show that the operator $R(\square_{g};\text{\textgreek{w}})$
is H\"older continuous as a function of $\text{\textgreek{w}}\in\mathbb{C}$
with $Im(\text{\textgreek{w}})\ge0$, with respect to the norm (\ref{eq:ResolventNormHolder}).
For any given $\mathcal{F}\in L_{cp}^{2}(\text{\textgreek{S}})$ and
any $\text{\textgreek{w}}_{1},\text{\textgreek{w}}_{2}\in$, let us
set $\text{\textgreek{f}}_{1}=R(\square_{g};\text{\textgreek{w}}_{1})$
and $\text{\textgreek{f}}_{2}=R(\square_{g};\text{\textgreek{w}}_{2})$.
We will show that, for any $0<\text{\textgreek{h}}<\frac{1}{2}$,
$0<\text{\textgreek{h}}_{0}<\frac{1}{2}-\text{\textgreek{h}}$ (assuming
without loss of generality that $|\text{\textgreek{w}}_{1}-\text{\textgreek{w}}_{2}|<1$):
\begin{align}
\int_{\text{\textgreek{S}}}(1+r)^{-1-2\text{\textgreek{h}}}\big(\big|\nabla_{g_{\text{\textgreek{S}}}}(\text{\textgreek{f}}_{1}-\text{\textgreek{f}}_{2})\big|_{g_{\text{\textgreek{S}}}}^{2} & +\big(|\text{\textgreek{w}}_{1}|^{2}+(1+r)^{-2}\big)|\text{\textgreek{f}}_{1}-\text{\textgreek{f}}_{2}|^{2}\big)\, dg_{\text{\textgreek{S}}}\le\label{eq:HolderDone-1}\\
 & \le C_{\text{\textgreek{h}}\text{\textgreek{h}}_{0}\text{\textgreek{q}}}|\text{\textgreek{w}}_{1}-\text{\textgreek{w}}_{2}|^{2\text{\textgreek{h}}_{0}}e^{2Im(\text{\textgreek{w}}_{1})}(1+|\text{\textgreek{w}}_{1}|^{k})\int_{\text{\textgreek{S}}}(1+r)^{1+2\text{\textgreek{h}}+\text{\textgreek{h}}_{0}}|\mathcal{F}|^{2}\, dg_{\text{\textgreek{S}}}.\nonumber 
\end{align}

For any $0<a\le1$, we define 
\begin{equation}
\text{\textgreek{y}}_{a}=\text{\textgreek{q}}(t)e^{a\bar{t}}\big(e^{-i\text{\textgreek{w}}_{1}t}\text{\textgreek{f}}_{1}-e^{-i\text{\textgreek{w}}_{2}t}\text{\textgreek{f}}_{2}\big),\label{eq:AuxiliaryFunction-1}
\end{equation}
noticing that $\text{\textgreek{y}}_{a}$ satisfies

\begin{equation}
\begin{cases}
\square_{g}\text{\textgreek{y}}_{a}=\text{\textgreek{q}}(t)e^{a\bar{t}}(e^{-i\text{\textgreek{w}}_{1}t}-e^{i\text{\textgreek{w}}_{2}t})\mathcal{F}+\mathcal{B}_{cut}+\mathcal{B}_{exp},\\
(\text{\textgreek{y}}_{a},T\text{\textgreek{y}}_{a})|_{t=0}=(0,0),
\end{cases}\label{eq:WaveForCutOffDifference-1}
\end{equation}
where 
\begin{equation}
\mathcal{B}_{cut}\doteq2\text{\textgreek{q}}^{\prime}(t)\nabla_{\text{\textgreek{m}}}t\nabla^{\text{\textgreek{m}}}\big(e^{a\bar{t}}(e^{-i\text{\textgreek{w}}_{1}t}\text{\textgreek{f}}_{1}-e^{-i\text{\textgreek{w}}_{2}t}\text{\textgreek{f}}_{2})\big)+\square_{g}(\text{\textgreek{q}}(t))\big(e^{a\bar{t}}(e^{-i\text{\textgreek{w}}_{1}t}\text{\textgreek{f}}_{1}-e^{-i\text{\textgreek{w}}_{2}t}\text{\textgreek{f}}_{2})\big)\label{eq:ErrorsFromCutOff-1}
\end{equation}
and 
\begin{equation}
\mathcal{B}_{exp}\doteq2ae^{a\bar{t}}\nabla_{\text{\textgreek{m}}}\bar{t}\nabla^{\text{\textgreek{m}}}\big(e^{-i\text{\textgreek{w}}_{1}t}\text{\textgreek{f}}_{1}-e^{-i\text{\textgreek{w}}_{2}t}\text{\textgreek{f}}_{2}\big)+\big(a^{2}\nabla_{\text{\textgreek{m}}}\bar{t}\nabla^{\text{\textgreek{m}}}\bar{t}+a\square_{g}\bar{t}\big)e^{a\bar{t}}\big(e^{-i\text{\textgreek{w}}_{1}t}\text{\textgreek{f}}_{1}-e^{-i\text{\textgreek{w}}_{2}t}\text{\textgreek{f}}_{2}\big).\label{eq:ErrorsExponential-1}
\end{equation}

Since $\text{\textgreek{f}}_{1},\text{\textgreek{f}}_{2}$ satisfy
the bound (\ref{eq:BoundForResolventNorm}) for $\text{\textgreek{w}}_{1},\text{\textgreek{w}}_{2}$
in place of $\text{\textgreek{w}}$, respectively, in view of (\ref{eq:BoundExponentialOnTimeSlices})
we can estimate for any $\text{\textgreek{t}}\ge0$ and any $0<\text{\textgreek{h}}<\frac{1}{2}$:
\begin{equation}
\sum_{j=1}^{k}\int_{\{t=\text{\textgreek{t}}\}}(1+r)^{1+2\text{\textgreek{h}}}|T^{j}\mathcal{B}_{cut}|\, dg_{\text{\textgreek{S}}}\le\begin{cases}
C_{\text{\textgreek{h}}\text{\textgreek{q}}}a^{-3-\text{\textgreek{h}}}\sum_{j=1}^{2}e^{2Im(\text{\textgreek{w}}_{j})}(1+|\text{\textgreek{w}}_{j}|^{k})\int_{\text{\textgreek{S}}}(1+r)^{1+2\text{\textgreek{h}}}|\mathcal{F}|^{2}\, dg_{\text{\textgreek{S}}}, & 0\le\text{\textgreek{t}}\le1\\
0, & \text{\textgreek{t}}\ge1.
\end{cases}\label{eq:BoundForCutOffHolder}
\end{equation}
Furthermore, in view of the bound (\ref{eq:BoundFromNewMethod}) for
$\text{\textgreek{f}}_{1},\text{\textgreek{f}}_{2}$ (with $\text{\textgreek{h}}+\text{\textgreek{h}}_{0}$
in place of $\text{\textgreek{h}}$ there) and (\ref{eq:WaveTbar}),
we can estimate for any $\text{\textgreek{t}}\ge0$, any $0<\text{\textgreek{h}}<\frac{1}{2}$
and any $0<\text{\textgreek{h}}_{0}<\frac{1}{2}-\text{\textgreek{h}}$:
\begin{equation}
\sum_{j=1}^{k}\int_{\{t=\text{\textgreek{t}}\}}(1+r)^{1+2\text{\textgreek{h}}}|T^{j}\mathcal{B}_{exp}|^{2}\, dg_{\text{\textgreek{S}}}\le C_{\text{\textgreek{h}}\text{\textgreek{h}}_{0}\text{\textgreek{q}}}\sum_{j=1}^{2}a^{2\text{\textgreek{h}}_{0}}e^{2(Im(\text{\textgreek{w}}_{j})+a)(\text{\textgreek{t}}+1)}(1+|\text{\textgreek{w}}_{j}|^{k})\int_{\text{\textgreek{S}}}(1+r)^{1+2\text{\textgreek{h}}+2\text{\textgreek{h}}_{0}}|\mathcal{F}|^{2}\, dg_{\text{\textgreek{S}}}.\label{eq:BoundForExponentialHolder}
\end{equation}

Applying the integrated local energy decay estimate (\ref{eq:IledWithLossPhysical})
for (\ref{eq:WaveForCutOffDifference-1}), we obtain in view of (\ref{eq:BoundForCutOffHolder})
and (\ref{eq:BoundForExponentialHolder}) for any $t_{f}\ge0$ and
any $0<\text{\textgreek{h}}<\frac{1}{2},0<\text{\textgreek{h}}_{0}<\frac{1}{2}-\text{\textgreek{h}}$:
\begin{align}
\int_{\{0\le t\le t_{f}\}}(1+r)^{-1-2\text{\textgreek{h}}} & \big(J_{\text{\textgreek{m}}}^{N}(\text{\textgreek{y}}_{a})N^{\text{\textgreek{m}}}+(1+r)^{-2}|\text{\textgreek{y}}_{a}|^{2}\big)\, dg\le\label{eq:IledWithLossPhysical-1-1}\\
\le C_{\text{\textgreek{h}}\text{\textgreek{h}}_{0}\text{\textgreek{q}}}\Big\{ & \int_{\{0\le t\le t_{f}\}}\text{\textgreek{q}}(t)(1+r)^{1+2\text{\textgreek{h}}}e^{a\bar{t}}\big|(e^{-i\text{\textgreek{w}}_{1}t}-e^{i\text{\textgreek{w}}_{2}t})\mathcal{F}\big|^{2}\, dg+\nonumber \\
 & +\sum_{j=1}^{2}\Big(a^{-3-\text{\textgreek{h}}}+a^{2\text{\textgreek{h}}_{0}}\int_{0}^{t_{f}}e^{2(a+Im(\text{\textgreek{w}}_{j}))(t+1)}\, dt\Big)(1+|\text{\textgreek{w}}_{j}|^{k})\int_{\text{\textgreek{S}}}(1+r)^{1+2\text{\textgreek{h}}+2\text{\textgreek{h}}_{0}}|\mathcal{F}|^{2}\, dg_{\text{\textgreek{S}}}\Big\}.\nonumber 
\end{align}
The relation (\ref{eq:AuxiliaryFunction-1}) implies that 
\begin{equation}
\text{\textgreek{y}}_{a}=\text{\textgreek{q}}(t)e^{a\bar{t}}e^{-i\text{\textgreek{w}}_{1}t}(\text{\textgreek{f}}_{1}-\text{\textgreek{f}}_{2})+\text{\textgreek{q}}(t)e^{a\bar{t}}(e^{-i\text{\textgreek{w}}_{1}t}-e^{-i\text{\textgreek{w}}_{2}t})\text{\textgreek{f}}_{2}.\label{eq:DifferenceExpanded}
\end{equation}
Thus, (\ref{eq:IledWithLossPhysical-1-1}) and (\ref{eq:DifferenceExpanded})
yield 
\begin{align}
\int_{\{1\le t\le t_{f}\}}(1+r)^{-1-2\text{\textgreek{h}}} & e^{2a\bar{t}}\big(J_{\text{\textgreek{m}}}^{N}(e^{-i\text{\textgreek{w}}_{1}t}(\text{\textgreek{f}}_{1}-\text{\textgreek{f}}_{2}))N^{\text{\textgreek{m}}}+(1+r)^{-2}|e^{-i\text{\textgreek{w}}_{1}t}(\text{\textgreek{f}}_{1}-\text{\textgreek{f}}_{2})|^{2}\big)\, dg\le\label{eq:IledWithLossPhysical-1-1-1}\\
\le & C_{\text{\textgreek{q}}}\int_{\{0\le t\le t_{f}\}}(1+r)^{-1-2\text{\textgreek{h}}}e^{a\bar{t}}\big(J_{\text{\textgreek{m}}}^{N}((e^{-i\text{\textgreek{w}}_{1}t}-e^{-i\text{\textgreek{w}}_{2}t})\text{\textgreek{f}}_{2})N^{\text{\textgreek{m}}}+(1+r)^{-2}|(e^{-i\text{\textgreek{w}}_{1}t}-e^{-i\text{\textgreek{w}}_{2}t})\text{\textgreek{f}}_{2}|^{2}\big)\, dg+\nonumber \\
 & +C_{\text{\textgreek{h}}\text{\textgreek{h}}_{0}\text{\textgreek{q}}}\Big\{\sum_{l=0}^{k}\int_{\{0\le t\le t_{f}\}}\text{\textgreek{q}}(t)(1+r)^{1+2\text{\textgreek{h}}}e^{a\bar{t}}\big|(\text{\textgreek{w}}_{1}^{l}e^{-i\text{\textgreek{w}}_{1}t}-\text{\textgreek{w}}_{2}^{l}e^{i\text{\textgreek{w}}_{2}t})\mathcal{F}\big|^{2}\, dg+\nonumber \\
 & \hphantom{+C_{\text{\textgreek{h}}\text{\textgreek{h}}_{0}\text{\textgreek{q}}}\Big\{}+\sum_{j=1}^{2}\Big(a^{-3-\text{\textgreek{h}}}+a^{2\text{\textgreek{h}}_{0}}\int_{0}^{t_{f}}e^{2(a+Im(\text{\textgreek{w}}_{j}))(t+1)}\, dt\Big)(1+|\text{\textgreek{w}}_{j}|^{k})\int_{\text{\textgreek{S}}}(1+r)^{1+2\text{\textgreek{h}}+2\text{\textgreek{h}}_{0}}|\mathcal{F}|^{2}\, dg_{\text{\textgreek{S}}}\Big\}.\nonumber 
\end{align}
 From (\ref{eq:IledWithLossPhysical-1-1-1}), we readily obtain using
the bound (\ref{eq:BoundForResolventNorm}) for $\text{\textgreek{f}}_{2}$
for the first term in the right hand side: 
\begin{align}
\Big(\int_{1}^{t_{f}}e^{2(a+Im(\text{\textgreek{w}}_{1}))\text{\textgreek{t}}} & \, d\text{\textgreek{t}}\Big)\int_{\text{\textgreek{S}}}(1+r)^{-1-2\text{\textgreek{h}}}\big(\big|\nabla_{g_{\text{\textgreek{S}}}}(\text{\textgreek{f}}_{1}-\text{\textgreek{f}}_{2})\big|_{g_{\text{\textgreek{S}}}}^{2}+\big(|\text{\textgreek{w}}_{1}|^{2}+(1+r)^{-2}\big)|\text{\textgreek{f}}_{1}-\text{\textgreek{f}}_{2}|^{2}\big)\, dg_{\text{\textgreek{S}}}\le\label{eq:AlmostDoneWithHolder}\\
\le & C_{\text{\textgreek{q}}\text{\textgreek{h}}}\Big(\sum_{j=1}^{2}\int_{0}^{t_{f}}\big(|e^{(a-i\text{\textgreek{w}}_{1})\text{\textgreek{t}}}-e^{(a-i\text{\textgreek{w}}_{2})\text{\textgreek{t}}}|^{2}+\frac{|\text{\textgreek{w}}_{1}-\text{\textgreek{w}}_{2}|}{1+|\text{\textgreek{w}}_{j}|}e^{2(a+Im(\text{\textgreek{w}}_{1}))\text{\textgreek{t}}}\big)\, d\text{\textgreek{t}}\Big)e^{2Im(\text{\textgreek{w}}_{j})}(1+|\text{\textgreek{w}}_{j}|^{k})\int_{\text{\textgreek{S}}}(1+r)^{1+2\text{\textgreek{h}}}|\mathcal{F}|^{2}\, dg_{\text{\textgreek{S}}}\Big\}+\nonumber \\
 & +C_{\text{\textgreek{h}}\text{\textgreek{h}}_{0}\text{\textgreek{q}}}\Big\{\sum_{l=0}^{k}\int_{0}^{t_{f}}|\text{\textgreek{w}}_{1}^{l}e^{(a-i\text{\textgreek{w}}_{1})\text{\textgreek{t}}}-\text{\textgreek{w}}_{2}^{l}e^{(a-i\text{\textgreek{w}}_{2})\text{\textgreek{t}}}|^{2}\, d\text{\textgreek{t}}\int_{\text{\textgreek{S}}}(1+r)^{1+2\text{\textgreek{h}}}|\mathcal{F}|^{2}\, dg_{\text{\textgreek{S}}}+\nonumber \\
 & \hphantom{+C_{\text{\textgreek{h}}\text{\textgreek{h}}_{0}\text{\textgreek{q}}}\Big\{}+\sum_{j=1}^{2}\Big(a^{-3-\text{\textgreek{h}}}+a^{2\text{\textgreek{h}}_{0}}\int_{0}^{t_{f}}e^{2(a+Im(\text{\textgreek{w}}_{j}))(\text{\textgreek{t}}+1)}\, d\text{\textgreek{t}}\Big)(1+|\text{\textgreek{w}}_{j}|^{k})\int_{\text{\textgreek{S}}}(1+r)^{1+2\text{\textgreek{h}}+2\text{\textgreek{h}}_{0}}|\mathcal{F}|^{2}\, dg_{\text{\textgreek{S}}}\Big\}\le\nonumber \\
\le & C_{\text{\textgreek{h}}\text{\textgreek{h}}_{0}\text{\textgreek{q}}}\sum_{j=1}^{2}\Big\{\Big(\int_{0}^{t_{f}}e^{2(a+Im(\text{\textgreek{w}}_{j}))(\text{\textgreek{t}}+1)}\big(|1-e^{i(\text{\textgreek{w}}_{2}-\text{\textgreek{w}}_{1})\text{\textgreek{t}}}|^{2}+\frac{|\text{\textgreek{w}}_{1}-\text{\textgreek{w}}_{2}|}{1+|\text{\textgreek{w}}_{j}|}+a^{2\text{\textgreek{h}}_{0}}+\frac{a^{-3-\text{\textgreek{h}}}}{e^{2(a+Im(\text{\textgreek{w}}_{j}))(\text{\textgreek{t}}+1)}}\big)\, d\text{\textgreek{t}}\Big)\times\nonumber \\
 & \hphantom{C_{\text{\textgreek{h}}\text{\textgreek{h}}_{0}\text{\textgreek{q}}}\sum_{j=1}^{2}\Big\{\Big(\int_{0}^{t_{f}}e^{2(a+Im(\text{\textgreek{w}}_{j}))(\text{\textgreek{t}}+1)}\big(|1-e^{i(\text{\textgreek{w}}_{2}-\text{\textgreek{w}}_{1})\text{\textgreek{t}}}|^{2}+}\times e^{2Im(\text{\textgreek{w}}_{j})}(1+|\text{\textgreek{w}}_{j}|^{k})\int_{\text{\textgreek{S}}}(1+r)^{1+2\text{\textgreek{h}}+\text{\textgreek{h}}_{0}}|\mathcal{F}|^{2}\, dg_{\text{\textgreek{S}}}\Big\}\nonumber 
\end{align}
and thus: 
\begin{align}
\int_{\text{\textgreek{S}}} & (1+r)^{-1-2\text{\textgreek{h}}}\big(\big|\nabla_{g_{\text{\textgreek{S}}}}(\text{\textgreek{f}}_{1}-\text{\textgreek{f}}_{2})\big|_{g_{\text{\textgreek{S}}}}^{2}+\big(|\text{\textgreek{w}}_{1}|^{2}+(1+r)^{-2}\big)|\text{\textgreek{f}}_{1}-\text{\textgreek{f}}_{2}|^{2}\big)\, dg_{\text{\textgreek{S}}}\le\label{eq:AlmostDoneWithHolder-1}\\
\le & C_{\text{\textgreek{h}}\text{\textgreek{h}}_{0}\text{\textgreek{q}}}\Big(\int_{1}^{t_{f}}e^{2(a+Im(\text{\textgreek{w}}_{1}))\text{\textgreek{t}}}\, d\text{\textgreek{t}}\Big)^{-1}\sum_{j=1}^{2}\Big\{\Big(\int_{0}^{t_{f}}e^{2(a+Im(\text{\textgreek{w}}_{j}))(\text{\textgreek{t}}+1)}\big(|1-e^{i(\text{\textgreek{w}}_{2}-\text{\textgreek{w}}_{1})\text{\textgreek{t}}}|^{2}+\frac{|\text{\textgreek{w}}_{1}-\text{\textgreek{w}}_{2}|}{1+|\text{\textgreek{w}}_{j}|}+a^{2\text{\textgreek{h}}_{0}}+\frac{a^{-3-\text{\textgreek{h}}}}{e^{2(a+Im(\text{\textgreek{w}}_{j}))(\text{\textgreek{t}}+1)}}\big)\, d\text{\textgreek{t}}\Big)\times\nonumber \\
 & \hphantom{C_{\text{\textgreek{h}}\text{\textgreek{h}}_{0}\text{\textgreek{q}}}\sum_{j=1}^{2}\Big\{\Big(\int_{0}^{t_{f}}e^{2(a+Im(\text{\textgreek{w}}_{j}))(\text{\textgreek{t}}+1)}\big(|1-e^{i(\text{\textgreek{w}}_{2}-\text{\textgreek{w}}_{1})\text{\textgreek{t}}}|^{2}+++++++++}\times e^{2Im(\text{\textgreek{w}}_{j})}(1+|\text{\textgreek{w}}_{j}|^{k})\int_{\text{\textgreek{S}}}(1+r)^{1+2\text{\textgreek{h}}+\text{\textgreek{h}}_{0}}|\mathcal{F}|^{2}\, dg_{\text{\textgreek{S}}}\Big\}\nonumber 
\end{align}
Therefore, choosing $t_{f}=a^{-1-\text{\textgreek{h}}_{0}}$ and $a=|\text{\textgreek{w}}_{1}-\text{\textgreek{w}}_{2}|^{1-4\text{\textgreek{h}}_{0}}$,
assuming without loss of generality that $|\text{\textgreek{w}}_{1}-\text{\textgreek{w}}_{2}|<1$,
(\ref{eq:AlmostDoneWithHolder-1}) readily yields the required H\"older
continuity estimate: 
\begin{align}
\int_{\text{\textgreek{S}}}(1+r)^{-1-2\text{\textgreek{h}}}\big(\big|\nabla_{g_{\text{\textgreek{S}}}}(\text{\textgreek{f}}_{1}-\text{\textgreek{f}}_{2})\big|_{g_{\text{\textgreek{S}}}}^{2} & +\big(|\text{\textgreek{w}}_{1}|^{2}+(1+r)^{-2}\big)|\text{\textgreek{f}}_{1}-\text{\textgreek{f}}_{2}|^{2}\big)\, dg_{\text{\textgreek{S}}}\le\label{eq:HolderDone}\\
 & \le C_{\text{\textgreek{h}}\text{\textgreek{h}}_{0}\text{\textgreek{q}}}|\text{\textgreek{w}}_{1}-\text{\textgreek{w}}_{2}|^{2\text{\textgreek{h}}_{0}}e^{2Im(\text{\textgreek{w}}_{1})}(1+|\text{\textgreek{w}}_{1}|^{k})\int_{\text{\textgreek{S}}}(1+r)^{1+2\text{\textgreek{h}}+\text{\textgreek{h}}_{0}}|\mathcal{F}|^{2}\, dg_{\text{\textgreek{S}}}.\nonumber 
\end{align}

\subsubsection*{4. Holomorphicity}

Finally, we will show that for any $0<\text{\textgreek{h}}<\frac{1}{2}$
and any $\mathcal{F},\text{\textgreek{f}}_{0}\in L_{cp}^{2}(\text{\textgreek{S}})$,
the inner product $\left\langle \text{\textgreek{f}}_{0},R(\square_{g};\text{\textgreek{w}})\mathcal{F}\right\rangle _{L^{2}(\text{\textgreek{S}})}$
is a holomorphic function of $\text{\textgreek{w}}$ when $Im(\text{\textgreek{w}})>0$.
This can be readily established using the classical Morera's theorem,
since, as we showed, $\left\langle \text{\textgreek{f}}_{0},R(\square_{g};\text{\textgreek{w}})\mathcal{F}\right\rangle _{L^{2}(\text{\textgreek{S}})}$
is continuous in $\text{\textgreek{w}}$, and the right hand side
of (\ref{eq:DefinitionResolventFree}) vanishes upon complex integration
over any piecewise smooth closed loop $\text{\textgreek{g}}\subset\big\{\text{\textgreek{w}}\in\mathbb{C}:\, Im(\text{\textgreek{w}}\}>0\big\}$
(and, hence, the same arguments leading to the uniqueness of solutions
to (\ref{eq:DefinitionResolventFree}) show that $\int_{\text{\textgreek{g}}}R(\square_{g};\text{\textgreek{w}})\mathcal{F}\, d\text{\textgreek{w}}$
also vanishes).

\section{\label{sec:SpectralConsequencesAppendix}Proof of Proposition \ref{prop:ILEDProductCase}}

\noindent In order to establish the integrated local energy decay
estimate (\ref{eq:IledWithLossPhysical}) for equation (\ref{eq:PotentialOnceMore}),
we will first show that the condition (\ref{eq:NearZeroFrequencyBound})
implies that, for all $\text{\textgreek{l}}\in[0,1]$, $R(\square_{g}-V_{\text{\textgreek{l}}};\cdot)$
has no poles in the half plane $\{\text{\textgreek{w}}:\, Im(\text{\textgreek{w}})>0\}$,
and the following bound holds on the strip $\{0\le Im(\text{\textgreek{w}})\le\frac{\text{\textgreek{e}}}{2}\}$:
\begin{equation}
\sup_{0\le Im(\text{\textgreek{w}})\le\frac{\text{\textgreek{e}}}{2}}||R(\square_{g}-V_{\text{\textgreek{l}}};\text{\textgreek{w}})||_{\mathcal{L},\text{\textgreek{h}}}<+\infty.\label{eq:ILEDResolventBoundedness}
\end{equation}

The effective limiting absorption principles established in \cite{Rodnianski2011}
imply that, provided $\text{\textgreek{e}}>0$ is sufficiently small
depending on the precise choice of the family $V_{\text{\textgreek{l}}}$,
the following non-zero real frequency bound holds for all $\text{\textgreek{l}}\in[0,1]$:
\begin{equation}
\sup_{\{0\le Im(\text{\textgreek{w}})\le\frac{\text{\textgreek{e}}}{2}\}\cap\{|\text{\textgreek{w}}|\ge\text{\textgreek{e}}\}}||R(\square_{g}-V_{\text{\textgreek{l}}};\text{\textgreek{w}})||_{\mathcal{L},\text{\textgreek{h}}}<+\infty.\label{eq:BoundFromTaoRodnianski}
\end{equation}
Thus, (\ref{eq:NearZeroFrequencyBound}) and (\ref{eq:BoundFromTaoRodnianski})
yield (\ref{eq:ILEDResolventBoundedness}) for all $\text{\textgreek{l}}\in[0,1]$.
The non-existence of poles for $R(\square_{g}-V_{\text{\textgreek{l}}};\cdot)$
in the half plane $\{\text{\textgreek{w}}:\, Im(\text{\textgreek{w}})>0\}$
follows readily from the following facts:

\begin{enumerate}

\item The poles of $R(\square_{g}-V_{\text{\textgreek{l}}};\cdot)$
in $\{\text{\textgreek{w}}:\, Im(\text{\textgreek{w}})>0\}$ vary
continuously with $\text{\textgreek{l}}$, except when reaching the
real axis (see Lemma \ref{lem:PropertiesResolventNotFree}).

\item $R(\square_{g}-V_{0};\cdot)=R(\square_{g};\cdot)$ has no poles
in $\{\text{\textgreek{w}}:\, Im(\text{\textgreek{w}})>0\}$. 

\item The bound (\ref{eq:ILEDResolventBoundedness}) guarantees that
no poles of $R(\square_{g}-V_{\text{\textgreek{l}}};\cdot)$ exist
in $\{0\le Im(\text{\textgreek{w}})\le\text{\textgreek{e}}\}$ for
all $\text{\textgreek{l}}\in[0,1]$.

\item There exists some $C\gg1$ depending on the family $V_{\text{\textgreek{l}}}$
so that $R(\square_{g}-V_{\text{\textgreek{l}}};\cdot)$ has no poles
in the region $\{|\text{\textgreek{w}}|\ge C\}$. This follows from
the fact that all the poles of $R(\square_{g}-V_{\text{\textgreek{l}}};\cdot)$
in $\{\text{\textgreek{w}}\in\mathbb{C}:\, Im(\text{\textgreek{w}})>0\}$
must lie on the imaginary semi-axis $\{\text{\textgreek{w}}=ia,a>0\}$
(in view of the fact that $\text{\textgreek{D}}_{g_{\text{\textgreek{S}}}}-V_{\text{\textgreek{l}}}$
is essentially self-adjoint), combined with the fact that $R(\square_{g}-V_{\text{\textgreek{l}}};\cdot)$
is holomorphic in the region $Im(\text{\textgreek{w}})\gg1$ (see
the definition and the remark below (\ref{eq:DefinitionResolventPotential})).

\end{enumerate}

We will now proceed to establish the integrated local energy decay
estimate (\ref{eq:IledWithLossPhysical}). The bound (\ref{eq:ILEDResolventBoundedness})
readily implies (after an application of the Fourier transform in
the $t$-variable) the following bound for any smooth $\tilde{\text{\textgreek{y}}}:\mathcal{M}\rightarrow\mathbb{C}$
such that the restriction of $\tilde{\text{\textgreek{y}}}$ on the
$\{t=const\}$ hypersurfaces is compactly supported and both $\tilde{\text{\textgreek{y}}}$
and its first derivatives are square integrable in $t$: 
\begin{equation}
\int_{\mathcal{M}}(1+r)^{-1-2\text{\textgreek{h}}}\big(J_{\text{\textgreek{m}}}^{T}(\tilde{\text{\textgreek{y}}})T^{\text{\textgreek{m}}}+r_{+}^{-2}|\tilde{\text{\textgreek{y}}}|^{2}\big)\lesssim_{\text{\textgreek{l}}}\sum_{j=0}^{k}\int_{\mathcal{M}}(1+r)^{1+2\text{\textgreek{h}}}|T^{j}(\square_{g}\tilde{\text{\textgreek{y}}})|^{2}.\label{eq:IledWithLossInhomogeneous}
\end{equation}
Furthermore, the bound (\ref{eq:ILEDResolventBoundedness}) combined
with the absence of poles for $R(\square_{g}-V_{\text{\textgreek{l}}};\cdot)$
in the upper half plane imply that for all $\text{\textgreek{l}}\in[0,1]$:
\begin{equation}
\inf_{\text{\textgreek{f}}\in C_{0}^{\infty}(\text{\textgreek{S}})}\frac{\int_{\text{\textgreek{S}}}\big(|\nabla_{g_{\text{\textgreek{S}}}}\text{\textgreek{f}}|^{2}+V_{\text{\textgreek{l}}}|\text{\textgreek{f}}|^{2}\big)\, dg_{\text{\textgreek{S}}}}{\int_{supp(V_{\text{\textgreek{l}}})}|\text{\textgreek{f}}|^{2}\, dg_{\text{\textgreek{S}}}}>0.\label{eq:PositivityEnergy}
\end{equation}
The lower bound (\ref{eq:PositivityEnergy}), in turn, implies that
the $T$-energy flux (associated to the problem (\ref{eq:PotentialOnceMore}),
$\text{\textgreek{l}}\in[0,1]$) of any smooth and suitably decaying
function $\text{\textgreek{y}}:\mathcal{M}\rightarrow\mathbb{R}$
, is positive definite on the $\{\bar{t}=const\}$ hypersurfaces,
i.\,e. for any $s\in\mathbb{R}$:
\begin{equation}
\int_{\{\bar{t}=s\}}\big(J_{\text{\textgreek{m}}}^{T}(\text{\textgreek{y}})\bar{n}^{\text{\textgreek{m}}}+r_{+}^{-2}|\text{\textgreek{y}}|^{2}\big)\lesssim_{\text{\textgreek{l}}}\int_{\{\bar{t}=s\}}\big(J_{\text{\textgreek{m}}}^{T}(\text{\textgreek{y}})\bar{n}^{\text{\textgreek{m}}}+V_{\text{\textgreek{l}}}|\text{\textgreek{y}}|^{2}\big).\label{eq:Positivity of energyFlux}
\end{equation}

\begin{rem*}
\noindent The lower bound (\ref{eq:PositivityEnergy}) can be obtained
as follows: Assume that (\ref{eq:PositivityEnergy}) fails to hold
for some $\text{\textgreek{l}}\in[0,1]$, then there exists a smooth
and compactly supported function $V_{\text{\textgreek{l}},2}:\text{\textgreek{S}}\rightarrow(-\infty,0]$
with $V_{\text{\textgreek{l}},2}=-1$ on $supp(V_{\text{\textgreek{l}}})$,
such that for all $\text{\textgreek{d}}>0$:
\begin{equation}
\inf_{\text{\textgreek{f}}\in C_{0}^{\infty}(\text{\textgreek{S}})}\frac{\int_{\text{\textgreek{S}}}\big(|\nabla_{g_{\text{\textgreek{S}}}}\text{\textgreek{f}}|^{2}+(V_{\text{\textgreek{l}}}+\text{\textgreek{d}}V_{\text{\textgreek{l}},2})|\text{\textgreek{f}}|^{2}\big)\, dg_{\text{\textgreek{S}}}}{\int_{supp(V_{\text{\textgreek{l}}})}|\text{\textgreek{f}}|^{2}\, dg_{\text{\textgreek{S}}}}\le-\text{\textgreek{d}}<0.\label{eq:ContradictionForNegativity}
\end{equation}
Thus, in view of the compactness of the support of $V_{\text{\textgreek{l}}}+\text{\textgreek{d}}V_{\text{\textgreek{l}},2}$,
a standard minimization argument (see e.\,g.~\cite{Reed1972}) yields
that for any $\text{\textgreek{d}}>0$, there exists a $\text{\textgreek{l}}_{\text{\textgreek{d}}}>0$
and an $L^{2}$ solution $\text{\textgreek{f}}_{\text{\textgreek{d}}}$
to the eigenvalue problem:
\begin{equation}
\text{\textgreek{D}}_{g_{\text{\textgreek{S}}}}\text{\textgreek{f}}_{\text{\textgreek{d}}}-(V_{\text{\textgreek{l}}}+\text{\textgreek{d}}V_{\text{\textgreek{l}},2})\text{\textgreek{f}}_{\text{\textgreek{d}}}=\text{\textgreek{l}}_{\text{\textgreek{d}}}^{2}\text{\textgreek{f}}_{\text{\textgreek{d}}},\label{eq:Eigenvalue}
\end{equation}
i.\,e.~$R(\square_{g}-V_{\text{\textgreek{l}}}-\text{\textgreek{d}}V_{\text{\textgreek{l}},2};\cdot)$
has a pole at $\text{\textgreek{w}}=i\text{\textgreek{l}}_{\text{\textgreek{d}}}$
for any $\text{\textgreek{d}}>0$. However, for $\text{\textgreek{d}}>0$
sufficiently small, the estimate (\ref{eq:ILEDResolventBoundedness})
also holds for $R(\square_{g}-V_{\text{\textgreek{l}}}-\text{\textgreek{d}}V_{\text{\textgreek{l}},2};\cdot)$
in place of $R(\square_{g}-V_{\text{\textgreek{l}}};\cdot)$,%
\footnote{In view of the fact that $R(\square_{g}-V_{\text{\textgreek{l}}}-\text{\textgreek{d}}V_{\text{\textgreek{l}},2})=\big(1-R(\square_{g}-V_{\text{\textgreek{l}}})\circ\text{\textgreek{d}}V_{\text{\textgreek{l}},2}\big)^{-1}R(\square_{g}-V_{\text{\textgreek{l}}})$%
} and thus our previous analysis establishing the absence of resonances
in the upper half plane for the family $R(\square_{g}-V_{\text{\textgreek{l}}};\cdot)$
(with parameter $\text{\textgreek{l}}$) also applies for the family
$R(\square_{g}-V_{\text{\textgreek{l}}}-\text{\textgreek{d}}V_{\text{\textgreek{l}},2};\cdot)$
(with parameter $\text{\textgreek{d}}$), yielding a contradiction.
\end{rem*}
Let, now, $\text{\textgreek{y}}$ be a smooth solution to (\ref{eq:PotentialOnceMore})
for some $\text{\textgreek{l}}\in[0,1]$ and some smooth function
$F:\mathcal{M}\rightarrow\mathbb{C}$, such that $supp(F)\subset\{t\ge0\}$
(so that $\text{\textgreek{y}}\equiv0$ for $t\le0$). We will assume
without loss of generality that $F$ is compactly supported in $\mathcal{M}$,
and we will show that 
\begin{equation}
\int_{\{t\ge0\}}(1+r)^{-1-2\text{\textgreek{h}}}\big(J_{\text{\textgreek{m}}}^{N}(\text{\textgreek{y}})N^{\text{\textgreek{m}}}+(1+r)^{-2}|\text{\textgreek{y}}|^{2}\big)\, dg\lesssim_{\text{\textgreek{h}}}\sum_{j=0}^{k}\int_{\{t\ge0\}}(1+r)^{1+2\text{\textgreek{h}}}|T^{j}F|^{2}\, dg.\label{eq:IledWithLossPhysicalCompactSupport}
\end{equation}

The estimate (\ref{eq:IledWithLossPhysical}) for any $t_{f}\ge0$
and any function $F$ which does not necessarily have compact support
in the $t$ variable (but with $supp(F)\cap\{t=\text{\textgreek{t}}\}$
being compact for any $\text{\textgreek{t}}\ge0$) can be obtained
from (\ref{eq:IledWithLossPhysicalCompactSupport}) as follows: Fixing
a smooth function $\text{\textgreek{q}}_{1}:\mathbb{R}\rightarrow[0,1]$
such that $\text{\textgreek{q}}_{1}\equiv1$ on $(-\infty,-1]$ and
$\text{\textgreek{q}}_{1}\equiv0$ on $[0,+\infty)$, let us define
for any $t_{f}\ge0$ the function $\text{\textgreek{y}}_{t_{f}}:\mathcal{M}\rightarrow\mathbb{C}$
by solving 
\begin{equation}
\begin{cases}
(\square_{g}-V_{\text{\textgreek{l}}})\text{\textgreek{y}}_{t_{f}}=\text{\textgreek{q}}_{1}(t-t_{f})F,\\
(\text{\textgreek{y}}_{t_{f}},T\text{\textgreek{y}}_{t_{f}})|_{t=0}=(0,0).
\end{cases}\label{eq:ModifiedCauchyProblem}
\end{equation}
Note that $\text{\textgreek{y}}\equiv\text{\textgreek{y}}_{t_{f}}$
on $\{0\le t\le t_{f}-1\}$ in view of (\ref{eq:PotentialOnceMore}).
Since $\text{\textgreek{q}}_{1}(t-t_{f})F$ has compact support in
$\mathcal{M}$, an application of (\ref{eq:IledWithLossPhysicalCompactSupport})
for $\text{\textgreek{y}}_{t_{f}}$ yields: 
\begin{equation}
\int_{\{t\ge0\}}(1+r)^{-1-2\text{\textgreek{h}}}\big(J_{\text{\textgreek{m}}}^{N}(\text{\textgreek{y}}_{t_{f}})N^{\text{\textgreek{m}}}+(1+r)^{-2}|\text{\textgreek{y}}_{t_{f}}|^{2}\big)\, dg\lesssim_{\text{\textgreek{h}}}\sum_{j=0}^{k}\int_{\{0\le t\le t_{f}\}}(1+r)^{1+2\text{\textgreek{h}}}|T^{j}F|^{2}\, dg.\label{eq:IledWithLossPhysicalCompactSupport-1}
\end{equation}
Furthermore, the domain of dependence property for (\ref{eq:PotentialOnceMore})
combined with local-in-time energy estimates and a Cauchy--Schwarz
inequality readily yield: 
\begin{align}
\int_{\{t_{f}-1\le t\le t_{f}\}}(1+r)^{-1-2\text{\textgreek{h}}} & \big(J_{\text{\textgreek{m}}}^{N}(\text{\textgreek{y}})N^{\text{\textgreek{m}}}+(1+r)^{-2}|\text{\textgreek{y}}|^{2}\big)\, dg\lesssim\label{eq:IledWithLossPhysicalCompactSupport-1-1}\\
\lesssim & \int_{\{t_{f}-2\le t\le t_{f}-1\}}(1+r)^{-1-2\text{\textgreek{h}}}\big(J_{\text{\textgreek{m}}}^{N}(\text{\textgreek{y}})N^{\text{\textgreek{m}}}+(1+r)^{-2}|\text{\textgreek{y}}|^{2}\big)\, dg+\nonumber \\
 & +\int_{\{t_{f}-2\le t\le t_{f}\}}(1+r)^{1+2\text{\textgreek{h}}}|F|^{2}\, dg.\nonumber 
\end{align}
Therefore, since $\text{\textgreek{y}}=\text{\textgreek{y}}_{t_{f}}$
on $\{0\le t\le t_{f}-1\}$, (\ref{eq:IledWithLossPhysical}) can
be readily obtained from (\ref{eq:IledWithLossPhysicalCompactSupport-1})
and (\ref{eq:IledWithLossPhysicalCompactSupport-1-1}).

We will now proceed to establish (\ref{eq:IledWithLossPhysicalCompactSupport})
when $F$ is compactly supported in $\mathcal{M}$. Let $\text{\textgreek{q}}:\mathbb{R}\rightarrow(0,1]$
be a smooth function such that $\text{\textgreek{q}}(x)=1$ for $x\le0$
and $\text{\textgreek{q}}(x)=e^{-x}$ for $x\ge1$, and let us define
for any $\text{\textgreek{d}}>0$ the function $\text{\textgreek{q}}_{\text{\textgreek{d}}}:\mathcal{M}\rightarrow(0,1]$
by the relation:
\begin{equation}
\text{\textgreek{q}}_{\text{\textgreek{d}}}=\text{\textgreek{q}}(\text{\textgreek{d}}\bar{t}).\label{eq:Cut-Off-Function}
\end{equation}
Commuting (\ref{eq:PotentialOnceMore}) with $T^{j}$, $j\le\lceil\frac{d-1}{2}\rceil$,
the energy flux identity (for any $\text{\textgreek{t}}>0$) 
\begin{equation}
\int_{\{\bar{t}=\text{\textgreek{t}}\}}\big(J_{\text{\textgreek{m}}}^{T}(T^{j}\text{\textgreek{y}})\bar{n}^{\text{\textgreek{m}}}+V_{\text{\textgreek{l}}}|T^{j}\text{\textgreek{y}}|^{2}\big)=-2\int_{\{0\le\bar{t}\le\text{\textgreek{t}}\}}T^{j+1}\text{\textgreek{y}}\cdot T^{j}F\label{eq:EnergyFlux}
\end{equation}
combined with elliptic estimates (using equation (\ref{eq:PotentialOnceMore}))
and the Sobolev embedding theorem, implies that $\text{\textgreek{y}}$
is uniformly bounded on $\mathcal{M}$. Therefore, the function $\text{\textgreek{q}}_{\text{\textgreek{d}}}\text{\textgreek{y}}$
is square integrable in the $t$ variable. In view of this fact and
the relation 
\begin{equation}
\square_{g}(\text{\textgreek{q}}_{\text{\textgreek{d}}}\text{\textgreek{y}})-V_{\text{\textgreek{l}}}\text{\textgreek{q}}_{\text{\textgreek{d}}}\text{\textgreek{y}}=\text{\textgreek{q}}_{\text{\textgreek{d}}}F+2\partial^{\text{\textgreek{m}}}\text{\textgreek{q}}_{\text{\textgreek{d}}}\cdot\partial_{\text{\textgreek{m}}}\text{\textgreek{y}}+\square_{g}\text{\textgreek{q}}_{\text{\textgreek{d}}}\cdot\text{\textgreek{y}},\label{eq:InhomogeneousCutOff}
\end{equation}
from (\ref{eq:IledWithLossInhomogeneous}) (for $\text{\textgreek{q}}_{\text{\textgreek{d}}}\text{\textgreek{y}}$
in place of $\tilde{\text{\textgreek{y}}}$ there) and (\ref{eq:Cut-Off-Function})
we obtain:%
\footnote{Notice the bound $|\square_{g}\text{\textgreek{q}}_{\text{\textgreek{d}}}|\lesssim\text{\textgreek{d}}^{2}(1+r)^{-1}\min\{e^{-\text{\textgreek{d}}\bar{t}},1\}$,
which follows from the fact that $\square_{g}\text{\textgreek{q}}_{\text{\textgreek{d}}}=-2\partial_{\bar{t}}\partial_{r}\text{\textgreek{q}}_{\text{\textgreek{d}}}+\partial_{r}^{2}\text{\textgreek{q}}_{\text{\textgreek{d}}}+(d-1)r^{-1}\partial_{r}\text{\textgreek{q}}_{\text{\textgreek{d}}}+r^{-2}\text{\textgreek{D}}_{g_{\mathbb{S}^{d-1}}}\text{\textgreek{q}}_{\text{\textgreek{d}}}+O(r^{-1})\big\{\partial^{2}\text{\textgreek{q}}_{\text{\textgreek{d}}},\partial\text{\textgreek{q}}_{\text{\textgreek{d}}}\big\}$
in the polar coordinate chart $(\bar{t},r,\text{\textgreek{sv}})$
in the region $r\gg1$.%
}
\begin{align}
\int_{\mathcal{M}}(1+r)^{-1-2\text{\textgreek{h}}}\big( & J_{\text{\textgreek{m}}}^{T}(\text{\textgreek{q}}_{\text{\textgreek{d}}}\text{\textgreek{y}})T^{\text{\textgreek{m}}}+(1+r)^{-2}|\text{\textgreek{q}}_{\text{\textgreek{d}}}\text{\textgreek{y}}|^{2}\big)\lesssim_{\text{\textgreek{l}},\text{\textgreek{h}}}\label{eq:IledWithLossInhomogeneous-2}\\
\lesssim_{\text{\textgreek{l}},\text{\textgreek{h}}} & \sum_{j=0}^{k}\int_{\mathcal{M}}(1+r)^{1+2\text{\textgreek{h}}}|T^{j}F|^{2}+\sum_{j=0}^{k}\int_{0}^{+\infty}\text{\textgreek{d}}^{2}e^{-\text{\textgreek{d}}s}\big(\mathcal{E}^{(1+2\text{\textgreek{h}})}[T^{j}\text{\textgreek{y}}](s)+\mathcal{E}_{en}[T^{j}\text{\textgreek{y}}]\big)\, ds,\nonumber 
\end{align}
where, for some fixed $R\gg1$ and any $0<p\le2$: 
\begin{gather}
\mathcal{E}^{(p)}[\text{\textgreek{f}}](\text{\textgreek{t}})\doteq\int_{\{\bar{t}=\text{\textgreek{t}}\}\cap\{r\ge R\}}\big(r^{p}|\partial_{r}\text{\textgreek{f}}|^{2}+r^{p-2}|\text{\textgreek{f}}|^{2}\big),\label{eq:Penergy}\\
\mathcal{E}_{en}[\text{\textgreek{f}}](\text{\textgreek{t}})\doteq\int_{\{\bar{t}=\text{\textgreek{t}}\}}\big(J_{\text{\textgreek{m}}}^{T}(\text{\textgreek{f}})\bar{n}^{\text{\textgreek{m}}}+(1+r)^{-2}|\text{\textgreek{f}}|^{2}\big),
\end{gather}
the $\partial_{r}$-derivative in (\ref{eq:Penergy}) being considered
with respect to the (polar) coordinate chart $(\bar{t},r,\text{\textgreek{sv}})$
in the region $\{r\ge R\}$.

In order to obtain (\ref{eq:IledWithLossPhysicalCompactSupport})
for $\text{\textgreek{y}}$ from (\ref{eq:IledWithLossInhomogeneous-2}),
it suffices to show that the second term of the right hand side of
(\ref{eq:IledWithLossInhomogeneous-2}) converges to $0$ as $\text{\textgreek{d}}\rightarrow0$.
The $r^{p}$-weighted estimates of Section 5 of \cite{Moschidisc},
for $p=2$, yield for any $\text{\textgreek{t}}>0$ (provided $R$
is sufficiently large): 
\begin{equation}
\mathcal{E}^{(2)}[\text{\textgreek{y}}](\text{\textgreek{t}})+\int_{-\infty}^{\text{\textgreek{t}}}\mathcal{E}^{(1)}[\text{\textgreek{y}}](s)\, ds\lesssim\int_{\mathcal{M}}(1+r)^{3}|F|^{2}+\int_{\{\bar{t}\le\text{\textgreek{t}}\}\cap\{r\le R\}}\big(J_{\text{\textgreek{m}}}^{T}(\text{\textgreek{y}})T^{\text{\textgreek{m}}}+|\text{\textgreek{y}}|^{2}\big),\label{eq:NewMethodP=00003D2}
\end{equation}
 In view of the $T$-energy flux identity (\ref{eq:EnergyFlux}) and
the lower bound (\ref{eq:Positivity of energyFlux}), we obtain from
(\ref{eq:NewMethodP=00003D2}):
\begin{equation}
\mathcal{E}^{(2)}[\text{\textgreek{y}}](\text{\textgreek{t}})+\int_{-\infty}^{\text{\textgreek{t}}}\mathcal{E}^{(1)}[\text{\textgreek{y}}](s)\, ds\le C(F)(1+\text{\textgreek{t}}),\label{eq:LinearGrowth-1}
\end{equation}
where $C(F)>0$ depends on the precise choice of $F$. From (\ref{eq:LinearGrowth-1})
and the compact support of $F$ (yielding $\mathcal{E}^{(p)}[\text{\textgreek{y}}](\text{\textgreek{t}})=0$
for $\text{\textgreek{t}}\ll-1$) we deduce that for any $\text{\textgreek{t}}>0$
\begin{equation}
\int_{-\infty}^{\text{\textgreek{t}}}\mathcal{E}^{(2)}[\text{\textgreek{y}}](s)\, ds+\text{\textgreek{t}}\int_{-\infty}^{\text{\textgreek{t}}}\mathcal{E}^{(1)}[\text{\textgreek{y}}](s)\, ds\le C(F)(1+\text{\textgreek{t}})^{2},
\end{equation}
and thus a standard interpolation argument yields the following qualitative
bound for $\text{\textgreek{y}}$:
\begin{equation}
\int_{-\infty}^{\text{\textgreek{t}}}\mathcal{E}^{(1+2\text{\textgreek{h}})}[\text{\textgreek{y}}](s)\le C(F)(1+\text{\textgreek{t}})^{1+2\text{\textgreek{h}}}.\label{eq:SublinearGrowth}
\end{equation}
Therefore, for any integer $0\le j\le k$, setting for simplicity
\begin{equation}
f_{j}(\text{\textgreek{t}})\doteq\mathcal{E}^{(1+2\text{\textgreek{h}})}[T^{j}\text{\textgreek{y}}](s)+\mathcal{E}_{en}[T^{j}\text{\textgreek{y}}],
\end{equation}
in view of (\ref{eq:SublinearGrowth}) and (\ref{eq:EnergyFlux}),
we can bound for any integer $0\le j\le k$, any $\text{\textgreek{d}}>0$
and any $s_{0}\gg1$:
\begin{align}
\int_{0}^{+\infty}\text{\textgreek{d}}^{2}e^{-\text{\textgreek{d}}s}f_{j}(s)\, ds & \le\int_{0}^{s_{0}}\text{\textgreek{d}}^{2}f_{j}(s)\, ds+\int_{s_{0}}^{+\infty}\text{\textgreek{d}}^{2}e^{-\text{\textgreek{d}}s}f_{j}(s)\, ds\le\label{eq:BoundForInterpolation}\\
 & \le C(F)\Big\{\text{\textgreek{d}}^{2}s_{0}^{1+2\text{\textgreek{h}}}+\int_{s_{0}}^{+\infty}e^{-\text{\textgreek{d}}s}\text{\textgreek{d}}^{2}s^{2}\, ds\Big\}\le\nonumber \\
 & \le C(F)\Big\{\text{\textgreek{d}}^{2}s_{0}^{1+2\text{\textgreek{h}}}+e^{-\text{\textgreek{d}}s_{0}}(\text{\textgreek{d}}s_{0}^{2}+\text{\textgreek{d}}^{-1})\Big\}.\nonumber 
\end{align}
Choosing $s_{0}=\text{\textgreek{d}}^{-1-\frac{2\text{\textgreek{h}}(1-2\text{\textgreek{h}})}{1+2\text{\textgreek{h}}}}$
in (\ref{eq:BoundForInterpolation}), we obtain as $\text{\textgreek{d}}\rightarrow0$
(since $2\text{\textgreek{h}}<1$)
\begin{equation}
\lim_{\text{\textgreek{d}}\rightarrow0}\int_{0}^{+\infty}\text{\textgreek{d}}^{2}e^{-\text{\textgreek{d}}s}f_{j}(s)\, ds=0.
\end{equation}
Thus, letting $\text{\textgreek{d}}\rightarrow0$, (\ref{eq:IledWithLossInhomogeneous-2})
yields the desired integrated local energy decay estimate (\ref{eq:IledWithLossPhysicalCompactSupport}).
\qed

\section{\label{sec:A-topological-lemma}A topological lemma}

We will establish the following lemma on the image of a continuous
family of maps from the unit ball to itself:
\begin{lem}
\label{lem:TopologicalLemma}Let $\mathcal{F}:[0,1]\times B_{1}^{n}\rightarrow B_{1}^{n}$
be a continuous map, where $B_{\text{\textgreek{r}}}^{n}$ is the
open ball of radius $\text{\textgreek{r}}$ in $\mathbb{R}^{n}$.
Assume also that $\mathcal{F}\big(\{0\}\times\cdot\big):B_{1}^{n}\rightarrow B_{1}^{n}$
is a homeomorphism onto an open neighborhood of $0_{\mathbb{R}^{n}}$
and that for any $t\in[0,1]$ we have 
\begin{equation}
\mathcal{F}\big(\{t\}\times(B_{1}^{n}\backslash B_{1/2}^{n})\big)\subseteq B_{1}^{n}\backslash\{0\}.\label{eq:ForAllBallsMappingOnTheSphere}
\end{equation}
 Then for any $t\in[0,1]$:
\begin{equation}
0_{\mathbb{R}^{n}}\in\mathcal{F}\big(\{t\}\times B_{1}^{n}\big).\label{eq:0BelongsToAllBalls}
\end{equation}
\end{lem}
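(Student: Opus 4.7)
The plan is to argue by Brouwer degree theory. Since $\mathcal{F}(0,\cdot):B_1^n\to B_1^n$ is a homeomorphism onto an open neighborhood of $0_{\mathbb{R}^n}$, there is a unique point $x_0\in B_1^n$ with $\mathcal{F}(0,x_0)=0$. Hypothesis (\ref{eq:ForAllBallsMappingOnTheSphere}) applied at $t=0$ forces $x_0\in B_{1/2}^n$ (since $0\notin\mathcal{F}(\{0\}\times(B_1^n\setminus B_{1/2}^n))$).

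First I would set up the family of maps $\mathcal{F}_t\doteq\mathcal{F}(t,\cdot):\overline{B_{1/2}^n}\to\mathbb{R}^n$, where we view $B_1^n$ as a subset of $\mathbb{R}^n$. Since $\partial B_{1/2}^n\subset B_1^n\setminus B_{1/2}^n$, hypothesis (\ref{eq:ForAllBallsMappingOnTheSphere}) immediately gives $0\notin\mathcal{F}_t(\partial B_{1/2}^n)$ for every $t\in[0,1]$, which is precisely the condition needed for the Brouwer degree $\deg(\mathcal{F}_t, B_{1/2}^n,0)$ to be well-defined.

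The key computation is that $|\deg(\mathcal{F}_0, B_{1/2}^n, 0)|=1$: indeed, $\mathcal{F}_0|_{\overline{B_{1/2}^n}}$ is a homeomorphism onto a compact subset of $\mathbb{R}^n$ whose interior contains $0$, and the (topological) degree of a homeomorphism at an interior value of its image is $\pm 1$, depending on the orientation. Next, the continuity of $\mathcal{F}$ together with the uniform non-vanishing condition $0\notin\mathcal{F}([0,1]\times\partial B_{1/2}^n)$ makes $\{\mathcal{F}_t\}_{t\in[0,1]}$ an admissible homotopy in the sense of Brouwer degree; homotopy invariance then yields $\deg(\mathcal{F}_t,B_{1/2}^n,0)=\deg(\mathcal{F}_0,B_{1/2}^n,0)\neq 0$ for all $t\in[0,1]$. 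Since a map of non-zero degree at a point must attain that value, $\mathcal{F}_t^{-1}(0)\cap B_{1/2}^n\neq\emptyset$, giving (\ref{eq:0BelongsToAllBalls}).

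I do not foresee any serious obstacle; the proof is essentially a textbook application of homotopy invariance of the Brouwer degree. The only items requiring care are (i) extracting from the hypothesis that the degree is well-defined on the \emph{smaller} ball $B_{1/2}^n$ (which is where the choice of inner radius $1/2$ enters and uses the closed annulus $B_1^n\setminus B_{1/2}^n$), and (ii) invoking the fact that the degree of a homeomorphism onto an open set containing the target value equals $\pm 1$, which is classical. An entirely equivalent formulation avoiding degree theory would be to note that the set $\{t\in[0,1]\mid 0\in\mathcal{F}(\{t\}\times B_{1/2}^n)\}$ is both open and closed in $[0,1]$ and non-empty, but the degree argument is more transparent.
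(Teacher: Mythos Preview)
Your proof is correct and is essentially equivalent to the paper's argument, though phrased in a different formalism. The paper works directly with the induced maps on relative homology $H_n(B_1^n,B_1^n\setminus B_{1/2}^n)\to H_n(B_1^n,B_1^n\setminus\{0\})$, observing that this homomorphism between copies of $\mathbb{Z}$ is locally constant in $t$, nontrivial at $t=0$ (since $\mathcal{F}_0$ is a homeomorphism onto a neighborhood of $0$), and would be trivial if $0$ were missed. Your Brouwer degree argument is the same invariant in disguise: the degree $\deg(\mathcal{F}_t,B_{1/2}^n,0)$ is precisely the image of the generator under that homology map, homotopy invariance of the degree corresponds to local constancy of the induced map, and the $\pm 1$ computation at $t=0$ matches the nontriviality statement. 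Your formulation has the practical advantage of being immediately recognizable to analysts and of making the role of the boundary condition $0\notin\mathcal{F}_t(\partial B_{1/2}^n)$ explicit; the paper's formulation is slightly more abstract but identical in content.
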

\begin{proof}
Because of (\ref{eq:ForAllBallsMappingOnTheSphere}), for any $t\in[0,1]$
the map $\mathcal{F}(\{t\}\times\cdot):B_{1}^{n}\rightarrow B_{1}^{n}$
induces a well defined group homeomorphism 
\begin{equation}
\mathcal{F}_{hom}(t):H_{n}(B_{1}^{n},B_{1}^{n}\backslash B_{1/2}^{n})\rightarrow H_{n}(B_{1}^{n},B_{1}^{n}\backslash\{0_{\mathbb{R}^{n}}\}),\label{eq:GroupHomeomorphism}
\end{equation}
where $H_{n}(A,B)$ is the $n$-th reduced homology group of $A$
relative to $B\subset A$ (see \cite{Hatcher2002}). In this case,
$H_{n}(B_{1}^{n},B_{1}^{n}\backslash B_{1/2}^{n})\simeq\mathbb{Z}\simeq H_{n}(B_{1}^{n},B_{1}^{n}\backslash\{0_{\mathbb{R}^{n}}\})$. 

Because $\mathcal{F}$ is continuous, the map (\ref{eq:GroupHomeomorphism})
is continuous in $t$ and, hence, since its domain and range are discrete,
it is constant in $t$. Because $\mathcal{F}\big(\{0\}\times\cdot\big):B_{1}^{n}\rightarrow B_{1}^{n}$
is a homeomorphism onto an open neighborhood of $0_{\mathbb{R}^{n}}$
, $\mathcal{F}_{hom}(0)$ is non-trivial, and thus (\ref{eq:GroupHomeomorphism})
is also non-trivial for any $t\in[0,1]$. This implies that (\ref{eq:0BelongsToAllBalls})
holds, since if $0_{\mathbb{R}^{n}}\notin\mathcal{F}\big(\{t\}\times B_{1}^{n}\big)$
then $\mathcal{F}_{hom}(t)$ is identically $0$. Thus, the proof
of the Lemma is complete.
\end{proof}
\bibliographystyle{plain}
\bibliography{DatabaseExample}

\end{document}